\numberwithin{equation}{section}
\theoremstyle{plain}
\newtheorem{theorem}[subsection]{Theorem}
\newtheorem{lemma}[subsection]{Lemma}
\newtheorem{corollary}[subsection]{Corollary}
\newtheorem{proposition}[subsection]{Proposition}
\newtheorem{conjecture}[subsection]{Conjecture}
\newtheorem{definition}[subsection]{Definition}
\newtheorem{thmx}{Theorem}
\newtheorem{corx}[thmx]{Corollary}
\theoremstyle{definition}
\newtheorem{notation}[subsection]{Notation}
\theoremstyle{remark}
\newtheorem{remark}[subsection]{Remark}
\newcommand{\tensor}{\otimes}
\DeclareMathOperator{\Diff}{Diff}
\DeclareMathOperator{\dom}{dom}
\DeclareMathOperator{\id}{id}
\DeclareMathOperator{\supp}{supp}   
\DeclareMathOperator{\End}{End}    
\DeclareMathOperator{\pr}{pr}
\DeclareMathOperator{\ind}{ind}
\DeclareMathOperator{\scal}{scal}  
\newcommand{\forget}[1]{}
\newcommand{\RR}{\mathbb{R}}
\newcommand{\CC}{\mathbb{C}}
\newcommand{\ZZ}{\mathbb{Z}}
\newcommand{\Id}{\operatorname{Id}}
\newcommand{\Dom}{\operatorname{Dom}}
\begin{document}
\title[Callias-type operators in $C^\ast$-algebras and PSC on noncompact manifolds]{Callias-type operators in $C^\ast$-algebras and positive scalar curvature on noncompact manifolds.}

\author{Simone Cecchini}
\address{Mathematisches Institut,
Georg-August-Universit\"at G\"ottingen, 37073 G\"ottingen, Germany}

\email{cecchini@mathematik.uni-goettingen.de}


\begin{abstract} 
A Dirac-type operator on a complete Riemannian manifold is of Callias-type if its square is a Schr\"{o}dinger-type operator with a potential uniformly positive outside of a compact set.
We develop the theory of Callias-type operators twisted with Hilbert $C^\ast$-module bundles and prove an index theorem for such operators. 
As an application, we derive an obstruction to the existence of complete Riemannian metrics of positive scalar curvature on noncompact spin manifolds in terms of closed submanifolds of codimension one.
In particular, when $N$ is a closed spin manifold, we show that if the cylinder $N\times\RR$ carries a complete metric of positive scalar curvature, then the (complex) Rosenberg index on $N$ must vanish.
\end{abstract}


\maketitle

\section{Introduction}

An important topic in differential geometry in recent decades is whether a given smooth manifold admits a Riemannian metric of positive scalar curvature. 
On closed spin manifolds, the most powerful obstructions to the existence of such metrics are based on the index theory for the spin-Dirac operator.
Indeed, the Bochner-Lichnerowicz formula \cite{Lic63} implies that, on a closed spin manifold $N$ with positive scalar curvature, the spin-Dirac operator is invertible and hence its index must vanish. 

Rosenberg (\cite{Ros83}, \cite{Ros86a}, \cite{Ros86b}) refined this obstruction by using Dirac operators twisted with flat Hilbert $C^\ast$-module bundles of finite type.
Let $\pi$ be the fundamental group of $N$ and let $C^\ast_\RR(\pi)$ be the \emph{real maximal} group $C^\ast$-algebra of $\pi$.
By twisting the spin-Dirac operator on $N$ with the canonical flat Hilbert $C^\ast_\RR(\pi)$-bundle over $N$, one obtains the Rosenberg index obstruction $\alpha_\RR(N)\in KO_\ast(C^\ast_\RR(\pi))$.
It was conjectured  that this obstruction gives a complete characterization of the existence of metrics of positive scalar curvature on closed spin manifolds.


\begin{conjecture}[Gromov-Lawson-Rosenberg]\label{C:G-L-R}
Let $N$ be a closed spin manifold of dimension at least $5$.
Then $N$ admits a metric of positive scalar curvature if and only if $\alpha_\RR(N)=0$.
\end{conjecture}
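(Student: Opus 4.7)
The plan is to address the two implications separately: the ``only if'' direction is accessible by standard index theory, while the ``if'' direction requires the surgery program of Gromov-Lawson-Stolz.

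For the forward implication, assume $N$ carries a metric $g$ of positive scalar curvature. The Rosenberg class $\alpha_\RR(N)$ is represented by the spin-Dirac operator twisted with the canonical flat Hilbert $C^\ast_\RR(\pi)$-bundle $\mathcal{L}\to N$. Since the twisting connection is flat, the twisted Bochner-Lichnerowicz formula reads $D_\mathcal{L}^2=\nabla^\ast\nabla+\scal_g/4$, whence $D_\mathcal{L}$ is invertible and its index in $KO_\ast(C^\ast_\RR(\pi))$ vanishes. This direction is robust and makes no essential use of the dimension hypothesis.

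For the reverse implication, first invoke the Gromov-Lawson-Schoen-Yau surgery theorem: when $\dim N\geq 5$, the existence of a PSC metric on a connected spin manifold is preserved under surgeries of codimension at least three, and hence is an invariant of the spin bordism class of the classifying pair $(N,u\colon N\to B\pi)$ in $\Omega^{\mathrm{spin}}_\ast(B\pi)$. It therefore suffices to find a representative of $[N,u]$ that admits a PSC metric. One factors the Rosenberg index as $\Omega^{\mathrm{spin}}_\ast(B\pi)\to \mathrm{ko}_\ast(B\pi)\xrightarrow{A} KO_\ast(C^\ast_\RR(\pi))$ and appeals to Stolz's theorem, reducing the problem to showing that every element in $\ker(A)$ is represented by a bordism to a manifold assembled from the standard PSC model spaces (Bott manifolds, total spaces of $\mathrm{HP}^2$-bundles, fibered products with homogeneous fibers).

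The last reduction is the central obstacle and in the stated generality is false: counterexamples of Schick, for groups such as $(\ZZ/3)^4$ exhibiting a certain torsion phenomenon, show that $\alpha_\RR$ does not detect all bordism obstructions to PSC. A genuine proof must therefore restrict $\pi$ to a class of groups for which the real Baum-Connes assembly map $A$ is injective and for which $\mathrm{ko}_\ast(B\pi)$ admits an explicit PSC-theoretic basis. The Callias-type machinery developed in this paper contributes to this program by producing a new family of index obstructions supported on codimension-one submanifolds, allowing one to probe $\alpha_\RR(N)$ via data on a closed hypersurface and its noncompact complement, sidestepping the direct manipulation of $\mathrm{ko}_\ast(B\pi)$.
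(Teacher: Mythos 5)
This statement is labeled a \emph{conjecture} in the paper, and the paper neither proves it nor claims it is true: immediately after stating it, the author recalls that Stolz proved the simply connected case but that Schick produced a counterexample in dimension $5$, so the conjecture is known to be false in the stated generality. There is therefore no paper proof for your attempt to be measured against, and you were right not to produce one. Your ``only if'' direction is the standard and correct Lichnerowicz argument (flatness of the Mi\v{s}\v{c}enko bundle kills the curvature term in the twisted Bochner formula, so $D_\mathcal{L}$ is invertible and $\alpha_\RR(N)=0$); that half is indeed a theorem and needs no dimension hypothesis. Your discussion of the ``if'' direction is also accurate: the Gromov--Lawson--Schoen--Yau surgery theorem reduces the question to $\Omega^{\mathrm{spin}}_\ast(B\pi)$ (or $\mathrm{ko}_\ast(B\pi)$), Stolz handles the simply connected case, and Schick's counterexamples (e.g.\ for $(\ZZ/3)^4$-type torsion phenomena) show the reduction cannot go through for arbitrary $\pi$. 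So your ``proposal'' is not a proof and correctly cannot be one; it is an honest account of the state of the conjecture, which matches the paper's own framing. The only thing to flag is a matter of genre: the task asked for a proof, but the correct response for a conjecture that the paper itself declares false is precisely what you gave, namely to prove the provable half and explain why the other half fails.
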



In the celebrated work~\cite{Sto92}, Stolz proved this conjecture when $N$ is simply connected.
Since then, many other cases have been proved.
On the other hand, the conjecture is not always true by the counterexample found by Schick in dimension $5$ (cf.~\cite{Sch98}).
For a comprehensive discussion  of this topic, we refer to the survey papers~\cite{Ros07} and \cite{Sto01}.

The study of complete metrics of positive scalar curvature on a noncompact manifold $M$ is more complicated.
In the case when $M$ is a cylinder with compact base, Rosenberg and Stoltz proposed the following conjecture.


\begin{conjecture}\label{C:R-S}\emph{(\cite[Conjecture~7.1]{RS94})}
Let $N$ be a closed manifold.
If $N\times\RR$ admits a complete metric of positive scalar curvature, then $N$ admits a metric of positive scalar curvature.
\end{conjecture}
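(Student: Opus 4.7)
The plan is to combine a direct geometric construction --- using Gromov's $\mu$-bubble technique in the spirit of band-width inequalities --- with the Callias-type index theorem of this paper, in order to derive a positive scalar curvature metric on $N$ from the given complete PSC metric $g$ on $N\times\RR$. Assume $N$ is spin of dimension $n\geq 5$, and that $\mathrm{scal}_g \geq \kappa > 0$ on the slab $N\times[-L,L]$ for some sufficiently large $L$.

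The geometric step is to produce a compact PSC hypersurface inside the slab. Choose a smooth function $h\colon(-L,L)\to\RR$ depending only on $t$, with $h(t)\to +\infty$ as $t\to -L^+$, $h(t)\to -\infty$ as $t\to L^-$, and $\tfrac{n}{n-1}h^2 + 2h' < \kappa$ throughout. Minimize the $\mu$-bubble functional
\[
\mathcal{F}(\Omega) \;:=\; \mathcal{H}^{n}(\partial^*\Omega) \;-\; \int_{\Omega} h(t)\, dV_g
\]
over Caccioppoli sets $\Omega\subset N\times[-L,L]$ satisfying $N\times\{-L\}\subset\Omega$ and $\Omega\cap(N\times\{L\})=\emptyset$. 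The blow-up of $h$ at the slab boundary forces any minimizer $\Omega_0$ to lie strictly in the interior, and its reduced boundary $\Sigma := \partial^*\Omega_0$ is a compact hypersurface (smooth for $n\leq 6$; otherwise with a singular set of codimension at least $8$). The stability inequality for $\Sigma$, together with the choice of $h$, produces a conformal factor on $\Sigma$ yielding a metric of positive scalar curvature on $\Sigma$.

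The topological step is to pass from PSC on $\Sigma$ to PSC on $N$. By construction, $\Sigma$ separates the two boundary slices $N\times\{\pm L\}$ and represents their common fundamental class. A component of $\Sigma$ homologous to $N\times\{0\}$ cobounds with $N\times\{-L\}\cong N$ a compact region $W\subset N\times[-L,L]$. In dimension $n\geq 5$, after handle manipulation, one aims to present $W$ via a handle decomposition using only handles of codimension $\geq 3$ relative to $\Sigma$; the Gromov-Lawson surgery theorem then transfers the PSC metric from $\Sigma$ across $W$ to $N$.

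The main obstacle is precisely this topological identification --- controlling the handles of $W$ and killing the $K$-theoretic obstructions to PSC-preserving surgery along it. Here the Callias-type index theorem proved in this paper enters as the essential complementary input: applied to the spin-Dirac operator on $N\times\RR$ twisted with the canonical flat $C^\ast(\pi_1 N)$-bundle, the hypothesis that $g$ is a complete PSC metric forces the Rosenberg index $\alpha(N)\in K_\ast(C^\ast(\pi_1 N))$ to vanish, eliminating the primary spin-cobordism obstructions to the surgery chain. The scheme thus settles Conjecture~\ref{C:R-S} unconditionally in every case where $\alpha(N)=0$ is known to imply the existence of a PSC metric on $N$ --- including the simply-connected case in dimension $\geq 5$ via Stolz's theorem --- and in full generality reduces it to the Gromov-Lawson-Rosenberg conjecture for $\pi_1 N$.
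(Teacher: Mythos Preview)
The statement you are attempting to prove is Conjecture~\ref{C:R-S}, which the paper explicitly presents as an \emph{open conjecture}; there is no proof of it in the paper to compare against. What the paper actually establishes is the much weaker Corollary~\ref{C:Corollary B}: for closed spin $N$, a complete metric on $N\times\RR$ with the stated curvature hypothesis forces $\alpha_\CC(N)=0$. Your proposal does not prove the conjecture either --- your own final paragraph concedes that it only reduces the question to the Gromov--Lawson--Rosenberg conjecture for $\pi_1(N)$, which is known to fail in general (Schick's counterexample, cited as~\cite{Sch98} in the paper).

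Beyond this, the $\mu$-bubble portion of your argument has genuine gaps and is, in the end, never used. First, the existence of the weight $h$ with $\tfrac{n}{n-1}h^2+2h'<\kappa$ and the prescribed blow-up requires the band to be wide relative to the curvature lower bound (roughly $L\gtrsim \kappa^{-1/2}$); since the PSC metric $g$ is arbitrary and the scalar curvature may decay at infinity, there is no reason such a slab exists. Second, for $n\geq 8$ the minimizer $\Sigma$ may be singular, and you do not address this. Third, and most seriously, the surgery step from $\Sigma$ to $N$ is unsupported: you assert one can arrange a handle decomposition of the region $W$ using only handles of codimension $\geq 3$, but nothing in the setup guarantees this --- the inclusion $\Sigma\hookrightarrow W$ need not be $2$-connected. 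Invoking $\alpha(N)=0$ does not help here; vanishing of the Rosenberg index is an obstruction-theoretic statement about $N$, not a tool for controlling handles in an arbitrary cobordism between $\Sigma$ and $N$.

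Stripped of the $\mu$-bubble apparatus, what remains of your argument is exactly the content of Corollary~\ref{C:Corollary B}: the Callias index theorem gives $\alpha_\CC(N)=0$, and one then appeals to GLR. Note finally that Conjecture~\ref{C:R-S} does not assume $N$ is spin, so even this partial conclusion addresses only a subclass of the manifolds in question.
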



When $N$ is enlargeable, this conjecture holds by results of Gromov and Lawson (see \cite[Corollary~6.13 and Theorem~7.5]{GL83}).
In \cite[Section~7]{RS94}, Stolz and Rosenberg proved many other cases by using the minimal surface technique.
When the manifold $N$ has a spin structure, it is possible to use Dirac obstructions on $N$ to construct obstructions to the existence of a complete metric of positive scalar curvature on $N\times \RR$.
In particular, the following theorem holds.


\begin{theorem}\label{T:Ros07}\emph{(Rosenberg, \cite[Theorem~3.4]{Ros07})}
Let $N$ be a closed  spin manifold.
If $N\times\RR$ admits a complete metric of \emph{uniformly} positive scalar curvature, then $\alpha_\RR(N)= 0$.
\end{theorem}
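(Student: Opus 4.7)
My plan is to build a Callias-type operator on $M := N \times \RR$ (equipped with the given complete metric $g$ of uniformly positive scalar curvature) and compute its $KO$-theoretic index in two ways. Via the Callias-type index theorem developed in this paper, the index localises to the codimension-one hypersurface $N \times \{0\}$ and recovers the Rosenberg invariant $\alpha_\RR(N)$. Via the Bochner--Lichnerowicz formula together with uniform positivity of the scalar curvature, the operator is invertible, so its index must vanish. Combining the two identifications gives $\alpha_\RR(N) = 0$.

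\textit{Construction.} Let $\pi := \pi_1(N)$ and let $\mathcal{L} \to M$ be the canonical flat Hilbert $C^\ast_\RR(\pi)$-bundle pulled back from $N$ along the projection $M \to N$. Let $D$ denote the spin Dirac operator on $M$ twisted by $\mathcal{L}$. Pick a smooth function $f : \RR \to [-1, 1]$ with $f(t) = \pm 1$ for $|t|$ sufficiently large. Pulling $f$ back to $M$ and multiplying by a self-adjoint bundle involution anticommuting with Clifford multiplication (e.g.\ from a $Cl_1$-linear extension of the spinor bundle) produces a self-adjoint ``Callias potential'' $\Phi$ with $\Phi^2 \geq 1$ outside the compact slab $N \times [-1, 1]$. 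The operator $B := D + \Phi$ (with the standard Callias doubling if dimensional parity requires it) is then of Callias type in the sense of this paper.

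\textit{The two computations.} By the Callias-type index theorem,
\[
\Ind(B) \;=\; \Ind\!\bigl(D_N \otimes \mathcal{L}|_{N \times \{0\}}\bigr) \;=\; \alpha_\RR(N) \quad\in\; KO_\ast(C^\ast_\RR(\pi)),
\]
since the zero set of $f$ is $\{0\}$, the restricted bundle coincides with the canonical flat bundle on $N$, and $\alpha_\RR$ is independent of the Riemannian metric on $N$. For the vanishing, flatness of $\mathcal{L}$ and the Bochner--Lichnerowicz formula give $D^2 \geq \mathrm{scal}_g/4 \geq \kappa/4$ uniformly, where $\kappa > 0$ is the lower bound for the scalar curvature. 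A direct calculation yields $B^2 = D^2 + \Phi^2 + \{D, \Phi\}$ with $\bigl\|\{D,\Phi\}\bigr\| \leq C \|f'\|_\infty$ for a geometric constant $C$. Stretching the transition region of $f$ makes $\|f'\|_\infty$ arbitrarily small, so one arranges $B^2 \geq \kappa/8 > 0$ uniformly. Hence $B$ is invertible, $\Ind(B) = 0$, and $\alpha_\RR(N) = 0$.

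\textit{Main obstacle.} The technical heart is the Callias-type index theorem itself --- the main contribution of the paper --- and its application here; one must verify that the localisation of $\Ind(B)$ to $N \times \{0\}$ genuinely yields the Rosenberg $\alpha$-invariant of $N$ with its original spin structure, despite the fact that the ambient metric $g$ need not be a product near $N \times \{0\}$. This reduces, on the closed manifold $N \times \{0\}$, to the metric-independence of $\alpha_\RR$. A secondary subtlety is the bookkeeping between graded and $Cl_k$-linear formulations imposed by the parity of $\dim M = \dim N + 1$ and by the real $KO$-theoretic setting, but this is by now standard.
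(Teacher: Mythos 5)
Your proposal mirrors, in the real $KO$-setting, exactly the architecture the paper uses to prove its complex result (Theorem~\ref{T:codimension one obstructions} and its consequence Corollary~\ref{C:Corollary B}): build a Callias-type operator from the twisted spin Dirac operator, identify its index with the index of the twisted Dirac operator on the hypersurface via the Callias-type index theorem, and then use Bochner--Lichnerowicz plus positivity of scalar curvature to force that index to vanish. Note, however, that the paper does not actually prove Theorem~\ref{T:Ros07}; it cites it from Rosenberg, and its own contribution is the complex analogue (with the weaker hypothesis of positivity only near $N$).

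The central gap in your argument, as a proof relying on the present paper, is that the Callias-type index theorem here (Theorem~\ref{T:Theorem B}(b), Theorem~\ref{T:Callias-type theorem}) is stated and proved only for \emph{complex} unital $C^\ast$-algebras, with the separability restriction coming from the complex $KK$-machinery of Sections~\ref{S:growing potentials}--\ref{S:analysis on the cylinder}. The paper explicitly remarks, immediately after Corollary~\ref{C:Corollary B}, that the real case of this corollary (i.e.\ the version with $\alpha_\RR$) is not established here and is deferred to future work. So the line ``By the Callias-type index theorem, $\Ind(B) = \alpha_\RR(N)$'' is not available: you would need a $KO$-theoretic Callias theorem for Hilbert $C^\ast_\RR(\pi)$-module bundles, which is precisely the missing ingredient the paper flags. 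Your dismissal of the $Cl_k$-linear/parity bookkeeping as ``by now standard'' understates this: in the real setting the $8$-periodic Clifford-linear formalism is where the genuine extra work lies, and it is not covered by Sections~\ref{S:reduction}--\ref{S:analysis on the cylinder}.

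Two smaller points. First, the hypersurface to which the Callias index localises is not the zero set of $f$ but a slice $N\times\{r\}$ on which the potential is invertible, bounding the compact region containing the essential support (this is the $N=M_-\cap M_+$ of Theorem~\ref{T:Theorem B}(b)); the end result is of course diffeomorphic to $N$ with the same spin structure and flat bundle, so the identification with $\alpha_\RR(N)$ still goes through, but the justification as written is off. Second, the paper's construction in Subsection~\ref{SS:codimension 1 obstructions} uses a scalar potential $\lambda h$ (commuting with Clifford multiplication, as Definition~\ref{D:Callias} requires), whereas you use $f\cdot\epsilon$ with $\epsilon$ anticommuting with Clifford multiplication; this is the ``undoubled'' Dirac--Schr\"odinger convention and is equivalent after a routine unitary rewriting, but it does not literally fit Definition~\ref{D:Callias}, so a translation step should be spelled out before invoking Theorem~\ref{T:Callias-type theorem}. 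The vanishing half of your argument (stretching $f$ so that $\|f'\|_\infty$ is small, then $B^2\ge \kappa/4-\|f'\|_\infty>0$ by Proposition~\ref{P:B-L formula} and flatness of the coefficient bundle) is correct and matches the proof of Theorem~\ref{T:codimension one obstructions}, where the analogous role is played by shrinking the coupling constant $\lambda$.
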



Notice that this theorem is a first step toward connecting Conjecture~\ref{C:G-L-R} with Conjecture~\ref{C:R-S}.
Suppose, in fact, that $N$ is a closed spin manifold satisfying Conjecture~\ref{C:G-L-R}.
If $N\times\RR$ admits a complete metric of \emph{uniformly} positive scalar curvature, then Theorem~\ref{T:Ros07} implies that $N$ admits a metric of positive scalar curvature.
A natural question to ask is whether Theorem~\ref{T:Ros07} holds under the weaker hypothesis that $N\times\RR$ admits a complete metric of (not necessarily uniformly) positive scalar curvature.

In the present paper, we work with complex $C^\ast$-algebras instead of real ones.
Let $C^\ast_\CC(\pi)$ be the (minimal or maximal) complex group $C^\ast$-algebra associated to the fundamental group $\pi$ of $N$.
By twisting the spin-Dirac operator on $N$ with the canonical flat Hilbert $C^\ast_\CC(\pi)$-bundle of finite type over $N$, we obtain an index obstruction $\alpha_\CC(N)\in K_\ast(C^\ast_\CC(\pi))$.
The first main result is the following theorem.


\begin{thmx}\label{T:Theorem A}
Let $M$ be a connected spin manifold without boundary and let $N\subset M$ be a closed connected codimension one submanifold with trivial normal bundle.
Suppose the inclusion $i:N\hookrightarrow M$ induces an injective homomorphism $i_\ast:\pi_1(N)\rightarrow\pi_1(M)$.
If $M$ admits a complete metric whose scalar curvature is nonnegative and strictly positive in a collar neighborhood of $N$, then $\alpha_\CC(N)= 0$.
\end{thmx}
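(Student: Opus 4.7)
The plan is to construct a Callias-type operator on a suitable cover of $M$ whose index in $K_\ast(C^\ast_\C(\pi_1(N)))$ equals $\alpha_\C(N)$ by the Callias-type index theorem developed earlier in the paper, while the scalar curvature hypothesis forces this index to vanish.

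First I would pass to the Riemannian cover $p: \hat M \to M$ corresponding to the subgroup $i_\ast(\pi_1(N)) \subset \pi_1(M)$. Injectivity of $i_\ast$ gives $\pi_1(\hat M) \cong \pi_1(N)$, and the inclusion $i$ lifts to an embedding $\hat N \hookrightarrow \hat M$ inducing an isomorphism on fundamental groups. A topological argument using the trivial normal bundle together with the $\pi_1$-isomorphism property shows that $\hat N$ is two-sided and separating, so we may write $\hat M = \hat M_+ \cup_{\hat N} \hat M_-$. The pulled-back metric remains complete, has $\scal \geq 0$ globally, and satisfies $\scal \geq s_0 > 0$ on a tubular neighborhood $\hat N \times (-\epsilon,\epsilon) \subset \hat M$ coming from the collar of $N$ where the scalar curvature is strictly positive.

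Over $\hat M$ I would take the spin-Dirac operator $\hat D$ twisted by the canonical flat Mishchenko $C^\ast_\C(\pi_1(N))$-line bundle $\mathcal{L}$, acting on the doubled (graded) bundle obtained by tensoring the spinor bundle with $\C^2$ and equipping it with an involution $\sigma$ anticommuting with $\hat D$. I then introduce the potential $\Phi = f\cdot \sigma$, with $f: \hat M \to \R$ equal to $+L$ on $\hat M_+$ outside the collar of $\hat N$, equal to $-L$ on $\hat M_-$ outside such a collar, and linear in the normal coordinate across the collar. Setting $B = \hat D + \Phi$, a direct computation together with the Lichnerowicz formula yields
\[
B^2 \geq \nabla^\ast \nabla + \tfrac{\scal}{4} + f^2 - |df|
\]
as a lower bound of self-adjoint operators. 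Outside the collar of $\hat N$ one has $|df| = 0$ and $f^2 = L^2$, so the Callias condition holds and $B^2 \geq L^2$ there. Inside the collar one has $\scal \geq s_0$ and $|df| \leq L/\epsilon$, so choosing $L < s_0\epsilon/4$ gives $\tfrac{\scal}{4} - |df| > 0$ and hence $B^2 \geq c > 0$ uniformly on all of $\hat M$. Therefore $B$ is invertible and $\ind(B) = 0$ in $K_\ast(C^\ast_\C(\pi_1(N)))$. On the other hand, the Callias-type index theorem established in the preceding part of the paper identifies $\ind(B)$ with the index of the Dirac operator on the characteristic hypersurface $\hat N$ twisted with $\mathcal{L}|_{\hat N}$. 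Since $\hat N \cong N$ and $\mathcal{L}|_{\hat N}$ is the Mishchenko bundle of $N$ for $C^\ast_\C(\pi_1(N))$, this index is precisely $\alpha_\C(N)$; combining the two computations yields $\alpha_\C(N) = 0$.

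The main obstacle is the topological step of verifying that $\hat N$ separates $\hat M$: this is precisely where the injectivity of $i_\ast$ must be used, and it is what permits the global construction of a sign-changing potential $f$. A secondary technical point is the quantitative balancing of the potential strength $L$ against $s_0$ and the collar width $\epsilon$, which is what converts the collar-localized positivity of $\scal$ into global uniform invertibility of $B$; this is ultimately the mechanism by which weak scalar curvature positivity on only a codimension-zero neighborhood of $N$ suffices to kill the full Rosenberg-type obstruction $\alpha_\C(N)$.
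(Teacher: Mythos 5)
Your overall strategy matches the one in the paper: pass to the intermediate cover $\hat M$ corresponding to the subgroup $i_\ast(\pi_1(N))$, exploit the lifted separating hypersurface $\hat N\cong N$, build a Callias-type operator on $\hat M$ whose potential changes sign across $\hat N$, combine the Lichnerowicz formula with the collar-localized scalar curvature positivity and a small enough potential strength to obtain global uniform invertibility, and finally invoke the Callias-type index theorem to identify the vanishing index with $\alpha_\CC(N)$. This is precisely the content of Theorem~\ref{T:codimension one obstructions} applied to $\hat M$, and your quantitative balancing of $L$ against $s_0$ and $\epsilon$ reproduces the ``$\lambda$ small'' argument in its proof.

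However, there is a genuine gap concerning parity. The Callias-type index machinery established in Theorem~\ref{T:Theorem B} and Theorem~\ref{T:Callias-type theorem} applies only to \emph{odd-dimensional} base manifolds: the construction starts from an ungraded Dirac bundle $\Sigma$ and forms the operator on $\Sigma\oplus\Sigma$ via~\eqref{eq:twisted Callias}. Your proposal — tensoring the spinor bundle with $\CC^2$ and introducing an involution $\sigma$ anticommuting with $\hat D$ — is exactly this odd-dimensional setup, and it tacitly presupposes that the spinor bundle on $\hat M$ is ungraded, i.e.\ that $\hat M$ (equivalently $M$) is odd-dimensional. But Theorem~\ref{T:Theorem A} makes no parity assumption on $M$. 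When $M$ is even-dimensional, $N$ is odd-dimensional, $\alpha_\CC(N)$ lives in $K_1(C^\ast_\CC\pi_1(N))$, and your construction does not directly produce a cycle detecting it. The paper handles this case by replacing $(M,N)$ with $(M\times S^1, N\times S^1)$, equipped with the product of the given metric and the flat metric on $S^1$ (this preserves completeness, the nonnegativity of the scalar curvature, its positivity on a collar, the trivial normal bundle, and the $\pi_1$-injectivity), running the odd-dimensional argument to get $\alpha_\CC(N\times S^1)=0$ in $K_0$, and then appealing to the suspension isomorphism of \cite[Proposition~4.2]{HPS15} to conclude $\alpha_\CC(N)=0$ in $K_1$. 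Without this step your argument establishes only half of the theorem.

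A secondary, minor point: the existence of the separating lift $\hat N\subset \hat M$ with a two-sided tubular neighborhood does require a nontrivial covering-space argument (the paper delegates it to \cite[Proof of Theorem~1.7]{Zei16}), but you correctly identify it as the place where the injectivity of $i_\ast$ is decisive, so I regard this as acceptable at the level of a proof sketch rather than a gap.
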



\begin{remark}
Suppose $M$ is closed and $N$ is a submanifold with trivial normal bundle and such that the induced homomorphism $\pi_1(N)\rightarrow\pi_1(M)$ is injective.
Recently, Hanke, Pape and Schick~\cite{HPS15} proved that, when $N$ has codimension two and $\pi_2(M)=0$, the Rosenberg index of $N$ is an obstruction to the existence of metrics of positive scalar curvature on $M$.
When $N$ has codimension one, Zeidler~\cite{Zei16} proved that if $\alpha_\CC(N)\neq 0$, then $\alpha_\CC(M)\neq 0$ and $M$ cannot carry a metric of positive scalar curvature.
\end{remark}


\begin{remark}
Schick and Zadeh~\cite{SZ15} used \emph{coarse} index theory to study obstructions to \emph{uniformly} positive scalar curvature metrics on $M$ in terms of suitable closed submanifolds of arbitrary codimension.
In particular, their approach allows us to deduce Theorem~\ref{T:Theorem A} for complete metrics on $M$ with uniformly positive scalar curvature.
From this point of view, the main novelty of the present paper is that we require strict positivity only in a neighborhood of the submanifold $N$.
\end{remark}


When $M$ is the cylinder $N\times\RR$, Theorem~\ref{T:Theorem A} implies the following consequence.


\begin{corx}\label{C:Corollary B}
Let $N$ be a closed connected spin manifold.
If $N\times\RR$ admits a complete metric whose scalar curvature is nonnegative and strictly positive in a collar neighborhood of $N$, then $\alpha_\CC(N)= 0$.
\end{corx}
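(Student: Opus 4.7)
The plan is to deduce Corollary~B as an essentially immediate application of Theorem~A, taking $M=N\times\RR$ with the given complete metric and taking the codimension one submanifold to be the slice $N\times\{0\}$. I would begin by verifying the topological hypotheses of Theorem~A for this pair. Since $N$ is a closed connected spin manifold, the cylinder $M=N\times\RR$ is a connected spin manifold without boundary, and the slice $N\times\{0\}$ is a closed connected codimension one submanifold. Its normal bundle in $N\times\RR$ is trivialized by the vector field $\partial/\partial t$ coming from the $\RR$-direction. Because $\RR$ is contractible, the inclusion $i:N\hookrightarrow N\times\RR$ is a homotopy equivalence, so the induced map $i_\ast:\pi_1(N)\rightarrow\pi_1(N\times\RR)$ is an isomorphism, and in particular injective.

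With these topological conditions in place, the scalar curvature hypothesis of Theorem~A is literally the hypothesis of Corollary~B: the collar neighborhood of $N\times\{0\}\subset N\times\RR$ on which positivity is required is the one already appearing in the statement of the corollary. Invoking Theorem~A for the pair $(N\times\RR, N\times\{0\})$ therefore yields $\alpha_\CC(N)=0$, as claimed.

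Consequently, there is no real obstacle at this stage. All the substantive difficulty and technical content of the paper lie in the proof of Theorem~A itself, which in turn rests on the index theory for Callias-type operators twisted with Hilbert $C^\ast$-module bundles developed earlier in the text; the reduction from Theorem~A to Corollary~B is purely formal.
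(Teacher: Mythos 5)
Your proposal is correct and is exactly the reduction the paper intends: the paper introduces Corollary~B with the sentence ``When $M$ is the cylinder $N\times\RR$, Theorem~\ref{T:Theorem A} implies the following consequence,'' and your verification of the hypotheses of Theorem~A for the pair $(N\times\RR, N\times\{0\})$ (trivial normal bundle via $\partial/\partial t$, and $i_\ast$ an isomorphism since $\RR$ is contractible) is precisely what is needed.
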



\begin{remark}
The ``real" version of this corollary, i.e. with $\alpha_\CC(N)$ replaced by $\alpha_\RR(N)$, would allow us to deduce Conjecture~\ref{C:R-S} for all closed spin manifolds $N$ verifying Conjecture~\ref{C:G-L-R}.
The author plans to treat the case of real $C^\ast$-algebras in a future paper.
\end{remark}


We deduce Theorem~\ref{T:Theorem A} from an abstract index theorem for Callias-type operators twisted with Hilbert $C^\ast$-bundles of finite type.
A Callias-type operator on a complete Riemannian manifold $M$ is an operator of the form $P=D+\Phi$, where $D$ is a Dirac operator and $\Phi$ is a potential, such that $P^2$ is an operator of Schr\"{o}dinger-type with potential uniformly positive at infinity.
This means that $P^2=D^2+\Pi$, where $\Pi$ is a bundle map uniformly positive outside of a compact set.
This condition implies that the spectrum of $P$ is discrete near zero so that $P$ is Fredholm.

The study of such operators was initiated by Callias, \cite{Cal78}, and further developed by many authors, cf., for example, \cite{BottSeeley78}, \cite{BM92}, \cite{Ang93}, \cite{Bun95}. Several generalizations and new applications of the Callias-type index theorem were obtained recently, cf. \cite{Kot11}, \cite{CN14}, \cite{Wim14}, \cite{Kot15}, \cite{BS16}.
For this paper, the relevant property of Callias-type operators is that the computation of their index can be reduced to the computation of the index of a Dirac-type operator on a suitable codimension one closed submanifold.
In particular, the Callias-type index theorem, \cite{Ang93}, \cite{Bun95}, states that the index of $P$ is equal to the index of a certain Dirac operator $D_N$ induced by the restriction of $P$ to a suitable closed hypersurface $N$.

In this paper we suppose that $A$ is a complex \emph{unital} $C^\ast$-algebra, $V$ is a Hilbert $A$-bundle of finite type over $M$ and $P_V$ is the operator obtained by twisting $P$ with the bundle $V$.
We extend to this setting the theory of Callias-type operators.


\begin{thmx}[Callias-type operators in $C^\ast$-algebras]\label{T:Theorem B}
We have:
\begin{enumerate}[label=(\alph*)]
\item \emph{(Invertibility at infinity)} The operator $P_V^2$ is invertible at infinity. Therefore, $P_V$ has a well-defined index class $\ind_A  P_V\in K_0(A)$.
\item \emph{(Callias-type theorem)} 
Suppose $M$ is odd-dimensional and orientable and assume there is a partition $M=M_-\cup_N M_+$, where $N=M_-\cap M_+$ is a closed codimension one submanifold of $M$ and $M_-$ is a compact submanifold, whose interior contains an essential support of $\Phi$ (see Definition~\ref{D:Callias}).
If $A$ is separable, then $\ind_A  P_V =\ind_A  D_{N,i^\ast{V}}$ in $K_0(A)$, where $i:N\hookrightarrow M$ is the inclusion map and $D_{N,i^\ast{V}}$ is the Dirac operator $D_N$, induced by $P$ on $N$, twisted with the pull-back bundle $i^\ast V$.
\end{enumerate}
\end{thmx}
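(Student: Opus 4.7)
For part (a), the plan is to compute the twisted square via a Weitzenb\"ock-type identity. Choosing a connection on the Hilbert $A$-bundle $V$ of finite type, the potential $\Phi$ induces $\Phi \otimes \id_V$, and the identity $P^2 = D^2 + \Pi$ transfers to
\[
P_V^2 = D_V^2 + \Pi \otimes \id_V + R_V,
\]
where $R_V$ is a bounded bundle endomorphism collecting contributions from the curvature of $V$ and from the commutator $[\Phi, \nabla^V]$. Boundedness of $R_V$ uses finite type of $V$, while $\Pi \otimes \id_V$ inherits the uniform positivity of $\Pi$ outside a compact set $K$. Hence, after enlarging $K$ if necessary, $P_V^2$ is bounded below by a positive multiple of the identity outside $K$, which is the desired invertibility at infinity. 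In the Mishchenko-Fomenko framework on complete manifolds, adapted to Callias-type operators as in Bunke's work, this yields a parametrix for $P_V$ modulo $A$-compact operators and hence a well-defined class $\ind_A P_V \in K_0(A)$.

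For part (b), I would follow the strategy of Anghel and Bunke, adapted to $K_0(A)$-valued indices. The main structural tool is a \emph{relative index theorem}: if two Callias-type configurations agree outside compact subsets, with compatible identifications of their bundles at infinity, then their index classes in $K_0(A)$ agree. Granting this, the plan is to deform $(M, P_V)$ outside $M_-$ to the standard cylindrical model $\widetilde M = M_- \cup_N (N \times [0, \infty))$ equipped with a product metric on the end and a radial potential $\psi(t) c(\partial_t)$ satisfying $\psi(t) \to +\infty$. This uses the hypothesis that $M_-$ contains an essential support of $\Phi$ and that $N$ has trivial normal bundle, so that the collar and the end are consistently identified. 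By the relative index theorem, this deformation does not alter $\ind_A P_V$.

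On the cylindrical model, the twisted Callias-type operator takes the schematic form $c(\partial_t)(\partial_t + \psi(t)) + D_N \otimes \id_V$, and its index can be computed by separation of variables. Since $N$ is compact and $V$ is of finite type, $D_{N, i^\ast V}$ has compact $A$-resolvent, so its spectral decomposition splits the problem into a direct sum of one-dimensional Callias operators $\partial_t + \psi(t) + \lambda$. A Witten-type analysis identifies the kernel and cokernel of the model operator, as Hilbert $A$-modules, with those of $D_{N, i^\ast V}$. The odd-dimensionality and orientability of $M$ are used here to align gradings (the Clifford action of $c(\partial_t)$ plays the role of a $\ZZ/2$-grading), while separability of $A$ is used to ensure the resulting spectral projections define legitimate classes in $K_0(A)$. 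Summing the spectral pieces yields the asserted equality $\ind_A P_V = \ind_A D_{N, i^\ast V}$.

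The main obstacle is the relative index theorem in the Hilbert $A$-module setting. The classical proof (Gromov-Lawson, Bunke) proceeds by cut-and-paste: the difference of two Callias indices on manifolds agreeing at infinity is realized as the index of a single operator on a closed manifold, which vanishes for dimensional reasons. Transporting this argument to the $C^\ast$-algebraic framework requires a robust functional calculus for self-adjoint $A$-linear elliptic operators on complete manifolds, stability of the parametrix of part (a) under compactly supported deformations of both the metric and the potential, and additivity of the resulting index class under gluing. Once these technical ingredients are in place, the reduction to the cylindrical model and the final spectral computation should proceed essentially as in the scalar case.
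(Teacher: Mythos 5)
Your plan for part~(a) is directionally right but glosses over the real analytic content, and introduces a spurious remainder. Because $\Phi_V=\Phi\otimes\id_V$ acts trivially on the $V$-factor, it commutes with $1\otimes\nabla^V$, and the paper's key algebraic identity is $[D_V,\Phi_V]=[D,\Phi]\otimes\id_V$; there is no term $R_V$ coming from the curvature of $V$ or from ``$[\Phi,\nabla^V]$''. More importantly, you jump from ``$P_V^2$ is uniformly positive outside a compact set'' to ``there is a parametrix modulo $A$-compacts and hence an index class.'' That jump is not available on a noncompact manifold: the Rellich lemma is only local, and Bunke's index is defined instead via the Kasparov module $(H^0,1,P_V(P_V^2+f)^{-1/2})$, which requires $P_V^2+f$ to be \emph{invertible} as an operator $H^2\to H^0$. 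Establishing this invertibility is the heart of the matter and is nontrivial in the Hilbert $A$-module setting; the paper needs the Hanke--Pape--Schick theorem that $P_V$ has a unique self-adjoint regular extension, together with Theorem~\ref{T:H_mu essentially self-adjoint} (proved in Appendix~\ref{A:shroedinger-type} via the Kaad--Lesch local-global principle) that $P_V^2+\mu^2$ is regular and self-adjoint, followed by a Neumann-series argument. None of this appears in your sketch.

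For part~(b), the cut-and-paste reduction to a cylindrical model via Bunke's $K$-theoretic relative index theorem matches the paper (Section~\ref{S:reduction}). But the second half of your argument, the separation of variables using a ``spectral decomposition'' of $D_{N,i^*V}$ and a Witten-type identification of kernels and cokernels, does not work for a general $C^\ast$-algebra $A$, and this is precisely the obstacle the paper goes out of its way to avoid. Over $A$, the operator $D_{N,i^*V}$ has $A$-compact resolvent, but $A$-compact operators on Hilbert $A$-modules admit no eigenspace decomposition, the kernel and cokernel need not be finitely generated projective, and the class $\ind_A D_{N,i^*V}$ is only defined after a perturbation (Mi\v s\v cenko--Fomenko). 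Consequently one cannot ``split the problem into a direct sum of one-dimensional Callias operators.'' The paper's solution is to reformulate everything in terms of unbounded Kasparov modules (Section~\ref{S:growing potentials}, using a potential $\chi$ growing at infinity so the model operator has compact resolvent) and to replace separation of variables with a computation of Kasparov intersection products via Kucerovsky's unbounded connection criterion (Section~\ref{S:analysis on the cylinder}). Separability of $A$ enters there, to make the intersection product defined, not to ``ensure spectral projections define classes in $K_0(A)$'' as you suggest. Without the passage to $KK$-theory, your argument does not close.
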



\begin{remark}
For the notion of invertibility at infinity and the definition of the class $\ind_AP_V$, see Subsection~\ref{SS:Bunke's approach}.
\end{remark}


\begin{remark}\label{R:vN1}
Suppose $A$ is a von Neumann algebra endowed with a finite trace $\tau$.
In~\cite{BC16}, it is shown that it is possible to use the trace $\tau$ to define a numerical index $\ind_\tau P_V\in\RR$ and in~\cite{BC} a Callias-type theorem has been proved for such index.
This result cannot be deduced from part (b) of Theorem~\ref{T:Theorem B} since, in general, von Neumann algebras are not separable.
\end{remark}


The possibility of extending the analysis of Callias-type operators to Hilbert $C^\ast$-bundles was left as an open question by Bunke in \cite{Bun95}.
For the invertibility at infinity, the main problem is to show the invertibility of $P_V^2+1$.
To solve this issue, we make use of a recent result of Hanke, Pape and Schick~\cite{HPS15} that guarantees that the operator $P_V^2+1$ has dense range.
In order to prove invertibility, we also show that the operator $P_V^2+1$ has a unique self-adjoint extension.

When $A=\CC$, the proof of the Callias-type theorem used in both~\cite{Ang93} and~\cite{Bun95} consists of two steps.
In the first step, a ``cut-and-paste" argument is used to reduce the initial problem to a model problem on the cylinder $N\times\RR$.
In the second step, a ``separation of variables" argument is used to show that the kernel (resp. cokernel) of the model operator on the cylinder is isomorphic to the kernel (resp. cokernel) of the operator on the base space.
By using the $K$-theoretic relative index theorem of Bunke~\cite{Bun95}, the ``cut-and-paste" argument can be adapted to the case when $A$ is an arbitrary $C^\ast$-algebra.

A second problem arises in the computations on the cylinder.
In the case of arbitrary $C^\ast$-algebras, in order to define the index classes of the operators on the cylinder and on the base space we need to perturb these operators.
This fact doesn't allow us to separate the variables.
We change our point of view here.
We formulate the index classes of the operators on the cylinder and on the base space in a $KK$-theoretical setting and make heavy use of the properties of the Kasparov intersection product to reduce the twisted case to the untwisted one.
In order to do these computations, we use the notion of unbounded connection developed by Kucerovsky in~\cite{Kuc97}.
To this end, we show that, under suitable growth conditions of the endomorphism $\Phi$, the operator $P_V$ defines an unbounded Kasparov cycle.
In order to use the Kasparov product, we need to assume in part (b) that $A$ is separable (see Remark~\ref{R:algebra is separable}).

The paper is organized as follows.
In Section~\ref{S:main results}, we formulate the main results of the paper.
In Section~\ref{S:invertibility at infinity}, we prove that $P_V^2$ is invertible at infinity and use this fact to define the class $\ind_AP_V\in K_0(A)$ (this corresponds to point (a) of Theorem~\ref{T:Theorem B}).
In Section~\ref{S:properties of the index}, we study some properties of this class.
Sections~\ref{S:reduction}, ~\ref{S:growing potentials}, and~\ref{S:analysis on the cylinder} are devoted to the proof of the Callias-type theorem (part (b) of Theorem~\ref{T:Theorem B}).
In particular, Section~\ref{S:reduction} is devoted to the ``cut-and-paste" argument, in Section~\ref{S:growing potentials} we show that the index class of the model operator can be expressed through an unbounded Kasparov module, and in Section~\ref{S:analysis on the cylinder} we present the $KK$-theoretical computations on the cylinder.
This concludes the proof of Theorem~\ref{T:Theorem B}.
In Section~\ref{S:vanishing} we prove a vanishing theorem from which we deduce Theorem~\ref{T:Theorem A}.
Finally, in Appendix~\ref{A:shroedinger-type} we prove regularity and self-adjointness of Schr\"odinger-type operators on Hilbert $C^\ast$-bundles of finite-type.

Our analytic results of Section~\ref{S:invertibility at infinity} have intersection with~\cite[Section~3.4]{Ebe16}.


\subsection*{Acknowledgment} 
This paper would not have appeared without Maxim Braverman's help. 
I am very grateful to him for countless remarks and suggestions.
The author wishes also to thank Jens Kaad, Paolo Piazza, Thomas Schick and Yanli Song for very useful and enlightening discussions.


\section{Main results}\label{S:main results}
In this section we formulate the main results of the paper.


\subsection{Twisted Dirac-type operators}\label{SS:twisted Dirac}
Let $M$ be a complete Riemannian manifold and let $S$ be a complex Dirac bundle over $M$.
This means that $S$ is a complex vector bundle endowed with a Clifford action $c:T^*M\to \End(S)$ of the cotangent bundle and a metric connection $\nabla^S$ compatible with the inner product of the fibers and satisfying the Leibniz rule (see \cite[Definition~5.2.]{LM89}).
The Dirac operator associated to this bundle is the formally self-adjoint operator $\slashed{D}\in \Diff^1(M;S)$ given by the composition
\[
	C^\infty_c(M;S)\xrightarrow{\,\nabla^S\,}
	C^\infty_c(M;T^\ast M\tensor S)\xrightarrow{\ c\ }
	C^\infty_c(M;S)\,.
\]
Fix a self-adjoint potential $\Psi\in C^\infty(M;\End(S))$ and consider the Dirac-type operator
\begin{equation}
	B\ :=\slashed{D}\,+\,\Psi\,.
\end{equation}
Notice that the operator $B$ is formally self-adjoint.


Let $A$ be a complex unital $C^\ast$-algebra and let $V$ be a Hilbert $A$-bundle of finite type over $M$. 
In particular, this means that the fibers of $V$  are finitely generated projective Hilbert $A$-modules.
We also suppose that $V$ is endowed with a metric connection $\nabla^V$ preserving the $A$-valued inner product of the fibers.
The tensor product $S\tensor V$ is a Hilbert $A$-bundle of finite type.
The Dirac operator $\slashed{D}$ twisted with the bundle $V$ is the operator $\slashed{D}_V\in \Diff^1(M;S\tensor V)$ defined through the composition
 \begin{equation}\label{eq:twisted Dirac}
	C^\infty_c(M;S\tensor V)\xrightarrow{\,\nabla^S\tensor 1+1\tensor \nabla^V\,}
	C^\infty_c(M;T^\ast M\tensor S\tensor V)\xrightarrow{\ c\,\tensor 1\ }
	C^\infty_c(M;S\tensor V)\,.
\end{equation}
We also extend the potential $\Psi$ to a section $\Psi_V\in C^\infty(M;\End_A(S\tensor V))$ by setting
\[
	\Psi_V\ :=\ \Psi\tensor 1\,.
\]

\begin{definition}\label{D:twisted Dirac-type}
The Dirac-type operator $B=\slashed{D}+\Psi$ \emph{twisted with the bundle} $V$ is the operator $B_V:=\slashed{D}_V+\Psi_V$.
\end{definition}


\subsection{Sobolev spaces}\label{SS:Sobolev}
Fix a nonnegative integer $l$. 
We use the operator $B_V$ to define the $A$-valued inner product
\[
	\left< u,v\right>_l\ :=\ \sum_{k=0}^l\int_M \left< \big(B_V^ku\big)(x),\big(B_V^kv\big)(x)\right>_x\,d\mu(x)\,,
	\qquad\qquad u,\,v\in C^\infty_c(M;S\tensor V)\,,
\]
where $d\mu(x)$ is the smooth measure induced by the Riemannian metric on $M$ and $\left<\cdot ,\cdot\right>_x$ denotes the $A$-valued inner product of the fiber $S_x\tensor V_x$.
Endowed with this inner product, $C^\infty_c(M;S\tensor V)$ has a pre-Hilbert $A$-module structure.
We denote by $H^l$ the Hilbert $A$-module obtained as the completion of $C^\infty_c(M;S\tensor V)$ with respect to the norm
\[
	\|u\|_l\ :=\ \sqrt{|\left< u,u\right>_l|_A}\,\,,\qquad\qquad\qquad u\in C^\infty_c(M;S\tensor V)\,,
\]
where $|\cdot|_A$ denotes the norm of the $C^\ast$-algebra $A$.

We denote by $\mathcal{L}_A(H^i,H^j)$ the space of bounded adjointable $A$-linear operators from $H^i$ to $H^j$ (for basic notions on Hilbert $C^\ast$-modules and bounded adjointable operators, cf.~\cite[Chapter~1]{Lan95}).
Given an operator $T\in\mathcal{L}_A(H^i,H^j)$, we denote by $\|T\|_{\mathcal{B}(H^i,H^j)}$ the operator norm of $T$ as a bounded operator $H^i\rightarrow H^j$.
Finally, we set $\mathcal{L}_A(H^i):=\mathcal{L}_A(H^i,H^i)$ and $\|\cdot\|_{\mathcal{B}(H^i)}:=\|\cdot\|_{\mathcal{B}(H^i,H^i)}$.


\subsection{A Schr\"odinger-type operator}\label{SS:A schrodinger-type}
We regard $B_V=\slashed{D}_V+\Psi_V$ as an unbounded operator on $H^0$ with initial domain $C^\infty_c(M;S\tensor V)$.
By~\cite[Theorem~2.3]{HPS15}, $B_V$ has a unique extension to a regular self-adjoint operator on $H^0$ (for the notion of regularity and more in general for basic notions of unbounded operators on Hilbert $A$-modules, we refer the reader to~\cite[Chapter~9]{Lan95}).

Fix a self-adjoint potential $\Pi\in \Gamma(M;S)$ and consider the Schr\"odinger-type operator 
\begin{equation}\label{eq:B_V^2+mu^2}
	G_V\ :=\ B_V^2 \ + \ \Pi_V\,,
\end{equation}
where $\Pi_V:=\Pi\tensor\id_V$.
We view $G_V$ as an $A$-linear unbounded operator on $H^0$ with initial domain $C^\infty_c(M;S\tensor V)$.


\begin{theorem}\label{T:H_mu essentially self-adjoint}
Suppose that $\Pi$ is uniformly bounded from below.
Then the minimal closure of $G_V$ is a regular, self-adjoint operator on $H^0$.
It is the unique self-adjoint extension of $G_V$.
\end{theorem}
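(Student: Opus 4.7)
The strategy is to combine the regular self-adjointness of $B_V$ (recalled above, via the result of Hanke--Pape--Schick cited in the text) with a truncation argument for the possibly unbounded potential $\Pi_V$, and a Chernoff-type commutator argument for uniqueness; all three ingredients are carried out in the Hilbert $C^\ast$-module framework of Lance.

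\textbf{Reduction and existence of a self-adjoint extension.} After replacing $\Pi$ by $\Pi+C$ for a suitable constant $C\geq 0$, we may assume $\Pi\geq 0$, so that $G_V$ is symmetric and non-negative on $C^\infty_c(M;S\tensor V)$. Since $B_V$ is regular self-adjoint, Borel functional calculus in the $C^\ast$-module sense gives that $B_V^2$ is regular self-adjoint and $\geq 0$. Using the fibrewise $C^\ast$-calculus in $\End_A(S\tensor V)$, define bounded truncations $\Pi_V^{(N)}:=\Pi_V(1+\Pi_V/N)^{-1}$, satisfying $0\leq \Pi_V^{(N)}\nearrow \Pi_V$ pointwise. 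Each $G_V^{(N)}:=B_V^2+\Pi_V^{(N)}$ is a bounded self-adjoint perturbation of the regular self-adjoint operator $B_V^2$, and is therefore itself regular self-adjoint. A monotone convergence argument for the family of resolvents $\{(G_V^{(N)}+1)^{-1}\}$ (the $C^\ast$-module analogue of Kato's monotone convergence theorem for non-negative sesquilinear forms) then produces a regular self-adjoint operator $\widehat{G}_V\geq 0$ on $H^0$ extending $G_V$.

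\textbf{Uniqueness.} Let $G_V^{\min}$ denote the minimal closure of $G_V$ and set $G_V^{\max}:=(G_V^{\min})^\ast$ (Hilbert-module adjoint). It remains to show $\Dom(G_V^{\max})\subseteq \Dom(G_V^{\min})$. By completeness of $M$ pick a smoothed exhaustion function $\rho:M\to[0,\infty)$ with bounded Hessian and set $\chi_n:=\eta(\rho/n)$ for a fixed $\eta\in C^\infty_c([0,2))$ equal to $1$ near $0$; then $\chi_n\to 1$ pointwise while $\|d\chi_n\|_\infty+\|\nabla d\chi_n\|_\infty\to 0$. For $u\in\Dom(G_V^{\max})$, the compactly supported cut $\chi_n u$ lies in the graph closure of $C^\infty_c$ under $G_V$ by local elliptic regularity and Friedrichs mollification, and so in $\Dom(G_V^{\min})$. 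The commutator identity
\[
    [G_V,\chi_n]\ =\ [B_V^2,\chi_n]\ =\ 2\,c(d\chi_n)\,B_V\ +\ [B_V,\,c(d\chi_n)],
\]
combined with the $A$-valued form bound $\left\langle B_V u,B_V u\right\rangle_{H^0}\leq\left\langle G_V^{\max}u,\,u\right\rangle_{H^0}$ (a direct consequence of $\Pi_V\geq 0$), yields $G_V^{\min}(\chi_n u)\to G_V^{\max}u$ in $H^0$ and therefore $u\in\Dom(G_V^{\min})$.

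\textbf{Main obstacle.} The delicate step is the existence in the first paragraph: one must verify that the monotone limit of $\{G_V^{(N)}\}$ produces not merely a self-adjoint but a \emph{regular} operator on the Hilbert $A$-module $H^0$, i.e.\ that $(\widehat{G}_V+1)$ has range equal to all of $H^0$. This is automatic in the Hilbert-space case by the spectral theorem, but in the $C^\ast$-module category it requires explicit uniform estimates on the resolvents $(G_V^{(N)}+1)^{-1}$ and careful control of their strong limit. The commutator argument in the uniqueness step is then essentially routine, the only technical subtlety being the $[B_V,\,c(d\chi_n)]$ term, whose zeroth-order part involves second derivatives of $\chi_n$ and is handled by the bounded-Hessian choice of $\rho$.
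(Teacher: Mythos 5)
The paper's route is quite different from yours: it invokes the Kaad--Lesch local--global principle (\cite{KL12}, their Theorem~A.2) to reduce regular self-adjointness on the Hilbert $A$-module $H^0$ to ordinary self-adjointness of every localization $G_\rho$ on a Hilbert space, and then proves self-adjointness of each $G_\rho^\ast$ by a cutoff argument. You instead stay in the Hilbert $C^\ast$-module category throughout, building a regular self-adjoint extension by truncation and monotone convergence of resolvents and then proving uniqueness by a Chernoff-type commutator estimate. Staying module-intrinsic is conceptually attractive, but two steps in your proposal are genuine gaps rather than routine technicalities.

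First, the ``monotone resolvent convergence'' you invoke to produce a \emph{regular} self-adjoint limit $\widehat{G}_V$ from the bounded truncations $G_V^{(N)}=B_V^2+\Pi_V^{(N)}$ does not have a standard Hilbert $C^\ast$-module form. Strong convergence of $(G_V^{(N)}+1)^{-1}$ to some positive $T\in\mathcal{L}_A(H^0)$ does not automatically give that $T$ is the resolvent of a regular self-adjoint operator unless one can prove that the range of $T$ is dense and that $\ker T=0$ in the module sense; moreover one must then identify the resulting operator with the closure of $G_V$ on $C^\infty_c$, which you do not address. The Kaad--Lesch reduction sidesteps all of this: it is precisely the device the paper uses to avoid having to prove regularity by hand.

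Second, and more decisively, your uniqueness step relies on the commutator identity
\[
  [B_V^2,\chi_n]\ =\ 2\,c(d\chi_n)\,B_V\ +\ [B_V,\,c(d\chi_n)],
\]
and the second term $[B_V,c(d\chi_n)]$ genuinely involves second derivatives of $\chi_n$. You handle this by positing an exhaustion function $\rho$ with bounded Hessian so that $\|\nabla d\chi_n\|_\infty\to 0$; but a general complete Riemannian manifold need not admit any proper function with uniformly bounded Hessian (that requires something like bounded geometry or curvature control, not just completeness). The paper's argument, by contrast, never forms the full commutator $[Q_\rho^2,\phi_k]$: it integrates by parts exactly once, writing $\langle Q_\rho^2(\phi_k s_1),s_2\rangle=\langle Q_\rho(\phi_k s_1),Q_\rho s_2\rangle$ and then commuting $\phi_k$ past a single $Q_\rho$, so only first derivatives $d\phi_k$ ever appear. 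This is exactly why Shubin's Proposition~4.1, which supplies cutoffs with $\|d\phi_k\|_\infty\to 0$ and nothing about second derivatives, suffices there. Until you either restrict to manifolds with bounded Hessian exhaustions or reorganize the estimate so that second-order commutator terms never appear, your proof does not go through in the generality of the theorem.
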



\begin{remark}
This theorem is a direct consequence of Theorem~\ref{T:G essentially self-adjoint} proved in Appendix~\ref{A:shroedinger-type}.
\end{remark}


\subsection{$A$-index of twisted Dirac-type operators: Bunke's approach}\label{SS:Bunke's approach}
Let $q:M\rightarrow\RR$ be a smooth function which is constant outside of a compact set.
It is a classical fact (see \cite{FM80}) that, for every nonnegative integer $j$, the operator $B_V^k+q$ extends to a bounded adjointable operator
\begin{equation*}\label{eq:P_v^k+f}
	B_V^k+q\colon H^{j+k}\longrightarrow H^j\,.
\end{equation*}


\begin{definition}\label{D:invertible at infinity}
We say that the operator $B_V^2$ is \emph{invertible at infinity} if there exists a compactly supported smooth function $f\colon M\rightarrow [0,\infty)$ such that the operator $B_V^2+f$ is invertible and $\big(B_V^2+f\big)^{-1}\in\mathcal{L}_A(H^0,H^2)$.
\end{definition}


In this case, Bunke associated to the operator $B_V$ a $K$-theoretical index class.
The construction of this class makes use of Kasparov's $KK$-theory.
For the notion of bounded Kasparov module and $KK$-group, we refer to~\cite{Bla98}. 


\begin{theorem}[Bunke,~\cite{Bun95}]\label{T:Bunke}
Suppose $B_V^2$ is invertible at infinity and let $f\in C^\infty_c(M)$ be as in Definition~\ref{D:invertible at infinity}.
Then the triple 
\begin{equation}\label{eq:KK-index of P_V}
	\left(H^0,1,B_V\left( B_V^2+f\right)^{-1/2}\right)
\end{equation}
is a \emph{bounded Kasparov module} for the pair of algebras $(\CC,A)$.
Moreover, the class in $KK(\CC,A)= K_0(A)$ defined by the triple~\eqref{eq:KK-index of P_V} is independent of the choice of the function $f$.
In this case, the $A$-index of $B_V$ is the $K$-theoretical class
\begin{equation}\label{eq:ind_AP_V}
	\ind_AB_V\ :=\ \left[H^0,1,B_V\left( B_V^2+f\right)^{-1/2}\right]\in KK(\CC,A)\ =\ K_0(A)\,,
\end{equation}
where $f$ is as in Definition~\ref{D:invertible at infinity}.
\end{theorem}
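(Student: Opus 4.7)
The theorem splits into two parts: (i) for a fixed admissible $f$, the triple $(H^0,\mathbf{1},F)$ with $F:=B_V(B_V^2+f)^{-1/2}$ is a bounded Kasparov $(\CC,A)$-module; (ii) its class in $KK(\CC,A)$ is independent of $f$. The identification with $K_0(A)$ is the usual one for Kasparov modules with coefficient algebra $\CC$.

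First, for well-definedness of $F$, I observe that $B_V^2+f$ is a positive self-adjoint regular operator on $H^0$: self-adjointness and regularity follow from Theorem~\ref{T:H_mu essentially self-adjoint} applied with $\Pi=f\cdot\id_S$ (which is compactly supported and hence uniformly bounded below), while positivity is immediate from $f\ge 0$ and $B_V^2\ge 0$. The invertibility-at-infinity hypothesis makes $B_V^2+f$ strictly positive, so the bounded functional calculus yields $(B_V^2+f)^{-1/2}\in\mathcal{L}_A(H^0)$ with image contained in $H^1=\Dom(B_V)$, and the composition $F=B_V\circ(B_V^2+f)^{-1/2}$ is bounded adjointable.

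Since the representation of $\CC$ is by the identity, the Kasparov module axioms reduce to showing $F^2-1$ and $F-F^*$ lie in $\mathcal{K}_A(H^0)$. A direct commutator manipulation gives
\[
F^2-1\;=\;-f(B_V^2+f)^{-1}\;+\;B_V\,\bigl[(B_V^2+f)^{-1/2},B_V\bigr]\,(B_V^2+f)^{-1/2},
\]
and an analogous formula holds for $F-F^*$. The first term factors through multiplication by $f$, viewed as $H^2\to H^0$, which is $A$-compact by the Mishchenko--Fomenko Rellich-type lemma for Hilbert $C^\ast$-bundles of finite type (as used throughout~\cite{Bun95}). For the commutator term, $[B_V,f]=-c(df)$ is a compactly supported bundle endomorphism, so I would use the integral representation
\[
(B_V^2+f)^{-1/2}\;=\;\tfrac{1}{\pi}\int_0^\infty \lambda^{-1/2}(B_V^2+f+\lambda)^{-1}\,d\lambda
\]
together with the resolvent identity $[B_V,(B_V^2+f+\lambda)^{-1}]=-(B_V^2+f+\lambda)^{-1}c(df)(B_V^2+f+\lambda)^{-1}$; uniform resolvent estimates in $\lambda$ then express the commutator $[B_V,(B_V^2+f)^{-1/2}]$ as a norm-convergent integral of $A$-compact operators.

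Independence of $f$ is handled by a standard operator homotopy: given admissible $f_0,f_1$, the affine path $f_t=(1-t)f_0+tf_1$ is again admissible (compactly supported, nonnegative, with $B_V^2+f_t$ invertible by a second resolvent argument along $t$), and the family $F_t=B_V(B_V^2+f_t)^{-1/2}$ is norm-continuous, giving an operator homotopy of Kasparov modules. The principal technical obstacle I expect is the compactness of $[B_V,(B_V^2+f)^{-1/2}]$: the Rellich lemma is available in the Hilbert $C^\ast$-module setting, but its application to the resolvent integral requires uniform $\lambda$-estimates and a careful use of the finite-type hypothesis on $V$ to exchange limit and integral.
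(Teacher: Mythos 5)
The paper does not prove this statement: it is attributed to Bunke, and the remark that follows merely records the integral formula
\begin{equation*}
B_V\bigl(B_V^2+f\bigr)^{-1/2}w\ =\ \frac{2}{\pi}\int_0^\infty B_V\bigl(B_V^2+f+\lambda^2\bigr)^{-1}w\,d\lambda
\end{equation*}
together with citations to Lemma~1.8, Proposition~1.13, and p.~244 of~\cite{Bun95} for norm-convergence, the Kasparov-cycle axioms, and the $f$-independence. Your proposal is therefore a reconstruction of an argument the paper outsources, and it covers the right ground: the identity $F^2-1=-f(B_V^2+f)^{-1}+B_V\bigl[(B_V^2+f)^{-1/2},B_V\bigr](B_V^2+f)^{-1/2}$ is correct; $f(B_V^2+f)^{-1}$ is $A$-compact because it factors as $(B_V^2+f)^{-1}\colon H^0\to H^2$ bounded followed by $f\colon H^2\to H^0$ compact (Lemma~\ref{L:Rellich lemma}); and the resolvent-commutator strategy via the subordination integral is the same mechanism Bunke uses. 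One operational difference worth noting: the paper \emph{defines} the bounded operator $B_V(B_V^2+f)^{-1/2}$ directly as the norm-convergent integral above and thereby sidesteps domain questions, whereas you first construct $(B_V^2+f)^{-1/2}$ by functional calculus and then compose with $B_V$; the two agree once one knows $\Dom\bigl((B_V^2+f)^{1/2}\bigr)=H^1$, which you assert but do not justify --- it follows from $\|(B_V^2+f)^{1/2}u\|^2=\|B_Vu\|^2+\langle fu,u\rangle$ together with boundedness of $f$.

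Two smaller points. For the independence of $f$, you invoke a ``second resolvent argument along $t$'' to obtain invertibility of $B_V^2+f_t$; as stated this would be circular, since the resolvent identity for $(B_V^2+f_t)^{-1}$ presupposes the invertibility you want. The clean reason is positivity: a positive self-adjoint regular invertible operator $T$ on a Hilbert $A$-module satisfies $T\ge\|T^{-1}\|^{-1}$, so from $B_V^2+f_0\ge c_0$ and $B_V^2+f_1\ge c_1$ one gets $B_V^2+f_t\ge(1-t)c_0+tc_1\ge\min(c_0,c_1)>0$ for the whole affine path; Theorem~\ref{T:B_V^2+f invertible} then gives uniform invertibility, and norm-continuity of $F_t$ follows from the resolvent identity applied to $f_s-f_t=(s-t)(f_1-f_0)$. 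Finally, a sign: $[B_V,f]=[\slashed{D}_V,f]=c(df)$, not $-c(df)$ (the zeroth-order part $\Psi_V$ commutes with $f$), though this has no effect on the compactness argument.
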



\begin{remark}\label{L:integral representation}
The operator $B_V\left( B_V^2+f\right)^{-1/2}$ used in~\eqref{eq:KK-index of P_V} is defined as follows. 
For $w\in H^1$, set
\begin{equation}\label{eq:integral representation}
	 B_V\left( B_V^2+f\right)^{-1/2}w\ :=\ \frac{2}{\pi}\int_0^\infty B_V\left( B_V^2+f+\lambda^2\right)^{-1}w\, d\lambda\,.
\end{equation}
In~\cite[Lemma~1.8]{Bun95} it is shown that the integral on the right hand side is norm-convergent and defines an operator in $\mathcal{L}_A(H^0)$.
Moreover, by~\cite[Proposition~1.13]{Bun95} and the construction of~\cite[page~244]{Bun95}, the triple~\eqref{eq:KK-index of P_V} defines a class in $KK(\CC,A)$ that is independent of the choice of $f$.
\end{remark}


We now define a particular class of Dirac-type operators and use Bunke's approach to define the $A$-index of the operators in this class.


\subsection{Twisted Callias-type operators}\label{SS:twisted Callias}
Let $M$ be an odd-dimensional complete Riemannian manifold and let $\Sigma$ be an \emph{ungraded} complex Dirac bundle over $M$.
Let $D\in\Diff^1(M;\Sigma)$ be a formally self-adjoint  Dirac-type operator (see Subsection~\ref{SS:twisted Dirac}).
Fix a self-adjoint potential $\Phi\in C^\infty(M;\End(\Sigma))$ and consider the operator
\begin{equation*}
	P\ :=\ \left(\begin{array}{cc}0&D- i\Phi\\D+ i\Phi&0\end{array}\right)\,.
\end{equation*}
Notice that $P\in\Diff^1(M;\Sigma\oplus\Sigma)$ is of \emph{Dirac-type} and \emph{formally self-adjoint}.
Let $V$ be as in Subsection~\ref{SS:twisted Dirac} and consider the twisted Dirac-type operator
\begin{equation}\label{eq:twisted Callias}
	P_V\ :=\ \left(\begin{array}{cc}0&D_V- i\Phi_V\\D_V+ i\Phi_V&0\end{array}\right)\,,
\end{equation}
where $\Phi_V:=\Phi\tensor \id_V$.


\begin{definition}\label{D:Callias}
The endomorphism $\Phi$ is said to be \emph{admissible} for the pair $(\Sigma,D)$ if
\begin{enumerate}
\item[(i)] the commutator $[D,\Phi]:=D\Phi-\Phi D$ is an endomorphism of $\Sigma$;
\item[(ii)] there exist a constant $d>0$ and a compact set $K\subset M$ such that 
\begin{equation}\label{eq:ungraded admissible endomorphism}
	\Phi^2(x)\ \geq\  d\ +\ \left\|[D,\Phi](x)\right\|,\qquad x\in M\setminus K\,.
\end{equation}
\end{enumerate}
In this case, we say that $K$ is an \emph{essential support} for $\Phi$ and that the operator $P_V$ defined in~\eqref{eq:twisted Callias} is the \emph{twisted Callias-type operator} associated to the \emph{admissible quadruple} $(\Sigma,D,\Phi,V)$.
If we can choose $K=\emptyset$, we say that $\Phi$ has \emph{empty} essential support.
\end{definition}


In Inequality~\eqref{eq:ungraded admissible endomorphism} we used the following notation, that will be used throughout this paper without specific mention.


\begin{notation}
Let $W\rightarrow M$ be a Hilbert $A$-bundle of finite type. 
For $\Psi_0$, $\Psi_1\in C^\infty(M;\End(W))$ and $x\in M$, we say that $\Psi_0(x)\geq \Psi_1(x)$ if $\left<\Psi_0(x)v,v\right>_x\geq \left<\Psi_1(x)v,v\right>_x$ for all $v\in W_x$, where $\left<\cdot,\cdot\right>_x$ is the inner product of the fiber $W_x$.
\end{notation}


\subsection{$A$-index of Callias-type operators}\label{SS:A-index of P}
In order to define the index of a Callias-type operator $P_V$, we use Bunke's approach (see Subsection~\ref{SS:Bunke's approach}).


\begin{theorem}\label{T:B^2 is invertible at infinity}
Let $M$ be a complete Riemannian manifold and let $P_V$ be the Callias-type operator associated to an admissible quadruple $(\Sigma,D,\Phi,V)$.
Then the operator $P_V^2$ is invertible at infinity.
Therefore, $P_V$ has an index class $\ind_AP_V$ in $K_0(A)$ defined by~\eqref{eq:ind_AP_V}.
\end{theorem}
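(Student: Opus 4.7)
The plan is to apply the construction of Theorem~\ref{T:Bunke}: produce a compactly supported $f \in C^\infty_c(M,[0,\infty))$ so that $P_V^2 + f$ is invertible on $H^0$ with inverse in $\mathcal{L}_A(H^0, H^2)$. Write $P = \tilde D + \tilde \Phi$, where $\tilde D := \bigl(\begin{smallmatrix}0 & D \\ D & 0 \end{smallmatrix}\bigr)$ is a formally self-adjoint Dirac operator on $\Sigma \oplus \Sigma$ and $\tilde \Phi := \bigl(\begin{smallmatrix} 0 & -i\Phi \\ i\Phi & 0\end{smallmatrix}\bigr)$ is a self-adjoint potential. A direct Bochner-type computation gives
\begin{equation*}
P^2 \;=\; \tilde D^{\,2} \;+\; \Pi, \qquad \Pi \;=\; \begin{pmatrix} \Phi^2 + i[D,\Phi] & 0 \\ 0 & \Phi^2 - i[D,\Phi] \end{pmatrix},
\end{equation*}
and $\Pi$ is a smooth self-adjoint endomorphism of $\Sigma \oplus \Sigma$ whose two diagonal blocks are both bounded below by $d\cdot\Id$ on $M\setminus K$ by the admissibility inequality~\eqref{eq:ungraded admissible endomorphism}. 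Twisting preserves this pointwise lower bound: $P_V^2 = \tilde D_V^{\,2} + \Pi_V$ with $\Pi_V := \Pi \otimes \id_V \geq d\cdot\Id$ fiberwise off $K$.

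Since $K$ is compact and $\Pi$ is smooth, one may choose $f \in C^\infty_c(M;[0,\infty))$ such that $\Pi(x) + f(x)\cdot\Id \geq d\cdot\Id$ for every $x \in M$. Then $P_V^2 + f = \tilde D_V^{\,2} + (\Pi_V + f\cdot\Id)$ is a twisted Schr\"odinger-type operator of the form treated by Theorem~\ref{T:H_mu essentially self-adjoint}, applied with the twisted Dirac-type operator taken to be $\tilde D_V$ (i.e.\ with zero potential $\Psi$) and the Schr\"odinger potential $\Pi + f\cdot\Id$, which is uniformly bounded below. That theorem produces a unique regular self-adjoint extension $T$ of $P_V^2 + f$ on the Hilbert $A$-module $H^0$.

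On the core $C^\infty_c(M;(\Sigma\oplus\Sigma)\otimes V)$ the $A$-valued quadratic form decomposes as
\begin{equation*}
\bigl\langle (P_V^2 + f)u, u \bigr\rangle_0 \;=\; \bigl\langle \tilde D_V u,\tilde D_V u \bigr\rangle_0 \;+\; \bigl\langle (\Pi_V + f)u, u \bigr\rangle_0 \;\geq\; d\,\langle u, u \rangle_0.
\end{equation*}
Because $T$ is the unique closure, graph-norm approximation of $v \in \Dom(T)$ by compactly supported smooth sections propagates this bound to all of $\Dom(T)$; together with the $C^\ast$-inequality $\|\langle Tv,v\rangle_0\|_A \geq d\|v\|_0^2$ and Cauchy-Schwarz, this yields $\|Tv\|_0 \geq d \|v\|_0$. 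Regular self-adjointness then forces $T$ to be bijective by standard functional calculus for regular self-adjoint operators on Hilbert $A$-modules (cf.~\cite[Ch.~9]{Lan95}), so $T^{-1} \in \mathcal{L}_A(H^0)$ with $\|T^{-1}\|_{\mathcal B(H^0)} \leq 1/d$.

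Finally, for $u \in H^0$ and $v := T^{-1}u$ one has $P_V^2 v = u - fv \in H^0$, and the identity $\|P_V v\|_0^2 = \langle P_V^2 v, v\rangle_0$ (established on smooth approximants and passed to the limit using uniqueness of the closure) yields $\|P_V v\|_0 \leq C\|u\|_0$. Together with the definition of the $H^2$-norm in terms of powers of $P_V$, this gives $\|v\|_2 \leq C'\|u\|_0$, so $T^{-1} \in \mathcal{L}_A(H^0,H^2)$; Definition~\ref{D:invertible at infinity} together with Theorem~\ref{T:Bunke} then delivers the index class $\ind_A P_V \in K_0(A)$. The main obstacle is the passage from the pointwise fiberwise lower bound on $\Pi+f$ to operator-theoretic invertibility of $T$: this is immediate from the spectral theorem when $A = \CC$, but over a general $C^\ast$-algebra it relies crucially on Theorem~\ref{T:H_mu essentially self-adjoint}, whose proof in turn depends on the Hanke-Pape-Schick dense-range result highlighted in the introduction.
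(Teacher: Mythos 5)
Your overall strategy is sound and, in fact, more direct than the paper's. You rewrite $P_V^2 + f$ as a Schr\"odinger-type operator $\tilde D_V^2 + (\Pi_V + f)$ with $\Pi_V + f$ uniformly bounded below, apply Theorem~\ref{T:H_mu essentially self-adjoint} (i.e.\ Theorem~\ref{T:G essentially self-adjoint}) once, and deduce invertibility from strict positivity. The paper instead reaches Theorem~\ref{T:B_V^2+f invertible} in three steps: it first inverts $B_V^2 + \mu^2$ for a \emph{constant} $\mu^2$ (Proposition~\ref{P:invertibility of B_V^2+lambda^2}, using the self-adjointness theorem and a result of Ebert on strictly positive regular operators), then passes to $B_V^2 + \mu^2 + f$ by a Neumann series (Lemma~\ref{L:invertibility of B_V^2+lambda^2+f}), and finally to $B_V^2 + f$ by a second Neumann series. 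Your shortcut avoids the two perturbation steps, which is a genuine simplification, and the decomposition $P_V^2 = \tilde D_V^2 + \Pi_V$ with $\Pi_V + f \ge d$ fiberwise is exactly the inequality~\eqref{eq:B_V^2 geq c} the paper uses.

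There is, however, a real gap at the end. Definition~\ref{D:invertible at infinity} requires $\bigl(P_V^2 + f\bigr)^{-1}$ to lie in $\mathcal{L}_A(H^0, H^2)$, which by the conventions of Subsection~\ref{SS:Sobolev} means bounded \emph{and adjointable}. Your argument produces only the norm estimate $\|v\|_2 \le C'\|u\|_0$, i.e.\ boundedness as a map $H^0 \to H^2$. In the Hilbert $C^\ast$-module setting a bounded $A$-linear map between different Hilbert modules need not be adjointable, and the claim $T^{-1} \in \mathcal{L}_A(H^0, H^2)$ does not follow from the estimate alone. The paper spends the last paragraph of the proof of Proposition~\ref{P:invertibility of B_V^2+lambda^2} on exactly this issue: it first upgrades $\bigl(B_V^2+\mu^2\bigr)^{-1}$ to a positive (hence self-adjoint) element of $\mathcal{L}_A(H^0)$, then performs a direct computation showing $\langle Qu, w\rangle_2 = \langle u, \{B_V^2 + (1-\mu^2)QB_V^2 + Q\}w\rangle_0$, which exhibits an explicit adjoint $H^2 \to H^0$. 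You would need to supply a comparable argument (or cite a result guaranteeing that a bijective adjointable operator between Hilbert modules has an adjointable inverse) to close the step. Apart from this, your application of Theorem~\ref{T:H_mu essentially self-adjoint} to $\tilde D_V$ and $\Pi + f$ is legitimate, your propagation of the lower bound from $C^\infty_c$ to $\Dom(T)$ via graph-norm approximation is correct, and the appeal to strict positivity plus regular self-adjointness to get a bounded inverse in $\mathcal{L}_A(H^0)$ is what the paper does as well, via~\cite[Theorem~2.21]{Ebe16}.
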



\begin{remark}
This theorem corresponds to part (a) of Theorem~\ref{T:Theorem B} and will be proved in Section~\ref{S:invertibility at infinity}.
\end{remark}


\subsection{A Callias-type theorem}\label{SS:Callias-type theorem}
Let $(\Sigma,D,\Phi,V)$ be an admissible quadruple on a complete Riemannian manifold $M$ and let $P_V$ be the associated Callias-type operator.
Suppose that there is a partition $M=M_-\cup_N M_+$, where $N=M_-\cap M_+$ is a smooth compact hypersurface and $M_-$ is a compact submanifold, whose interior contains an essential support of $\Phi$.
We want to use our data to construct a twisted Dirac-type operator on $N$ and use this operator to compute the class $\ind_AP_V$.

Let $\Sigma_N$ be the  restriction of $\Sigma$ to $N\subset M$.
Condition (ii) of Definition~\ref{D:Callias} implies that zero is not in the spectrum of $\Phi(x)$ for all $x\in N$.
Therefore, we have the splitting
\begin{equation}\label{eq:decomposition of Sigma_N}
	\Sigma_N=\Sigma_{N+}\oplus \Sigma_{N-}\,,
\end{equation}
where $\Sigma_{N+}$, $\Sigma_{N-}$ are respectively the positive and negative eigenbundles of $\Phi$.

By Condition~(i) of Definition~\ref{D:Callias}, the endomorphism $\Phi$ commutes with the Clifford multiplication. 
Hence, $c(\xi):\Sigma_{N\pm}\to \Sigma_{N\pm}$ for all $\xi\in T^*M\big|_N$. 
It follows that both bundles, $\Sigma_{N+}$ and $\Sigma_{N-}$, inherit  the Clifford action of $T^*M$.
In particular, Clifford multiplication by the unit normal vector field pointing at the direction of $M_+$ defines an endomorphism $\gamma:\Sigma_{N\pm}\to \Sigma_{N\pm}$.
Since $\gamma^2=-1$, the endomorphism $\alpha:=-i \gamma$ induces a grading 
\begin{equation}\label{eq:grading on SigmaN}
	\Sigma_{N\pm}\ = \ \Sigma_{N\pm}^+\oplus \Sigma_{N\pm}^-\,,
\end{equation}
where $\Sigma_{N\pm}^\pm$ is the span of the eigenvectors of $\alpha$ with eigenvalues $\pm1$.

We use the Riemannian metric on $M$ to identify $T^*N$ with a subbundle of $T^*M$. Then the Clifford action of $T^*N$ on $\Sigma_{N\pm}$ is graded with respect to this grading, i.e. $c(\xi):\Sigma_{N\pm}^\pm\to \Sigma_{N\pm}^\mp$ for all $\xi\in T^*N$. 
Let $\nabla^{\Sigma_N}$ be the connection on $\Sigma_N$ obtained by restricting  the connection on $\Sigma$. It does not, in general, preserve decomposition~\eqref{eq:decomposition of Sigma_N}. 
We define a connection $\nabla^{\Sigma_{N\pm}}$ on the bundle $\Sigma_{N\pm}$ by setting 
\begin{equation*}
	\nabla^{\Sigma_{N\pm}} s^\pm\ :=\ 
	\pr_{\Sigma_{N\pm}}\left(\nabla^{\Sigma_N} s^\pm\right)\,,
	\qquad\qquad s^\pm\in C^\infty(N;\Sigma_{N\pm})\,,
\end{equation*}
where $\pr_{\Sigma_{N\pm}}$ is the projection onto the bundle $T^\ast N\tensor\Sigma_{N\pm}$. 
By \cite[Lemma~2.7]{Ang90} (see also \cite[Section~5.1]{BC}), $\Sigma_{N+}$ and $\Sigma_{N-}$ carry a $\ZZ_2$-graded Dirac bundle structure.

We denote by  $ D_{N+}$, $D_{N-}$ the Dirac operators on $N$ associated respectively with the bundles $\Sigma_{N+}$ and $\Sigma_{N-}$. 
Notice that the operators $D_{N\pm}$ are odd  with respect to the grading \eqref{eq:grading on SigmaN}, i.e. they have the form 
\[
	D_{N\pm}\ = \ \begin{pmatrix}
	0&D_{N\pm}^-\vspace{0.2cm}\\D_{N\pm}^+&0
	\end{pmatrix}\,,
\]
where $D_{N\pm}^+$ (respectively $D_{N\pm}^-$) is the restriction of $D_{N\pm}$ to $\Sigma_{N\pm}^+$ (respectively $\Sigma_{N\pm}^-$). 

Let $V_N$ be the restriction of $V$ to $N$. 
It is a Hilbert $A$-bundle of finite type endowed with a connection $\nabla^V_N$ obtained by pulling back the connection $\nabla^V$.
Consider the operator $D_{N+,V_N}$ obtained by twisting the Dirac operator $D_{N+}$ with the bundle $V_N$.
In the classical paper \cite{FM80}, Mi\v{s}\v{c}enko and Fomenko showed that the operator $D_{N+,V_N}$ is $A$-Fredholm and it has a well-defined index class $\ind_A D_{N+,V_N}\in K_0(A)$ (see also \cite[Section~5]{Sch05}).

The next theorem is the main result of this paper. 
In the case when $A=\CC$ it was proved in \cite[Theorem~1.5]{Ang93} and \cite[Theorem~2.9]{Bun95}.
When $A$ is a von Neumann algebra with a finite trace, the result has been recently proved in~\cite{BC}.


\begin{theorem}[Callias-type theorem in $C^\ast$-algebras]\label{T:Callias-type theorem}
Suppose that the $C^\ast$-algebra $A$ is separable. Then the classes $\ind_AP_V$ and $\ind_A D_{N+,V_N}$ coincide.
\end{theorem}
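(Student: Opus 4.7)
The plan is to follow the two-step strategy announced in the introduction: first reduce the computation of $\ind_A P_V$ to a model Callias-type operator on the cylinder $N\times\RR$ by a cut-and-paste argument, and then identify the cylindrical index with $\ind_A D_{N+,V_N}$ via a Kasparov product computation.

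For the first step, I would fix a tubular neighborhood $U\cong N\times(-\varepsilon,\varepsilon)$ of $N$ disjoint from the given essential support of $\Phi$, so that $\Phi$ is invertible on $U$ with sign agreeing with the splitting~\eqref{eq:decomposition of Sigma_N}. Using the collar data, I would construct a cylindrical admissible quadruple $(\Sigma^{\text{cyl}},D^{\text{cyl}},\Phi^{\text{cyl}},V^{\text{cyl}})$ on $N\times\RR$ whose restriction to $N\times(-\varepsilon,\varepsilon)$ coincides with the restriction of $(\Sigma,D,\Phi,V)$. Bunke's $K$-theoretic relative index theorem from \cite{Bun95}, extended to Hilbert $A$-bundles once the index classes are defined as in Theorem~\ref{T:Bunke}, then gives
\[
\ind_A P_V\ =\ \ind_A P^{\text{cyl}}_{V^{\text{cyl}}}\qquad\text{in}\quad K_0(A).
\]

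For the second step, I would arrange $\Phi^{\text{cyl}}$ on $N\times\RR$ to grow linearly in the normal coordinate $t$, with sign matching $\Phi$ at the two ends. Writing $D^{\text{cyl}}=c(dt)\,\partial_t+D_N$ and using Clifford multiplication by $dt$, separation of variables in the untwisted case ($A=\CC$, $V$ trivial) identifies the class $[P^{\text{cyl}}]$ with $[D_{N+}]$ in $KK(\CC,C(N))$; this is the classical computation of Anghel and Bunke. To extend this to the twisted case, I would exhibit $\ind_A P^{\text{cyl}}_{V^{\text{cyl}}}$ as the internal Kasparov product
\[
\bigl[P^{\text{cyl}}\bigr]\otimes_{C(N)}[V_N]\ \in\ KK(\CC,A),
\]
where $[P^{\text{cyl}}]\in KK(\CC,C(N))$ is the unbounded Kasparov cycle defined by the cylindrical Callias-type operator (constructed in Section~\ref{S:growing potentials} under a linear-growth assumption on $\Phi^{\text{cyl}}$), and $[V_N]\in KK(C(N),A)$ is the class represented by the finite-type Hilbert $A$-module of continuous sections of $V_N$. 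Using Kucerovsky's criterion~\cite{Kuc97} and equipping $V_N$ with an unbounded connection, I would verify the domain, commutator and semi-boundedness conditions to identify the product with the unbounded cycle defining $\ind_A P^{\text{cyl}}_{V^{\text{cyl}}}$. Combining with $[P^{\text{cyl}}]=[D_{N+}]$ yields
\[
\ind_A P^{\text{cyl}}_{V^{\text{cyl}}}\ =\ [D_{N+}]\otimes_{C(N)}[V_N]\ =\ \ind_A D_{N+,V_N}.
\]

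The principal technical obstacle is the second step: verifying Kucerovsky's hypotheses for the Kasparov product, in particular the semi-boundedness of the graded commutator of the cylindrical Callias operator with a connection lift on $V^{\text{cyl}}$, requires tight control on the growth of $\Phi^{\text{cyl}}$. This is why the admissibility condition must be strengthened to a linear-growth condition before the separation-of-variables argument can be recast in $KK$-terms; a preliminary homotopy on the cylinder is needed to justify replacing $\Phi^{\text{cyl}}$ with such a growing potential without changing the index class. The separability hypothesis on $A$ is used precisely here, to invoke the existence and uniqueness of the internal Kasparov product in Kucerovsky's unbounded framework.
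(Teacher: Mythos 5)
Your overall architecture matches the paper's: a cut-and-paste reduction to a cylindrical model operator (Section~\ref{S:reduction}), a preliminary homotopy to arrange a linearly growing potential so that the model operator has compact resolvent and defines an \emph{unbounded} Kasparov cycle (Section~\ref{S:growing potentials}), and then a Kucerovsky-style verification to compute the cylindrical index as a Kasparov product (Section~\ref{S:analysis on the cylinder}). That part is on target.

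However, the Kasparov product you propose does not type-check, and this is a genuine gap. You place $[P^{\text{cyl}}]$ in $KK(\CC,C(N))$ and $[V_N]$ in $KK(C(N),A)$, and form $[P^{\text{cyl}}]\otimes_{C(N)}[V_N]$. But the natural habitat of the untwisted cylindrical Callias operator (and of $[D_{N+}]$) is $KK(C(N),\CC)$, not $KK(\CC,C(N))$: the operator acts on an ordinary (i.e.\ $\CC$-)Hilbert space of $L^2$-sections, with $C(N)$ acting on the \emph{left} by multiplication, and the Dirac operator is not $C(N)$-linear, so there is no Hilbert $C(N)$-module in sight. Symmetrically, the module $C(N;V_N)$ cannot define a $KK(C(N),A)$-cycle with $F=0$: viewed as a Hilbert $A$-module it is not finitely generated, and multiplication by a nonzero $f\in C(N)$ is not $A$-compact on it, so the Fredholm condition $\phi(C(N))\subset\mathcal K_A$ fails. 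The paper resolves both problems at once by taking $[V_N]\in KK(\CC,C(N;A))$, where $C(N;V_N)$ \emph{is} finitely generated projective over $C(N;A)$, and then composing on the right with $[D_{N+}]\widehat\tensor[\id_A]\in KK(C(N;A),A)$. In other words, the vector-bundle class sits on the left of the composition and the Dirac class on the right, and the twist by $A$ enters through the external product with $[\id_A]$ rather than by promoting $V_N$ to a $C(N)$-$A$-bimodule.

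A second, smaller, mismatch: the paper does not construct a single cycle for the cylindrical Callias operator and feed it directly into the product. It first factors the cylindrical operator through two Kasparov cycles, $[\chi]\in KK^1(C(N),C_0(N\times\RR))$ (the potential) and $\big[\widehat D_{N+}\big]\in KK^1(C_0(N\times\RR),\CC)$ (the longitudinal Dirac operator), and identifies $[\chi]\circ\big[\widehat D_{N+}\big]=[D_{N+}]$ via Kucerovsky's $KK$-theoretic form of Anghel's theorem. Lemma~\ref{L:structure of model operator} then shows $\ind_A{\bf M}_{V_N}=\left\{[V_N]\circ\left([\chi]\widehat\tensor[\id_A]\right)\right\}\circ\left(\big[\widehat D_{N+}\big]\widehat\tensor[\id_A]\right)$, with the intermediate algebras $C(N;A)$ and $C_0(N\times\RR;A)$, and associativity plus the external-product compatibility of Proposition~\ref{P:intersection product} collapse this to $[V_N]\circ\left([D_{N+}]\widehat\tensor[\id_A]\right)=\ind_A D_{N+,V_N}$. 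If you fix the variance of your classes to the paper's, your argument becomes essentially the paper's; but as written, the two-term product $[P^{\text{cyl}}]\otimes_{C(N)}[V_N]$ is not a product of composable $KK$-classes.
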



\begin{remark}
This theorem corresponds to part (b) of Theorem~\ref{T:Theorem B} and will be proved in Sections~\ref{S:reduction}, ~\ref{S:growing potentials}, and~\ref{S:analysis on the cylinder}.
\end{remark}


\begin{remark}\label{R:A separable}
The proof of this theorem consists of two steps.
In the first step, we reduce the computation of $\ind_AP_V$ to the computation of the $A$-index of a \emph{model operator} ${\bf M}_{V_N}$, which is a Callias-type operator on the cylinder $N\times\RR$.
The second step consists of solving a problem on the cylinder, i.e. proving Theorem~\ref{T:Callias-type theorem} for the operator ${\bf M}_{V_N}$.
In order to do the computations on the cylinder, we reformulate the problem in a $KK$-theoretical setting and make use of the properties of the intersection product.
\end{remark}


\begin{remark}\label{R:unbounded modules}
In Subsection~\ref{SS:model operator}, we define the operator ${\bf M}_{V_N}$ by using a potential growing to infinity at infinity but with uniformly bounded first derivatives.
Because of this choice, the operator ${\bf M}_{V_N}$ has compact resolvent (see~Section~\ref{S:growing potentials}).
This allows us to work with unbounded Kasparov modules in doing our $KK$-theoretical computations on the cylinder.
\end{remark}


\begin{remark}\label{R:interpretation of the index}
Theorem~\ref{T:Callias-type theorem} reduces the computation of the index class of an elliptic operator on a noncompact manifold to the computation of the index class of an operator on a suitable closed submanifold.
Therefore, we can use the rich theory of elliptic operators on closed manifolds to interpret such class.
The application of this theorem that we present in this paper is based on this fact.
\end{remark}


\subsection{Codimension one obstructions to PSC on noncompact manifolds}\label{SS:codimension 1 obstructions}
Let $(M,g)$ be a complete odd-dimensional oriented Riemannian spin manifold.
Denote by $\kappa$ the scalar curvature of $g$.
Suppose there is a partition $M=M_-\cup_N M_+$, where $N=M_-\cap M_+$ is a smooth closed hypersurface.
Notice that the normal bundle of $N$ is trivial so that, by \cite[Proposition~2.15]{LM89}, $N$ is endowed with a spin structure and the operator $\slashed{D}_{N,V_N}$ is well-defined.


\begin{theorem}\label{T:codimension one obstructions}
Let $V\rightarrow M$ be a flat Hilbert $A$-bundle.
Suppose that the scalar curvature $\kappa$ is nonnegative and there exists a tubular neighborhood $U$ of $N$ such that $\kappa$ is strictly positive on $U$.
Then the class $\ind_A \slashed{D}_{N,V_N}\in K_0(A)$ vanishes.
\end{theorem}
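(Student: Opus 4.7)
The plan is to construct a twisted Callias-type operator $P_V$ on $M$ whose index vanishes by a Bochner--Lichnerowicz argument and which, via the Callias-type theorem (Theorem~\ref{T:Callias-type theorem}, whose separability hypothesis on $A$ we take on here, as in the envisaged applications), computes the index of $\slashed{D}_{N,V_N}$.

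First I would construct a smooth function $\phi\colon M\to\RR$ with $\phi^{-1}(0)=N$ transversally, $\phi<0$ on $M_-$ and $\phi>0$ on $M_+$, $|\phi(x)|\to\infty$ as $x\to\infty$, and $|d\phi|$ globally bounded, tuned so that
\[
\phi^2-|d\phi|+\kappa/4 \ \geq\ c \qquad\text{pointwise on } M
\]
for some $c>0$. On a smaller tubular neighborhood $U'\Subset U$, where $\kappa\geq\kappa_0>0$ by compactness of $\overline{U'}$, take $\phi$ to be a small-slope odd function of the normal coordinate so that the constraint reduces to $|d\phi|<\kappa_0/4$ there. Outside $U'$, where only $\kappa\geq 0$ is available, let $|\phi|$ grow fast enough that $\phi^2$ strictly dominates $|d\phi|$; this can be arranged by solving an ODE of the form $f^2\geq|f'|+c$ along the normal direction and extending to all of $M$ via a suitable exhaustion. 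Setting $\Sigma:=S$, $D:=\slashed{D}$, $\Phi:=\phi\cdot\id_S$ then gives an admissible quadruple $(\Sigma,D,\Phi,V)$ with associated twisted Callias-type operator $P_V$ of~\eqref{eq:twisted Callias}.

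The vanishing $\ind_A P_V=0$ follows next. The diagonal blocks of $P_V^2$ equal $\slashed{D}_V^2+\phi^2\pm ic(d\phi)\otimes\id_V$; flatness of $V$ yields the twisted Bochner--Lichnerowicz formula $\slashed{D}_V^2=\nabla^*\nabla+\kappa/4$, and the bound $\pm ic(d\phi)\geq -|d\phi|\cdot\id$ then gives $P_V^2\geq \nabla^*\nabla+\kappa/4+\phi^2-|d\phi|\geq c$ on $C^\infty_c$. By Theorem~\ref{T:H_mu essentially self-adjoint} applied to $P_V^2$, the self-adjoint closure has spectrum in $[c,\infty)$, so $P_V$ is invertible on $H^0$ and $P_V(P_V^2)^{-1/2}$ is a self-adjoint involution commuting with the $\CC$-action. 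The Kasparov module~\eqref{eq:KK-index of P_V} is therefore degenerate, so $\ind_A P_V=0$ in $K_0(A)$. For the Callias reduction, the essential support of $\Phi$ is compactly contained in some $\{|\phi|\leq C\}$; pick $c_0>C$ small enough that $N_0:=\{\phi=c_0\}$ and $N_1:=\{\phi=-c_0\}$ are smooth hypersurfaces inside $U'$, and set $M_-^{\mathrm{new}}:=\{|\phi|\leq c_0\}$, a compact submanifold with boundary $N_0\sqcup N_1$ whose interior contains the essential support of $\Phi$. Theorem~\ref{T:Callias-type theorem} then gives $0=\ind_A P_V=\ind_A D_{N_0+,V_{N_0}}+\ind_A D_{N_1+,V_{N_1}}$. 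On $N_1$, $\Phi=-c_0\cdot\id$ is negative-definite so $\Sigma_{N_1+}=0$ and the $N_1$-contribution vanishes; on $N_0$, $\Phi=c_0\cdot\id$ is positive-definite so $\Sigma_{N_0+}=\Sigma|_{N_0}$, and the graded Dirac structure of Subsection~\ref{SS:Callias-type theorem} identifies $D_{N_0+,V_{N_0}}$ with the twisted spin Dirac operator $\slashed{D}_{N_0,V_{N_0}}$ (up to the usual sign of the grading). Finally, the product structure of $U$ yields a spin diffeomorphism $N_0\cong N$ and flatness of $V$ identifies $V_{N_0}\cong V_N$ by parallel transport, so $\ind_A\slashed{D}_{N_0,V_{N_0}}=\ind_A\slashed{D}_{N,V_N}$, completing the argument.

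The main technical obstacle is the construction of $\phi$ satisfying the pointwise estimate $\phi^2-|d\phi|+\kappa/4\geq c$ throughout $M$: close to $N$ the constraint $|d\phi|<\kappa_0/4$ forces a small slope of $\phi$, while outside $U'$ the growth of $|\phi|$ must strictly dominate $|d\phi|$ uniformly, so one has to solve a one-dimensional ODE-type constraint in the normal direction and glue the resulting local model smoothly to a function of controlled growth on the remainder of $M$.
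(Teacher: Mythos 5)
Your overall strategy --- a Callias-type operator $P_\lambda$ on $M$ with scalar potential, vanishing of $\ind_AP_\lambda$ via Bochner--Lichnerowicz and the curvature hypotheses, then Theorem~\ref{T:Callias-type theorem} to identify the index with a twisted Dirac index on a hypersurface --- is exactly the paper's strategy, and your vanishing argument (uniform lower bound on $P_V^2$, degenerate Kasparov module) is Corollary~\ref{C:condition for ind_AB_V=0} unrolled. Two of your choices, however, introduce complications the paper avoids and which you leave unresolved.

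First, you demand $|\phi|\to\infty$ at infinity and then identify ``the construction of $\phi$'' (gluing a small-slope model near $N$ to a growing function elsewhere via an ODE-type constraint) as the main technical obstacle. That obstacle is self-imposed. Admissibility~\eqref{eq:ungraded admissible endomorphism} holds already for a potential that is \emph{locally constant} $\pm\phi_0$ outside a compact set, since there $[D,\Phi]\equiv 0$ and the estimate reduces to $\phi_0^2\geq d$. The paper takes $\lambda h$ with $h\equiv+1$ on $M_+$ and $h\equiv-1$ on $M_-\setminus L$, where $L\subset M_-$ is a compact collar of $N$ on which $\kappa\geq\kappa_0>0$; then $dh$ is supported in the compact transition region inside $L$, where for small $\lambda$ the Bochner term $\kappa/4$ dominates $\lambda|dh|$, while off that region $\kappa/4+\lambda^2h^2-\lambda|dh|=\kappa/4+\lambda^2\geq\lambda^2>0$. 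No growth, no ODE. (The growing potential $\chi$ appears in the paper only for the \emph{model} operator on the cylinder, Section~\ref{S:growing potentials}, for a different purpose: to make its resolvent $A$-compact, so that it defines an unbounded Kasparov cycle.)

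Second, by setting $\phi|_N=0$ you make $\Phi|_N=0$, so the eigenbundle splitting~\eqref{eq:decomposition of Sigma_N} is undefined at $N$ and Theorem~\ref{T:Callias-type theorem} cannot be applied with hypersurface $N$ itself. You push to $N_0\sqcup N_1=\{\phi=\pm c_0\}$ and then identify $N_0$ with $N$. That can be made to work, but it requires an extra invariance argument (a spin-diffeomorphism from the tubular neighborhood, parallel transport to match the flat bundles, and stability of the twisted Dirac index under the resulting change of metric), and the window ``$c_0>C$ yet $N_0\subset U'$'' imposes a constraint on the construction of $\phi$ that you do not actually verify. The paper sidesteps all of this by taking $h\equiv+1$ on $N$, so that $\Sigma_{N+}=\mathbb{S}_M|_N$, $\Sigma_{N-}=0$, $D_{N+,V_N}=\slashed{D}_{N,V_N}$, and Theorem~\ref{T:Callias-type theorem} is applied directly to the given partition $M=M_-\cup_N M_+$.
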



\begin{remark}
In Section~\ref{S:vanishing}, we use Theorem~\ref{T:codimension one obstructions} to prove Theorem~\ref{T:Theorem A}.
\end{remark}

\section{Invertibility at infinity of Callias-type operators}\label{S:invertibility at infinity}
This section is devoted to the proof of Theorem~\ref{T:B^2 is invertible at infinity}.
We let $M$, $S$, $V$, $B_V$ denote the same objects of Subsection~\ref{SS:twisted Dirac}.
We deduce Theorem~\ref{T:B^2 is invertible at infinity} from the following theorem.


\begin{theorem}\label{T:B_V^2+f invertible}
Suppose there exist a function $f\in C^\infty_c(M)$ and a constant $c>0$ such that 
\begin{equation}\label{B_V^2>c>0}
	\left<\left(B_V^2\ +\ f\right)s,s\right>_0\ \geq \ c\,\left<s,s\right>_0\,,\qquad\qquad s\in H^2\,.
\end{equation}
Then $B_V^2 +f$ is invertible with $\big(B_V^2 +f\big)^{-1}\in\mathcal{L}_A(H^0)\cap\mathcal{L}_A(H^0,H^2)$.
\end{theorem}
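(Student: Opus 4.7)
Write $T := B_V^2 + f$. The plan is to show that $T$ extends to a regular self-adjoint operator on $H^0$ with domain exactly $H^2$, then use the lower bound \eqref{B_V^2>c>0} together with continuous functional calculus to produce a bounded adjointable inverse.

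First I would invoke Theorem~\ref{T:H_mu essentially self-adjoint} with $\Pi = f\cdot\id$ (which is bounded, hence uniformly bounded from below, since $f\in C^\infty_c(M)$). This yields that the minimal closure of $T$ on the initial domain $C^\infty_c(M;S\tensor V)$ is the \emph{unique} self-adjoint extension, and that this extension is regular on $H^0$. Denote this extension again by $T$; its domain is the completion of $C^\infty_c(M;S\tensor V)$ in the graph norm $\|s\|_0 + \|Ts\|_0$.

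Second, I would show that the graph norm of $T$ is equivalent to the $H^2$-norm on $C^\infty_c(M;S\tensor V)$, so that $\dom(T) = H^2$. The bound $\|Ts\|_0 \leq \|B_V^2 s\|_0 + \|f\|_\infty\|s\|_0$ and its reverse control $\|B_V^2 s\|_0$ in terms of $\|Ts\|_0$ and $\|s\|_0$, while the intermediate term $\|B_V s\|_0$ is handled by the Cauchy--Schwarz inequality for the $A$-valued inner product applied to the formally self-adjoint $B_V$:
\[
	\|B_V s\|_0^2 \ =\ \bigl\|\langle B_V^2 s,s\rangle\bigr\|_A \ \leq\ \|B_V^2 s\|_0 \|s\|_0.
\]

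Third, I would derive an operator-theoretic lower bound from \eqref{B_V^2>c>0}. For $s \in H^2$, positivity of $\langle Ts,s\rangle_0$ in $A$ and Cauchy--Schwarz yield
\[
	c\|s\|_0^2 \ \leq\ \|\langle Ts,s\rangle\|_A \ \leq\ \|Ts\|_0\|s\|_0,
\]
so $\|Ts\|_0 \geq c\|s\|_0$. Since $T$ is regular self-adjoint, this forces the spectrum of $T$ (in the $C^*$-module sense, see \cite[Chapter~10]{Lan95}) to lie in $[c,\infty)$. The continuous functional calculus for regular self-adjoint operators then provides $T^{-1} = g(T)\in\mathcal{L}_A(H^0)$, where $g(x) = 1/x$ is bounded on $[c,\infty)$; by construction $T^{-1}$ maps $H^0$ bijectively onto $\dom(T) = H^2$. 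Finally, boundedness as an operator into $H^2$ is automatic from the norm equivalence of Step~2: for $w\in H^0$ and $s := T^{-1}w$,
\[
	\|s\|_{H^2} \ \leq\ C\bigl(\|s\|_0 + \|Ts\|_0\bigr) \ =\ C\bigl(\|T^{-1}w\|_0 + \|w\|_0\bigr) \ \leq\ C\bigl(\|T^{-1}\|_{\mathcal{B}(H^0)} + 1\bigr)\|w\|_0.
\]

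The main obstacle I anticipate is Step~2: identifying the domain of the self-adjoint extension with $H^2$, rather than with some potentially larger completion. If one instead bypassed the functional calculus via the standard Hilbert-space surjectivity argument, the subtlety would migrate to verifying that a closed submodule of $H^0$ with trivial orthogonal complement is all of $H^0$, which is not automatic for Hilbert $C^*$-modules; using regular self-adjointness plus functional calculus sidesteps this point.
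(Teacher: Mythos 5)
Your approach is correct in essence, but it takes a genuinely shorter route than the paper. The paper's proof is a three-step bootstrap: it first proves invertibility of $B_V^2+\mu^2$ (Proposition~\ref{P:invertibility of B_V^2+lambda^2}, using Theorem~\ref{T:H_mu essentially self-adjoint} and then Ebert's invertibility criterion for regular positive operators), then deduces invertibility of $B_V^2+\mu^2+f$ for $\mu^2>\|f\|_\infty$ via a Neumann series (Lemma~\ref{L:invertibility of B_V^2+lambda^2+f}), and finally writes $B_V^2+f=\big[\Id-\mu^2(B_V^2+\mu^2+f)^{-1}\big](B_V^2+\mu^2+f)$ and sums another Neumann series using hypothesis~\eqref{B_V^2>c>0}. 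You bypass both Neumann series by applying Theorem~\ref{T:H_mu essentially self-adjoint} directly to $\Pi=f\cdot\id_S$, which is legitimate and cleaner; in effect you are doing in one step what the paper does for $\mu^2$ and then propagating by perturbation.

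Two points deserve attention. First, in Step~3 you pass from the norm lower bound $\|Ts\|_0\geq c\|s\|_0$ to ``spectrum in $[c,\infty)$''; for a regular self-adjoint operator on a Hilbert $C^\ast$-module the norm lower bound alone only locates the spectrum outside $(-c,c)$, not in $[c,\infty)$. What you should use is the \emph{positivity} $\langle Ts,s\rangle_0\geq c\langle s,s\rangle_0$, which is precisely the hypothesis of Ebert's Theorem~2.21 (the result the paper invokes in Proposition~\ref{P:invertibility of B_V^2+lambda^2}); this gives invertibility with positive bounded inverse directly. Second, in Step~4 you only establish \emph{boundedness} of $T^{-1}\colon H^0\to H^2$, but the conclusion asks for $T^{-1}\in\mathcal{L}_A(H^0,H^2)$, i.e.\ \emph{adjointability}, which is not automatic from boundedness in the Hilbert $C^\ast$-module setting. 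The paper devotes an explicit computation to this in the proof of Proposition~\ref{P:invertibility of B_V^2+lambda^2}. You can avoid that computation by noting that $B_V^2+f\colon H^2\to H^0$ is a bounded adjointable bijection (the former by the classical Mi\v{s}\v{c}enko--Fomenko fact cited in Subsection~\ref{SS:Bunke's approach}) and that the inverse of a bounded adjointable bijection between Hilbert $A$-modules is again adjointable, with adjoint $((B_V^2+f)^\ast)^{-1}$; but this should be stated, as the claim is not implied by the norm estimate you record.
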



Before proving this theorem, let us deduce some consequences.


\begin{corollary}\label{C:condition for ind_AB_V=0}
Suppose there is a constant $c>0$ such that
\begin{equation}\label{B_V^2 >0}
	\left<B_V^2s,s\right>_0\ \geq \ c\,\left<s,s\right>_0\,,\qquad\qquad s\in H^2\,.
\end{equation}
Then the class $\ind_AB_V$ vanishes.
\end{corollary}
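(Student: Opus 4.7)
The plan is to apply Theorem~\ref{T:B_V^2+f invertible} with $f\equiv 0$ and then recognize the resulting Kasparov representative as a degenerate cycle.

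First I would observe that the hypothesis~\eqref{B_V^2 >0} is identical to~\eqref{B_V^2>c>0} with $f\equiv 0\in C^\infty_c(M)$. Applying Theorem~\ref{T:B_V^2+f invertible} thus yields that $B_V^2$ is invertible with $(B_V^2)^{-1}\in\mathcal{L}_A(H^0)\cap\mathcal{L}_A(H^0,H^2)$. In particular $B_V^2$ is invertible at infinity in the sense of Definition~\ref{D:invertible at infinity} (one takes the function there to be identically zero), so Theorem~\ref{T:Bunke} supplies a well-defined index class
\[
	\ind_AB_V\ =\ \bigl[H^0,\,1,\,F\bigr]\ \in\ KK(\CC,A)\ =\ K_0(A),\qquad F\ :=\ B_V\bigl(B_V^2\bigr)^{-1/2}.
\]

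Next I would identify $F$ with the sign (phase) of $B_V$. Since $B_V$ is regular and self-adjoint and, by the first step, $B_V^2\geq c>0$, the spectrum of $B_V$ is contained in $(-\infty,-\sqrt{c}\,]\cup[\sqrt{c},+\infty)$. Using the scalar identity $t^{-1/2}=(2/\pi)\int_0^\infty(t+\lambda^2)^{-1}\,d\lambda$ for $t>0$ together with the bounded Borel functional calculus for regular self-adjoint operators on Hilbert $A$-modules, the norm-convergent integral of Remark~\ref{L:integral representation} (taken with $f\equiv 0$) coincides with $\sgn(B_V)$. In particular $F$ extends from $H^1$ to a bounded adjointable operator on $H^0$ with
\[
	F^2\ =\ \sgn(B_V)^2\ =\ 1\quad\text{on } H^0.
\]

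Finally, the representation of $\CC$ on $H^0$ is by scalar multiplication, so $[F,a\cdot 1]=0$ for every $a\in\CC$; combined with $F^2-1=0$, the triple $(H^0,1,F)$ is a \emph{degenerate} Kasparov cycle and therefore represents the zero element of $KK(\CC,A)$. This gives $\ind_AB_V=0$. The main obstacle is the identification of the integral of Remark~\ref{L:integral representation} with $\sgn(B_V)$ as an operator on $H^0$ (not merely on $H^1$, where the integral is a priori defined); the gap from zero in the spectrum of $B_V$ provided by Theorem~\ref{T:B_V^2+f invertible} applied with $f\equiv 0$ is precisely what makes $x\mapsto x/\sqrt{x^2}$ a bounded continuous function on $\spec(B_V)$ and thus makes this identification routine via functional calculus.
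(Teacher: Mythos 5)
Your proof is correct and follows essentially the same route as the paper. The paper's own proof is the one-line observation that, by Theorem~\ref{T:B_V^2+f invertible} with $f\equiv 0$, the operator $B_V^2$ is invertible, ``from which the thesis follows''; your argument simply makes explicit what that last clause is hiding, namely that with $f\equiv 0$ the operator $F=B_V(B_V^2)^{-1/2}$ from~\eqref{eq:KK-index of P_V} is the self-adjoint unitary $\sgn(B_V)$ (thanks to the spectral gap $\spec(B_V)\cap(-\sqrt{c},\sqrt{c})=\emptyset$), so $(H^0,1,F)$ is a degenerate Kasparov cycle representing $0\in K_0(A)$.
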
 


\begin{proof}
By Theorem~\ref{T:B_V^2+f invertible}, Condition~\ref{B_V^2 >0} implies that $B_V^2$ is invertible, from which the thesis follows.
\end{proof}


\subsection{Proof of Theorem~\ref{T:B^2 is invertible at infinity}}\label{Pf:B^2 invertible at infinity}
Let $P_V$ be the Callias-type operator associated to an admissible quadruple $(\Sigma,D,\Phi,V)$ over a complete Riemannian manifold $M$.
We have
\begin{equation}\label{eq:P_V^2+f}
	P_V^2\ =\ \left(\begin{array}{cc}D_V^2+\Phi_V^2+i\big([D_V,\Phi_V]\big)&0\vspace{0.2cm}\\
	0&D_V^2+\Phi_V^2-i\big([D_V,\Phi_V]\big)\end{array}\right)\,.
\end{equation}
By Conditions~(i) and~(ii) of Definition~\ref{D:Callias}, the commutator $\big[D,\Phi\big]$ is in $C^\infty(M;\End(\Sigma))$ and we can choose a compactly supported smooth function $f:M\rightarrow [0,\infty)$ such that
\begin{equation}\label{eq:B_V^2 geq c}
	\Phi^2(x)\,+f(x)\,-\,\big\|\big[D,\Phi\big](x)\big\|\ \geq\ c\,,\qquad\qquad x\in M\,,
\end{equation}
for some constant $c>0$.
In~\cite{BC} (see the proof of Lemma~8.2) it is shown that 
\begin{equation}\label{eq:[D_V,phi_V]=[D,phi]}
	\big[D_V,\Phi_V\big]\ =\ \big[D,\Phi\big]\tensor\id_V\,.
\end{equation}
From~\eqref{eq:P_V^2+f},~\eqref{eq:B_V^2 geq c}, and~\eqref{eq:[D_V,phi_V]=[D,phi]} we deduce that Condition~\eqref{B_V^2>c>0} is satisfied.
Now the thesis follows from Theorem~\ref{T:B_V^2+f invertible}
\hfill$\square$


\begin{corollary}\label{C:vanishing sufficient condition}
Let $P_V$ be a twisted Callias-type operator associated to an admissible quadruple $\big(\Sigma,D,\Phi,V\big)$ over a complete Riemannian manifold $M$. 
If $\Phi$ has empty essential support (see Definition~\ref{D:Callias}), then the class $\ind_A P_V$ vanishes.
\end{corollary}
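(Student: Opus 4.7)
The plan is to reduce this to Corollary~\ref{C:condition for ind_AB_V=0} applied to $P_V$ itself, by showing that empty essential support forces $P_V^2$ to be uniformly positive on $H^2$. Since $P_V$ is a formally self-adjoint Dirac-type operator on the Dirac bundle $\Sigma\oplus\Sigma$ (plus a potential), it fits the framework of Subsection~\ref{SS:twisted Dirac}, so the corollary applies with $B_V$ replaced by $P_V$.

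First, I would revisit the block computation of $P_V^2$ from formula~\eqref{eq:P_V^2+f}, whose diagonal entries are $D_V^2 + \Phi_V^2 \pm i[D_V,\Phi_V]$. Using the identity $[D_V,\Phi_V] = [D,\Phi]\tensor\id_V$ from~\eqref{eq:[D_V,phi_V]=[D,phi]}, together with the fact that $D$ and $\Phi$ are fiberwise self-adjoint, the endomorphism $\pm i[D_V,\Phi_V](x)$ is pointwise self-adjoint with operator norm equal to $\|[D,\Phi](x)\|$. In particular it is bounded below fiberwise by $-\|[D,\Phi](x)\|\cdot\id$.

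Next, I would invoke the hypothesis that $\Phi$ has empty essential support: this means inequality~\eqref{eq:ungraded admissible endomorphism} holds on all of $M$ with $K = \emptyset$, i.e. there exists $d > 0$ with
\begin{equation*}
    \Phi^2(x) - \|[D,\Phi](x)\| \ \geq\ d \qquad \text{for every } x \in M.
\end{equation*}
Combining this with the previous step yields the pointwise fiberwise estimate $\Phi_V^2(x) \pm i[D_V,\Phi_V](x) \geq d\cdot\id$ on each of the two diagonal blocks. Since $D_V^2$ is a non-negative self-adjoint operator on $H^0$, decomposing $s = (s_1, s_2) \in H^2 \oplus H^2$ and expanding gives
\begin{equation*}
    \langle P_V^2 s, s\rangle_0 \ =\ \|D_V s_1\|_0^2 + \|D_V s_2\|_0^2 + \sum_{\pm}\langle (\Phi_V^2 \pm i[D_V,\Phi_V])s_{\pm}, s_{\pm}\rangle_0 \ \geq\ d\,\langle s,s\rangle_0.
\end{equation*}

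Finally, I would apply Corollary~\ref{C:condition for ind_AB_V=0} to $P_V$ (taking the constant $c = d$) to conclude $\ind_A P_V = 0$. I do not anticipate any serious obstacle here, since the proof is essentially a repackaging of the computation that already appears in the proof of Theorem~\ref{T:B^2 is invertible at infinity} in Subsection~\ref{Pf:B^2 invertible at infinity}; the only new observation is that the auxiliary cutoff function $f$ from~\eqref{eq:B_V^2 geq c} can be taken to vanish identically once the admissibility estimate is global. The subtlest point to state carefully is that $\Phi_V^2 \pm i[D_V,\Phi_V]$, while involving an off-diagonal commutator, is fiberwise self-adjoint, so pointwise positivity in the Hilbert $A$-module sense really does yield the $H^0$-level coercivity above.
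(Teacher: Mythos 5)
Your proposal is correct and follows essentially the same route as the paper's proof: the paper simply observes that empty essential support makes Inequality~\eqref{eq:B_V^2 geq c} hold with $f=0$ and then cites Corollary~\ref{C:condition for ind_AB_V=0}, implicitly invoking the computation from the proof of Theorem~\ref{T:B^2 is invertible at infinity}, which you have spelled out explicitly. The only cosmetic slip is that the middle display should use the $A$-valued inner product $\left<D_V^2 s_i, s_i\right>_0 = \left<D_V s_i, D_V s_i\right>_0 \geq 0$ rather than the scalar $\|D_V s_i\|_0^2$, since the coercivity estimate required by Corollary~\ref{C:condition for ind_AB_V=0} is an ordering of $A$-valued quantities.
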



\begin{proof}
If $\Phi$ has empty essential support, then Inequality~\eqref{eq:B_V^2 geq c} holds with $f=0$.
Now the thesis follows from Corollary~\ref{C:condition for ind_AB_V=0}.
\end{proof}


The remaining part of this section is devoted to proving Theorem~\ref{T:B_V^2+f invertible}.


\subsection{Invertibility of $B_V^2+\mu^2$}\label{SS:B_V^2+lambda^2}
We fix a constant $\mu\in\RR\setminus\{0\}$ and study the bounded adjointable operator
\begin{equation}\label{eq:H_mu bounded}
	B_V^2+\mu^2\colon H^2\longrightarrow H^0\,.
\end{equation}
To this end, we need some information about the operator $B_V$.
We view $B_V$ as an unbounded operator on $H^0$ with initial domain $C^\infty_c(M;S\tensor V)$.
We make use of the following result.


\begin{theorem}[Hanke-Pape-Schick,~\cite{HPS15}]\label{T:H-P-S}
The minimal closure $\overline{B_V}$ of the operator $B_V$ is a regular self-adjoint operator.
It is the unique self-adjoint extension of $B_V$.
\end{theorem}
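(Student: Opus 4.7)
The plan is to adapt the classical Chernoff--Wolf argument for essential self-adjointness of Dirac-type operators on complete Riemannian manifolds to the Hilbert $C^\ast$-module setting. Three things need to be established: symmetry of $B_V$ on $C^\infty_c(M;S\tensor V)$, the coincidence of the minimal and maximal closures on $H^0$, and regularity of $\overline{B_V}$ in the sense of~\cite{Lan95}.

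Symmetry reduces to integration by parts: the connection $\nabla^V$ preserves the $A$-valued inner product, $\slashed{D}_V$ is formally self-adjoint, and $\Psi_V$ is self-adjoint by hypothesis, so the boundary terms vanish for compactly supported sections. Hence $\overline{B_V}\subseteq B_V^\ast$ and $\overline{B_V}$ is a densely defined closed symmetric operator on $H^0$.

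For the equality $\overline{B_V}=B_V^\ast$, I would use cut-off functions together with mollification in local trivializations of $V$. Because $M$ is complete, one can find $\chi_n\in C^\infty_c(M)$ with $0\leq\chi_n\leq 1$, $\chi_n\to 1$ pointwise, and $\sup_M|d\chi_n|\to 0$ (for example smoothings of rescaled distance functions from a fixed basepoint). For any $u\in\dom(B_V^\ast)$, pairing against test sections and using the skew-adjointness of Clifford multiplication by a real one-form yields the identity $B_V^\ast(\chi_n u)=\chi_n B_V^\ast u+c(d\chi_n)u$ as an equality in $H^0$, whose right-hand side converges to $B_V^\ast u$. Since $V$ is of finite type, the support of each $\chi_n u$ is covered by finitely many trivializing charts for $V$ and $S$, and a componentwise Friedrichs-type mollification in these charts produces $C^\infty_c$ approximations converging in graph norm. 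A diagonal argument then gives $\overline{B_V}=B_V^\ast$, and hence both essential self-adjointness in the Hilbert-space sense and uniqueness of the self-adjoint extension.

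The main obstacle is regularity in the Hilbert $C^\ast$-module sense, which demands that $\overline{B_V}\pm i$ be \emph{surjective} rather than merely have dense range; unlike the Hilbert-space case, this does not follow from self-adjointness via spectral theory. To overcome this, I would construct the wave propagator $e^{itB_V}$ on $H^0$ directly. In local trivializations of $V$, the equation $\partial_t\psi=iB_V\psi$ is a symmetric first-order hyperbolic system with coefficients in $\mathcal{L}_A(A^N)$; the standard energy estimate, applied fiberwise using the $C^\ast$-norm of $A$, gives existence and unit propagation speed. Completeness of $M$ then lets one patch the local solutions into a strongly continuous one-parameter group of unitaries on $H^0$, and the Laplace-transform formula $(\overline{B_V}\pm i)^{-1}u=\pm i\int_0^\infty e^{-t}\,e^{\mp itB_V}u\,dt$ produces a bounded adjointable inverse, yielding the regularity of $\overline{B_V}$.
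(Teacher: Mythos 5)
The paper does not prove this theorem itself --- it is cited from~\cite{HPS15} --- and the only closely related proof the paper gives, that of Theorem~\ref{T:G essentially self-adjoint} in Appendix~\ref{A:shroedinger-type}, takes a genuinely different route based on the Kaad--Lesch local-global principle. Your first two steps (symmetry, and $\overline{B_V}=B_V^\ast$ via cut-offs $\chi_n$ with $\|d\chi_n\|_\infty\to 0$ followed by Friedrichs mollification in trivializing charts) are sound in outline and reproduce the classical Chernoff argument in the finite-type $A$-bundle setting; you do tacitly use elliptic regularity in the Mi\v{s}\v{c}enko--Fomenko calculus to place $\chi_n u$ in $H^1_{\mathrm{loc}}$ before mollifying, but that is standard and fixable.

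The genuine gap is in your regularity step. You assert that ``the standard energy estimate \ldots gives existence'' of solutions to $\partial_t\psi=iB_V\psi$. An energy estimate gives uniqueness and an a priori bound on $\langle\psi(t),\psi(t)\rangle_A$; it does not give existence, and it is exactly the existence of the Cauchy propagator for a symmetric hyperbolic system with $\mathcal{L}_A(A^N)$-valued coefficients, as opposed to matrix-valued ones, that is nontrivial in the Hilbert $C^\ast$-module setting: the usual constructions (Galerkin truncation, compactness, spectral calculus for $B_V$) are not directly available, and the subsequent ``patching into a one-parameter unitary group'' presupposes this existence. As written, step three needs a reference or a real construction. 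By contrast, the strategy the paper uses in Appendix~\ref{A:shroedinger-type}, and which Ebert~\cite[Theorem~2.14]{Ebe16} uses to generalize Theorem~\ref{T:H-P-S} itself, sidesteps the wave group entirely: the Kaad--Lesch local-global principle (Theorem~\ref{T:KL}), combined with Ebert's identification of localizations with honest differential operators on Hilbert-space bundles $W^\rho$, reduces regularity and self-adjointness of $\overline{B_V}$ on the Hilbert $A$-module to ordinary essential self-adjointness of $(B_V)_\rho$ on Hilbert spaces, where regularity is automatic from the spectral theorem. Your cut-off argument from step two then settles each localized problem, and module-level regularity comes for free. I would recommend reorganizing the proof along those lines rather than trying to make the wave-group construction rigorous.
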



\begin{remark}
This theorem has been recently generalized by J.~Ebert to a larger class of first order differential operators acting on Hilbert $A$-bundles of finite type (see~\cite[Theorem~2.14]{Ebe16}).
\end{remark}


\begin{proposition}\label{P:invertibility of B_V^2+lambda^2}
The operator $B_V^2+\mu^2$ is invertible. 
Moreover, $\left(B_V^2+\mu^2\right)^{-1}\in \mathcal{L}_A(H^0)\cap \mathcal{L}_A(H^0, H^2)$.
\end{proposition}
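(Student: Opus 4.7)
The plan is to reduce invertibility of $B_V^2+\mu^2$ to functional calculus for the regular self-adjoint operator $\overline{B_V}$ furnished by Theorem~\ref{T:H-P-S}, and then to verify that the bounded inverse so obtained has the required mapping property into the Sobolev space of Subsection~\ref{SS:Sobolev}.

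First, since $\pm i\mu$ belong to the resolvent set of every regular self-adjoint operator on a Hilbert $A$-module, the resolvents $(\overline{B_V}\mp i\mu)^{-1}$ exist as bounded adjointable operators in $\mathcal{L}_A(H^0)$ with range $\dom(\overline{B_V})$, and their product
\[
	R\ :=\ (\overline{B_V}-i\mu)^{-1}(\overline{B_V}+i\mu)^{-1}\ \in\ \mathcal{L}_A(H^0)
\]
is a two-sided inverse of $\overline{B_V}^2+\mu^2$ on $\dom(\overline{B_V}^2)$, with $\|R\|_{\mathcal{B}(H^0)}\le\mu^{-2}$. Moreover the factorization together with self-adjointness of $\overline{B_V}$ gives $R^\ast=R$.

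Second, I would derive the $\|\cdot\|_2$ bound on $Rw$ directly. For $w\in H^0$ and $u:=Rw$, the identity $(\overline{B_V}^2+\mu^2)u=w$ yields $\|B_V^2u\|_0\le \|w\|_0+\mu^2\|u\|_0\le 2\|w\|_0$, while self-adjointness of $\overline{B_V}$ combined with the $A$-valued Cauchy--Schwarz inequality gives
\[
	\|B_Vu\|_0^2\ =\ |\langle B_V^2u,u\rangle_0|_A\ =\ |\langle w-\mu^2u,u\rangle_0|_A\ \le\ 2\mu^{-2}\|w\|_0^2.
\]
Combining these with the trivial estimate $\|u\|_0\le\mu^{-2}\|w\|_0$ yields $\|u\|_2\le C_\mu\|w\|_0$ for some constant depending only on $\mu$.

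The main obstacle is the qualitative statement that the element $u=Rw$ actually lies in the Sobolev completion $H^2$ rather than merely in the abstract domain $\dom(\overline{B_V}^2)$; equivalently, that $C^\infty_c(M;S\tensor V)$ is a core for $\overline{B_V}^2$ in its graph norm. The inclusion $H^2\hookrightarrow\dom(\overline{B_V}^2)$ is immediate from closedness of $\overline{B_V}^2$. For the reverse inclusion I would exploit completeness of $M$ to produce smooth exhaustion cutoffs $\chi_n\colon M\to[0,1]$ whose first and second derivatives tend to zero uniformly, and combine this with local elliptic regularity for the second-order elliptic operator $B_V^2+\mu^2$ (applied fibrewise in local trivializations of $V$) together with Friedrichs mollification to approximate an arbitrary $u\in\dom(\overline{B_V}^2)$ by elements of $C^\infty_c$ in the $\|\cdot\|_2$-norm; the error terms from the commutators $[B_V,\chi_n]$ and $[B_V^2,\chi_n]$ vanish in the limit. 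With this identification in place, the preceding bound upgrades to $R\in\mathcal{L}_A(H^0,H^2)$, which completes the proof.
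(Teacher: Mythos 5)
Your proposal takes a genuinely different entry point from the paper's and hits the same fundamental difficulty, but leaves two gaps.

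First, the core issue. You start from Theorem~\ref{T:H-P-S} (essential self-adjointness of the first-order operator $B_V$) and build the resolvent $R=(\overline{B_V}-i\mu)^{-1}(\overline{B_V}+i\mu)^{-1}$ inverting $\overline{B_V}^2+\mu^2$, which is fine. You then correctly identify that the crux is showing $\dom\bigl(\overline{B_V}^2\bigr)=H^2$, i.e.\ that $C^\infty_c(M;S\tensor V)$ is a core for $\overline{B_V}^2$ — this is precisely the essential self-adjointness of $B_V^2$. But the remedy you sketch, cutoffs plus ``local elliptic regularity applied fibrewise in local trivializations of $V$'' and Friedrichs mollification, does not obviously execute: the fibers of $V$ are infinite-dimensional Hilbert $A$-modules, so classical scalar elliptic regularity does not apply fibrewise. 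One needs either the Mi\v{s}\v{c}enko--Fomenko pseudodifferential calculus for $A$-linear operators or the Kaad--Lesch local--global principle to reduce to honest Hilbert spaces before running the cutoff argument. This is exactly the content of Theorem~\ref{T:G essentially self-adjoint} in Appendix~\ref{A:shroedinger-type}, and the paper uses it (via Theorem~\ref{T:H_mu essentially self-adjoint}) to get self-adjointness and regularity of $\overline{B_V^2+\mu^2}$ \emph{directly}, after which the identification of the domain with $H^2$ is immediate because the graph norm of $B_V^2+\mu^2$ is equivalent to $\|\cdot\|_2$ on $C^\infty_c$. Your route therefore does not avoid the hard analytic input; it rephrases it and then underestimates what it takes.

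Second, the last sentence — ``the preceding bound upgrades to $R\in\mathcal{L}_A(H^0,H^2)$'' — is a genuine gap. In the Hilbert $C^\ast$-module setting $\mathcal{L}_A(H^0,H^2)$ is the space of bounded \emph{adjointable} operators, and a bounded $A$-linear operator between Hilbert $A$-modules need not be adjointable. Your norm estimate $\|Rw\|_2\le C_\mu\|w\|_0$ gives boundedness only. The paper closes this by an explicit computation of the adjoint of $(B_V^2+\mu^2)^{-1}\colon H^0\to H^2$, using self-adjointness of $B_V$ on $H^1$ and self-adjointness (via positivity) of the inverse in $\mathcal{L}_A(H^0)$; some such argument is needed and is missing from your proof.
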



\begin{proof}
Consider the minimal closure 
\begin{equation}\label{eq:H_mu closure}
	\overline{B_V^2+\mu^2}\colon\Dom\big(\overline{B_V^2+\mu^2}\big)\longrightarrow H^0
\end{equation}
of the unbounded operator $B_V^2+\mu^2\colon C^\infty_c(M;S\tensor V)\longrightarrow H^0$.
By Theorem~\ref{T:H_mu essentially self-adjoint}, the operator $\overline{B_V^2+\mu^2}$ is self-adjoint and regular.
Moreover, it is strictly positive, since $\left<\overline{B_V^2+\mu^2}u,u\right>\geq\mu^2\left<u,u\right>$ for $u\in \Dom\big(\overline{B_V^2+\mu^2}\big)$.
By~\cite[Theorem~2.21]{Ebe16}, $\overline{B_V^2+\mu^2}$ is invertible and $\left(\overline{B_V^2+\mu^2}\right)^{-1}$ is a positive element of $\mathcal{L}_A(H^0)$.

We now compare operators~\eqref{eq:H_mu bounded} and~\eqref{eq:H_mu closure}.
Recall that the domain of $\overline{B_V^2+\mu^2}$ is the closure of $C^\infty_c(M;S\tensor V)$ in $H^0$ with respect to the graph norm
\[
	\|u\|_\Gamma\,:=\,\sqrt{\|\left(B_V^2+\mu^2\right)u\|^2+\|u\|^2}\,,\qquad u\in C^\infty_c(M;S\tensor V)\,.
\]
Since the norms $\|\cdot\|_2$ and $\|\cdot\|_\Gamma$ are equivalent on $C^\infty_c(M;S\tensor V)$, then $\Dom(\overline{B_V^2+\mu^2})$ and $H^2$ coincide as sets and are isomorphic as Banach spaces.
It follows that the operators~\eqref{eq:H_mu bounded} and~\eqref{eq:H_mu closure} coincide as maps from $H^2=\Dom\big(\overline{B_V^2+\mu^2}\big)$ to $H^0$.
Therefore, by the first part of the proof, the operator~\eqref{eq:H_mu bounded} is invertible and its inverse is a positive element of $\mathcal{L}_A(H^0)$.
It remains to show that $\left(B_V^2+\mu^2\right)^{-1}$ is in $\mathcal{L}_A(H^2,H^0)$.

By the open mapping theorem, the $A$-linear operator $\left(B_V^2+\mu^2\right)^{-1}\colon H^0\longrightarrow H^2$ is bounded.
To show adjointability, we make use of the bounded adjointable operator $B_V\colon H^1\longrightarrow H^0$.
Since $H^1$ coincides with $\Dom(\overline{B_V})$, by Theorem~\ref{T:H-P-S} we have
\begin{equation}\label{eq:P_V is SA}
	\left<B_Vs,s\right>_0\ =\ \left<s,B_Vs\right>_0\,,\qquad\qquad s\in H^1\,.
\end{equation}
Moreover, since $\left(B_V^2+\mu^2\right)^{-1}$ is positive in $\mathcal{L}_A(H^0)$, by \cite[proof of Lemma~4.1]{Lan95} it is also self-adjoint, i.e.
\begin{equation}\label{eq:Q is s.a.}
	\left< \left(B_V^2+\mu^2\right)^{-1}u,v\right>_0\ =\  \left< u,\left(B_V^2+\mu^2\right)^{-1}v\right>_0\,,
	\qquad\qquad u,\,v\in H^0\,.
\end{equation}
We are now ready to show that $\left(B_V^2+\mu^2\right)^{-1}$ is adjointable as operator $H^0\rightarrow H^2$.
Fix $u\in H^0$, $w\in H^2$ and set $Q:=\left(B_V^2+\mu^2\right)^{-1}$.
Observe that $B_Vw\in H^1$ and $B_V^2w\in H^0$.
Using~\eqref{eq:P_V is SA} and~\eqref{eq:Q is s.a.}, we obtain
\[
\begin{aligned}
	\left< Qu,w\right>_2\ =\ &
	\left< B_V^2Qu,B_V^2w\right>_0\,+\,\left< B_VQu,B_Vw\right>_0\,+\,\left< Qu,w\right>_0\vspace{0.2cm}\\
	=\ &\left< \left(B_V^2\,+\,\mu^2\right)Qu,B_V^2w\right>_0\,+\,\left(1-\mu^2\right)\left< Qu,B_V^2w\right>_0\,+\,
	\left< u,Qw\right>_0\vspace{0.2cm}\\
	=\ &\left< u,B_V^2w\right>_0\,+\,\left(1-\mu^2\right)\left< u,QB_V^2w\right>_0\,+\,
	\left< u,Qw\right>_0\vspace{0.2cm}\\
	=\ &\left< u,\left\{ B_V^2\,+\,\left(1-\mu^2\right)QB_V^2\,+\,Q\right\}w\right>_0\,,
\end{aligned}
\]
from which it follows that $Q=\left(B_V^2+\mu^2\right)^{-1}$ is adjointable also as operator $H^0\rightarrow H^2$.
\end{proof}


\subsection{Invertibility at infinity of $B_V^2$}\label{SS:invertibilty of B_V^2+f}
In this subsection we conclude the proof of Theorem~\ref{T:B_V^2+f invertible}.


\begin{lemma}\label{L:invertibility of B_V^2+lambda^2+f}
Let $\mu$ be a nonzero real number and let $f\in C^\infty_c(M)$ be such that $\mu^2>\|f\|_\infty$.
Then the operator $B_V^2+\mu^2+f$ is invertible. Moreover, $\left(B_V^2+\mu^2+f\right)^{-1}\in \mathcal{L}_A(H^0)\cap \mathcal{L}_A(H^0, H^2)$.
\end{lemma}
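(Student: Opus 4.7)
The plan is to realise $B_V^2 + \mu^2 + f$ as a multiplicative perturbation of $B_V^2 + \mu^2$, for which Proposition~\ref{P:invertibility of B_V^2+lambda^2} already supplies an inverse, and then to invert the correction factor via a Neumann series. The smallness hypothesis $\mu^2 > \|f\|_\infty$ is precisely what makes this series converge.

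First I would set $Q := (B_V^2 + \mu^2)^{-1} \in \mathcal{L}_A(H^0) \cap \mathcal{L}_A(H^0, H^2)$. Since $\overline{B_V^2 + \mu^2}$ is a positive regular self-adjoint operator on $H^0$ bounded below by $\mu^2$, its inverse satisfies $0 \leq Q \leq \mu^{-2} I$ in $\mathcal{L}_A(H^0)$, hence $\|Q\|_{\mathcal{B}(H^0)} \leq 1/\mu^2$. Pointwise multiplication by the bounded real-valued function $f$ defines a self-adjoint operator $M_f \in \mathcal{L}_A(H^0)$ of norm at most $\|f\|_\infty$. Therefore $K := M_f Q \in \mathcal{L}_A(H^0)$ satisfies $\|K\|_{\mathcal{B}(H^0)} \leq \|f\|_\infty / \mu^2 < 1$, and the Neumann series $\sum_{k \geq 0}(-K)^k$ converges in $\mathcal{L}_A(H^0)$ to $(I+K)^{-1}$.

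Next, a direct calculation on $u \in H^2$ yields the factorisation
\[
    B_V^2 + \mu^2 + f \;=\; (I + K)\,(B_V^2 + \mu^2)
\]
as bounded adjointable operators $H^2 \to H^0$: indeed $K(B_V^2+\mu^2)u = M_f Q (B_V^2+\mu^2) u = M_f u = fu$. Both factors on the right are invertible, so $B_V^2 + \mu^2 + f$ is invertible with inverse $Q(I + K)^{-1}\colon H^0 \to H^2$. Adjointability in both $\mathcal{L}_A(H^0)$ and $\mathcal{L}_A(H^0, H^2)$ follows at once: $(I+K)^{-1}$ is adjointable on $H^0$ by the Neumann series, and $Q$ is adjointable in both senses by Proposition~\ref{P:invertibility of B_V^2+lambda^2}, so the composition inherits both properties.

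I do not anticipate any serious obstacle. The one point deserving a moment of care is the operator norm estimate $\|Q\|_{\mathcal{B}(H^0)} \leq 1/\mu^2$, which rests on the standard Hilbert $C^\ast$-module fact that a positive adjointable operator $S$ with $S \leq c I$ has operator norm at most $c$; once this is in hand, the remaining steps are a textbook Neumann series argument coupled with careful bookkeeping of the Sobolev levels on which each operator acts.
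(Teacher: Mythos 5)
Your proposal is correct and follows essentially the same route as the paper: factor $B_V^2+\mu^2+f=(\Id+f(B_V^2+\mu^2)^{-1})(B_V^2+\mu^2)$, use the lower bound $\mu^2$ on $B_V^2+\mu^2$ together with $\mu^2>\|f\|_\infty$ to get $\|f(B_V^2+\mu^2)^{-1}\|_{\mathcal{B}(H^0)}<1$, and invert the first factor by a Neumann series, inheriting adjointability on each Sobolev level from Proposition~\ref{P:invertibility of B_V^2+lambda^2}. Your statement of the norm bound $\|Q\|_{\mathcal{B}(H^0)}\le 1/\mu^2$ is actually cleaner than the paper's display~\eqref{eq:norm (B_V^2+lambda^2)^-1}, which contains an evident typographical slip (the bound should read $<1/\|f\|_\infty$), but the underlying argument is identical.
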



\begin{proof}
By Proposition~\ref{P:invertibility of B_V^2+lambda^2}, $B_V^2+\mu^2$ is invertible and the inverse is in both $\mathcal{L}_A(H^0)$ and $\mathcal{L}_A(H^0,H^2)$.
In particular, we can write
\begin{equation}\label{eq:decomposition of B_V^2+lambda^2+f}
	B_V^2+\mu^2+f\ =\ \left\{ \Id+f\left( B_V^2+\mu^2\right)^{-1}\right\} \left( B_V^2+\mu^2\right)\,.
\end{equation}
Since $\mu^2>\|f\|_\infty$, we have
\[
	\left< \left(B_V^2+\mu^2\right)s,s\right>_0\ \geq\ \mu^2\left< s,s\right>_0\ >\ \|f\|_\infty\left< s,s\right>_0\,,
	\qquad\qquad s\in H^2\,.
\]
By the Cauchy-Schwarz inequality for Hilbert $A$-modules, we obtain
\begin{equation}\label{eq:norm (B_V^2+lambda^2)^-1}
	\big\|\big( B_V^2+\mu^2\big)^{-1}\big\|_{\mathcal{B}(H^0)}\ <\ \|f\|_\infty\,.
\end{equation}
Since $\left(B_V^2+\mu^2\right)^{-1}\in \mathcal{L}_A(H^0)$ and $f\in L^\infty(M)$, then also $f\left(B_V^2+\mu^2\right)^{-1}\in \mathcal{L}_A(H^0)$ and by \eqref{eq:norm (B_V^2+lambda^2)^-1} we deduce
\[
	\left\|\,f\,\big( B_V^2+\mu^2\big)^{-1}\right\|_{\mathcal{B}(H^0)}\ <\ 1\,.
\]
Therefore, the operator $\Id+f\left(B_V^2+\mu^2\right)^{-1}$ is invertible with bounded inverse given by the Neumann series
\begin{equation*}\label{eq:neumann series 2}
	\sum_{k=0}^\infty \,(-1)^k\,\left\{f\left(B_V^2+\mu^2\right)^{-1}\right\}^k\,.
\end{equation*}
Since this series converges in norm and each summand is adjointable, it defines an element in $\mathcal{L}_A(H^0)$.
Since $\big(B_V^2+\mu^2\big)^{-1}$ is in $\mathcal{L}_A(H^0)\cap \mathcal{L}_A(H^0,H^2)$, from \eqref{eq:decomposition of B_V^2+lambda^2+f} we finally deduce that $B_V^2+\mu^2+f$ is invertible with inverse in $\mathcal{L}_A(H^0)\cap \mathcal{L}_A(H^0,H^2)$ given by the norm convergent series
\[
	\left(B_V^2+\mu^2\right)^{-1} \sum_{k=0}^\infty \,(-1)^k\,\left\{f\left(B_V^2+\mu^2\right)^{-1}\right\}^k .
\]
\end{proof}


\subsection{Proof of Theorem~\ref{T:B_V^2+f invertible}}
Choose a function $f\in C^\infty_c(M)$ and a constant $c>0$ such that Condition~\eqref{B_V^2>c>0} is satisfied.
We want to show that $B_V^2+f$ is invertible and $(B_V^2+f)^{-1}\in\mathcal{L}_A(H^0)\cap \mathcal{L}_A(H^0, H^2)$.

Pick $\mu\neq 0$ such that $\mu^2>\|f\|_\infty$. 
By Lemma~\ref{L:invertibility of B_V^2+lambda^2+f}, the operator $B_V^2+\mu^2+f$ is invertible and $\left(B_V^2+\mu^2+f\right)^{-1}\in \mathcal{L}_A(H^0)\cap \mathcal{L}_A(H^0, H^2)$. In particular, we can write
\begin{equation}\label{eq:neumann series of D^2+f}
	B_V^2+f\ =\ \left[\Id\,-\,\mu^2\left( B_V^2+\mu^2+f\right)^{-1}\right] \left(B_V^2+\mu^2+f\right)\,.
\end{equation}
By Condition~\eqref{B_V^2>c>0}, we have
\[
	\left<\left(B_V^2+\mu^2+f\right)s,s\right>_0\ \geq\ \left(\mu^2+c\right)\,\left<s,s\right>_0\,.
\]
By the Cauchy-Schwarz inequality for Hilbert $A$-modules, we obtain
\begin{equation}\label{eq:norm (B_V^2+f+lambda^2)^-1}
	\left\|\mu^2\left(B_V^2+\mu^2+f\right)^{-1}\right\|_{\mathcal{B}(H^0)}\ \leq\  \frac{\mu^2}{\mu^2+c} \ <\ 1\,.
\end{equation}
Using a Neumann series in the same way as in the proof of Lemma~\ref{L:invertibility of B_V^2+lambda^2+f}, from~\eqref{eq:neumann series of D^2+f} and~\eqref{eq:norm (B_V^2+f+lambda^2)^-1} we deduce the thesis.
\hfill$\square$

\section{Some properties of the index of twisted Callias-type operators}\label{S:properties of the index}
We establish some properties of the index class of twisted Callias-type operators.
These properties will be used to do the deformations and ``cut-and-paste" constructions of Section~\ref{S:reduction}.

Let $M$, $S$, $V$ be as in Subsection~\ref{SS:twisted Dirac} and let $B_V\in\Diff^1(M;S\tensor V)$ be a twisted formally self-adjoint Dirac-type operator.
Let $\Psi\in C^\infty(M;\End(S))$ be a self-adjoint potential and consider the operator
\begin{equation}\label{eq:family invertible at infty}
	B_t\ :=\ B_V\,+\,t\,\Psi_V\,,\qquad\qquad t_0\,\leq\, t\,\leq\,t_1\,,
\end{equation}
where $\Psi_V:=\Psi\tensor \id_V$.
Denote by $H^j$ the $j$-th Sobolev space defined by the operator $B_V$ (see Subsection~\ref{SS:Sobolev}).
We make the following assumption.
\begin{itemize}
	\item[\textbf{(F.1)}] The potential $\Psi_V$ defines an operator in $\mathcal{L}_A(H^j)$, for $j=0,1$.
\end{itemize}
By~\textbf{(F.1)}, $B_t$ and the anticommutator $\left\{B_V,\Psi_V\right\}=B_V\,\Psi_V\,+\,\Psi_V\,B_V$ are in $\mathcal{L}_A(H^1,H^0)$ and $B_t^2$ is in $\mathcal{L}_A(H^2,H^0)$.
We also make the follwing assumption.
\begin{itemize}
	\item[\textbf{(F.2)}] The operators $\{B_t^2\}_{t_0\leq t\leq t_1}$ are \emph{uniformly} invertible at infinity, 
		i.e. there exists a compactly supported smooth function $f:M\rightarrow [0,\infty)$
		such that, for $t_0\leq t\leq t_1$, the operator $B_t^2+f$ is invertible with 
		$\big(B_t^2+f\big)^{-1}\in\mathcal{L}_A(H^0,H^2)$.
\end{itemize}


\begin{proposition}\label{P:stability of the index}
Suppose that Conditions~\textbf{(F.1)} and~\textbf{(F.2)} are satisfied.
Then the class $\ind_AB_t\in K_0(A)$ is independent of $t\in [t_0,t_1]$.
\end{proposition}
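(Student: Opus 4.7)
The plan is to construct an operator homotopy of bounded Kasparov modules joining the cycles representing $\ind_A B_{t_0}$ and $\ind_A B_{t_1}$, and then invoke the homotopy invariance of Kasparov's $KK$-theory. Specifically, with $f$ the compactly supported function furnished by hypothesis~\textbf{(F.2)}, Theorem~\ref{T:Bunke} yields for each $t \in [t_0, t_1]$ a bounded Kasparov module $(H^0, 1, F_t)$, where
\[
F_t \,:=\, B_t \bigl(B_t^2 + f\bigr)^{-1/2} \,\in\, \mathcal{L}_A(H^0),
\]
and whose class in $KK(\CC, A) = K_0(A)$ equals $\ind_A B_t$. The whole task reduces to proving that the path $t \mapsto F_t$ is norm-continuous in $\mathcal{L}_A(H^0)$.

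First I would decompose
\[
F_t - F_s \,=\, (t-s)\,\Psi_V\,(B_t^2+f)^{-1/2} \,+\, B_s\bigl[(B_t^2+f)^{-1/2} - (B_s^2+f)^{-1/2}\bigr],
\]
using $B_t - B_s = (t-s)\Psi_V$. The first summand is $O(|t-s|)$ in operator norm by~\textbf{(F.1)}. For the second, I would use the integral representation
\[
(B_\tau^2+f)^{-1/2} \,=\, \frac{2}{\pi}\int_0^\infty R_\tau(\lambda)\,d\lambda, \qquad R_\tau(\lambda) := (B_\tau^2+f+\lambda^2)^{-1},
\]
which is valid as a norm-convergent integral provided $B_\tau^2+f$ is self-adjoint with spectrum bounded below by a positive constant $c$ uniform in $\tau$, a fact to be extracted from~\textbf{(F.2)} combined with $B_\tau^2 \geq 0$ and $f \geq 0$. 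Applying the resolvent identity $R_t - R_s = R_t(B_s^2 - B_t^2) R_s$ together with the expansion $B_s^2 - B_t^2 = (s-t)\{B_V,\Psi_V\} + (s^2-t^2)\Psi_V^2$, whose boundedness $H^1 \to H^0$ follows from~\textbf{(F.1)}, one obtains an integrand with norm bounded by $C|t-s|\,\lambda^{-2}$, via the functional-calculus estimates $\|B_s R_t(\lambda)\|_{\mathcal{B}(H^0)} = O(\lambda^{-1})$ and $\|R_s(\lambda)\|_{\mathcal{B}(H^0, H^1)} = O(\lambda^{-1})$ that follow from the uniform lower bound $c > 0$. Integrating in $\lambda$ delivers the bound $\|F_t - F_s\|_{\mathcal{B}(H^0)} \leq C|t-s|$.

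The main technical obstacle is extracting a genuine uniform-in-$t$ positive lower bound $B_t^2 + f \geq c > 0$ from the uniform invertibility hypothesis~\textbf{(F.2)}, since only then do the functional-calculus majorizations apply simultaneously for all $t \in [t_0, t_1]$; this should however be immediate from open-mapping type considerations applied to the bounded inverse $(B_t^2+f)^{-1} \in \mathcal{L}_A(H^0)$ and the compactness of the parameter interval. Once norm continuity of $t \mapsto F_t$ is in hand, $\bigl(H^0, 1, F_t\bigr)_{t\in[t_0,t_1]}$ is an operator homotopy of bounded Kasparov modules, and the homotopy invariance of $KK$-theory (see for example \cite[Chapter~17]{Bla98}) gives $\ind_A B_t = [H^0, 1, F_t] \in K_0(A)$ constant on $[t_0, t_1]$.
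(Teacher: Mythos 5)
Your proof is correct and follows essentially the same route as the paper: both represent $F_t = B_t(B_t^2+f)^{-1/2}$ via the norm-convergent integral $\frac{2}{\pi}\int_0^\infty B_t R_t(\lambda)\,d\lambda$, expand the difference $F_t - F_s$ using the resolvent identity and the factorization $B_t^2 - B_s^2 = (t-s)\{B_V,\Psi_V\} + (t^2-s^2)\Psi_V^2$, and derive a Lipschitz bound $\|F_t - F_s\| \leq C|t-s|$ from uniform estimates $\|R_t(\lambda)\| \lesssim (d+\lambda^2)^{-1}$ and $\|B_t R_t(\lambda)\| \lesssim (d+\lambda^2)^{-1/2}$, concluding by operator-homotopy invariance of $KK$. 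The only substantive point you flag as an ``obstacle'' --- extracting a uniform-in-$t$ spectral lower bound $d>0$ --- is exactly what the paper settles by showing $d_t := \inf\sigma(B_t^2+f)$ varies continuously in $t$ (via $\|B_s - B_t\|_\infty \leq |s-t|\|\Psi\|_\infty$) and is positive for each $t$ by Bunke's Lemma~1.4, so the infimum over the compact interval is strictly positive; your ``open-mapping plus compactness'' sketch is a workable substitute once one supplies this continuity, and you should also note that the integrand bound is $C|t-s|(d+\lambda^2)^{-1}$ rather than $C|t-s|\lambda^{-2}$ (the latter is not integrable near $\lambda=0$).
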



Our analysis makes use of the following lemma.


\begin{lemma}[Relllich lemma]\label{L:Rellich lemma}
Let $\nu\in C^\infty_c(M)$. Then the multiplication operator $\nu:H^l\rightarrow H^k$ is $A$-compact for every $k<l$.
\end{lemma}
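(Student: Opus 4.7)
The plan is to localize using a compactly supported cutoff, trivialize $V$ over a compact neighborhood of $\supp\nu$ as a direct summand of a free Hilbert $A$-module bundle, and then invoke the classical Rellich--Kondrachov theorem together with the fact that tensoring a $\CC$-compact operator of Hilbert spaces with the identity on $A^N$ produces an $A$-compact operator.

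First I would pick a compact submanifold-with-boundary $K \subset M$ whose interior contains $\supp\nu$, and a cutoff $\chi \in C^\infty_c(M)$ supported in $\Int K$ and equal to $1$ on a neighborhood of $\supp\nu$. Multiplication by $\chi$ extends to a bounded adjointable operator $\chi \colon H^j \to H^j$ for each $j$, because iterated commutators with $B_V$ are lower order differential operators with compactly supported smooth coefficients (formal induction using the Leibniz rule). Since $\nu = \nu\chi$ and the $A$-compact operators form a two-sided ideal in $\mathcal{L}_A(H^l,H^k)$, it is enough to prove that $\chi \colon H^l \to H^k$ is $A$-compact.

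Second, since $V$ is of finite type and $K$ is compact, the Hilbert $C^\ast$-module Serre--Swan theorem furnishes an integer $N$ and adjointable bundle maps $j \colon V|_K \hookrightarrow \underline{A^N}$, $p \colon \underline{A^N} \to V|_K$ with $p \circ j = \id_{V|_K}$. Through $j$ and $p$, the operator $\chi$ factors through the inclusion
\[
\iota \colon H^l_K\bigl(M; S \tensor \underline{A^N}\bigr) \hookrightarrow H^k_K\bigl(M; S \tensor \underline{A^N}\bigr)
\]
of sections with support in $K$, so it suffices to show $\iota$ is $A$-compact. On sections supported in $K$, the trivial bundle gives a canonical decomposition $H^j_K\bigl(M; S \tensor \underline{A^N}\bigr) \iso H^j_K(M; S) \tensor_\CC A^N$, and under this identification $\iota = \iota_0 \tensor \id_{A^N}$, with $\iota_0 \colon H^l_K(M;S) \hookrightarrow H^k_K(M;S)$ the classical inclusion.

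The classical Rellich--Kondrachov theorem gives that $\iota_0$ is a $\CC$-compact operator of Hilbert spaces. A $\CC$-rank-one operator $|x\rangle\langle y|$ becomes the $A$-finite rank operator $\sum_{i=1}^N |x \tensor e_i\rangle\langle y \tensor e_i|_A$ under tensoring with $\id_{A^N}$, and norm limits transfer, so $\iota = \iota_0 \tensor \id_{A^N}$ is $A$-compact. Composed with $\chi \cdot j$ on the left and $p$ on the right, this gives the $A$-compactness of $\chi$ and hence of $\nu$. The main obstacle I expect is the norm-comparison step implicit in the identification above: identifying the $B_V$-defined norm $\|\cdot\|_j$ with a standard Sobolev norm on sections supported in the compact set $K$ so that classical Rellich applies. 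This rests on elliptic regularity and Garding's inequality for the Dirac-type operator $B_V$, carefully transferred from the scalar setting to the Hilbert $A$-module bundle setting.
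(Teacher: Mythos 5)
The paper itself does not prove this lemma; it attributes it to the literature, with the remark immediately following it reading ``In the $C^\ast$-algebra setting, it was first proved in \cite{FM80}. We use the formulation given in \cite{Bun95}.'' Your plan---localize to a compact $K$, realize $V|_K$ as a complemented subbundle of a trivial Hilbert $A$-bundle $\underline{A^N}$, identify the Sobolev spaces of $K$-supported sections with external tensor products $H^j_K(M;S)\otimes A^N$, and import classical Rellich--Kondrachov via $\iota_0\otimes\id_{A^N}$---is precisely the route taken in those references, so the strategy is sound and is not a genuinely different argument from the one the paper is implicitly relying on.

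Two points beyond the norm-comparison issue that you already flag. First, that step really is the crux, and is heavier than the phrasing ``elliptic regularity and Garding's inequality'' suggests: equivalence, on sections supported in $K$, of the $B_V$-defined norm $\|\cdot\|_j$ with a standard Sobolev norm on the trivialized bundle requires the Garding inequality of the Mi\v{s}\v{c}enko--Fomenko elliptic calculus for $A$-linear operators, not merely the commutator estimates you invoke for the boundedness of multiplication by $\chi$. Second, and subtler: norm equivalence only gives a bounded Banach-space isomorphism between the two completions, whereas your factorization $\chi = p\circ\iota\circ(\chi\cdot j)$ needs $\chi\cdot j$ and $p$ to be \emph{adjointable} between the respective Hilbert $A$-modules. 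The $A$-compact operators form an ideal in the adjointable operators, not in all bounded $A$-linear maps; precomposing a compact operator with a merely bounded $A$-linear map need not remain compact. In the Hilbert $C^\ast$-module world a bounded bijection need not be adjointable, so one has to exhibit the adjoint of the identification explicitly, e.g.\ through the parametrix construction of~\cite{FM80}. Finally, your initial pass from $\nu$ to the auxiliary cutoff $\chi$ is harmless but redundant: $\nu$ is itself compactly supported and smooth and can be treated directly.
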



\noindent This lemma provides a basic set of $A$-compact operators.
In the $C^\ast$-algebra setting, it was first proved in \cite{FM80}. 
We use the formulation given in \cite{Bun95}.


By Condition~\textbf{(F.2)} and Theorem~\ref{T:Bunke}, the class $\ind_AB_t\in K_0(A)$ is represented by the bounded Kasparov module $(H^0,1,F_t)$, where
\begin{equation}\label{eq:operator F_t}
	F_t\ :=\ B_t\,\big(B_t^2+f\big)^{-1/2}
\end{equation}
is defined through Formula~\eqref{eq:integral representation} and where $f$ is as in Condition~\textbf{(F.2)}.
We also consider the operator
\[
	R_t(\lambda)\ :=\ \big(B_t^2+f+\lambda^2\big)^{-1}\,.
\]
Notice that, by~\cite[Lemma~1.5]{Bun95}, $R_t(\lambda)\in\mathcal{L}_A(H^0,H^2)\cap\mathcal{L}_A(H^0)$.


\begin{lemma}\label{L:B_s-B_t}
For all $s,\,t\in[t_0,t_1]$, the difference $B_s-B_t$ is a uniformly bounded bundle map and we have
\begin{equation*}\label{eq:B_s-B_t}
	\|B_s\,-\,B_t\|_\infty\ \leq\ |s-t|\,\left\|\Psi\right\|_\infty\,.
\end{equation*}
\end{lemma}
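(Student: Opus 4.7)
The plan is to prove this by essentially unwrapping the definition of $B_t$ and doing a fiberwise computation, since everything to the right of $B_V$ in formula~\eqref{eq:family invertible at infty} is a smooth bundle endomorphism rather than a genuine differential operator.

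First I would use the definition $B_t = B_V + t\Psi_V$ to compute
\[
   B_s - B_t \;=\; (s-t)\,\Psi_V \;=\; (s-t)\,(\Psi \otimes \id_V).
\]
This is a difference of first-order operators that share the same principal symbol, so it is in particular a bundle map: it lies in $C^\infty(M;\End_A(S\otimes V))$ and involves no derivatives. That takes care of the first assertion of the lemma, provided we know the bundle map in question is uniformly bounded.

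Next I would estimate the fiberwise operator norm. At a point $x \in M$, the endomorphism $\Psi_V(x) = \Psi(x)\otimes \id_{V_x}$ acts on the fiber $S_x \otimes V_x$ of the Hilbert $A$-bundle. Since tensoring by the identity on a Hilbert $A$-module is isometric on operator norms, one has $\|\Psi_V(x)\|_x = \|\Psi(x)\|$. Taking suprema over $x \in M$ gives $\|\Psi_V\|_\infty = \|\Psi\|_\infty$, and multiplying by the scalar $|s-t|$ yields the claimed inequality $\|B_s - B_t\|_\infty \leq |s-t|\,\|\Psi\|_\infty$.

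The only thing that still requires justification is that $\|\Psi\|_\infty$ is actually finite, so that $B_s - B_t$ deserves to be called \emph{uniformly} bounded. This is exactly what assumption \textbf{(F.1)} buys us: since $\Psi_V$ defines a bounded adjointable operator on $H^0$ and acts as pointwise multiplication by a continuous bundle endomorphism on the $L^2$-like Hilbert $A$-module of sections, it must be essentially (and hence everywhere) bounded by the standard argument that the essential supremum of a multiplication operator equals its operator norm. I expect no real obstacle here; the lemma is essentially a bookkeeping statement whose only role is to supply the uniform bounds needed later to control the family $F_t$ in~\eqref{eq:operator F_t} and its resolvents $R_t(\lambda)$.
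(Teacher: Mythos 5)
Your proof is correct and follows essentially the same route as the paper's, which simply computes $B_s - B_t = (s-t)\Psi_V$ and declares that the thesis follows. The extra detail you supply — the fiberwise norm identity $\|\Psi(x)\otimes \id_{V_x}\| = \|\Psi(x)\|$ and the observation that assumption \textbf{(F.1)} is what guarantees finiteness of $\|\Psi\|_\infty$, hence the ``uniform'' in ``uniformly bounded'' — is an accurate unpacking of what the paper leaves implicit.
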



\begin{proof}
From~\eqref{eq:family invertible at infty}, we have $B_s-B_t=(s-t)\Psi_V$, from which the thesis follows.
\end{proof}


\begin{lemma}\label{L:|R_t(lambda)|}
There exists a constant $d>0$ such that
\begin{equation}\label{eq:|R_lambda(t)|}
	\| R_t(\lambda)\|_{\mathcal{B}(H^0)} \ \leq\  (d+\lambda^2)^{-1}\,,\quad\qquad\qquad  t_0\leq t\leq t_1\,.
\end{equation}
\end{lemma}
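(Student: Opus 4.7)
The plan is to reduce the bound on $\|R_t(\lambda)\|_{\mathcal{B}(H^0)}$ to a uniform positive lower bound on the self-adjoint operator $B_t^2+f$. More precisely, I want to produce $d>0$ such that
\[
\bigl\langle(B_t^2+f)s,s\bigr\rangle_0 \ \geq\ d\,\langle s,s\rangle_0 \,,\qquad s\in H^2,\ t\in[t_0,t_1].
\]
Once this is established, adding $\lambda^2$ gives $\langle(B_t^2+f+\lambda^2)s,s\rangle_0 \geq (d+\lambda^2)\langle s,s\rangle_0$, and the Cauchy--Schwarz inequality for Hilbert $A$-modules (exactly as used in~\eqref{eq:norm (B_V^2+lambda^2)^-1}) yields $\|R_t(\lambda)\|_{\mathcal{B}(H^0)} \leq (d+\lambda^2)^{-1}$.

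For a \emph{fixed} $t$, Condition~\textbf{(F.2)} together with the argument in Subsection~\ref{SS:B_V^2+lambda^2} shows that $(B_t^2+f)^{-1}$ is a positive element of $\mathcal{L}_A(H^0)$, so by the standard estimate $\bigl\langle(B_t^2+f)s,s\bigr\rangle_0 \geq \|(B_t^2+f)^{-1}\|_{\mathcal{B}(H^0)}^{-1}\,\langle s,s\rangle_0$. The task is therefore to bound $\|(B_t^2+f)^{-1}\|_{\mathcal{B}(H^0)}$ uniformly on the compact interval $[t_0,t_1]$.

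To that end, I will establish that the map $t\mapsto (B_t^2+f)^{-1}$ is continuous as a map $[t_0,t_1]\to \mathcal{L}_A(H^0)$; uniform boundedness on a compact interval is then automatic. By Condition~\textbf{(F.1)} and Lemma~\ref{L:B_s-B_t}, the multiplication by $(s-t)\Psi_V$ is a bounded adjointable operator on $H^0$ and on $H^1$ with norm $O(|s-t|)$, hence $t\mapsto B_t$ is continuous as a map into $\mathcal{L}_A(H^1,H^0)$. Writing
\[
B_s^2 - B_t^2 \ =\ B_s(B_s-B_t)\,+\,(B_s-B_t)B_t,
\]
together with the fact that $B_t\in \mathcal{L}_A(H^2,H^1)$ is uniformly bounded in $t$, shows that $t\mapsto B_t^2$ is continuous as a map $[t_0,t_1]\to \mathcal{L}_A(H^2,H^0)$. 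Combining this with the identity
\[
(B_s^2+f)^{-1}\,-\,(B_t^2+f)^{-1}\ =\ (B_s^2+f)^{-1}\,(B_t^2-B_s^2)\,(B_t^2+f)^{-1}
\]
and Condition~\textbf{(F.2)} (which gives $(B_t^2+f)^{-1}\in \mathcal{L}_A(H^0,H^2)$), the desired continuity follows, producing a uniform $K>0$ with $\|(B_t^2+f)^{-1}\|_{\mathcal{B}(H^0)}\leq K$ for all $t\in[t_0,t_1]$. Taking $d:=1/K$ completes the proof.

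The main obstacle is the continuity step: one must carefully pass between the Sobolev levels $H^0,H^1,H^2$ defined via the unperturbed operator $B_V$ and keep track that the perturbation $t\,\Psi_V$ acts boundedly at each level --- this is precisely what Condition~\textbf{(F.1)} supplies, and without it the identity $B_s^2-B_t^2=B_s(B_s-B_t)+(B_s-B_t)B_t$ would not produce a bounded map $H^2\to H^0$. Everything else is formal manipulation using the resolvent identity and compactness of $[t_0,t_1]$.
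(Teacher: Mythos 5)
Your overall strategy is sound and leads to the same bound, but the route is different in its intermediate steps from the paper's. The paper leans directly on Bunke's Lemma~1.5, which gives
\[
	\| R_t(\lambda)\|_{\mathcal{B}(H^0)}\ \leq\ (d_t+\lambda^2)^{-1},
	\qquad
	d_t\,:=\,\inf\left\{\|B_t u\|_0^2+\|\sqrt{f}u\|_0^2 : u\in H^2,\ \|u\|_0=1\right\},
\]
uses Bunke's Lemma~1.4 for positivity of each $d_t$, and then argues from Lemma~\ref{L:B_s-B_t} that $t\mapsto d_t$ varies continuously, so $d:=\inf_t d_t>0$ by compactness of $[t_0,t_1]$. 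You instead rederive the per-$t$ bound from positive $C^\ast$-algebra calculus (i.e., $\langle (B_t^2+f)s,s\rangle_0\geq\|(B_t^2+f)^{-1}\|^{-1}\langle s,s\rangle_0$) and push the uniformity through continuity of the resolvent map $t\mapsto(B_t^2+f)^{-1}$. Both are legitimate; yours is more self-contained, but in essence you are reproving Bunke's lemmas rather than citing them. The Cauchy--Schwarz reduction at the start of your argument is exactly the mechanism the paper uses in~\eqref{eq:norm (B_V^2+lambda^2)^-1}.

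There is one gap you should repair. Your continuity claim as written is circular: you estimate
\[
	\|(B_s^2+f)^{-1}-(B_t^2+f)^{-1}\|_{\mathcal{B}(H^0)}
	\ \leq\ \|(B_s^2+f)^{-1}\|_{\mathcal{B}(H^0)}\,\|B_t^2-B_s^2\|_{\mathcal{B}(H^2,H^0)}\,\|(B_t^2+f)^{-1}\|_{\mathcal{B}(H^0,H^2)}\,,
\]
and for this to go to zero as $s\to t$ you already need a local uniform bound on $\|(B_s^2+f)^{-1}\|_{\mathcal{B}(H^0)}$ near $t$ --- which is exactly the kind of uniformity you are trying to produce. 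The standard repair is to rearrange the resolvent identity to
\[
	(B_s^2+f)^{-1}\ =\ (B_t^2+f)^{-1}\left[\,1-\big(B_t^2-B_s^2\big)\,(B_t^2+f)^{-1}\right]^{-1},
\]
which is valid as soon as $\|(B_t^2-B_s^2)(B_t^2+f)^{-1}\|_{\mathcal{B}(H^0)}<1$; since $\|B_t^2-B_s^2\|_{\mathcal{B}(H^2,H^0)}=O(|s-t|)$ by Condition~\textbf{(F.1)} and Lemma~\ref{L:B_s-B_t}, this holds for $s$ near $t$ and the Neumann series gives a local bound $\|(B_s^2+f)^{-1}\|_{\mathcal{B}(H^0)}\leq 2\|(B_t^2+f)^{-1}\|_{\mathcal{B}(H^0)}$. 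A finite cover of $[t_0,t_1]$ then yields the uniform $K$ and hence $d:=1/K>0$. With that adjustment your argument is complete.
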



\begin{proof}
By~\cite[Lemma~1.5]{Bun95}, we have
\begin{equation}\label{eq:|R_lambda(t)|1}
	\| R_t(\lambda)\|_{\mathcal{B}(H^0)} \ \leq\  \big(d_t+\lambda^2\big)^{-1}\,,
\end{equation}
where
\[
	0\ <\ d_t\ :=\ \inf\left\{\left(\|B_t u\|_0^2+\big\|\sqrt{f} u\big\|_0^2\right): u\in H^2,\|u\|_0=1\right\}\,
\] 
and where $f$ is as in Condition~\textbf{(F.2)}.
Notice that the positivity of $d_t$ is not a trivial fact and is guaranteed by~\cite[Lemma~1.4]{Bun95}.
From~\eqref{eq:|R_lambda(t)|1} we deduce that Inequality~\eqref{eq:|R_lambda(t)|} holds with
\[
	d:=\inf\,\left\{d_t:t_0\leq t\leq t_1\right\}\,.
\]
Finally, from Lemma~\ref{L:B_s-B_t}, it follows that $\{d_t\}_{t_0\leq t\leq t_1}$ varies continuously so that $d$ is strictly positive, which concludes the proof.
\end{proof}


\begin{lemma}\label{L:B_tR_t(lambda)}
There exists a constant $c_1>0$ such that
\begin{equation*}\label{eq:|B_tR_lambda(t)|}
	\left\| B_t\,R_t(\lambda)\right\|_{\mathcal{B}(H^0)} \ \leq\  c_1\,
	\big(d+\lambda^2\big)^{-1/2}\,,\quad\qquad\qquad t_0\leq t\leq t_1\,.
\end{equation*}
\end{lemma}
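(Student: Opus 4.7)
The plan is to reduce the operator-norm bound to an $A$-valued inner-product computation and then exploit the positivity of $f+\lambda^2$. Fix $u\in H^0$ and set $v:=R_t(\lambda)u\in H^2$, so $\|B_tR_t(\lambda)u\|_0=\|B_tv\|_0$. The first step is to justify that $B_t$ is self-adjoint with domain $H^1$: by Theorem~\ref{T:H-P-S} this holds for $B_V$, and hypothesis \textbf{(F.1)} makes the perturbation $t\Psi_V$ a bounded self-adjoint operator preserving $H^1$, so $B_t$ is self-adjoint on $H^1$ for every $t\in[t_0,t_1]$. Since $v\in H^2\subset H^1$ and $B_tv\in H^1=\dom(B_t)$, self-adjointness delivers the key identity
\[
	\langle B_tv,B_tv\rangle_0\ =\ \langle v,B_t^2v\rangle_0.
\]

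The second step is to exploit pointwise nonnegativity of $f+\lambda^2$. This yields the operator inequality $B_t^2\leq B_t^2+f+\lambda^2$ at the level of $A$-valued quadratic forms, hence
\[
	\langle v,B_t^2v\rangle_0\ \leq\ \langle v,(B_t^2+f+\lambda^2)v\rangle_0\ =\ \langle v,u\rangle_0
\]
as elements of $A$. Because $X\leq Y$ in the positive cone of $A$ implies $|X|_A\leq|Y|_A$, and by the Cauchy--Schwarz inequality for Hilbert $A$-modules, I obtain
\[
	\|B_tv\|_0^2\ =\ \big|\langle B_tv,B_tv\rangle_0\big|_A\ \leq\ \big|\langle v,u\rangle_0\big|_A\ \leq\ \|v\|_0\cdot\|u\|_0.
\]

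Finally, Lemma~\ref{L:|R_t(lambda)|} supplies $\|v\|_0=\|R_t(\lambda)u\|_0\leq(d+\lambda^2)^{-1}\|u\|_0$. Substituting gives $\|B_tR_t(\lambda)u\|_0^2\leq(d+\lambda^2)^{-1}\|u\|_0^2$, so the claimed bound holds uniformly in $t\in[t_0,t_1]$ with $c_1=1$. The only real subtlety is the passage from the $A$-valued form inequality $B_t^2\leq B_t^2+f+\lambda^2$ to the corresponding $C^\ast$-norm estimate on the induced inner-product expressions; this is a standard consequence of order-preservation of the norm on the positive cone of a $C^\ast$-algebra, so no genuinely new analytic input beyond Lemma~\ref{L:|R_t(lambda)|} and Theorem~\ref{T:H-P-S} is required.
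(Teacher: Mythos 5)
Your proof is correct, and it follows essentially the same route as the paper's: both set $v=R_t(\lambda)u$, use self-adjointness of $B_t$ on $H^1$ to rewrite $\|B_tv\|_0^2=\big|\langle v,B_t^2v\rangle_0\big|_A$, and then close via Cauchy--Schwarz combined with Lemma~\ref{L:|R_t(lambda)|}. The one place you diverge is in controlling the $B_t^2$ factor: the paper first computes $B_t^2R_t(\lambda)=\Id-(f+\lambda^2)R_t(\lambda)$, bounds its $\mathcal{B}(H^0)$-norm by a constant $c_0$, and then applies Cauchy--Schwarz to $\langle v,B_t^2v\rangle_0=\langle v,B_t^2R_t(\lambda)u\rangle_0$; you instead invoke the order inequality $0\leq B_t^2\leq B_t^2+f+\lambda^2$ at the level of $A$-valued forms (valid since $f\geq 0$ and $\lambda^2\geq 0$), giving $\langle v,B_t^2v\rangle_0\leq\langle v,u\rangle_0$ directly and hence the sharper constant $c_1=1$. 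Both steps encode the same cancellation $(B_t^2+f+\lambda^2)R_t(\lambda)=\Id$; yours is a mild streamlining that trades the norm estimate on $B_t^2R_t(\lambda)$ for a positivity comparison, exploiting monotonicity of the $C^\ast$-norm on the positive cone — perfectly valid and slightly more economical than the paper's version.
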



\begin{proof}
Notice that
\[
	B_t^2\,R_t(\lambda)\ =\ \left[R_t(\lambda)^{-1}-f-\lambda^2\right]\,R_t(\lambda)\ =\ 
	\Id\,-\,\left(f+\lambda^2\right)\,R_t(\lambda)\,.
\]
Using~\eqref{eq:|R_lambda(t)|} and the previous equality, we obtain
\begin{equation*}\label{eq:B_t^2R_lambda(t)}
	\big\|B_t^2\,R_t(\lambda)\big\|_{\mathcal{B}(H^0)}\ \leq\ 1+
	\big\|\big(f+\lambda^2\big)R_t(\lambda)\big\|_{\mathcal{B}(H^0)}\ 
	\leq\ 1+\frac{\|f\|_\infty+\lambda^2}{d+\lambda^2}\ \leq\ c_0\,,
\end{equation*}
for a suitable constant $c_0$.
By the previous inequality and Lemma~\ref{L:|R_t(lambda)|}, for $u\in H^0$ we get
\[
\begin{aligned}
	\| B_t\,R_t(\lambda)u\|_0^2\ =\ &\big|\left<B_t\,R_t(\lambda)u,B_t\,R_t(\lambda)u\right>_0\big|_A\ =\ 
	\big|\left<R_t(\lambda)u,B_t^2\,R_t(\lambda)u\right>_0\big|_A\vspace{0.2cm}\\
	\leq\ &\| R_t(\lambda)u\|_0\,\left\| B_t^2\,R_t(\lambda)u\right\|_0\ \leq\ c_0\,(d+\lambda^2)^{-1}\,\|u\|_0^2\,,
\end{aligned}
\]
from which the thesis follows.
\end{proof}


\begin{lemma}\label{L:B_t^2-B_s^2}
There exists a constant $c_2>0$ such that
\begin{equation*}\label{eq:B_t^2-B_s^2}
	\left\|\left\{B_V,\Psi_V\right\}R_t(\lambda)\right\|_{\mathcal{B}(H^0)}\ \leq\ c_2\,\big(d+\lambda^2\big)^{-1/2}\,,
	\qquad\qquad t_0\leq t\leq t_1\,.
\end{equation*}
\end{lemma}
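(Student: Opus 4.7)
The plan is to decompose
\[
\{B_V,\Psi_V\}\,R_t(\lambda)\ =\ \Psi_V\,B_V\,R_t(\lambda)\ +\ B_V\,\Psi_V\,R_t(\lambda)
\]
and bound each summand separately, exploiting the two boundedness conditions in \textbf{(F.1)} ($\Psi_V$ bounded on $H^0$ and on $H^1$) together with the estimates from Lemmas~\ref{L:|R_t(lambda)|} and~\ref{L:B_tR_t(lambda)}. The key preliminary step is to convert the $B_t$-estimates into $B_V$-estimates: writing $B_V=B_t-t\Psi_V$ and using \textbf{(F.1)} for $j=0$, I get
\[
\bigl\|B_V\,R_t(\lambda)\bigr\|_{\mathcal{B}(H^0)}
\ \leq\ \bigl\|B_t\,R_t(\lambda)\bigr\|_{\mathcal{B}(H^0)}\,+\,|t|\,\|\Psi_V\|_{\mathcal{B}(H^0)}\,\bigl\|R_t(\lambda)\bigr\|_{\mathcal{B}(H^0)},
\]
so by Lemmas~\ref{L:|R_t(lambda)|} and~\ref{L:B_tR_t(lambda)} there is a constant $c_1'>0$ (uniform in $t\in[t_0,t_1]$) with $\|B_V\,R_t(\lambda)\|_{\mathcal{B}(H^0)}\leq c_1'(d+\lambda^2)^{-1/2}$.

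The first summand is then immediate: by \textbf{(F.1)} for $j=0$,
\[
\bigl\|\Psi_V\,B_V\,R_t(\lambda)\bigr\|_{\mathcal{B}(H^0)}
\ \leq\ \|\Psi_V\|_{\mathcal{B}(H^0)}\,\bigl\|B_V\,R_t(\lambda)\bigr\|_{\mathcal{B}(H^0)}
\ \leq\ c_1'\,\|\Psi_V\|_{\mathcal{B}(H^0)}\,(d+\lambda^2)^{-1/2}.
\]

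For the second summand I factor through $H^1$: the definition of the $H^1$-norm gives $\|B_V w\|_0\leq\|w\|_1$, so $B_V\colon H^1\to H^0$ has norm $\leq 1$, and by \textbf{(F.1)} for $j=1$ the operator $\Psi_V\colon H^1\to H^1$ is bounded. Hence
\[
\bigl\|B_V\,\Psi_V\,R_t(\lambda)u\bigr\|_0
\ \leq\ \bigl\|\Psi_V\,R_t(\lambda)u\bigr\|_1
\ \leq\ \|\Psi_V\|_{\mathcal{B}(H^1)}\,\bigl\|R_t(\lambda)u\bigr\|_1,\qquad u\in H^0.
\]
Now $\|R_t(\lambda)u\|_1^2=\|B_V R_t(\lambda)u\|_0^2+\|R_t(\lambda)u\|_0^2$, and both terms have already been bounded: the first by $(c_1')^2(d+\lambda^2)^{-1}\|u\|_0^2$, and the second by $(d+\lambda^2)^{-2}\|u\|_0^2\leq d^{-1}(d+\lambda^2)^{-1}\|u\|_0^2$ via Lemma~\ref{L:|R_t(lambda)|}. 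Combining,
\[
\bigl\|B_V\,\Psi_V\,R_t(\lambda)\bigr\|_{\mathcal{B}(H^0)}
\ \leq\ \|\Psi_V\|_{\mathcal{B}(H^1)}\,\sqrt{(c_1')^2+d^{-1}}\,(d+\lambda^2)^{-1/2}.
\]
Adding the two bounds produces the desired constant $c_2$, uniform in $t\in[t_0,t_1]$.

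The only real subtlety — and the place where condition \textbf{(F.1)} is indispensable — is handling the term $B_V\Psi_V R_t(\lambda)$, where we must push $B_V$ through $\Psi_V$; the Sobolev bound on $\Psi_V$ (i.e.\ boundedness on $H^1$) is precisely what allows this at the cost of controlling $\|R_t(\lambda)\|_{H^0\to H^1}$, which in turn reduces to the two resolvent estimates already established.
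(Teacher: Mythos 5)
Your argument is correct and takes essentially the same route as the paper's: bound $\|R_t(\lambda)u\|_1$ by $C(d+\lambda^2)^{-1/2}\|u\|_0$ via Lemmas~\ref{L:|R_t(lambda)|} and~\ref{L:B_tR_t(lambda)}, then compose with the anticommutator regarded as a bounded map $H^1\to H^0$, which is precisely what your two-term decomposition $\Psi_V B_V + B_V\Psi_V$ establishes from \textbf{(F.1)}. The one point where you are actually more careful than the paper is the explicit conversion of the $B_t$-resolvent bound into a $B_V$-resolvent bound before entering the $H^1$-norm; the paper's displayed inequality mixes $B_t$ into the $B_V$-defined Sobolev norm without comment, silently invoking the $t$-uniform equivalence of graph norms that your step makes explicit.
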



\begin{proof}
Fix $t\in[t_0,t_1]$ and $u\in H^0$.
By Lemma~\ref{L:|R_t(lambda)|} and Lemma~\ref{L:B_tR_t(lambda)}, we have
\[
\begin{aligned}
	&\left\|R_t(\lambda) u\right\|_1^2\ \leq\ \left\|R_t(\lambda) u\right\|_0^2\,+\,\left\|B_t\,R_t(\lambda) u\right\|_0^2\\
	&\qquad \leq\ \big(d+\lambda^2\big)^{-2}\,\|u\|_0^2\,+\,c_1^2\,\big(d+\lambda^2\big)^{-1}\,\|u\|_0^2
	\ \leq\ \big(d^{-1}+c_1^2\big)\,\big(d+\lambda^2\big)^{-1}\,\|u\|_0^2\,.
\end{aligned}
\]
Hence, 
\[
	\left\|\left\{B_V,\Psi_V\right\}R_t(\lambda)u\right\|_0\ \leq\ \left\|\left\{B_V,\Psi_V\right\}\right\|_{\mathcal{B}(H^1,H^0)}
	\big(d^{-1}+c_1^2\big)^{1/2}\,\big(d+\lambda^2\big)^{-1/2}\,\|u\|_0\,,
\]
from which the thesis follows.
\end{proof}


\subsection{Proof of  Proposition~\ref{P:stability of the index}}\label{SS:Proof of stability of index}
We show that the family $\big\{ F_t\big\}_{t_0\leq t\leq t_1}$ defined by~\eqref{eq:operator F_t} is continuous in $\mathcal{L}_A(H^0)$.
By~\eqref{eq:integral representation}, we deduce
\begin{equation}\label{eq:B_s-B_t integral represantation}
	\big( F_s-F_t\big)\,w\ =\ 
	\frac{2}{\pi}\,\int_0^\infty \left\{B_s\,R_s(\lambda)-B_t\,R_t(\lambda)\right\}w\,d\lambda\,,\qquad\qquad w\in H^1\,.
\end{equation}
We now analyze the integrand term on the right-hand side of~\eqref{eq:B_s-B_t integral represantation}.
We have
\[
	B_t^2\ =\ B_V^2\,+\,t\,\left\{B_V,\Psi_V\right\}\,+\,t^2\Psi_V^2
\]
from which
\[
	B_t^2\,-\,B_s^2\ =\ (t-s)\,\left\{B_V,\Psi_V\right\}\,+\,\left(t^2-s^2\right)\Psi_V^2\,.
\]
Hence,
\begin{equation}\label{eq:B_s-B_t integral represantation1}
\begin{aligned}
	& B_s\,R_s(\lambda)-B_t\,R_t(\lambda)\ =\ 
	\big(B_s-B_t\big)\,R_s(\lambda)+B_t\,\big(R_s(\lambda)-R_t(\lambda)\big)\vspace{0.2cm}\\
	&\quad =\ \big(B_s-B_t\big)\,R_s(\lambda)+B_t\,R_t(\lambda)\,\big(B_t^2-B_s^2\big)\,R_s(\lambda)\vspace{0.2cm}\\
	&\quad =\ (s-t)\left\{\Psi_V\,R_s(\lambda)\,-\,B_t\,R_t(\lambda)\,\left\{B_V,\Psi_V\right\}R_s(\lambda)
	\,-\,B_t\,R_t(\lambda)\,(s+t)\,\Psi_V^2\,R_s(\lambda)\right\}\,.
\end{aligned}
\end{equation}
Using Lemma~\ref{L:|R_t(lambda)|}, Lemma~\ref{L:B_tR_t(lambda)}, and Lemma~\ref{L:B_t^2-B_s^2}, from~\eqref{eq:B_s-B_t integral represantation} and~\eqref{eq:B_s-B_t integral represantation1} we deduce that there exists a constant $c_3>0$ such that
\[
	\big\|\big(F_s-F_t\big)w\big\|_0\ \leq\ \|w\|_0\ |s-t|\ c_3\ \frac{2}{\pi}\,\int_0^\infty\ \frac{d\lambda}{d+\lambda^2}\,,
	\qquad w\in H^1\,.
\]
Since $H^1$ is dense in $H^0$, the last inequality implies the thesis.
\hfill$\square$


\begin{corollary}\label{C:compact case}
Let $P_V$ be a twisted Callias-type operator over a closed manifold $M$. Then the class $\ind_AP_V$ vanishes.
\end{corollary}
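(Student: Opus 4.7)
The plan is to apply Proposition~\ref{P:stability of the index} to a one-parameter family that shifts the potential $\Phi$ by a constant, deforming $P_V$ to a Callias-type operator whose (new) potential has empty essential support; Corollary~\ref{C:vanishing sufficient condition} then finishes the argument. Concretely, let $\Psi_V$ denote the bounded bundle endomorphism $\left(\begin{smallmatrix} 0 & -i\id \\ i\id & 0 \end{smallmatrix}\right)$ of $(\Sigma \oplus \Sigma)\tensor V$, and set
\begin{equation*}
B_t \ :=\ P_V + t\,\Psi_V \ =\
\begin{pmatrix} 0 & D_V - i(\Phi_V + t) \\ D_V + i(\Phi_V + t) & 0 \end{pmatrix},
\qquad t \in [0, T].
\end{equation*}
Each $B_t$ is the twisted Callias-type operator for the admissible quadruple $(\Sigma, D, \Phi + t, V)$ on the closed manifold $M$ (admissibility is automatic here, since one may always take $K = M$ in Definition~\ref{D:Callias}).

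To apply Proposition~\ref{P:stability of the index}, I will check conditions \textbf{(F.1)} and \textbf{(F.2)}. Condition \textbf{(F.1)} follows from standard elliptic regularity on the closed manifold $M$: $\Psi_V$ is a smooth bounded bundle endomorphism, so it acts boundedly and adjointably on the Sobolev spaces $H^0$ and $H^1$ defined by the elliptic first-order operator $B_V = P_V$. For \textbf{(F.2)}, a direct block computation using $[D_V, t] = 0$ yields
\begin{equation*}
B_t^2 \ =\
\begin{pmatrix}
D_V^2 + (\Phi_V + t)^2 + i[D_V, \Phi_V] & 0 \\
0 & D_V^2 + (\Phi_V + t)^2 - i[D_V, \Phi_V]
\end{pmatrix}.
\end{equation*}
Since $M$ is compact, $[D, \Phi]$ is uniformly bounded, while $D_V^2$ and $(\Phi_V + t)^2$ contribute nonnegatively to the $A$-valued inner product. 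Choosing any constant $f > \|[D, \Phi]\|_\infty$ therefore yields the uniform lower bound
\begin{equation*}
\left<(B_t^2 + f)s, s\right>_0 \ \geq\ \big(f - \|[D, \Phi]\|_\infty\big)\,\left<s, s\right>_0,
\qquad s \in H^2,\ t \in [0, T],
\end{equation*}
and Theorem~\ref{T:B_V^2+f invertible} then produces inverses in $\mathcal{L}_A(H^0) \cap \mathcal{L}_A(H^0, H^2)$ uniformly in $t$. This verifies \textbf{(F.2)}, so Proposition~\ref{P:stability of the index} gives $\ind_A P_V = \ind_A B_T$ for every $T \geq 0$.

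Finally, pick $T$ so large that $(T - \|\Phi\|_\infty)^2 > \|[D, \Phi]\|_\infty + 1$. The pointwise estimate $(\Phi(x) + T)^2 \geq 1 + \|[D, \Phi](x)\|$, valid for every $x \in M$, shows that the shifted potential $\Phi + T$ has empty essential support, and Corollary~\ref{C:vanishing sufficient condition} yields $\ind_A B_T = 0$. Hence $\ind_A P_V = 0$. The only substantive obstacle in the argument is the uniform estimate \textbf{(F.2)}; it is trivialized here by compactness of $M$, which permits $f$ to be taken as a constant and bounds $[D, \Phi]$ in supremum norm. Without compactness this step would require a localization argument, as in the proof of Theorem~\ref{T:B^2 is invertible at infinity}.
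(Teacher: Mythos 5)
Your proof is correct, but it takes a genuinely different route from the paper. The paper's one-sentence proof deforms $\Phi$ to $-\Phi$ along the linear path $t\mapsto t\Phi$ and then relies on the implicit observation that conjugation by the odd unitary $\left(\begin{smallmatrix}0&1\\1&0\end{smallmatrix}\right)$ identifies $P_V^{-\Phi}$ with $P_V^{\Phi}$ while reversing the grading, so $\ind_A P_V^\Phi=\ind_A P_V^{-\Phi}=-\ind_A P_V^{\Phi}$. Your approach instead shifts $\Phi$ by a large positive constant $T$; once $(\Phi+T)^2\geq 1+\|[D,\Phi]\|$ everywhere, the shifted potential has empty essential support and Corollary~\ref{C:vanishing sufficient condition} applies directly. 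This is more transparent: it gives $\ind_A P_V=0$ outright rather than via a symmetry relation, and it uses only the machinery already set up for noncompact manifolds, making it the exact specialization to the compact case of the general philosophy that ``large potential at infinity kills the index.'' Both arguments feed the same Proposition~\ref{P:stability of the index}, just with different endpoints of the homotopy.

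Two small points worth tightening. First, verifying~\textbf{(F.1)} is not automatic from $\Psi_V$ being a bounded bundle map: one needs the anticommutator $\{P_V,\Psi_V\}=2\Phi_V$ to be bounded (which holds here because $\Phi$ is bounded on the compact $M$), and adjointability on $H^1$ uses that $[P_V,\Phi_V]$ is an endomorphism (Condition~(i) of Definition~\ref{D:Callias}). On a noncompact manifold, ``smooth bounded bundle endomorphism'' alone would not suffice. Second, in~\textbf{(F.2)} the inverse $(B_t^2+f)^{-1}$ produced by Theorem~\ref{T:B_V^2+f invertible} applied to $B_t$ lies a priori in $\mathcal{L}_A(H^0(B_t),H^2(B_t))$, where the Sobolev spaces are built from $B_t$ rather than from $B_V=B_0$; on a closed manifold these spaces coincide (both equal the standard Sobolev spaces, by elliptic estimates and the boundedness of $\Psi_V$), but this identification should be mentioned since Proposition~\ref{P:stability of the index} fixes the Sobolev spaces once and for all using $B_V$. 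Neither point affects the validity of your argument in the compact setting.
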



\begin{proof}
Suppose $P_V$ is associated to an admissible quadruple $\big(\Sigma,D,\Phi,V\big)$ over $M$.
Since $M$ is closed, using Proposition~\ref{P:stability of the index} we deduce that its index class coincides with the index class of the operator associated to the quadruple $\big(\Sigma,D,-\Phi,V\big)$.
\end{proof}

\section{Reduction to the cylinder}\label{S:reduction}
The next three sections are devoted to the proof of Theorem~\ref{T:Callias-type theorem}.
In this section we consider a twisted Callias-type operator $P_V$ associated to an admissible quadruple $\big(\Sigma,D,\Phi,V\big)$ over a complete \emph{oriented} Riemannian manifold $M$.
We assume there is a partition $M=M_-\cup_NM_+$, where $N=M_-\cap M_+$ is a closed hypersurface and $M_-$ is a compact submanifold with boundary whose interior contains an essential support of $\Phi$.
We reduce the computation of the index class of $P_V$ to the computation of the index class of a \emph{model operator}, i.e. a twisted Callias-type operator ${\bf M}_{V_N}$ on the cylinder $N\times\RR$.
We adapt to the case of an arbitrary $C^\ast$-algebra $A$ the ``cut-and-paste" technique that was used in~\cite{Ang93} and~\cite{Bun95} to prove the case when $A=\CC$ and in~\cite{BC} to prove the case when $A$ is a von Neumann algebra endowed with a finite trace.


\subsection{The model operator}\label{SS:model operator}
Let the twisted Callias-type operator $P_V$, the closed manifold $N$, the $\ZZ_2$-graded Dirac bundle $\Sigma_{N+}=\Sigma_{N+}^+\oplus \Sigma_{N+}^-$ over $N$ and the Hilbert $A$-bundle of finite type $V_N$ over $N$ be as in Subsection~\ref{SS:Callias-type theorem}.
Recall that $V_N$ is endowed with a metric connection $\nabla^{V_N}$.
We use these data to construct a twisted Callias-type operator on the cylinder $N\times\RR$.

Let $p:N\times\RR\rightarrow N$ be the projection onto the first factor and denote by $\widehat{\Sigma}_{N+}$ the pull-back bundle $p^\ast \Sigma_{N+}$. 
Then 
\begin{equation}\label{eq:widehatEN}
	\widehat{\Sigma}_{N+}\ = \ \widehat{\Sigma}_{N+}^+\oplus \widehat{\Sigma}_{N+}^-\,,
	\quad\qquad\text{where}\quad \widehat{\Sigma}_{N+}^\pm\ :=\ p^*\Sigma_{N+}^\pm\,.
\end{equation}
The bundle $\widehat{\Sigma}_{N+}$ has a natural Clifford action given by:
\begin{equation}\label{eq:clifford action}
	\widehat{c}(\xi,t)
	\ = \
	c(\xi)\, + \, \gamma t\,,\qquad  
	(\xi,t)\in T^\ast_{(x,r)} (N\times\RR)\ =\
	T^\ast_x N\oplus\RR,\qquad (x,r)\in N\times \RR\,,
\end{equation}
where $c$ is the Clifford action of $T^*N$ on $\Sigma_{N+}$ and $\gamma=\pm i$ on $\widehat{\Sigma}_{N+}^\pm$. 
Notice, however, that this action does not preserve the grading \eqref{eq:widehatEN}.
Endowed with the pull-back connection $\nabla^{\widehat{\Sigma}_{N+}}$ induced by the connection on $\Sigma_{N+}$, the bundle $\widehat{\Sigma}_{N+}$ becomes an \emph{ungraded} Dirac bundle with associated Dirac operator $\widehat{D}_{N+}$. 

We now define an admissible endomorphism of $\widehat{\Sigma}_{N+}$.
Let $\chi$ be the identity function on $\RR$, i.e. $\chi(r)=r$ for every $r\in\RR$.
With a slight abuse of notation, we denote by $\chi$ also the induced function $N\times\RR\rightarrow\RR$.


\begin{lemma}\label{L:h goes to infinity}
Multiplication by $\chi$ is an admissible endomorphism for the pair $\left(\widehat{\Sigma}_{N+},\widehat{D}_{N+}\right)$ (see Definition~\ref{D:Callias}).
\end{lemma}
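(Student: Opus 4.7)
The plan is to verify the two conditions of Definition~\ref{D:Callias} directly for $\Phi = \chi$ acting on sections of $\widehat{\Sigma}_{N+}$.

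First I would compute the commutator $[\widehat{D}_{N+},\chi]$. Since $\widehat{D}_{N+}$ is a Dirac-type operator and $\chi$ acts by pointwise multiplication by a smooth real-valued function on $N\times\RR$, the standard formula gives $[\widehat{D}_{N+},\chi] = \widehat{c}(d\chi)$, where $d\chi$ is viewed as a one-form on $N\times\RR$. Since $\chi$ depends only on the $\RR$-coordinate and equals the identity there, $d\chi = dr$ is the unit covector in the $\RR$ direction at every point. By the formula~\eqref{eq:clifford action} for $\widehat{c}$, we then have $[\widehat{D}_{N+},\chi] = \gamma$, which is a bundle endomorphism acting as $+i$ on $\widehat{\Sigma}_{N+}^+$ and $-i$ on $\widehat{\Sigma}_{N+}^-$. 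In particular, this verifies condition (i) of Definition~\ref{D:Callias} and yields the pointwise bound $\|[\widehat{D}_{N+},\chi](x,r)\| = 1$ at every $(x,r)\in N\times\RR$.

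Next I would check condition (ii). We have $\chi^2(x,r) = r^2 \cdot \mathrm{id}$ as an endomorphism of $\widehat{\Sigma}_{N+}$. Pick any $d>0$ and set $R := \sqrt{d+1}$. Then for every point $(x,r)\in N\times\RR$ with $|r|\geq R$ we get
\[
\chi^2(x,r) \;=\; r^2\,\mathrm{id} \;\geq\; (d+1)\,\mathrm{id} \;=\; d\,\mathrm{id} \;+\; \|[\widehat{D}_{N+},\chi](x,r)\|\,\mathrm{id}.
\]
Taking $K := N\times[-R,R]$, which is compact because $N$ is closed, this is exactly the estimate required by~\eqref{eq:ungraded admissible endomorphism}.

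There is no real obstacle here: the content is that the derivative of the potential $\chi$ is bounded (namely, of norm one via the Clifford action) while $\chi$ itself grows to infinity, so the quadratic term dominates the commutator outside a compact cylinder. The choice of $\chi$ with linear growth rather than, say, a compactly supported bump is exactly what will later give the model operator ${\bf M}_{V_N}$ compact resolvent, as anticipated in Remark~\ref{R:unbounded modules}.
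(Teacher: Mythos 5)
Your proof is correct and follows essentially the same route as the paper: compute $[\widehat{D}_{N+},\chi]=\gamma$ via the Clifford action on $d\chi=dr$, then note that $\chi^2(x,r)-\|\gamma\|=r^2-1$ dominates any constant $d$ once $|r|>\sqrt{d+1}$, with $K=N\times[-\sqrt{d+1},\sqrt{d+1}]$ compact since $N$ is closed.
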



\begin{proof}
For a section $u\in C^\infty(N\times \RR,\widehat{\Sigma}_{N+})$, we have
\[
	\left(\big[\widehat{D}_{N+},\chi\big]u\right)(y,r)\ =\ \gamma \,\chi'(r)\,u(y,r)\ =\ \gamma\,u(y,r)\,,
	\qquad\qquad (y,r)\in N\times\RR\,.
\]
Hence, the commutator $\big[\widehat{D}_{N+},\chi\big]$ coincides with the endomorphism $\gamma$ of $\widehat{\Sigma}_{N+}$ and Condition~(i) of Definition~\ref{D:Callias} is satisfied.
Fix a constant $C>0$. 
Then, for $(y,r)\in N\times\RR$, we have
\begin{equation}\label{eq:estimate of chi}
	\chi^2(y,r)\,-\,\big\|\big[\widehat{D}_{N+},\chi\big](y,r)\big\|\ =\ r^2-1\,\geq C\,,\qquad\qquad |r|>\sqrt{C+1}\,.
\end{equation}
Therefore, also Condition~(ii) of Definition~\ref{D:Callias} is satisfied.
\end{proof}


Let $\widehat{V}_N$ be the bundle $V_N$ pulled back to $N\times\RR$.
It is endowed with the connection $\nabla^{\widehat{V}_N}$ obtained by pulling back the connection $\nabla^{V_N}$.


\begin{definition}\label{D:model operator}
The \emph{model operator} on $N\times\RR$ induced by $P_V$ is the twisted Callias-type operator ${\bf M}_{V_N}$ associated to the admissible quadruple $\left(\widehat{\Sigma}_{N+},\widehat{D}_{N+},\chi,\widehat{V}_N\right)$ on $N\times\RR$.
\end{definition}


The next theorem is the main result of this section.
It reduces the computation of the index class of $P_V$ to the computation of the index class of the model operator ${\bf M}_{V_N}$.


\begin{theorem}\label{T:reduction to the cylinder}
The classes $\ind_AP_V$ and $\ind_A{\bf M}_{V_N}$ coincide.
\end{theorem}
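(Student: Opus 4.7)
The plan is to adapt to the $C^\ast$-algebra setting the cut-and-paste argument of Anghel~\cite{Ang93} and Bunke~\cite{Bun95} used for $A=\CC$. The two main inputs are Bunke's $K$-theoretic relative index theorem for twisted Callias-type operators and the homotopy invariance of the index established in Proposition~\ref{P:stability of the index}; together with the vanishing criterion of Corollary~\ref{C:vanishing sufficient condition}, they will reduce the comparison of $\ind_A P_V$ with $\ind_A {\bf M}_{V_N}$ to two vanishing statements.

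\emph{Step~1 (Cylindrical normalization).} I would first deform $(\Sigma, D, \Phi, V)$ through admissible quadruples so that on a tubular collar $U\cong N\times(-\epsilon,\epsilon)$ of $N$ in $M$, the data agree, up to a bundle isomorphism, with the restriction of the model quadruple $(\widehat{\Sigma}_{N+}, \widehat{D}_{N+}, \chi, \widehat{V}_N)$ to $N\times(-\epsilon,\epsilon)$. Such a collar can be chosen disjoint from the essential support of $\Phi$ (which, by hypothesis, lies in the interior of $M_-$); the deformation is constructed using Condition~(i) of Definition~\ref{D:Callias} together with the spectral decomposition~\eqref{eq:decomposition of Sigma_N}. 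Throughout the homotopy the essential support stays in the interior of $M_-$, so Conditions~\textbf{(F.1)} and~\textbf{(F.2)} hold uniformly, and Proposition~\ref{P:stability of the index} gives invariance of the index.

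\emph{Step~2 (Cut-and-paste and vanishing).} After Step~1, $P_V$ on $M$ and ${\bf M}_{V_N}$ on $N\times\RR$ possess isomorphic cylindrical neighborhoods of $N$ and $N\times\{0\}$. Cutting each along the respective hypersurface and exchanging the outer halves produces twisted Callias-type operators $Q_1, Q_2$ on
$$\widetilde{M}_1 := M_-\cup_N(N\times[0,\infty)), \qquad \widetilde{M}_2 := (N\times(-\infty,0])\cup_N M_+.$$
Bunke's $K$-theoretic relative index theorem, adapted to this setting, yields an identity in $K_0(A)$ of the form $\ind_A P_V - \ind_A Q_1 = \ind_A {\bf M}_{V_N} - \ind_A Q_2$. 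I then show $\ind_A Q_i = 0$ by deforming the potential on each $\widetilde{M}_i$ (via a uniform shift combined with a translation along the cylindrical end) to an admissible potential with empty essential support, while keeping Conditions~\textbf{(F.1)},~\textbf{(F.2)} in force. Proposition~\ref{P:stability of the index} preserves the index under this deformation, and Corollary~\ref{C:vanishing sufficient condition} gives the vanishing. The identity above then collapses to $\ind_A P_V = \ind_A {\bf M}_{V_N}$.

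\textbf{Expected main obstacle.} The principal technical hurdle is the generalization of Bunke's relative index theorem to twisted Callias-type operators, and the verification that the cut-and-paste construction is compatible with the definition of the index via the bounded Kasparov modules of Subsection~\ref{SS:Bunke's approach}. The accompanying construction of the vanishing homotopies on $\widetilde{M}_1$ and $\widetilde{M}_2$, while natural, requires careful monitoring of uniform invertibility at infinity along the deformation; in particular, one must check that the shifted potentials continue to satisfy the growth bound \eqref{eq:ungraded admissible endomorphism} and define operators belonging to $\mathcal{L}_A(H^1,H^0)$ uniformly in the deformation parameter.
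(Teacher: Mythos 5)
Your overall strategy (cylindrical normalization, cut-and-paste via Bunke's relative index theorem, vanishing via homotopy and Corollary~\ref{C:vanishing sufficient condition}) matches the paper's, but Step~1 contains a gap that propagates into Step~2. The data $(\Sigma,D,\Phi,V)$ cannot be deformed to agree on a collar of $N$, even up to bundle isomorphism, with the model quadruple $(\widehat\Sigma_{N+},\widehat D_{N+},\chi,\widehat V_N)$: the restriction $\Sigma|_N$ splits as $\Sigma_{N+}\oplus\Sigma_{N-}$ with both summands nonzero in general, so $\Sigma$ near $N$ has strictly larger rank than $\widehat\Sigma_{N+}$. Consequently the hypotheses of the relative index theorem — that the two quadruples coincide near the cutting hypersurfaces — fail for your Step~2: cutting $M$ at $N$ and $N\times\RR$ at $N\times\{0\}$ cannot be matched because the bundles being glued have different ranks, so the operators $Q_1,Q_2$ do not exist as written and the claimed relative-index identity does not hold.

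The paper circumvents this with three preparations your proposal omits: first pass to a manifold $M_C$ with cylindrical ends carrying the \emph{full} bundle $\widehat\Sigma_{N+}\oplus\widehat\Sigma_{N-}$; then perturb the connection so it preserves this splitting over the cylindrical end (otherwise $D$ has off-diagonal terms mixing $\widehat\Sigma_{N\pm}$, cf.~\eqref{eq:reduction to the cylinder2}); then perturb the potential so that on the cylindrical end it equals $\zeta\widehat\alpha$ with $\zeta$ growing like $r$, which is needed for eventual comparison with the unbounded potential $\chi$. Only then is cut-and-paste performed — and not against $N\times\RR$ carrying the model quadruple, but against an auxiliary operator $P_1$ on $\big(N\times(-\infty,1]\big)\cup_N M_-^-$. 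The resulting operator on $N\times\RR$ lives on $\widehat\Sigma_{N+}\oplus\widehat\Sigma_{N-}$ and splits as $T_+\oplus T_-$, with $T_+$ index-equivalent to $\mathbf M_{V_N}$ and $T_-$ of empty essential support (index zero). Finally, note that the obstacle you flag at the end — extending Bunke's relative index theorem to the twisted setting — is not actually an issue: Bunke proved it already for Hilbert $C^\ast$-module bundles, and the paper simply cites it. The genuine technical work is the bundle/connection/potential normalization just described, which your Steps~1--2 do not address.
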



\begin{remark}
Notice that the potential $\chi$ is \emph{unbounded}. 
This is the main difference between our  model operator and the one used in~\cite{Ang93} and~\cite{BC}, where the potential $\chi$ is a function that is constant outside of a compact subset of $N\times\RR$.
As already mentioned in Remark~\ref{R:unbounded modules}, our choice of the potential $\chi$ is motivated by the $KK$-theoretical calculations of Section~\ref{S:analysis on the cylinder}.
\end{remark}


The remaining part of this section is devoted to proving Theorem~\ref{T:reduction to the cylinder}.


\subsection{Bunke's relative index theorem}\label{SS:relative index theorem}
In this subsection we review Bunke's $K$-theoretic relative index theorem for the benefit of the reader.
In particular, we formulate this theorem for twisted Callias-type operators.


For $j=0,1$, let $P_j$ be twisted Callias-type operators associated to admissible quadruples $\big(\Sigma_j,D_j,\Phi_j,V_j\big)$ over complete Riemannian manifolds $M_j$.
Suppose $M_j=X_j\cup_{N_j} Y_j$ are partitions of $M_j$, where $N_j=X_j\cap Y_j$ are \emph{closed} hypersurfaces.
We also assume that the quadruples $\big(\Sigma_0,D_0,\Phi_0,V_0\big)$, $\big(\Sigma_1,D_1,\Phi_1,V_1\big)$ \emph{coincide} near $N_0$, $N_1$.
This means that there are tubular neighborhoods $U(N_0)$, $U(N_1)$ respectively of $N_0$, $N_1$ and an isometric diffeomorphism $\psi:U(N_0)\rightarrow U(N_1)$ such that:
\begin{itemize}
	\item $\psi$ restricts to a diffeomorphism between $N_0$ and $N_1$;
	\item there exists an isomorphism of Dirac bundles $\Psi_1:\Sigma_0|_{U(N_0)}\rightarrow \Sigma_1|_{U(N_1)}$ 
		covering $\psi$;
	\item there exists an isomorphism of Hilbert $A$-bundles 
		$\Psi_2:V_0|_{U(N_0)}\rightarrow V_1|_{U(N_1)}$ covering $\psi$ and preserving the connections.
\end{itemize}


Cut $M_j$ along $N_j$ and use the map $\psi$ to glue the pieces together interchanging $Y_0$ and $Y_1$.
In this way we obtain the complete oriented Riemannian manifolds 
\[
	M_2:=X_0\cup_NY_1 \qquad\text{and}\qquad M_3:=X_1\cup_NY_0\,,
\]
where $N\cong N_0\cong N_1$.
We refer to $M_2$ and $M_3$ as the {\em manifolds obtained from $M_0$ and $M_1$ by cutting and pasting}.

Use the map $\Psi_1$ to cut the bundles $\Sigma_0$, $\Sigma_1$ at $N_0$, $N_1$ and glue the pieces together interchanging $\Sigma_0|_{Y_0}$ and $\Sigma_1|_{Y_1}$.
In this way we obtain Dirac bundles $\Sigma_2\rightarrow M_2$ and $\Sigma_3\rightarrow M_3$ with associated Dirac operators $D_2$ and $D_3$.
Define a bundle map $\Phi_2\in C^\infty(M_2;\End(\Sigma_2))$ coinciding with $\Phi_0$ on $X_0$ and with $\Phi_1$ on $Y_1$.
Notice that $\Phi_2$ is admissible for the pair $\big(\Sigma_2,D_2\big)$.
In a similar way, define an admissible endomorphism $\Phi_3$ for the pair $\big(\Sigma_3,D_3\big)$ coinciding with $\Phi_1$ on $X_1$ and with $\Phi_0$ on $Y_0$.
Finally, use the map $\Psi_2$ to cut the bundles $V_0$, $V_1$ at $N_0$, $N_1$ and glue the pieces together interchanging $V_0|_{Y_0}$ and $V_1|_{Y_1}$.
In this way we obtain Hilbert $A$-bundles $V_2\rightarrow M_2$ and $V_3\rightarrow M_3$ endowed with metric connections.

With this procedure, we obtain admissible quadruples
\begin{equation}\label{eq:quadruples cut-paste}
	\big(\Sigma_2,D_2,\Phi_2,V_2\big)\qquad\text{and}\qquad\big(\Sigma_3,D_3,\Phi_3,V_3\big)
\end{equation}
respectively on $M_2$ and $M_3$.
Let $P_2$, $P_3$ be the associated twisted Dirac-type operators.
We refer to~\eqref{eq:quadruples cut-paste} as the \emph{quadruples obtained from $\big(\Sigma_0,D_0,\Phi_0,V_0\big)$ and $\big(\Sigma_1,D_1,\Phi_1,V_1\big)$ by cutting and pasting}.
In this setting, we have the following formulation of Bunke's $K$-theoretic relative index theorem.


\begin{theorem}\label{T:K-theoretic relative index theorem}\emph{(Bunke,~\cite[Theorem~1.2]{Bun95}).}
\[
	\ind_A P_0\,+\,\ind_A P_1\ =\ \ind_A P_2\,+\,\ind_A P_3\,.
\]
\end{theorem}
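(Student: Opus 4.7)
The plan is to exploit the fact that the cut-and-paste construction is a purely local operation near the hypersurfaces $N_j$: the four operators $P_0,P_1,P_2,P_3$ arise from the same local data on the pieces $X_0,X_1,Y_0,Y_1$, only the way those pieces are reassembled differs. I would first identify the underlying Hilbert $A$-modules. At the $L^2$-level (using the gluings $\psi$, $\Psi_1$, $\Psi_2$ to identify the bundles near the common hypersurface $N$), one has canonical orthogonal decompositions
\[
	H^0(M_0)\oplus H^0(M_1)\ \cong\ L^2(X_0)\oplus L^2(Y_0)\oplus L^2(X_1)\oplus L^2(Y_1)\ \cong\ H^0(M_2)\oplus H^0(M_3),
\]
where the second isomorphism is the permutation of the middle two summands. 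Let $\Upsilon$ denote the resulting canonical unitary identification.

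By Theorem~\ref{T:Bunke}, each class $\ind_A P_j$ is represented by the bounded Kasparov module $\bigl(H^0(M_j),1,F_j\bigr)$ with $F_j = B_j(B_j^2+f_j)^{-1/2}$. I would choose the $f_j$ compatibly with the gluing, supported away from the tubular neighborhoods $U_j$ of $N_j$ so that $f_0=f_2$ on $X_0$, $f_0=f_3$ on $Y_0$, $f_1=f_3$ on $X_1$, and $f_1=f_2$ on $Y_1$. Taking direct sums, the Kasparov modules
$\bigl(H^0(M_0)\oplus H^0(M_1),1,F_0\oplus F_1\bigr)$ and $\bigl(H^0(M_2)\oplus H^0(M_3),1,F_2\oplus F_3\bigr)$ represent $\ind_A P_0+\ind_A P_1$ and $\ind_A P_2+\ind_A P_3$, respectively.

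The crux of the argument is to prove that, after transport along $\Upsilon$, these two Kasparov modules agree modulo an $A$-compact perturbation of the operator. Using the integral representation~\eqref{eq:integral representation},
\[
	F_j\ =\ \tfrac{2}{\pi}\int_0^\infty B_j\bigl(B_j^2+f_j+\lambda^2\bigr)^{-1}\,d\lambda,
\]
the difference $\Upsilon(F_0\oplus F_1)\Upsilon^{-1}-(F_2\oplus F_3)$ can be expressed as a $\lambda$-integral of commutators of the resolvents with smooth cutoff functions that are supported in the tubular neighborhoods $U_j$ (they come from the failure of $\Upsilon$ to intertwine the local expressions of the $B_j$ past the gluing tube, which only involves terms localized where the cutoffs' derivatives are nonzero). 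Each such commutator is compactly supported in $M$, and by the Rellich lemma (Lemma~\ref{L:Rellich lemma}) combined with the mapping property $(B_j^2+f_j+\lambda^2)^{-1}\in\mathcal{L}_A(H^0,H^2)$ it defines an $A$-compact operator on $H^0$. The uniform decay $\|(B_j^2+f_j+\lambda^2)^{-1}\|_{\mathcal{B}(H^0)}=O((d+\lambda^2)^{-1})$ (cf.\ Lemma~\ref{L:|R_t(lambda)|}) then gives norm-convergence of the $\lambda$-integral in the $A$-compact ideal. Since bounded Kasparov cycles differing by a compact perturbation of the operator define the same class in $KK(\CC,A)=K_0(A)$, the relative index identity follows.

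The hard part will be the third step. Matching the Hilbert modules at the $L^2$-level is essentially automatic, but realizing $\Upsilon$ on Sobolev spaces and then controlling the resolvent commutators rigorously requires a careful choice of cutoff functions and a precise bookkeeping of the error terms they generate, along with verifying that these terms are indeed compactly supported so that Rellich applies. One has to keep the $\lambda$-decay sharp enough through the commutator computation for the integral to converge in the compact-operator norm; this is the technical heart of Bunke's original argument in~\cite{Bun95}, and it is precisely where invertibility at infinity (Theorem~\ref{T:B^2 is invertible at infinity}) and its quantitative refinements from Section~\ref{S:properties of the index} are essential.
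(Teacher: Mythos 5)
The paper does not reprove this statement; it simply cites \cite[Theorem~1.2]{Bun95}, so I am comparing your sketch against Bunke's original argument.

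Your outline captures the right \emph{ingredients} — compatible choices of the perturbing functions $f_j$, the integral representation~\eqref{eq:integral representation}, the Rellich lemma, and the $\lambda$-decay from Lemma~\ref{L:|R_t(lambda)|} — but the central step as you have written it does not go through. You take $\Upsilon$ to be the \emph{sharp-cut} permutation unitary coming from the orthogonal decomposition $H^0(M_j)\cong L^2(X_j)\oplus L^2(Y_j)$, and then assert that $\Upsilon(F_0\oplus F_1)\Upsilon^{-1}-(F_2\oplus F_3)$ is $A$-compact because it reduces to "commutators with smooth cutoff functions". But a sharp permutation produces no smooth cutoffs: when you unwind what $\Upsilon^{-1}(F_2\oplus F_3)\Upsilon - (F_0\oplus F_1)$ actually is, the terms you get are Hankel-type chops $\mathbf{1}_{Y_j}F_j\mathbf{1}_{X_j}$ and $\mathbf{1}_{X_j}F_j\mathbf{1}_{Y_j}$, where $\mathbf{1}_{X_j},\mathbf{1}_{Y_j}$ are the \emph{indicator} functions of the two pieces. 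These are \emph{not} compact in general. The model case makes this transparent: on $M=\RR$ with $F=B(B^2+1)^{-1/2}$ having Fourier multiplier $\xi/\sqrt{1+\xi^2}$ (which behaves like $\operatorname{sgn}\xi$ at high frequency), the operator $\mathbf{1}_{[0,\infty)}F\,\mathbf{1}_{(-\infty,0]}$ is essentially a Hankel operator with a discontinuous symbol at $\infty$, and by Hartman's theorem it is bounded but \emph{not} compact. So the claimed compact perturbation fails, and Rellich's lemma cannot help you because the indicator functions are not smooth and the error is not compactly supported.

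The fix — and this is what Bunke actually does — is to replace the sharp-cut $\Upsilon$ with an \emph{isometry built from a smooth rotation}. Choose smooth functions $\alpha,\beta$ on the glued manifolds, with $\alpha^2+\beta^2=1$, $\alpha\equiv 1$ deep inside $X_j$, $\beta\equiv 1$ deep inside $Y_j$, and $d\alpha,d\beta$ supported in the tubular neighborhood $U(N)$. Assemble these into a $2\times 2$ matrix of multiplication operators giving an isometry $L:H^0(M_2)\oplus H^0(M_3)\to H^0(M_0)\oplus H^0(M_1)$ (a rotation in the glued region, the identity elsewhere). Because $\alpha,\beta$ are smooth, $L$ \emph{does} preserve the Sobolev domains, and the failure of $L$ to intertwine the operators is controlled by the genuine commutators $[B_j,\alpha]=c(d\alpha)$ and $[B_j,\beta]=c(d\beta)$, which are multiplication by compactly supported smooth bundle maps. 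Fed through the resolvent formula $R_j(\lambda)=(B_j^2+f_j+\lambda^2)^{-1}$, these error terms do produce compactly supported factors to which Lemma~\ref{L:Rellich lemma} applies, and the $\lambda$-decay of $\|R_j(\lambda)\|_{\mathcal{B}(H^0)}$ makes the $\lambda$-integral converge in the $A$-compact ideal. That is the argument your sketch is reaching for, but it requires $L$, not $\Upsilon$; the distinction between a smooth rotation and a sharp permutation is exactly where the proof lives or dies.
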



\subsection{A manifold with the reversed orientation}\label{SS:reverse orientation}
Suppose $\big(\Sigma,D,\Phi,V\big)$ is an admissible quadruple on an complete \emph{oriented} Riemannian manifold $M$.
Let $M^-$ be a copy of this manifold with the opposite orientation.
Denote by $\Sigma^-$ the Dirac bundle $\Sigma$ viewed as a vector bundle over $M^-$, endowed with the {\em opposite} Clifford action. 
This means that a vector $\xi\in T^*M\simeq T^*(M^-)$ acts on $\Sigma^-$ by $c(-\xi)$. 
The change of the Clifford action is needed because we reversed the orientation of $M$.
Denote respectively by $D^-$ and $\Phi^-$ the Dirac-type operator and the endomorphism on $\Sigma^-$ induced by $D$ and $\Phi$.
In this way, we obtain an admissible quadruple $\big(\Sigma^-,D^-,\Phi^-,V^-\big)$ over $M^-$ (cf.~\cite[Chapter~9]{BW93} and~\cite[Section~5]{BC}).
We use this construction and Theorem~\ref{T:K-theoretic relative index theorem} to deduce the following proposition.


\begin{proposition}\label{P:potential off compact}
For $j=0,1$, let $P_j$ be the twisted Callias-type operator associated to an admissible quadruple $\big(\Sigma_j,D_j,\Phi_j,V_j\big)$ over a complete \emph{oriented} Riemannian manifold $M_j$.
Suppose $M_j=X_j\cup_{N_j}Y_j$ is a partition, where $X_j$ is a compact submanifold with boundary whose interior contains an essential support of $\Phi_j$ and $N_j=X_j\cap Y_j$ is a codimension one closed submanifold of $M$.
If the quadruples $\big(\Sigma_0,D_0,\Phi_0,V_0\big)$, $\big(\Sigma_1,D_1,\Phi_1,V_1\big)$ {coincide} near $N_0$, $N_1$, then $\ind_A P_1=\ind_A P_2$. 
\end{proposition}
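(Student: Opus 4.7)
The plan is to apply the $K$-theoretic relative index theorem (Theorem~\ref{T:K-theoretic relative index theorem}) to the pair consisting of $M_1$ and the \emph{double} of the compact piece $X_0$, exploiting the vanishing of the index on closed manifolds (Corollary~\ref{C:compact case}) to produce the desired cancellation.

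First I would use the orientation-reversal construction of Subsection~\ref{SS:reverse orientation} to form the closed oriented Riemannian manifold $DX_0 := X_0 \cup_{N_0} X_0^-$ obtained by gluing $X_0$ to its orientation-reversed copy $X_0^-$ along their common boundary $N_0$. The restrictions of the quadruple $(\Sigma_0,D_0,\Phi_0,V_0)$ to $X_0$ and of its orientation-reversed version $(\Sigma_0^-,D_0^-,\Phi_0^-,V_0^-)$ to $X_0^-$ glue coherently along $N_0$ to yield an admissible quadruple on the closed manifold $DX_0$. The associated twisted Callias-type operator $P_{DX_0}$ then has $\ind_A P_{DX_0}=0$ by Corollary~\ref{C:compact case}.

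Next I would apply Theorem~\ref{T:K-theoretic relative index theorem} to the pair $(M_1, DX_0)$, using the isometric identification $\psi: U(N_0) \to U(N_1)$ from the hypothesis to identify the cutting hypersurfaces and to confirm that the quadruples coincide there. The cut-and-paste construction produces two complete oriented manifolds: on one side, the closed manifold $W := X_1 \cup_N X_0^-$ (obtained from the compact part of $M_1$ and the reversed-orientation piece of $DX_0$), and on the other side, precisely the manifold $M_2 = X_0 \cup_N Y_1$ introduced in Subsection~\ref{SS:relative index theorem}. Theorem~\ref{T:K-theoretic relative index theorem} then yields
$$ \ind_A P_1 \,+\, \ind_A P_{DX_0} \;=\; \ind_A P_W \,+\, \ind_A P_2. $$
Since $W$ is closed, a second application of Corollary~\ref{C:compact case} gives $\ind_A P_W = 0$; combined with $\ind_A P_{DX_0}=0$ from the previous step, this reduces the above identity to $\ind_A P_1 = \ind_A P_2$, which is the claim.

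The main technical point to verify is that the orientation-reversal and subsequent gluing are compatible with all the bundle data (the Clifford action on the Dirac bundle, the metric connection, and the Hilbert $A$-module bundle), so that the constructed quadruples on $DX_0$ and $W$ are genuinely admissible in the sense of Definition~\ref{D:Callias}. This requires checking that the sign conventions for the Clifford action prescribed in Subsection~\ref{SS:reverse orientation} match across $N$; such compatibility follows from the collar-type agreement of the quadruples near $N_0, N_1$ implemented by the isometric diffeomorphism $\psi$, exactly as in the analogous arguments of~\cite{Ang93} and~\cite{BC} in the scalar and von Neumann settings.
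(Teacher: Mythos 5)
Your proof has a genuine gap: you establish only half of the claim. The ``$\ind_A P_1 = \ind_A P_2$'' in the statement is a typo in the paper for ``$\ind_A P_0 = \ind_A P_1$''; this is clear from how the proposition is applied (e.g.\ Proposition~\ref{P:reduction to cylindrical ends} uses it to conclude $\ind_A P_V = \ind_A P_C$), and from the paper's own proof, which ends by chaining $\ind_A P_0 = \ind_A P_3 = \ind_A P_1$. Your single cut-and-paste of $M_1$ with the double $DX_0 = X_0\cup_{N_0}X_0^-$ shows
\[
\ind_A P_1 \ = \ \ind_A P_{X_0\cup_N Y_1}\,,
\]
which is precisely the second step of the paper's argument, but leaves open the first step: relating $\ind_A P_0$ to $\ind_A P_{X_0\cup_N Y_1}$. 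These are not the same manifold in general ($M_0 = X_0\cup_N Y_0$ has the ``wrong'' noncompact end), and in the very application this proposition is written for, $X_0 = X_1 = M_-$, so $X_0\cup_N Y_1 = M_1$ and your conclusion degenerates to the tautology $\ind_A P_1 = \ind_A P_1$, giving no information about $P_0$ at all.

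To close the gap, you need one more application of the relative index theorem that swaps the noncompact ends of $M_0$. Concretely: form the complete manifold $\widetilde{M} := Y_0^- \cup_N Y_1$, where $Y_0^-$ is the reversed-orientation copy of $Y_0$ carrying the reversed quadruple and $Y_1$ carries the restriction of $(\Sigma_1,D_1,\Phi_1,V_1)$. Since all the potentials there have empty essential support, Corollary~\ref{C:vanishing sufficient condition} gives $\ind_A P_{\widetilde{M}} = 0$. Cutting-and-pasting $M_0 = X_0\cup_N Y_0$ with $\widetilde{M}$ produces $X_0\cup_N Y_1$ and $Y_0^-\cup_N Y_0$; the latter again has empty essential support, so by Theorem~\ref{T:K-theoretic relative index theorem},
\[
\ind_A P_0 \ = \ \ind_A P_{X_0\cup_N Y_1}\,.
\]
Combined with your half, this yields the full statement $\ind_A P_0 = \ind_A P_1$. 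In effect, the paper uses \emph{two} cut-and-paste steps (one via a cylinder-type manifold with empty essential support, one via a closed double as you do), and your proposal supplies only the second.
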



\begin{proof}
Let $M_0^-$ be a copy of $M$ with the opposite orientation.
By hypothesis, we have the partition $M_0^-=Y_0^-\cup_NX_0^-$, where $N\cong N_0\cong N_1$.
Consider the manifold $M_2:=Y_0^-\cup_N Y_1$.
Let $\big(\Sigma_2,D_2,\Phi_2,V_2\big)$ be the admissible quadruple on $M_2$ coinciding with $\big(\Sigma_0^-,D_0^-,\Phi_0^-,V_0^-\big)$ on $Y_0^-$ and with $\big(\Sigma_0,D_0,\Phi_0,V_0\big)$ on $Y_0$.
Let $P_2$ be the twisted Callias-type operator associated to $\big(\Sigma_2,D_2,\Phi_2,V_2\big)$.
Notice that $\Phi_2$ has empty essential support so that, by Corollary~\ref{C:vanishing sufficient condition}, $\ind_A P_2=0$.

Let $M_3=X_0\cup_N Y_1$, $M_4=Y_0^-\cup_N Y_0$ be the complete Riemannian manifolds obtained from $M_0$ and $M_2$ by cutting and pasting.
Denote by $P_3$ and $P_4$ the twisted Callias-type operators associated respectively to the admissible quadruples $\big(\Sigma_3,D_3,\Phi_3,V_3\big)$ and $\big(\Sigma_4,D_4,\Phi_4,V_4\big)$ obtained from $\big(\Sigma_0,D_0,\Phi_0,V_0\big)$ and $\big(\Sigma_2,D_2,\Phi_2,V_2\big)$ by cutting and pasting.
Notice that $\Phi_4$ has empty essential support so, by Corollary~\ref{C:vanishing sufficient condition}, $\ind_AP_4=0$.
Using Theorem~\ref{T:K-theoretic relative index theorem}, we obtain 
\[
	\ind_A P_0\ = \ \ind_A P_0\ +\ \ind_A P_2\ = \
	\ind_A P_3\ +\ \ind_A P_4\ = \ \ind_AP_3\,.
\]

It remains to show that $\ind_AP_3=\ind_AP_1$.
Consider the manifold $M_5:=X_1\cup_N X_0^-$.
Let $\big(\Sigma_5,D_5,\Phi_5,V_5\big)$ be the admissible quadruple on $M_5$ coinciding with $\big(\Sigma_1,D_1,\Phi_1,V_1\big)$ on $X_1$ and with $\big(\Sigma_0^-,D_0^-,\Phi_0^-,V_0^-\big)$ on $X_0^-$.
Denote by $P_5$ the associated twisted Callias-type operator.
Since $M_5$ is a closed manifold, by Corollary~\ref{C:compact case}, $\ind_A P_5=0$.

Let $M_6=X_3\cup_N Y_1$, $M_7=Y_3^-\cup_N Y_3$ be the complete Riemannian manifolds obtained from $M_3$ and $M_5$ by cutting and pasting.
Denote by $P_6$ and $P_7$ the twisted Callias-type operators associated respectively to the admissible quadruples $\big(\Sigma_6,D_6,\Phi_6,V_6\big)$ and $\big(\Sigma_7,D_7,\Phi_7,V_7\big)$ obtained from $\big(\Sigma_3,D_3,\Phi_3,V_3\big)$ and $\big(\Sigma_5,D_5,\Phi_5,V_5\big)$ by cutting and pasting.
Notice that $M_7=M_1$ and that the quadruples $\big(\Sigma_7,D_7,\Phi_7,V_7\big)$ and $\big(\Sigma_1,D_1,\Phi_1,V_1\big)$ coincide.
Hence, $\ind_AP_7=\ind_AP_1$.
Moreover, by Corollary~\ref{C:compact case} $\ind_AP_6=0$.
Using Theorem~\ref{T:K-theoretic relative index theorem},  we finally deduce
\[
	\ind_A P_3\ = \ \ind_A P_3\ +\ \ind_A P_5\ = \
	\ind_A P_6\ +\ \ind_A P_7\ = \ \ind_AP_7\ =\ \ind_A P_1\,.
\]
\end{proof}


\subsection{Reduction to a manifold with cylindrical ends}\label{SS:cylindrical ends}
Let $P_V$, $M$, $\big(\Sigma,D,\Phi,V\big)$ and the partition $M=M_-\cup_NM_+$ be as in Subsection~\ref{SS:Callias-type theorem}.
Deform the metric, the Clifford bundle structure, the operator $D$, the bundle map $\Phi$ and the Hilbert $A$-bundle $V$ in such a way that there exist $\epsilon>0$ and a neighborhood $U(N)$ of $N$ satisfying
\begin{itemize}
	\item $U(N)$ is diffeomorphic to $N\times(1-\epsilon,1+\epsilon)$ and the Riemannian metric, restricted to $U(N)$,
		has a product structure; 
	\item the restriction of $\big(\Sigma,D,\Phi,V\big)$ to $N\times(1-\epsilon,1+\epsilon)$ coincides with 
		$\big(\widehat{\Sigma}_N,\widehat{D}_N,\widehat{\Phi}_N,\widehat{V}_N\big)$, where 
		$\widehat{\Sigma}_N,\widehat{D}_N,\widehat{\Phi}_N,\widehat{V}_N$ are defined in Subsection~\ref{SS:model operator};
	\item the set $M_-\setminus \left(N\times(1-\epsilon, 1]\right)$ is an essential support of $\Phi$.
\end{itemize}
For more details on this deformation, cf.~\cite[Section~6]{BC}.
Since all the changes occur in a compact set, the $A$-index of $P_V$ is stable under such deformation.

Consider the manifold with cylindrical ends
\begin{equation}\label{eq:manifold M_C}
	M_C\ :=\ M_-\cup_N\big(N\times[1,\infty)\big)\,.
\end{equation}
Let $\big(\Sigma_C,D_C,\Phi_C,V_C\big)$ be the quadruple coinciding with $\big(\Sigma,D,\Phi,V\big)$ on $M_-\cup_N\big(N\times [1,1+\epsilon)\big)$ and with $\big(\widehat{\Sigma}_N,\widehat{D}_N,\widehat{\Phi}_N,\widehat{V}_N\big)$ on $N\times (1-\epsilon,\infty)$.
Notice that this quadruple is admissible and the set $M_-\setminus \left(N\times(1-\epsilon, 1]\right)$ is an essential support of $\Phi_C$.
Denote by $P_C$ the twisted Callias-type operator associated with the quadruple $\big(\Sigma_C,D_C,\Phi_C,V_C\big)$.
The following proposition is a direct consequence of Proposition~\ref{P:potential off compact}.


\begin{proposition}\label{P:reduction to cylindrical ends}
The classes $\ind_AP_V$ and $\ind_AP_C$ coincide.
\end{proposition}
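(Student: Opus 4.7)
The plan is to deduce the result directly from Proposition~\ref{P:potential off compact} by recognizing $P_V$ and $P_C$ as a pair of Callias-type operators whose admissible quadruples coincide on a common collar of their separating hypersurfaces.

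First, I would set up the two decompositions so they match the framework of Proposition~\ref{P:potential off compact}. Write $M = X_0 \cup_{N_0} Y_0$ with $X_0 := M_-$, $Y_0 := M_+$, and $N_0 := N$; and write $M_C = X_1 \cup_{N_1} Y_1$ with $X_1 := M_-$, $Y_1 := N \times [1, \infty)$, and $N_1 := N \times \{1\}$. In both cases $X_0 = X_1 = M_-$ is compact, and by the third bullet of the deformation described in Subsection~\ref{SS:cylindrical ends} together with the explicit construction of $\Phi_C$, the interior of $M_-$ contains the set $M_- \setminus \left(N \times (1-\epsilon, 1]\right)$, which is an essential support for both $\Phi$ and $\Phi_C$. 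The hypersurfaces $N_0$ and $N_1$ are canonically identified closed codimension one submanifolds.

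Next, I would verify that the quadruples $(\Sigma, D, \Phi, V)$ and $(\Sigma_C, D_C, \Phi_C, V_C)$ coincide near $N_0$ and $N_1$ in the sense required by Proposition~\ref{P:potential off compact}. By the second bullet of the deformation, on the tubular neighborhood $U(N_0) := N \times (1-\epsilon, 1+\epsilon) \subset M$ the quadruple $(\Sigma, D, \Phi, V)$ is identified with the pulled-back cylindrical quadruple $(\widehat{\Sigma}_N, \widehat{D}_N, \widehat{\Phi}_N, \widehat{V}_N)$. By the very definition of $(\Sigma_C, D_C, \Phi_C, V_C)$, the same identification holds on $U(N_1) := N \times (1-\epsilon, 1+\epsilon) \subset M_C$. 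Taking $\psi$ to be the identity on $N \times (1-\epsilon, 1+\epsilon)$ and using the resulting canonical isomorphisms of Dirac bundles and of Hilbert $A$-bundles (which preserve the Clifford actions, the metric connections, and the potentials), all the gluing hypotheses of Proposition~\ref{P:potential off compact} are satisfied.

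Finally, an application of Proposition~\ref{P:potential off compact} to this pair yields $\ind_A P_V = \ind_A P_C$. There is no genuine obstacle in the argument; the substantive work was front-loaded into the deformation of Subsection~\ref{SS:cylindrical ends}, which was designed precisely so that $M$ and $M_C$ share a common product collar of $N$ on which all the admissible data agree, reducing the statement to a mechanical verification of hypotheses.
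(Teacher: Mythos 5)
Your proof is correct and follows the same route as the paper, which dispenses with the proposition by simply remarking that it is a direct consequence of Proposition~\ref{P:potential off compact}; you merely make explicit the choice of partitions, the identification $\psi$ on the common product collar $N\times(1-\epsilon,1+\epsilon)$, and the verification that $M_-$ is compact with an essential support of both potentials in its interior.
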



\subsection{A perturbation of the connection on the cylindrical end}\label{SS:perturbed connection}
Let the manifold $M_C$ and the quadruple $\big(\Sigma_C,D_C,\Phi_C,V_C\big)$ be as in Subsection~\ref{SS:cylindrical ends}.
In this subsection we modify the connection $\nabla^{\Sigma_C}$ of $\Sigma_C$ over the cylindrical end.
The goal is to get a new connection which preserves the grading.

Let us first introduce some notations.
Let $\nabla^{\Sigma_N}$ and $\nabla^{\Sigma_{N\pm}}$ be the connections respectively on $\Sigma_N$ and $\Sigma_{N\pm}$ defined in Subsection~\ref{SS:Callias-type theorem}.
Denote by $\nabla^{\widehat{\Sigma}_N}$ and $\nabla^{\widehat{\Sigma}_{N\pm}}$ the lifts of these connections respectively to the bundles $\widehat{\Sigma}_N$ and $\widehat{\Sigma}_{N\pm}$.
Notice that in general $\nabla^{\widehat{\Sigma}_N}\neq \nabla^{\widehat{\Sigma}_{N+}}\oplus\nabla^{\widehat{\Sigma}_{N-}}$.

Over the cylindrical end, we have the decomposition
\begin{equation}\label{eq:decomposition of Eigma_C}
	\Sigma_C\big|_{N\times(1-\epsilon,\infty)}\ =\ \widehat{\Sigma}_{N+}\oplus\widehat{\Sigma}_{N-}\,.
\end{equation}
Notice that the connection $\nabla^{\Sigma_C}$ of $\Sigma_C$ doesn't preserve Decomposition~\eqref{eq:decomposition of Eigma_C}.
In particular, with respect to such decomposition, we have
\begin{equation}\label{eq:reduction to the cylinder2}
    {D_C}\big|_{N\times (1-\epsilon,\infty)}\ =\
    \begin{pmatrix}
    		\widehat{D}_{N+}&
    		\widehat{\pi}_+\circ \widehat{D}_N\circ \widehat{\pi}_-
    		\vspace{0.2cm}\\
		\widehat{\pi}_-\circ \widehat{D}_N\circ \widehat{\pi}_+& 
		\widehat{D}_{N-}
    \end{pmatrix}\,,
\end{equation}
where $\widehat{\pi}_\pm$ are the projections onto $\widehat{\Sigma}_{N\pm}$.
In~\cite[Subsection~5.16]{BC}, it is shown that the operators $\widehat{\pi}_\pm\circ \widehat{D}_N\circ \widehat{\pi}_\mp$ are of order zero.
Define a bundle map $\Pi\in C^\infty(M_C;\End(\Sigma_C))$ that is $0$ outside of $N\times (1-\epsilon,\infty)$ and such that
\begin{equation}\label{eq:Psi}
	\Pi\big|_{N\times (1-\epsilon_1,\infty)}\ :=\
 	\begin{pmatrix}
		0&\widehat{\pi}_+\circ \widehat{D}_N\circ \widehat{\pi}_-\vspace{0.2cm}\\
		\widehat{\pi}_-\circ \widehat{D}_N\circ \widehat{\pi}_+& 0
 	\end{pmatrix}\,,
\end{equation}
where $0<\epsilon_1<\epsilon$.

Set $D_C^\prime:= D_C-\Pi$.
Notice that
\[
	D_C^\prime\big|_{N\times(1-\epsilon_1,\infty)}\ =\ \widehat{D}_{N+}\oplus\widehat{D}_{N-}\,.
\]
Hence, we regard the operator $D_C^\prime$ as a formally self-adjoint Dirac-type operator associated to a new connection $\nabla^{\Sigma_C^\prime}$ on $\Sigma_C$, coinciding with $\nabla^{\Sigma_C}$ outside of $N\times (1-\epsilon,\infty)$ and such that
\begin{equation}\label{eq:split connection}
	\nabla^{\Sigma_C^\prime}\big|_{N\times(1-\epsilon_1,\infty)}\ =\ 
	\nabla^{\widehat{\Sigma}_{N+}}\oplus \nabla^{\widehat{\Sigma}_{N-}}\,.
\end{equation}
Denote by $\Sigma^\prime_C$ the Dirac bundle $\Sigma_C$ endowed with the connection $\nabla^{\Sigma_C^\prime}$.


\begin{lemma}\label{L:ind_AP_C'}
There exists $\lambda\geq 1$ such that the quadruple $\big(\Sigma_C^\prime,D_C^\prime,\lambda\,\Phi_C,V_C\big)$ is admissible.
Moreover, if $P_C^\prime$ is the twisted Callias-type operator associated with this quadruple, then the classes $\ind_AP_C^\prime$ and $\ind_AP_C$ coincide.
\end{lemma}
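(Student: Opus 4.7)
My plan is to handle the two assertions in sequence. For admissibility, I would use that $\Pi$ is supported on the cylindrical end and, by construction, pulled back from bundle endomorphisms on the compact manifold $N$; thus $\Pi$ and $[\Pi, \Phi_C]$ are bounded, and $C := \sup_{x \in M_C} \|[\Pi, \Phi_C](x)\| < \infty$. Condition~(i) of Definition~\ref{D:Callias} is immediate from the identity $[D_C^\prime, \lambda\Phi_C] = \lambda[D_C, \Phi_C] - \lambda[\Pi, \Phi_C]$. For condition~(ii), letting $d_0 > 0$ and the compact $K \subset M_C$ realize admissibility of the original quadruple, multiplying $\Phi_C^2 \geq d_0 + \|[D_C,\Phi_C]\|$ by $\lambda^2$ and subtracting the triangle-inequality bound on the commutator gives
\[
\lambda^2\Phi_C^2 - \|[D_C^\prime, \lambda\Phi_C]\| \;\geq\; \lambda^2 d_0 - \lambda C
\]
on $M_C \setminus K$, which exceeds any prescribed positive constant once $\lambda \geq 1$ is chosen sufficiently large.

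For the equality of index classes, I would apply Proposition~\ref{P:stability of the index} twice via two successive linear homotopies. The first keeps $D_C$ fixed and scales the potential from $\Phi_C$ to $\lambda\Phi_C$; the second keeps the potential equal to $\lambda\Phi_C$ and linearly subtracts $t\Pi$ from $D_C$, arriving at $P_C^\prime$ at $t = 1$. In each case, the perturbation is a self-adjoint off-diagonal bundle map on $\Sigma_C \oplus \Sigma_C$ built respectively from $\Phi_C$ or $\Pi$. Condition~\textbf{(F.1)} of Section~\ref{S:properties of the index} follows because $\Phi_C$ and $\Pi$ are bounded bundle endomorphisms whose commutators with the base Dirac operator are also bounded---by admissibility of the original quadruple for the former, and by the bounded covariant derivatives of $\Pi$ (which is pulled back from the compact $N$) for the latter.

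The main obstacle is condition~\textbf{(F.2)}, uniform invertibility at infinity along each homotopy. The diagonal entries of $B_t^2$ take the form $D_t^2 + s_t^2\Phi_C^2 \pm is_t[D_t, \Phi_C]$ for an appropriate $D_t$ and scaling factor $s_t \geq 1$; outside the essential support of $\Phi_C$, the admissibility estimates give $s_t^2\Phi_C^2 - s_t\|[D_t, \Phi_C]\|$ bounded below independently of $t$---by $d_0$ in the first homotopy and by $\lambda^2 d_0 - \lambda C$ in the second, using the inequality derived above. A single compactly supported $f \geq 0$, chosen sufficiently large on a fixed neighborhood of the essential support as in the proof of Theorem~\ref{T:B^2 is invertible at infinity}, then makes $B_t^2 + f$ uniformly bounded below. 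Theorem~\ref{T:B_V^2+f invertible} gives the required uniform invertibility at infinity, and Proposition~\ref{P:stability of the index} applied to each homotopy yields $\ind_A P_C = \ind_A P_C^\prime$.
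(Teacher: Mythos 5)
Your proposal is correct and follows essentially the same strategy as the paper: one establishes admissibility of $\big(\Sigma_C',D_C',\lambda\Phi_C,V_C\big)$ for $\lambda$ large using the boundedness of $[\Pi,\Phi_C]$ together with the original admissibility estimate, and then links $\ind_A P_C$ to $\ind_A P_C'$ via two applications of Proposition~\ref{P:stability of the index} — first scaling $\Phi_C$ to $\lambda\Phi_C$ with $D_C$ fixed, then interpolating $D_C - t\Pi$ with the potential fixed at $\lambda\Phi_C$. The only difference is cosmetic: you spell out the verification of conditions \textbf{(F.1)} and \textbf{(F.2)} for each homotopy, which the paper leaves implicit.
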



\begin{proof}
Since both endomorphisms, $\Pi$ and $\Phi_C$, are uniformly bounded, the commutator $[\Pi,\Phi_C]$ belongs to $L^\infty(M;\End(\Sigma_C))$.
Since the restriction of $D_C$ to $N\times (1-\epsilon,\infty)$ is the lift of $D_N$, the commutator $[D,\Phi_C]$ is also in $L^\infty(M;\End(\Sigma_C))$.
Choose constants $d>0$ and $\lambda\geq 1$ such that 
\begin{equation}\label{eq:admissible lambda}
	\lambda^2\Phi^2_C(x)\ \geq\ d\,+\, \lambda\left(\|[D_C,\Phi_C]\|_\infty\,+\,\|\Pi,\Phi_C\|_\infty\right)\,,
	\qquad\qquad x\in N\times (1-\epsilon_1,\infty)\,.
\end{equation}
Observe that, for all $t\in [0,1]$, we have
\[
	\lambda\left(\|[D_C,\Phi_C]\|_\infty+\|\Pi,\Phi_C\|_\infty\right)
	\ \geq\ \|[D_C\,-\,t\,\Pi,\lambda\,\Phi_C](x)\|\,,\quad\qquad x\in N\times (1-\epsilon_1,\infty)\,.
\]
Using~\eqref{eq:admissible lambda}, from the previous inequality we deduce
\begin{equation}\label{eq:admissible lambda1}
	\left(\lambda\,\Phi_C\right)^2(x)\,-\,\left\|\left[D_C-t\,\Pi,\lambda\,\Phi_C\right]\right\|\ \geq\ d\,,
	\qquad\qquad x\in N\times (1-\epsilon_1,\infty),\quad 0\leq t\leq 1\,.
\end{equation}
It follows that the quadruple $\big(\Sigma_C,D_C-t\Pi,\lambda\Phi_C,V_C\big)$ is admissible, for $0\leq t\leq 1$, and we denote by $P_C^{\lambda t}$ the associated twisted Callias-type operator.
By Proposition~\ref{P:stability of the index}, the classes $\ind_A P_C^{\lambda 0}$ and $\ind_A P_C^{\lambda 1}$ coincide.

Since $D_C^\prime=D_C-\Pi$, the quadruple $\big(\Sigma_C^\prime,D_C^\prime,\lambda\,\Phi_C,V_C\big)$ is admissible and the associated operator $P_C^\prime$ has the same index class as $P_C^{\lambda 1}$.
Finally, using again Proposition~\ref{P:stability of the index}, the classes $\ind_A P_C$ and $\ind_A P_C^{\lambda 0}$ coincide.
Therefore,
\[
	\ind_AP_C\ =\ \ind_AP_C^{\lambda 0}\ =\ \ind_AP_C^{\lambda 1}\ =\ \ind_AP_C^\prime\,,
\]
from which the thesis follows.
\end{proof}


\subsection{A perturbation of the potential on the cylindrical end}\label{SS:perturbed potential}
In this subsection we modify the potential on the cylindrical end.
The goal is to obtain a new potential that goes to infinity at infinity.

Let $\zeta:\RR\rightarrow \RR$ be an increasing smooth function such that there are positive constants $R$, $c$ and $\epsilon_2$ satisfying the following conditions.
\begin{enumerate}
	\item[\textbf{(Z.1)}] $0<\epsilon_2<\epsilon_1$ and
		\[
		\zeta(r)\ =\ \left\{\begin{array}{cl}
		0\,,&\qquad\qquad 0< r< 1-\epsilon_1\\	
		1\,,&\qquad\qquad 1-\epsilon_2< r< 1+\epsilon_2\\
		r\,,&\qquad\qquad |r|>R 
		\end{array}\right.\,,
		\]
		where $\epsilon_1$ is the constant of~\eqref{eq:Psi};
	\item[\textbf{(Z.2)}] $\zeta^2(r)\,-\,\|\zeta^\prime(r)\|\ \geq\ c\,,\  $ for $r>1-\epsilon_2$.
\end{enumerate}
With a slight abuse of notation, we denote by $\zeta$ also the induced function $N\times\RR\rightarrow\RR$.
Let $\Phi_C^{\prime\prime}$ be the endomorphism of $\Sigma_C^\prime$ vanishing outside of $N\times (1-\epsilon_1,\infty)$ and such that $\Phi_C^{\prime\prime}=\zeta\,\widehat{\alpha}$ on the cylindrical end $N\times(1-\epsilon,\infty)$.
Here, $\widehat{\alpha}$ is the grading operator of $\widehat{\Sigma}_N$, i.e. $\widehat{\alpha}=\pm 1$ on $\widehat{\Sigma}_{N\pm}$.
By Condition~\textbf{(Z.1)}, the endomorphism $\Phi_C^{\prime\prime}$ is admissible for the pair $\big(\Sigma_C^\prime,D_C^\prime\big)$.
By Condition~\textbf{(Z.2)}, the set $M\setminus\big(N\times(1-\epsilon_2,\infty)\big)$ is an essential support of $\Phi_C^{\prime\prime}$.
Let $P_C^{\prime\prime}$ be the twisted Callias-type operator associated to the quadruple $\big(\Sigma_C^\prime,D_C^\prime,\Phi_C^{\prime\prime},V_C\big)$.


\begin{lemma}\label{L:ind_AP_C''}
The classes $\ind_AP_C^\prime$ and $\ind_AP_C^{\prime\prime}$ coincide.
\end{lemma}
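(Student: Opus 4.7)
The plan is to bridge between $\lambda\,\Phi_C$ and $\Phi_C''$ via a bounded intermediate potential, applying Proposition~\ref{P:stability of the index} to a homotopy of bounded potentials and Proposition~\ref{P:potential off compact} to handle the tail difference at infinity. The obstacle to a direct linear homotopy between $\lambda\,\Phi_C$ and $\Phi_C''$ is that their difference is unbounded on the cylindrical end, so hypothesis~\textbf{(F.1)} of Proposition~\ref{P:stability of the index} fails outright.

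\textbf{Step 1 (truncation).} Fix $R\gg 0$ and pick a smooth function $\tilde\zeta_R\colon\RR\to\RR$ with $\tilde\zeta_R=\zeta$ on $\{r\le R\}$, $\tilde\zeta_R=\zeta(R)=R$ on $\{r\ge R+1\}$, and uniformly bounded first derivative. Replacing $\zeta$ by $\tilde\zeta_R$ in the definition of $\Phi_C''$ produces a bounded endomorphism $\Phi_C^{\prime\prime R}\in C^\infty(M_C;\End(\Sigma_C^\prime))$. For $R$ large enough, $(\tilde\zeta_R)^2\ge c_1>0$ outside a compact set while $[D_C^\prime,\tilde\zeta_R\,\widehat\alpha]=\tilde\zeta_R'(r)\,\widehat c(dr)\widehat\alpha$ is uniformly bounded, so $(\Sigma_C^\prime,D_C^\prime,\Phi_C^{\prime\prime R},V_C)$ is admissible. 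Let $P_C^R$ denote the associated Callias-type operator.

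\textbf{Step 2 (homotopy).} Consider the linear interpolation $\Psi_t:=(1-t)\lambda\,\Phi_C+t\,\Phi_C^{\prime\prime R}$ for $t\in[0,1]$, and the associated path of Callias-type matrix operators \eqref{eq:twisted Callias}. This fits the framework $B_V+t\,\Xi_V$ of Proposition~\ref{P:stability of the index} with $B_V$ being the matrix Callias-type operator at $t=0$ and $\Xi_V=(\Phi_C^{\prime\prime R}-\lambda\Phi_C)\otimes\bigl(\begin{smallmatrix}0&-i\\ i&0\end{smallmatrix}\bigr)\otimes\id_{V_C}$. Hypothesis~\textbf{(F.1)} holds because $\Xi_V$ is a bounded bundle endomorphism with bounded first derivatives. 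For~\textbf{(F.2)}, the key observation is that on the cylindrical end, both $\lambda\,\Phi_C$ and $\tilde\zeta_R\,\widehat\alpha$ preserve the splitting $\widehat\Sigma_{N+}\oplus\widehat\Sigma_{N-}$ and have matching sign structure: positive on $\widehat\Sigma_{N+}$, negative on $\widehat\Sigma_{N-}$. Their convex combination therefore inherits the same sign structure, and on $\widehat\Sigma_{N+}$ one gets a lower bound
\[
	\Psi_t\ \ge\ \big((1-t)\lambda\,c_0+t\,\tilde\zeta_R\big)\id,
\]
where $c_0>0$ is a uniform lower bound for the positive part of $\widehat\Phi_N$. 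Squaring and minimizing $\big((1-t)\lambda c_0+t\tilde\zeta_R\big)^2$ over $t\in[0,1]$ yields a lower bound $\min(\lambda^2c_0^2,\tilde\zeta_R^2)$, which for $R$ large exceeds the uniform commutator bound on $[D_C^\prime,\Psi_t]=(1-t)\lambda[D_C^\prime,\Phi_C]+t\,\tilde\zeta_R'(r)\widehat c(dr)\widehat\alpha$ by a positive constant independent of $t$. This gives uniform invertibility at infinity, and Proposition~\ref{P:stability of the index} delivers $\ind_A P_C^\prime=\ind_A P_C^R$.

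\textbf{Step 3 (cut-and-paste).} The two Callias-type operators $P_C^R$ and $P_C^{\prime\prime}$ live on the same manifold $M_C$, and the quadruples $(\Sigma_C^\prime,D_C^\prime,\Phi_C^{\prime\prime R},V_C)$ and $(\Sigma_C^\prime,D_C^\prime,\Phi_C^{\prime\prime},V_C)$ coincide on $\{r\le R\}$, since $\tilde\zeta_R=\zeta$ there. Partition $M_C=X\cup_N Y$ along the closed hypersurface $N\times\{R-1\}$, with $X:=M_-\cup(N\times[1,R-1])$ compact and containing the essential supports of both potentials, and $Y:=N\times[R-1,\infty)$. The two quadruples coincide in a neighborhood of $N\times\{R-1\}$, so Proposition~\ref{P:potential off compact} applies and gives $\ind_A P_C^R=\ind_A P_C^{\prime\prime}$. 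Combining the two equalities proves the lemma. The main technical point is the uniform verification of~\textbf{(F.2)} in Step~2, which hinges entirely on the shared sign decomposition of the two potentials on the cylindrical end.
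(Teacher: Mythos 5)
Your overall plan matches the paper's: replace the unbounded potential $\zeta\widehat\alpha$ by a bounded truncation, connect the truncation to $\Phi_C'$ by a linear homotopy via Proposition~\ref{P:stability of the index}, and absorb the discrepancy at infinity with Proposition~\ref{P:potential off compact}. (The paper truncates to the constant $1$ rather than $R$, and does the cut-and-paste step first, but this is cosmetic.) However, there is a genuine gap in Step~2.

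The claim that $\min(\lambda^2c_0^2,\tilde\zeta_R^2)$ ``for $R$ large exceeds the uniform commutator bound\ldots by a positive constant independent of $t$'' does not hold, and increasing $R$ cannot repair it, for two reasons. First, $\lambda^2c_0^2$ is simply independent of $R$, so one side of your $\min$ is frozen. Second, and more seriously, the pessimistic bound $\Psi_t^2\ge\min(\lambda^2c_0^2,\tilde\zeta_R^2)$ combined with $\|[D_C',\Psi_t]\|\le\max\bigl(\lambda\|[D_C',\Phi_C]\|_\infty,\|\tilde\zeta_R'\|_\infty\bigr)$ is not enough: a convex combination of two admissible potentials with matching sign structure need not be admissible. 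Set $a=\lambda c_0$, $b=\tilde\zeta_R(r)$, $K_1=\lambda\|[D_C',\Phi_C]\|_\infty$, $K_2=\tilde\zeta_R'(r)$. The quantity that must stay bounded away from $0$ is
\[
	f(t)\ :=\ \bigl((1-t)a+tb\bigr)^2\ -\ \bigl((1-t)K_1+tK_2\bigr)\,,\qquad t\in[0,1]\,,
\]
and this is convex in $t$, so positivity at the endpoints does \emph{not} force positivity on the interior. Concretely, take $a^2=10$, $K_1=9$ (so $\lambda\Phi_C$ is admissible with margin $1$), and a point on the cylinder with $r$ near $1$ where $\zeta=1$ and $\zeta'=0$, so $b=1$, $K_2=0$ (again admissible with margin $1$). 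Then $f(0)=f(1)=1>0$, but
\[
	f(1/2)\ =\ \Bigl(\tfrac{\sqrt{10}+1}{2}\Bigr)^2-\tfrac{9}{2}\ \approx\ 4.33-4.5\ <\ 0\,.
\]
So at $t=1/2$ the potential is not admissible at that point, no matter how large $R$ is (the offending point has $r$ near $1$ and lies outside any essential support that works at $t=0$ or $t=1$). Condition \textbf{(F.2)} therefore fails for your family as stated.

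The paper sidesteps exactly this problem by inserting a further rescaling: it shows (quoting \cite[Subsection~5.17]{BC}) that there exists $\mu\ge\lambda$ such that the scaled family $\mu\Phi_t$ \emph{is} admissible with essential support independent of $t$. Indeed, $\mu^2u(t)^2-\mu v(t)=\mu(\mu u(t)^2-v(t))$ with $u(t)\ge\min(a,b)>0$ and $v(t)\le\max(K_1,K_2)$, so for $\mu>\bigl(1+\max(K_1,K_2)\bigr)/\min(a,b)^2$ the whole family is uniformly admissible; one then deforms $\Phi_C'\to\mu\Phi_C'$, then $\mu\Phi_C'\to\mu\Phi_C^{\prime\prime\prime}$, then $\mu\Phi_C^{\prime\prime\prime}\to\Phi_C^{\prime\prime\prime}$. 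To fix your proof you should interpose the same rescaling: first run a homotopy $s\mapsto s\lambda\Phi_C$, $s\in[1,\mu/\lambda]$, then the linear homotopy between $\mu\Phi_C$ and $\mu\Phi_C^{\prime\prime R}$, then a homotopy back from $\mu\Phi_C^{\prime\prime R}$ to $\Phi_C^{\prime\prime R}$, verifying \textbf{(F.2)} at each stage; Step~3 is then unaffected.
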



\begin{proof}
Let $f_1:\RR\rightarrow [0,1]$ be a smooth function with support in $(1-\epsilon_1,\infty)$ such that $f(r)=\zeta(r)$ for $0<r<1$ and  $f_1(r)=1$ for $r\geq 1$.
Regard $f_1$ as a function $M_C\rightarrow [0,1]$ and define the endomorphism $\Phi_C^{\prime\prime\prime}:=f_1\,\widehat{\alpha}$.
Notice that $\Phi_C^{\prime\prime\prime}$ is admissible for the pair $\big(\Sigma_C^\prime,D_C^\prime\big)$ and the set $M\setminus\big(N\times(1-\epsilon_2,\infty)\big)$ is an essential support of $\Phi_C^{\prime\prime\prime}$.
Denote by $P_C^{\prime\prime\prime}$ the twisted Callias-type operator associated to the quadruple $\big(\Sigma_C^\prime,D_C^\prime,\Phi_C^{\prime\prime\prime},V_C\big)$.
By Proposition~\ref{P:potential off compact}, the classes $\ind_AP_C^{\prime\prime\prime}$ and $\ind_AP_C^{\prime\prime}$ coincide. Therefore, to prove the thesis it suffices to show that $\ind_AP_C^{\prime\prime\prime}=\ind_AP_C^\prime$.

Consider the family of potentials
\[
	\Phi_t\ :=\ t\Phi_C^{\prime\prime\prime}\,+\,(1-t)\,\Phi_C^\prime\,,\qquad\qquad 0\leq t\leq1\,.
\]
In~\cite[Subsection~5.17]{BC}, it is shown that there exists $\mu\geq\lambda$ such that, for all $0\leq t\leq 1$, the endomorphism $\mu\,\Phi_t$ is admissible for the pair $\big(\Sigma_C^\prime,D_C^\prime\big)$, with essential support independent of $t$.
Notice that $\Phi_0=\Phi_C^\prime$ and $\Phi_1=\Phi_C^{\prime\prime\prime}$.
Let $P_{\mu C}^\prime$ and $P_{\mu C}^{\prime\prime\prime}$ be the twisted Callias-type operator associated respectively to the admissible quadruples $\big(\Sigma_C^\prime,D_C^\prime,\mu\,\Phi_C^\prime,V_C\big)$ and  $\big(\Sigma_C^\prime,D_C^\prime,\mu\,\Phi_C^{\prime\prime\prime},V_C\big)$.
Since the endomorphisms $\mu\Phi_C^\prime$ and $\mu\Phi_C^{\prime\prime\prime}$ are constant on the cylindrical end, the classes $\ind_AP_{\mu C}^\prime$ and $\ind_AP_{\mu C}^{\prime\prime\prime}$ coincide by Proposition~\ref{P:stability of the index}.

Moreover, using Proposition~\ref{P:stability of the index} again, we deduce that the class $\ind_AP_C^\prime$ coincides with $\ind_AP_{\lambda C}^\prime$ and the class $\ind_AP_{\mu C}^{\prime\prime\prime}$ coincides with $\ind_AP_C^{\prime\prime\prime}$.
Therefore,
\[
	\ind_AP_C^\prime\ =\ \ind_AP_{\mu C}^\prime\ =\ \ind_AP_{\mu C}^{\prime\prime\prime}\ =\ 
	\ind_AP_C^{\prime\prime\prime}\,,
\]
from which the thesis follows.
\end{proof}


\subsection{Proof of Theorem~\ref{T:reduction to the cylinder}}
From Proposition~\ref{P:reduction to cylindrical ends}, Lemma~\ref{L:ind_AP_C'} and Lemma~\ref{L:ind_AP_C''}, it suffices to show that $\ind_A{\bf M}_{V_N}=\ind_A P_C^{\prime\prime}$, where $P_C^{\prime\prime}$ is the twisted Callias-type operator associated to the admissible quadruple $\big(\Sigma_C^\prime,D_C^\prime,\Phi_C^{\prime\prime},V_C\big)$ defined in Subsection~\ref{SS:perturbed potential}.

Let $M_C^-$ be a copy of $M_C$ with the reversed orientation.
From the construction of Subsection~\ref{SS:reverse orientation},  the quadruple $\big(\Sigma_C^\prime,D_C^\prime,\Phi_C^{\prime\prime},V_C\big)$ induces an admissible quadruple $\big(\Sigma_C^-,D_C^-,\Phi_C^-,V_C\big)$ on $M_C^-$.
Moreover, from~\eqref{eq:manifold M_C} we have the partition $M_C^-=\big(N\times (-\infty,1]\big)\cup_NM_-^-$, where $M_-^-$ is a copy of $M_-$ with reversed orientation and where we identify $N\times (-\infty,1]$ with a copy of $N\times [1,\infty)$ with reversed orientation.

Let us construct an admissible quadruple on the complete Riemannian manifold
\[
	M_1\ :=\ \big(N\times (-\infty,1]\big)\cup_N(M^-_-)\,.
\]
Let $\Sigma_1\rightarrow M_1$ be the Dirac bundle coinciding with $\widehat{\Sigma}_N$ on $N\times(-\infty,1+\epsilon_2)$ and with $\Sigma_C^-$ on $\big(N\times (1-\epsilon_2,1]\big)\cup_N(M_-^-)$.
Define a Dirac-type operator $D_1\in\Diff^1(M_1;\Sigma_1)$ through the conditions $D_1=\widehat{D}_{N+}\oplus\widehat{D}_{N-}$ on $N\times(-\infty,1+\epsilon_2)$ and $D_1=D_C^-$ on $\big(N\times (1-\epsilon_2,1]\big)\cup_N(M_-^-)$.
Let $\Phi_1\in C^\infty(M_1;\End(\Sigma_1))$ be the endomorphism coinciding with $\Phi_C^-$ on $\big(N\times (1-\epsilon_2,1]\big)\cup_N(M_-^-)$ and such that
\[
	\Phi_1\big|_{N\times(-\infty,1+\epsilon_2)}\ =\ \left\{
	\begin{array}{cc}\zeta\,,&\text{ on } \widehat{\Sigma}_{N+}\vspace{0.2cm}\\-1\,,&\text{ on } \widehat{\Sigma}_{N-}\end{array}\right.\,.
\]
Observe that
\[
	[D_1,\Phi_1]\big|_{N\times(-\infty,1+\epsilon_2)}\ =\ 
	\left[\begin{pmatrix}\widehat{D}_{N+}&0\vspace{0.2cm}\\ 0&\widehat{D}_{N-}\end{pmatrix},
	\begin{pmatrix}\zeta&0\vspace{0.2cm}\\0&-1\end{pmatrix}\right]\ =\ 
	\begin{pmatrix}[\widehat{D}_{N+},\zeta]&0\vspace{0.2cm}\\0&0\end{pmatrix}\,.
\]
Thus, the endomorphism $\Phi_1$ is admissible for the pair $\big(\Sigma_1,D_1\big)$.
Finally, let $V_1\rightarrow M_1$ be the Hilbert $A$-bundle such that $V_1=V_C$ on $\big(N\times (1-\epsilon_2,1]\big)\cup_N(M^-_-)$ and $V_1=\widehat{V}_N$ on ${N\times(-\infty,1+\epsilon_2)}$.
Denote by $P_1$ the twisted Callias-type operator associated to the admissible quadruple $\big(\Sigma_1,D_1,\Phi_1,V_1\big)$.
Since the endomorphism $-\id_{\Sigma_1}$ has empty essential support and $\Phi_1=-\id_{\Sigma_1}$ outside of a compact set, $\ind_AP_1=0$ by Corollary~\ref{C:vanishing sufficient condition} and Proposition~\ref{P:potential off compact}.

Notice that the manifolds $M_C$, $M_1$ and the quadruples $\big(\Sigma_C^\prime,D_C^\prime,\Phi_C^{\prime\prime},V_C\big)$, 
$\big(\Sigma_1,D_1,\Phi_1,V_1\big)$ coincide near $N$ in the sense of Subsection~\ref{SS:relative index theorem}.
Let
\[
	M_2\ =\ M_-\cup (M^-_-)\qquad\text{and}\qquad M_3\ =\ N\times\RR
\]
be the complete Riemannian manifolds obtained by $M_C$ and $M_1$ by cutting and pasting.
Let $P_2$ and $P_3$ be the twisted Callias-type operators associated respectively to the admissible quadruples $\big(\Sigma_2,D_2,\Phi_2,V_2\big)$ and $\big(\Sigma_3,D_3,\Phi_3,V_3\big)$, obtained from $\big(\Sigma_C^\prime,D_C^\prime,\Phi_C^{\prime\prime},V_C\big)$ and  $\big(\Sigma_1,D_1,\Phi_1,V_1\big)$ by cutting and pasting.

Notice that we have the decomposition $\Sigma_3=\widehat{\Sigma}_{N+}\oplus\widehat{\Sigma}_{N-}$.
Notice also that, with respect to this decomposition, we have the splitting
\[
	\big(\Sigma_3,D_3,\Phi_3,V_3\big)\ =\ \big(\widehat{\Sigma}_{N+},\widehat{D}_{N+},\zeta,\widehat{V}_{N+}\big)
	\oplus \big(\widehat{\Sigma}_{N-},\widehat{D}_{N-},-1,\widehat{V}_{N-}\big)\,.
\]
Therefore,
\[
	\ind_AP_3\ =\ \ind_AT_++\ind_AT_-\,,
\]
where $T_+$ and $T_-$ are the twisted Callias-type operators associated respectively to the admissible quadruples 
$\big(\widehat{\Sigma}_{N+},\widehat{D}_{N+},\zeta,\widehat{V}_{N+}\big)$ and $\big(\widehat{\Sigma}_{N-},\widehat{D}_{N-},-1,\widehat{V}_{N-}\big)$ over $N\times\RR$.

By Proposition~\ref{P:potential off compact}, the classes $\ind_A{\bf M}_{V_N}$ and $\ind_AT_+$ coincide.
Since the endomorphism $-\id_{\widehat{\Sigma}_{N-}}$ has empty essential support, the class $\ind_AT_-$ vanishes by Corollary~\ref{C:vanishing sufficient condition}.
Moreover, the manifold $M_2$ is compact so that the class $\ind_AP_2$ vanishes by Corollary~\ref{C:compact case}.
Using Theorem~\ref{T:K-theoretic relative index theorem}, we finally deduce
\[
	\ind_AP_C^{\prime\prime}\ =\ \ind_AP_C^{\prime\prime}\,+\,\ind_AP_1\ =\ 
	\ind_AP_2\,+\,\ind_AP_3\ =\ T_+\,+\,T_-\ =\ \ind_A{\bf M}_{V_N}\,,
\]
from which the thesis follows.
\hfill$\square$

\section{An unbounded Kasparov module}\label{S:growing potentials}
We use the model operator ${\bf M}_{V_N}$ to define an \emph{unbounded} Kasparov module representing the class $\ind_A{\bf M}_{V_N}$.
This fact will allow us to use unbounded $KK$-theory to do the computations on the cylinder in Section~\ref{S:analysis on the cylinder}.


\subsection{Unbounded Kasparov modules}\label{SS:unbounded modules}
Let $\mathcal{A}$ and $\mathcal{B}$ be graded $C^\ast$-algebras.
\begin{definition}\label{D:unbounded modules}
An \emph{unbounded Kasparov module} for $(\mathcal{A},\mathcal{B})$ is a triple $(E,\phi,D)$, where $E$ is a $\ZZ_2$-graded Hilbert $\mathcal{B}$-module, $\phi\colon \mathcal{A}\rightarrow\mathcal{L}_\mathcal{B}(E)$ is a graded $\ast$-homomorphism, and $D$ is an odd self-adjoint regular operator on $E$ such that
\begin{enumerate}
	\item $(1+D^2)^{-1}\phi(a)$ extends to an element of $\mathcal{K}_\mathcal{B}(E)$ for every $a\in \mathcal{A}$;
	\item the set of $a\in \mathcal{A}$ such that $[D,\phi(a)]$ is densely defined and extends to an 
		element of $\mathcal{L}_\mathcal{B}(E)$ 
		is dense in $\mathcal{A}$.
\end{enumerate}
\end{definition}

The relationship between bounded and unbounded Kasparov modules is clarified by the next proposition.


\begin{proposition}[Baaj-Julg,~\cite{BJ83}]\label{P:unbounded to bounded}
Given an \emph{unbounded} Kasparov module $(E,\phi,D)$ for $(\mathcal{A},\mathcal{B})$, the triple $(E,\phi,D(D^2+1)^{-1/2})$ is a \emph{bounded} Kasparov module for $(\mathcal{A},\mathcal{B})$.
In this case, we say that the $KK$-theoretical element
\[
	\big[E,\phi,D(D^2+1)^{-1/2}\big]\in KK(\mathcal{A},\mathcal{B})
\]
is the class defined by the unbounded Kasparov module $(E,\phi,D)$.
\end{proposition}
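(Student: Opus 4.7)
The plan is to verify the three defining properties of a bounded Kasparov module for the operator $F := D(D^2+1)^{-1/2}$, built from $D$ via the bounded continuous functional calculus for regular self-adjoint operators on Hilbert $\mathcal{B}$-modules (cf.~\cite[Chapter~10]{Lan95}). Since the function $f(x) := x(x^2+1)^{-1/2}$ is odd, real-valued, and satisfies $|f(x)|<1$, this functional calculus produces an odd self-adjoint element $F \in \mathcal{L}_\mathcal{B}(E)$ with $\|F\|\leq 1$.

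Next, I would establish the condition $(F^2-\Id)\phi(a)\in\mathcal{K}_\mathcal{B}(E)$ for every $a\in\mathcal{A}$. Functional calculus applied to the identity $f(x)^2-1=-(x^2+1)^{-1}$ gives $F^2-\Id=-(D^2+1)^{-1}$, so this condition reduces immediately to hypothesis (1) of Definition~\ref{D:unbounded modules}.

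The substantive step is to show that $[F,\phi(a)]\in\mathcal{K}_\mathcal{B}(E)$ for $a$ in the dense subalgebra $\mathcal{A}_0\subset \mathcal{A}$ furnished by hypothesis (2), since the bounded transform passes to general $a$ by norm closure of $\mathcal{K}_\mathcal{B}(E)$. I would represent $F$ through the norm-convergent integral
\[
F \ =\ \frac{2}{\pi}\int_0^\infty D\bigl(D^2+1+\lambda^2\bigr)^{-1}\,d\lambda\,,
\]
analogous to~\eqref{eq:integral representation}, and expand the commutator of the integrand with $\phi(a)$ using the resolvent identity together with $[D^2,\phi(a)]=[D,\phi(a)]D+D[D,\phi(a)]$. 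The resulting terms have the shape $[D,\phi(a)](D^2+1+\lambda^2)^{-1}$ and $D(D^2+1+\lambda^2)^{-1}[D,\phi(a)]D(D^2+1+\lambda^2)^{-1}$, and the uniform bounds $\|(D^2+1+\lambda^2)^{-1}\|\leq(1+\lambda^2)^{-1}$ and $\|D(D^2+1+\lambda^2)^{-1}\|\leq(1+\lambda^2)^{-1/2}$ ensure the integral in $\lambda$ converges in operator norm on $[0,\infty)$.

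The main obstacle is promoting norm convergence to a conclusion about \emph{compactness} of the commutator. The idea is to factor each term in the integrand through the resolvent $(D^2+1)^{-1}$: writing $(D^2+1+\lambda^2)^{-1}=(D^2+1)^{-1}\bigl(\Id+\lambda^2(D^2+1+\lambda^2)^{-1}\bigr)$ (or a symmetric variant), one can reorganize each integrand so that a factor of the form $\phi(\tilde a)(D^2+1)^{-1}$ or $(D^2+1)^{-1}\phi(\tilde a)$ appears, which is compact by hypothesis (1) applied to $\tilde a = a$ or $a^*$. Since the compact operators form a closed two-sided ideal in $\mathcal{L}_\mathcal{B}(E)$ and the reorganized integrand is dominated by the same integrable bound in $\lambda$, the commutator $[F,\phi(a)]$ lies in $\mathcal{K}_\mathcal{B}(E)$ for $a\in\mathcal{A}_0$. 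Density and norm closure then finish the proof.
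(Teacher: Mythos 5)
The paper simply cites Baaj--Julg \cite{BJ83} for this proposition and gives no proof, so you are reconstructing the argument from scratch rather than matching an in-paper proof. Your handling of self-adjointness, boundedness, and the identity $F^2-\Id=-(D^2+1)^{-1}$ (hence $(F^2-\Id)\phi(a)\in\mathcal{K}_\mathcal{B}(E)$ by hypothesis (1)) is exactly right, and so is the reduction to a dense subalgebra via norm continuity of $a\mapsto[F,\phi(a)]$.

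The gap is in the commutator step, and it hides precisely the point that makes Baaj--Julg nontrivial. Writing $R_\lambda:=(D^2+1+\lambda^2)^{-1}$, the expansion $[DR_\lambda,\phi(a)]=[D,\phi(a)]R_\lambda+D[R_\lambda,\phi(a)]$ with $[R_\lambda,\phi(a)]=-R_\lambda\bigl(D[D,\phi(a)]+[D,\phi(a)]D\bigr)R_\lambda$ produces \emph{three} terms: $[D,\phi(a)]R_\lambda$, $-DR_\lambda D[D,\phi(a)]R_\lambda$, and $-DR_\lambda[D,\phi(a)]DR_\lambda$; you list only the first and the third. The second is essential, because the algebraic identity $D^2R_\lambda=\Id-(1+\lambda^2)R_\lambda$ makes the problematic first term \emph{cancel} against a piece of the second, leaving
\[
[DR_\lambda,\phi(a)]\ =\ (1+\lambda^2)R_\lambda[D,\phi(a)]R_\lambda\,-\,DR_\lambda[D,\phi(a)]DR_\lambda\,,
\]
in which $[D,\phi(a)]$ is flanked by resolvents on both sides. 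Your claim that each integrand can be reorganized to exhibit a factor $\phi(\tilde a)(D^2+1)^{-1}$ with $\tilde a=a$ or $a^*$ does not apply to $[D,\phi(a)]R_\lambda$ as it stands, since $[D,\phi(a)]$ is merely a bounded adjointable operator and is not in the range of $\phi$; without the cancellation there is no way to ``factor through the resolvent'' here. Once you have the two surviving terms, you must still expand $[D,\phi(a)]=D\phi(a)-\phi(a)D$ to bring out $\phi(a)$, absorb the extra factors of $D$ using $\|DR_\lambda^{1/2}\|\le1$, $\|D^2R_\lambda\|\le1$ and $D^2R_\lambda=\Id-(1+\lambda^2)R_\lambda$ once more, and invoke the intermediate fact that $R_\lambda^{1/2}\phi(a)\in\mathcal{K}_\mathcal{B}(E)$; the latter is not hypothesis (1) verbatim but follows from it via the ideal property, since $\bigl(R_\lambda^{1/2}\phi(a)\bigr)^*\bigl(R_\lambda^{1/2}\phi(a)\bigr)=\phi(a^*)R_\lambda\phi(a)\in\mathcal{K}_\mathcal{B}(E)$. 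With these ingredients the integrand is compact with norm $O\bigl((1+\lambda^2)^{-1}\bigr)$ and the integral lands in $\mathcal{K}_\mathcal{B}(E)$. As written, your sketch elides the cancellation and the $D$-absorption, which are the actual content of the compactness step.
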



\subsection{An alternative definition of the $A$-index of the model operator}\label{SS:A-index of M}
Let the closed manifold $N$, the $\ZZ_2$-graded Dirac bundle $\Sigma_{N+}=\Sigma_{N+}^+\oplus \Sigma_{N+}^-$ and the Hilbert $A$-bundle of finite type $V_N$ be as in Section~\ref{SS:Callias-type theorem}.
Let ${\bf M}_{V_N}$ be the \emph{model operator} associated to these data.
It is a twisted Callias-type operator on the cylinder $N\times \RR$ (see Subsection~\ref{SS:model operator}). 
In this section, we denote by $H^j$ the $j$-th Sobolev space defined by the operator ${\bf M}_{V_N}$ (see Subsection~\ref{SS:Sobolev}).

\begin{theorem}\label{T:unbounded kasparov module}
The triple 
\begin{equation}\label{eq:unbounded A-ind}
	\left(H^0\big(\widehat{V}_N\tensor\widehat{\Sigma}_{N+}\big)\oplus 
	H^0\big(\widehat{V}_N\tensor\widehat{\Sigma}_{N+}\big),1,
	{\bf M}_{V_N}\right)
\end{equation}
is an unbounded Kasparov module for the pair of algebras $(\CC,A)$.
Here, $1$ denotes complex scalar multiplication.
\end{theorem}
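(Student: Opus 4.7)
Since $\mathcal{A}=\CC$ and $\phi$ is scalar multiplication, condition~(2) of Definition~\ref{D:unbounded modules} is automatic: $[\mathbf{M}_{V_N},\phi(a)]=0$ for every $a\in\CC$. The matrix shape~\eqref{eq:twisted Callias} of ${\bf M}_{V_N}$ shows that it is odd with respect to the obvious $\ZZ_2$-grading $E:=H^0\oplus H^0$. Two nontrivial items remain: (i) ${\bf M}_{V_N}$ admits a regular self-adjoint extension on $E$, and (ii) $(1+{\bf M}_{V_N}^2)^{-1}$ is $A$-compact.

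For~(i), I would appeal to Theorem~\ref{T:H-P-S}: ${\bf M}_{V_N}$ is a formally self-adjoint Dirac-type operator on the complete manifold $N\times\RR$ twisted by $\widehat{V}_N$, with smooth self-adjoint potential built from $\chi$ and $\gamma$. Should the unboundedness of $\chi$ pose technical trouble for a direct appeal, the fallback is to apply Theorem~\ref{T:H_mu essentially self-adjoint} to the squared operator: by Lemma~\ref{L:h goes to infinity} the commutator $[\widehat{D}_{N+,\widehat{V}_N},\chi]$ equals the bounded endomorphism $\gamma$, so $\mathbf{M}_{V_N}^2=\widehat{D}_{N+,\widehat{V}_N}^2+\chi^2\pm i\gamma$ is a Schr\"odinger-type operator whose potential is bounded below; essential self-adjointness of the square then implies regular self-adjointness of $\mathbf{M}_{V_N}$ by standard functional-calculus arguments for regular operators on Hilbert $A$-modules.

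Item~(ii) is the core of the proof. Since $\mathbf{M}_{V_N}^2$ is Schr\"odinger-type with potential $\chi^2\pm i\gamma$ growing quadratically along the $\RR$-factor of the cylinder, I would use a cut-and-approximate argument. Pick cutoffs $\psi_R\in C^\infty_c(N\times\RR)$ with $\psi_R=1$ on $N\times[-R,R]$ and $\supp\psi_R\subset N\times[-R-1,R+1]$, and decompose
\[
    \big(1+\mathbf{M}_{V_N}^2\big)^{-1}\ =\ \psi_R\big(1+\mathbf{M}_{V_N}^2\big)^{-1}+(1-\psi_R)\big(1+\mathbf{M}_{V_N}^2\big)^{-1}.
\]
By Proposition~\ref{P:invertibility of B_V^2+lambda^2} the resolvent sends $H^0$ boundedly into $H^2$, and by Lemma~\ref{L:Rellich lemma} multiplication by the compactly supported $\psi_R$ is $A$-compact from $H^2$ into $H^0$; hence the first summand is $A$-compact on $H^0$. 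For the second summand I would establish the norm decay $\|(1-\psi_R)(1+\mathbf{M}_{V_N}^2)^{-1}\|_{\mathcal{B}(H^0)}\to 0$ via an IMS-type localization: setting $v=(1+\mathbf{M}_{V_N}^2)^{-1}u$ and pairing $(\mathbf{M}_{V_N}^2+1)(1-\psi_R)v$ against $(1-\psi_R)v$, one invokes $\chi^2\geq R^2$ on $\supp(1-\psi_R)$, controls the commutator $[\mathbf{M}_{V_N}^2,1-\psi_R]$ using the uniformly bounded derivatives of $\psi_R$, and applies the Cauchy-Schwarz inequality for $A$-valued inner products to conclude $\|(1-\psi_R)v\|_0=O(R^{-1})\|u\|_0$. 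Combining, $(1+\mathbf{M}_{V_N}^2)^{-1}$ is a norm limit of $A$-compact operators, hence $A$-compact.

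The main obstacle is the norm-decay estimate for the second summand. In the scalar setting this follows at once from the spectral calculus for unbounded self-adjoint operators, but on Hilbert $A$-modules one cannot simply take spectral projections of $\mathbf{M}_{V_N}^2$; one must run an intrinsic IMS-type energy estimate using positivity of $\mathbf{M}_{V_N}^2+1$ and the quadratic growth of $\chi^2$, in the same spirit as the positivity arguments of Section~\ref{S:invertibility at infinity}.
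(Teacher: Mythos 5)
Your verification of Condition~(2) is the same trivial observation the paper makes. For Condition~(1), however, the paper and your proposal take genuinely different routes to the same conclusion, both correct.

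The paper introduces the coercive function $h(x,r)=r^2-1$ on $N\times\RR$, observes via Inequality~\eqref{eq:estimate of chi} that ${\bf M}_{V_N}^2 \geq h$ in the quadratic-form sense, and then invokes \cite[Theorem~3.40]{Ebe16} as a black box to conclude that ${\bf M}_{V_N}$ has $A$-compact resolvent. That theorem of Ebert is precisely an abstract ``coercive lower bound on $D^2$ implies compact resolvent'' statement for regular self-adjoint Dirac operators on Hilbert $A$-bundles, so the geometric input (quadratic growth of $\chi^2$) is the same as yours. Your proposal instead reproves this implication from scratch: decompose $(1+{\bf M}_{V_N}^2)^{-1}$ using a cutoff $\psi_R$, use Proposition~\ref{P:invertibility of B_V^2+lambda^2} and Lemma~\ref{L:Rellich lemma} to see that $\psi_R(1+{\bf M}_{V_N}^2)^{-1}$ is $A$-compact, and then run the IMS-type energy estimate for the tail. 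Your estimate works and is correct: pairing $({\bf M}_{V_N}^2+1)(1-\psi_R)v$ against $(1-\psi_R)v$ with the form bound $\langle({\bf M}_{V_N}^2+1)w,w\rangle_0 \geq \langle \chi^2 w,w\rangle_0$ (valid because $i[\widehat D_{N+,\widehat V_N},\chi]=i\gamma=\mp\alpha$ so $1\pm i\gamma\geq 0$), and controlling $[{\bf M}_{V_N}^2,\psi_R]v$ by $\|v\|_1 \lesssim \|u\|_0$, one obtains $\|(1-\psi_R)v\|_0 = O(R^{-2})\|u\|_0$ — even a bit better than the $O(R^{-1})$ you claim, though both suffice. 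The trade-off is the usual one: the paper's appeal to Ebert is short and puts the argument in a reusable framework (the same citation reappears in Lemma~\ref{L:compact integrand}), whereas your hands-on version is more self-contained and makes the mechanism transparent.

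Regarding self-adjointness and regularity of ${\bf M}_{V_N}$: the paper's proof does not re-derive this; it has already been established by Theorem~\ref{T:H-P-S}, which, as stated in the paper, applies to $B_V=\slashed D_V+\Psi_V$ for any smooth self-adjoint $\Psi$ without a boundedness assumption, so the unboundedness of $\chi$ poses no obstruction and your primary route (direct appeal to Theorem~\ref{T:H-P-S}) is exactly what the paper implicitly relies on. Your fallback route — passing through essential self-adjointness of ${\bf M}_{V_N}^2$ via Theorem~\ref{T:H_mu essentially self-adjoint} — is unnecessary and would in fact require an extra argument to descend from self-adjointness of the square to self-adjointness and regularity of the operator itself, which is not a ``standard functional-calculus'' step in the Hilbert $A$-module setting; better to stick with Theorem~\ref{T:H-P-S}.
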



\begin{proof}
Condition~(2) of Definition~\ref{D:unbounded modules} is trivially satisfied.
Let us verify Condition~(1).
Let $h:N\times\RR\rightarrow \RR$ be the function defined by setting $h(x,r)=r^2-1$ for $(x,r)\in N\times\RR$.
Notice that $h$ is a coercive function, i.e. $h$ is smooth, proper and bounded from below (sse~\cite[Definition~2.12]{Ebe16}).
By~\eqref{eq:estimate of chi}, ${\bf M}_{V_N}^2\geq h$.
By~\cite[Theorem~3.40]{Ebe16}, it follows that ${\bf M}_{V_N}$ has compact resolvent.
Hence, Condition~(1) of Definition~\ref{D:unbounded modules} is also satisfied.
\end{proof}


\begin{theorem}\label{T:equivalence of definitions}
Let $f:M\rightarrow [0,\infty)$ be a compactly supported smooth function such that ${\bf M}_{V_N}^2+f$ is invertible and $\big({\bf M}_{V_N}^2+f\big)^{-1}$ is in $ \mathcal{L}_A(H^0,H^2)$.
Then the operator
\[
	{\bf M}_{V_N}\big({\bf M}_{V_N}^2+1\big)^{-1/2}\,-\,{\bf M}_{V_N}\big({\bf M}_{V_N}^2+f\big)^{-1/2}
\]
is $A$-compact.
\end{theorem}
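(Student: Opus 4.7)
The plan is to combine the integral representation of the bounded transform from Remark~\ref{L:integral representation} with the second resolvent identity and the compact resolvent property of ${\bf M}_{V_N}$. The latter follows from Theorem~\ref{T:unbounded kasparov module}: condition~(1) of Definition~\ref{D:unbounded modules}, applied with the algebra element $a=1$, asserts that $(1+{\bf M}_{V_N}^2)^{-1}$ is $A$-compact. Combined with continuous functional calculus for the self-adjoint regular operator ${\bf M}_{V_N}$, this implies that $g({\bf M}_{V_N})$ is $A$-compact for every $g\in C_0(\mathbb{R})$; in particular both $({\bf M}_{V_N}^2+1+\lambda^2)^{-1}$ and ${\bf M}_{V_N}({\bf M}_{V_N}^2+1+\lambda^2)^{-1}$ are $A$-compact for every $\lambda\geq 0$.

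First I would apply the integral representation
\[
{\bf M}_{V_N}\bigl({\bf M}_{V_N}^2+h\bigr)^{-1/2}\,=\,\frac{2}{\pi}\int_0^\infty {\bf M}_{V_N}\bigl({\bf M}_{V_N}^2+h+\lambda^2\bigr)^{-1}\,d\lambda
\]
with $h=1$ and $h=f$, and subtract. The second resolvent identity then rewrites the pointwise-in-$\lambda$ difference of resolvents as
\[
\bigl({\bf M}_{V_N}^2+1+\lambda^2\bigr)^{-1}-\bigl({\bf M}_{V_N}^2+f+\lambda^2\bigr)^{-1}\,=\,\bigl({\bf M}_{V_N}^2+1+\lambda^2\bigr)^{-1}(f-1)\bigl({\bf M}_{V_N}^2+f+\lambda^2\bigr)^{-1}.
\]
Thus the integrand in the difference becomes the composition of the $A$-compact factor ${\bf M}_{V_N}({\bf M}_{V_N}^2+1+\lambda^2)^{-1}$ with the bounded multiplication operator $f-1\in L^\infty(N\times\mathbb{R})$ and the bounded resolvent $({\bf M}_{V_N}^2+f+\lambda^2)^{-1}$; it is therefore $A$-compact for every $\lambda$.

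The remaining step is to verify that the integral converges in the operator norm, so that the limit stays $A$-compact. Functional calculus gives $\|{\bf M}_{V_N}({\bf M}_{V_N}^2+1+\lambda^2)^{-1}\|\leq (2\sqrt{1+\lambda^2})^{-1}$, while an argument in the spirit of Lemma~\ref{L:|R_t(lambda)|}---invoking the hypothesis that $({\bf M}_{V_N}^2+f)^{-1}\in\mathcal{L}_A(H^0,H^2)$ together with the positivity ${\bf M}_{V_N}^2+f\geq 0$---yields $\|({\bf M}_{V_N}^2+f+\lambda^2)^{-1}\|\leq C_0(c_0+\lambda^2)^{-1}$ for some $c_0>0$. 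Multiplying these estimates produces a bound of order $(1+\lambda^2)^{-3/2}$ on the integrand, which is integrable on $(0,\infty)$. The main technical point to pin down is extracting the strictly positive lower bound $c_0$ on the spectrum of ${\bf M}_{V_N}^2+f$ from the invertibility hypothesis; this is what keeps the integral convergent at the $\lambda=0$ endpoint and is where the assumption on $f$ is used in full. Once this is in hand, the full difference is exhibited as a norm-convergent integral of $A$-compact operators, hence is itself $A$-compact.
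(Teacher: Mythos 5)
Your proposal is correct and follows essentially the same route as the paper: the integral representation for the bounded transform from Remark~\ref{L:integral representation}, the second resolvent identity to factor the integrand as a product containing the $A$-compact operator $({\bf M}_{V_N}^2+1+\lambda^2)^{-1}$ (compactness via the compact-resolvent property from Theorem~\ref{T:unbounded kasparov module}), and a norm estimate guaranteeing convergence of the integral so that the limit remains in $\mathcal{K}_A(H^0)$. The only cosmetic differences are the order of factors in the resolvent identity and the precise decay estimate (your $(1+\lambda^2)^{-3/2}$ bound versus the paper's $(d+\lambda^2)^{-1}$, both integrable); the positive lower bound $c_0$ you flag as the remaining technical point is exactly what the paper supplies via \cite[Lemmas~1.4--1.6]{Bun95}, cited through Lemma~\ref{L:|R_t(lambda)|}.
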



Before proving Theorem~\ref{T:equivalence of definitions}, we deduce the following consequence.


\begin{corollary}
The class in $K_0(A)$ defined by the unbounded Kasparov module~\eqref{eq:unbounded A-ind} coincides with $\ind_A{\bf M}_{V_N}$.
\end{corollary}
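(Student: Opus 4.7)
The plan is to combine the Baaj--Julg bounded transform (Proposition~\ref{P:unbounded to bounded}) with the compact perturbation result Theorem~\ref{T:equivalence of definitions} and invoke the standard fact that a compact perturbation of the operator in a Kasparov cycle does not change its class in $KK$-theory.

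First, applying Proposition~\ref{P:unbounded to bounded} to the unbounded Kasparov module~\eqref{eq:unbounded A-ind}, which was shown to be a genuine unbounded Kasparov module for $(\CC,A)$ in Theorem~\ref{T:unbounded kasparov module}, the class in $KK(\CC,A)=K_0(A)$ represented by~\eqref{eq:unbounded A-ind} is
\[
	\Bigl[H^0\big(\widehat{V}_N\tensor\widehat{\Sigma}_{N+}\big)\oplus H^0\big(\widehat{V}_N\tensor\widehat{\Sigma}_{N+}\big),\,1,\,{\bf M}_{V_N}\big({\bf M}_{V_N}^2+1\big)^{-1/2}\Bigr].
\]
On the other hand, by Theorem~\ref{T:B^2 is invertible at infinity}, ${\bf M}_{V_N}^2$ is invertible at infinity, so there is a compactly supported smooth function $f\colon M\to [0,\infty)$ as in Definition~\ref{D:invertible at infinity}, and the Bunke definition~\eqref{eq:ind_AP_V} gives
\[
	\ind_A{\bf M}_{V_N}\ =\ \Bigl[H^0\big(\widehat{V}_N\tensor\widehat{\Sigma}_{N+}\big)\oplus H^0\big(\widehat{V}_N\tensor\widehat{\Sigma}_{N+}\big),\,1,\,{\bf M}_{V_N}\big({\bf M}_{V_N}^2+f\big)^{-1/2}\Bigr].
\]

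The two Kasparov cycles just displayed have the same underlying Hilbert $A$-module and the same representation of $\CC$; they differ only in the choice of operator. By Theorem~\ref{T:equivalence of definitions}, the difference
\[
	{\bf M}_{V_N}\big({\bf M}_{V_N}^2+1\big)^{-1/2}\,-\,{\bf M}_{V_N}\big({\bf M}_{V_N}^2+f\big)^{-1/2}
\]
lies in $\mathcal{K}_A(H^0\oplus H^0)$. It is a standard fact (see e.g.\ \cite[Proposition~17.2.5]{Bla98}) that two bounded Kasparov cycles for $(\CC,A)$ whose operators differ by an $A$-compact, self-adjoint perturbation (and which satisfy the Kasparov cycle axioms) define the same class in $KK(\CC,A)$, since the straight line joining them is an operator homotopy of Kasparov cycles. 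Applying this to our two cycles yields
\[
	\Bigl[H^0\oplus H^0,\,1,\,{\bf M}_{V_N}\big({\bf M}_{V_N}^2+1\big)^{-1/2}\Bigr]\ =\ \Bigl[H^0\oplus H^0,\,1,\,{\bf M}_{V_N}\big({\bf M}_{V_N}^2+f\big)^{-1/2}\Bigr]\ =\ \ind_A {\bf M}_{V_N},
\]
which is the desired equality.

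The only subtle point is verifying that the straight-line interpolation really is a path of Kasparov cycles; but the axioms to be checked are trivial (compactness of $(1+T^2)^{-1}$ times the representation is automatic on both endpoints because the representation is just scalar multiplication, which is not required to give compactness here—only the commutator and the bounded, self-adjoint, $[\cdot]\in\mathcal{K}$-modulo-identity conditions matter), so this reduces to the invocation of the cited standard principle. Hence no additional analytic work is needed beyond Theorem~\ref{T:equivalence of definitions}.
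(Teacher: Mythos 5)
Your proof is correct and follows essentially the same route as the paper: apply the Baaj--Julg transform to the unbounded module, compare with Bunke's bounded representative, and invoke Theorem~\ref{T:equivalence of definitions} to see the two operators differ by an $A$-compact self-adjoint perturbation, hence define the same $KK$-class. (A minor remark: the parenthetical aside in your last paragraph is a bit muddled---for the pair $(\CC,A)$ with the unit representation one does need $1-F_t^2\in\mathcal K_A$ along the straight-line path, but this holds because $1-F_0^2$ is compact and $F_t^2-F_0^2$ is a sum of products involving the compact difference $F_1-F_0$---but this does not affect the validity of the argument.)
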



\begin{proof}
Let $f$ be as in the hypothesis of Theorem~\ref{T:equivalence of definitions}.
By Theorem~\ref{T:Bunke}, the class $\ind_A{\bf M}_{V_N}$ is represented by the bounded Kasparov module
\begin{equation}\label{eq:bounded A-ind1}
	\left(H^0\big(\widehat{V}_N\tensor\widehat{\Sigma}_{N+}\big)\oplus 
	H^0\big(\widehat{V}_N\tensor\widehat{\Sigma}_{N+}\big),1,
	{\bf M}_{V_N}\big({\bf M}_{V_N}^2+f\big)^{-1/2}\right)\,.
\end{equation}
By Proposition~\ref{P:unbounded to bounded}, the class defined by the unbounded Kasparov module~\eqref{eq:unbounded A-ind} is the element of $K_0(A)$ represented by the bounded Kasparov module
\begin{equation}\label{eq:unbounded A-ind1}
	\left(H^0\big(\widehat{V}_N\tensor\widehat{\Sigma}_{N+}\big)\oplus 
	H^0\big(\widehat{V}_N\tensor\widehat{\Sigma}_{N+}\big),1,
	{\bf M}_{V_N}\big({\bf M}_{V_N}^2+1\big)^{-1/2}\right)\,.
\end{equation}
Now the thesis follows from~\eqref{eq:bounded A-ind1}, ~\eqref{eq:unbounded A-ind1} and Theorem~\ref{T:equivalence of definitions}.
\end{proof}


The remaining part of this section is devoted to the proof of Theorem~\ref{T:equivalence of definitions}.


\begin{lemma}\label{L:compact integrand}
The integral
\begin{equation}\label{eq:compact operator}
	\frac{2}{\pi}\int_0^\infty\left\{{\bf M}_{V_N}\,\big({\bf M}_{V_N}^2+1+\lambda^2\big)^{-1}\,-\,
	{\bf M}_{V_N}\, \big({\bf M}_{V_N}^2+f+\lambda^2\big)^{-1}\right\}\,d\lambda
\end{equation}
converges in operator norm and defines an element in $\mathcal{K}_A(H^0)$.
\end{lemma}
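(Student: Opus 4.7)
The plan is to prove the lemma in three steps: first, verify that the integrand is $A$-compact for each fixed $\lambda \geq 0$; second, obtain an integrable operator-norm bound on the integrand via the second resolvent identity; third, conclude from norm-closedness of $\mathcal{K}_A(H^0)$ inside $\mathcal{L}_A(H^0)$.

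Set $R_1(\lambda) := ({\bf M}_{V_N}^2 + 1 + \lambda^2)^{-1}$ and $R_f(\lambda) := ({\bf M}_{V_N}^2 + f + \lambda^2)^{-1}$. The proof of Theorem~\ref{T:unbounded kasparov module} has already established via \cite[Theorem~3.40]{Ebe16} that ${\bf M}_{V_N}$ has $A$-compact resolvent. I would then invoke continuous functional calculus for regular self-adjoint operators on a Hilbert $A$-module with $A$-compact resolvent: for every $g \in C_0(\RR)$, the operator $g({\bf M}_{V_N})$ lies in $\mathcal{K}_A(H^0)$. Applied to $g(t) = t(t^2 + 1 + \lambda^2)^{-1}$ and its analogue with $f$ replacing $1$, this shows that ${\bf M}_{V_N} R_1(\lambda)$ and ${\bf M}_{V_N} R_f(\lambda)$ are each $A$-compact; hence so is the integrand for every $\lambda$.

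For the norm estimate, the second resolvent identity yields
\[
{\bf M}_{V_N}\bigl(R_1(\lambda) - R_f(\lambda)\bigr) \;=\; \bigl[{\bf M}_{V_N} R_1(\lambda)\bigr]\,(f-1)\,R_f(\lambda).
\]
I would then apply the Bunke-type bounds from Section~\ref{S:properties of the index}---which carry over essentially verbatim with ${\bf M}_{V_N}$ playing the role of $B_V$, and with the two potentials taken to be the constant $1$ and the compactly supported $f$---to obtain $\|{\bf M}_{V_N} R_1(\lambda)\|_{\mathcal{B}(H^0)} = O\bigl((1+\lambda^2)^{-1/2}\bigr)$ via the analogue of Lemma~\ref{L:B_tR_t(lambda)}, and $\|R_f(\lambda)\|_{\mathcal{B}(H^0)} = O\bigl((d+\lambda^2)^{-1}\bigr)$ via the analogue of Lemma~\ref{L:|R_t(lambda)|}. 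Combined with $\|f-1\|_\infty < \infty$, this produces $\|{\bf M}_{V_N}(R_1(\lambda) - R_f(\lambda))\|_{\mathcal{B}(H^0)} = O\bigl((1+\lambda^2)^{-3/2}\bigr)$, which is integrable on $[0,\infty)$.

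The conclusion is then immediate: the norm-convergent integral of $A$-compact operators is $A$-compact, since $\mathcal{K}_A(H^0)$ is closed in $\mathcal{L}_A(H^0)$. The substantive obstacle is the norm bound in the second step: individually, $\|{\bf M}_{V_N} R_1(\lambda)\|$ and $\|{\bf M}_{V_N} R_f(\lambda)\|$ decay only like $\lambda^{-1}$, which is \emph{not} integrable, so the convergence of the integral genuinely depends on the cancellation produced by the second resolvent identity, which upgrades the decay from $\lambda^{-1}$ to $\lambda^{-3}$. A secondary point worth recording in the proof is that continuous functional calculus for a regular self-adjoint operator with $A$-compact resolvent really takes $C_0(\RR)$ into $\mathcal{K}_A(H^0)$---this is standard but deserves an explicit reference when it is used.
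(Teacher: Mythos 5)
Your overall strategy---factor the integrand via the second resolvent identity, obtain an integrable operator-norm bound, and conclude from norm-closedness of $\mathcal{K}_A(H^0)$ in $\mathcal{L}_A(H^0)$---is the same as the paper's, and the norm estimates in your second step are correct (the paper is content with a slightly weaker $O\bigl((d+\lambda^2)^{-1}\bigr)$ bound, which is equally integrable). The flaw is in the pointwise compactness step. You justify compactness of ${\bf M}_{V_N}\bigl({\bf M}_{V_N}^2+f+\lambda^2\bigr)^{-1}$ by continuous functional calculus applied to ``the analogue of $g(t)=t(t^2+1+\lambda^2)^{-1}$ with $f$ replacing $1$.'' No such analogue exists: $f$ is a compactly supported \emph{function} on the cylinder $N\times\RR$, not a scalar, so $t\mapsto t(t^2+f+\lambda^2)^{-1}$ is not a single element of $C_0(\RR)$, and since multiplication by $f$ does not commute with ${\bf M}_{V_N}$, the operator ${\bf M}_{V_N}\bigl({\bf M}_{V_N}^2+f+\lambda^2\bigr)^{-1}$ is not in the range of the continuous functional calculus of ${\bf M}_{V_N}$. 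Functional calculus gives compactness only of ${\bf M}_{V_N}\bigl({\bf M}_{V_N}^2+1+\lambda^2\bigr)^{-1}$, where $1$ genuinely is a scalar.

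The gap is easily closed, and closing it simplifies your argument. The second resolvent identity you already write for the norm bound,
\begin{equation*}
{\bf M}_{V_N}\bigl(R_1(\lambda)-R_f(\lambda)\bigr)=\bigl[{\bf M}_{V_N}R_1(\lambda)\bigr](f-1)R_f(\lambda)\,,
\end{equation*}
exhibits the integrand as a product of the $A$-compact operator ${\bf M}_{V_N}R_1(\lambda)$ with two bounded operators, so the integrand is $A$-compact without any separate claim about ${\bf M}_{V_N}R_f(\lambda)$; your first step should therefore be derived from this identity rather than argued independently. This is, up to a symmetric rewriting of the identity, exactly how the paper proceeds: it cites \cite[Theorem~3.40]{Ebe16} directly to get compactness of $\bigl({\bf M}_{V_N}^2+1+\lambda^2\bigr)^{-1}$ and reads off compactness of the integrand from the factorization, bypassing functional calculus altogether.
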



\begin{proof}
By Proposition~\ref{P:invertibility of B_V^2+lambda^2}, the operator $ \big({\bf M}_{V_N}^2+1+\lambda^2\big)^{-1}$ is in $\mathcal{L}_A(H^0,H^2)$ and, by Theorem~\ref{T:B^2 is invertible at infinity} and~\cite[Lemma~1.5]{Bun95}, the operator $ \big({\bf M}_{V_N}^2+f+\lambda^2\big)^{-1}$ is also in $\mathcal{L}_A(H^0,H^2)$.
Hence, the operator 
\begin{equation}\label{eq:integrandum operator}
	{\bf M}_{V_N}\,\big({\bf M}_{V_N}^2+1+\lambda^2\big)^{-1}\,-\,
	{\bf M}_{V_N}\, \big({\bf M}_{V_N}^2+f+\lambda^2\big)^{-1}
\end{equation} 
is in $\mathcal{L}_A(H^0)$.
We have
\begin{equation}\label{eq:estimate of integrand}
	\begin{array}{l}
	{\bf M}_{V_N}\, \big({\bf M}_{V_N}^2+1+\lambda^2\big)^{-1}\,-\,
		{\bf M}_{V_N}\, \big({\bf M}_{V_N}^2+f+\lambda^2\big)^{-1}\vspace{0.2cm}\\ 
	\qquad\qquad =\ {\bf M}_{V_N}\,\left\{ \big({\bf M}_{V_N}^2+1+\lambda^2\big)^{-1}-\, 
		\big({\bf M}_{V_N}^2+f+\lambda^2\big)^{-1}\right\}\vspace{0.2cm}\\
	\qquad\qquad =\ {\bf M}_{V_N}\, \big({\bf M}_{V_N}^2+f+\lambda^2\big)^{-1}\,(f-1)\, 
		\big({\bf M}_{V_N}^2+1+\lambda^2\big)^{-1}\,.
	\end{array}
\end{equation}
Since, by ~\cite[Theorem~3.40]{Ebe16}, the operator $ \big({\bf M}_{V_N}^2+1+\lambda^2\big)^{-1}$ is in $\mathcal{K}_A(H^0)$, the previous calculation shows that the operator~\eqref{eq:integrandum operator} is $A$-compact.
Moreover, by~\cite[Lemma~1.5]{Bun95},~\cite[Lemma~1.6]{Bun95}  and~\eqref{eq:estimate of integrand}, there exist positive constants $c$ and $d$ such that
\[
	\left\|{\bf M}_{V_N}\, \big({\bf M}_{V_N}^2+1+\lambda^2\big)^{-1}\,-\,
	{\bf M}_{V_N}\, \big({\bf M}_{V_N}^2+f+\lambda^2\big)^{-1}\right\|_{\mathcal{B}(H^0)}\ \leq\ c\,(d+\lambda^2)^{-1}\,.
\]
Therefore, the integral~\eqref{eq:compact operator} converges in operator norm and defines an element in $\mathcal{K}_A(H^0)$.
\end{proof}


\subsection{Proof of Theorem~\ref{T:equivalence of definitions}}
Fix $w\in H^1$. 
By~\cite[Lemma~1.8]{Bun95}, we have
\[
	{\bf M}_{V_N}\, \big({\bf M}_{V_N}^2+1\big)^{-1/2}w\ =\ 
	\frac{2}{\pi}\int_0^\infty{\bf M}_{V_N}\, \big({\bf M}_{V_N}^2+1+\lambda^2\big)^{-1}\,w\,d\lambda\,,
\]
where the integral converges in norm.
Using~\eqref{eq:integral representation} , we obtain
\begin{eqnarray*}
	&&{\bf M}_{V_N}\, \big({\bf M}_{V_N}^2+1\big)^{-1/2}w\,-\,{\bf M}_{V_N}\, 
		\big({\bf M}_{V_N}^2+f\big)^{-1/2}w\ =\vspace{0.2cm}\\
	&&\qquad
		=\ \frac{2}{\pi}\,\int_0^\infty\left\{{\bf M}_{V_N}\,\big({\bf M}_{V_N}^2+1+\lambda^2\big)^{-1}\,-\,
		{\bf M}_{V_N}\, \big({\bf M}_{V_N}^2+f+\lambda^2\big)^{-1}\right\}w\,d\lambda\,.
\end{eqnarray*}
Now the thesis follows from Lemma~\ref{L:compact integrand} and the density of $H^1$ in $H^0$.
\hfill$\square$

\section{Analysis on the cylinder}\label{S:analysis on the cylinder}
We complete the proof of Theorem~\ref{T:Callias-type theorem}.
We solve a model problem on a cylinder $N\times\RR$ with compact base.
Then we use results from Section~\ref{S:reduction} to deduce the general case.
The computations on the cylinder make heavy use of the properties of the intersection product in $KK$-theory and of the notion of connection for unbounded Kasparov modules developed by Kucerovsky.
In particular, we adapt some $KK$-theoretical computations of~\cite{Kuc01} to the case of operators twisted with Hilbert $C^\ast$-bundles.


\subsection{The setting}\label{SS:model problem}
Let the closed manifold $N$, the $\ZZ_2$-graded Dirac bundle $\Sigma_{N+}=\Sigma_{N+}^+\oplus \Sigma_{N+}^-\rightarrow N$ with associated Dirac operator $D_{N+}$ and the Hilbert $A$-bundle of finite type $V_N$ over $N$ be as in Subsection~\ref{SS:Callias-type theorem}.
Denote by $D_{N+,V_N}$ the operator obtained by twisting $D_{N+}$ with the bundle $V_N$.
Its index class $\ind_AD_{N+,V_N}\in K_0(A)$ is defined in~\cite{FM80}.


The next theorem is the main result of this section.
It allows us to reduce the computation of the $A$-index of the model operator ${\bf M}_{V_N}$ to the computation of the index class of $D_{N+,V_N}$.


\begin{theorem}\label{T:analysis on the cylinder}
	Suppose that the $C^\ast$-algebra $A$ is separable.
	Then the classes $\ind_A{\bf M}_{V_N}$ and $\ind_A D_{N+,V_N}$ coincide.
\end{theorem}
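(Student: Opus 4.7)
The overall strategy is to factor $\ind_A{\bf M}_{V_N}$ as an internal Kasparov product
\[
\ind_A{\bf M}_{V_N} \;=\; \beta \otimes_\CC \ind_A D_{N+,V_N}
\]
in $KK(\CC,A)$, where $\beta\in KK(\CC,\CC)\cong\ZZ$ is the (one-dimensional) Bott element, represented by the harmonic-oscillator Dirac operator on $\RR$. Since $\beta=1$, associativity of the intersection product immediately yields the theorem; the separability hypothesis on $A$ is used only here, to guarantee that the Kasparov product is well-defined. The plan is therefore to exhibit two explicit unbounded Kasparov modules whose external product is represented (via Kucerovsky's theorem) by ${\bf M}_{V_N}$.

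First, the twisted Dirac operator on $N$ defines an unbounded Kasparov module $(L^2(N;\Sigma_{N+}\tensor V_N),\,1,\,D_{N+,V_N})$ for $(\CC,A)$, whose class in $KK(\CC,A)$ is $\ind_A D_{N+,V_N}$ in the sense of Mishchenko--Fomenko. Second, on $\RR$, I would use the standard graded Bott module $(L^2(\RR;\CC^{2}),\,1,\,B_\RR)$ for $(\CC,\CC)$, with $B_\RR$ the odd self-adjoint harmonic-oscillator-type Dirac operator whose square is $-\partial_r^2+r^2\pm 1$. A direct computation shows $B_\RR$ has compact resolvent and one-dimensional kernel, so this module represents $1\in KK(\CC,\CC)$. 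Third, the product structure of the cylinder gives a natural unitary identification between $H^0(\widehat V_N\tensor\widehat\Sigma_{N+})^{\oplus 2}$ and the graded tensor product $L^2(\RR;\CC^{2})\,\widehat\tensor\,L^2(N;\Sigma_{N+}\tensor V_N)$, obtained by combining the $\gamma$-eigenbundle decomposition $\widehat\Sigma_{N+}=\widehat\Sigma_{N+}^+\oplus\widehat\Sigma_{N+}^-$ with the outer Callias doubling. Under this identification, the splitting of the pullback connection $\nabla^{\widehat\Sigma_{N+}}=\nabla^{\Sigma_{N+}}\oplus\nabla^{\text{triv}}_\RR$ yields $\widehat D_{N+,\widehat V_N}=D_{N+,V_N}\tensor 1+\gamma\tensor\partial_r$, and the fact that $\chi(y,r)=r$ factors through the $\RR$-coordinate then converts ${\bf M}_{V_N}$ into the canonical sum operator
\[
\widetilde{\bf M}\;:=\;B_\RR\,\widehat\tensor\,1\,+\,1\,\widehat\tensor\,D_{N+,V_N}
\]
associated with the external product of the two modules.

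With the two modules and the identification in hand, I would invoke Kucerovsky's unbounded Kasparov product theorem \cite{Kuc97} to conclude that $\widetilde{\bf M}$ (hence ${\bf M}_{V_N}$) represents the internal product $\beta\otimes_\CC\ind_A D_{N+,V_N}$. This requires checking three conditions: the domain/core condition on the algebraic graded tensor product, the connection condition (bounded commutators $[\widetilde{\bf M},T_\xi]_\pm$ on $\Dom D_{N+,V_N}$ for $\xi$ in a dense subspace), and the semiboundedness condition on the cross-term commutator. Conditions (i) and (ii) are straightforward, since $B_\RR$ and $D_{N+,V_N}$ act on orthogonal tensor factors so their graded commutator vanishes on the algebraic tensor product of cores; the semiboundedness condition is controlled by the fact that the potential $\chi$ was chosen in Subsection~\ref{SS:model operator} to have uniformly bounded first derivatives, which also underlies the compactness of the resolvent of ${\bf M}_{V_N}$ established in Theorem~\ref{T:unbounded kasparov module}.

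The main obstacle I expect is bookkeeping: reconciling the two independent $\ZZ_2$-gradings (the Callias doubling and the $\gamma$-eigenbundle grading of $\widehat\Sigma_{N+}$) with the Koszul signs of the graded tensor product, so that the block structure of ${\bf M}_{V_N}$ genuinely matches that of $\widetilde{\bf M}$ on the nose. Once this is carefully sorted out, the remaining hypotheses of Kucerovsky's theorem are verified by the arguments above, and associativity of the Kasparov product gives $\ind_A{\bf M}_{V_N}=1\cdot\ind_A D_{N+,V_N}=\ind_A D_{N+,V_N}$, completing the proof.
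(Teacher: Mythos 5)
Your proposal is correct and gives a genuinely different factorization from the one used in the paper. The paper factors $\ind_A{\bf M}_{V_N}$ as a \emph{three-term} composition product over the auxiliary function algebras $C(N;A)$ and $C_0(N\times\RR;A)$: namely $\ind_A{\bf M}_{V_N}=\{[V_N]\circ([\chi]\widehat\tensor[\id_A])\}\circ([\widehat D_{N+}]\widehat\tensor[\id_A])$ (Lemma~\ref{L:structure of model operator}), and then the separation-of-variables content is entirely delegated to the already-known, \emph{untwisted} identity $[D_{N+}]=[\chi]\circ[\widehat D_{N+}]$ in $KK(C(N),\CC)$ (Proposition~\ref{P:Anghel theorem}, due to Kucerovsky). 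In other words, the paper deliberately keeps the Hilbert $A$-bundle $V_N$ as a separate Kasparov factor so that the hard analysis of the cylinder can be cited rather than reproved. Your proposal instead performs the separation of variables directly at the twisted level, factoring $\ind_A{\bf M}_{V_N}$ as a product of two cycles \emph{over $\CC$}: the harmonic-oscillator module on $\RR$, which is literally $1\in KK(\CC,\CC)$, and the Mi\v{s}\v{c}enko--Fomenko module $(H^0(N;\Sigma_{N+}\tensor V_N),1,D_{N+,V_N})$. This is conceptually closer to the classical kernel/cokernel argument of Anghel and Bunke, and it avoids ever introducing Kasparov modules over $C_0(N\times\RR;A)$.

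The ``main obstacle'' you flag -- matching the Callias doubling grading $\sigma_3\tensor 1$ on $\widehat\Sigma_{N+}\oplus\widehat\Sigma_{N+}$ with the tensor-product grading on $\CC^2\widehat\tensor\Sigma_{N+}$ and fixing the resulting Koszul signs -- does in fact resolve. Explicitly, with the identification
$L^2(\RR;\CC^+)\tensor H^0(N;\Sigma_{N+}^+\tensor V_N)\to\widehat\Sigma_{N+}^+$ (even copy),
$L^2(\RR;\CC^-)\tensor H^0(N;\Sigma_{N+}^-\tensor V_N)\to-\widehat\Sigma_{N+}^-$ (even copy),
$L^2(\RR;\CC^-)\tensor H^0(N;\Sigma_{N+}^+\tensor V_N)\to\widehat\Sigma_{N+}^+$ (odd copy),
$L^2(\RR;\CC^+)\tensor H^0(N;\Sigma_{N+}^-\tensor V_N)\to\widehat\Sigma_{N+}^-$ (odd copy),
and the choice $B_\RR^+=i(\partial_r+r)$, $B_\RR^-=i(\partial_r-r)$, the off-diagonal blocks of $B_\RR\widehat\tensor 1+1\widehat\tensor D_{N+,V_N}$ reproduce $\widehat D_{N+}\pm i\chi$ exactly, so ${\bf M}_{V_N}$ is unitarily conjugate to the sum operator. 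Kucerovsky's conditions then verify essentially as you describe: the connection condition reduces to boundedness of $T_{B_\RR x}$ for $x$ in a core of $B_\RR$; compatibility follows since $\Dom({\bf M}_{V_N})\subseteq\Dom(B_\RR\widehat\tensor 1)$; and the positivity condition is automatic because $[{\bf M}_{V_N},B_\RR\widehat\tensor 1]_+=2B_\RR^2\widehat\tensor 1\geq 0$. One bonus worth noticing: because the product you invoke is a composition product over $\mathcal B=\CC$, which only requires $\mathcal B$ $\sigma$-unital and $\mathcal A=\CC$ separable, your route appears to avoid the exterior products $\widehat\tensor\,[\id_A]$ that force the paper to assume $A$ separable (see Remark~\ref{R:algebra is separable}); if carried out carefully this would slightly strengthen the theorem.
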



\begin{remark}
Notice that, by setting $M_-:=N\times[-1,1]$ and $M_+:=(N\times(-\infty,-1])\sqcup (N\times [1,\infty))$, this theorem is a particular instance of Theorem~\ref{T:Callias-type theorem}.
\end{remark}


\subsection{Proof of Theorem~\ref{T:Callias-type theorem}}
By Theorem~\ref{T:reduction to the cylinder} and Theorem~\ref{T:analysis on the cylinder}, we obtain
\[
	\ind_A P_V\ =\ \ind_A {\bf M}_{V_N}\ =\ \ind_A D_{N+,V_N}.
\]
\hfill$\square$


The remaining part of this section is devoted to proving Theorem~\ref{T:analysis on the cylinder}.
We first review some background material.


\subsection{Intersection product}\label{SS:intersection product}
Let $C_1$ be the complex Clifford algebra associated to $\CC$ and the canonical nondegenerate quadratic form.
The algebra $C_1$ is naturally endowed with a $\ZZ_2$-grading, where the even (resp. odd) elements are of the form $z\oplus z$ (resp. $z\oplus -z$), for some $z\in \CC$ (cf.~\cite[Examples~14.1.2.(b)]{Bla98}).
For $n\in\ZZ_+$, the groups $KK^n(\mathcal{A},\mathcal{B})$ are defined by setting $KK^0(\mathcal{A},\mathcal{B}) := KK(\mathcal{A},\mathcal{B})$ and $KK^{j+1}(\mathcal{A},\mathcal{B})=KK^j(\mathcal{A},\mathcal{B}\widehat{\tensor} C_1)$.
By Bott periodicity (\cite[Corollary~17.8.9]{Bla98}), $KK^0(\mathcal{A},\mathcal{B})\cong KK^0(\mathcal{A} \widehat{\tensor}C_1,\mathcal{B}\widehat{\tensor}C_1)$ and $KK^1(\mathcal{A},\mathcal{B})\cong KK^0(\mathcal{A} \widehat{\tensor}C_1,\mathcal{B})$.

The $KK$-theory groups are endowed with two products, i.e. the \emph{composition product}
\begin{equation}\label{eq:composition product}
	\circ:KK(\mathcal{A},\mathcal{B})\times KK(\mathcal{B},\mathcal{C})\xrightarrow{\qquad} KK(\mathcal{A},\mathcal{C})
\end{equation}
and the \emph{exterior product}
\begin{equation}\label{eq:exterior product}
	\widehat{\tensor}:KK(\mathcal{A}_1,\mathcal{B}_1)\times KK(\mathcal{A}_2,\mathcal{B}_2)\xrightarrow{\qquad} 
	KK(\mathcal{A}_1\widehat{\tensor} \mathcal{B}_1,\mathcal{A}_2\widehat{\tensor} \mathcal{B}_2)\,,
\end{equation}
where we use the (graded) minimal spacial tensor product (cf.~\cite[Appendix~T]{Weg93}).
In order to make these products defined, the $C^\ast$-algebras need to satisfy some conditions.
For the product~\eqref{eq:composition product} we require $\mathcal{A}$ separable and $\mathcal{B}$ $\sigma$-unital.
For~\eqref{eq:exterior product} we require that $\mathcal{A}_1$ and $\mathcal{A}_2$ are separable and $\mathcal{B}_1$ is $\sigma$-unital (see~\cite[Section~18.9]{Bla98}).
In the next proposition, we recall some properties of the products~\eqref{eq:composition product} and~\eqref{eq:exterior product} that will be used in this section.
We will assume that all $C^\ast$-algebras satisfy the appropriate size restrictions (separable or $\sigma$-unital) necessary to make the products defined.


\begin{proposition}\label{P:intersection product}(Properties of the intersection product)
For ${\bf x}\in KK(\mathcal{A},\mathcal{B})$, ${\bf y}\in KK(\mathcal{B},\mathcal{C})$ and ${\bf z}\in KK(\mathcal{C},\mathcal{D})$, we have
\begin{enumerate}[label=(\alph*)]
\item $\left({\bf x}\circ{\bf y}\right)\circ{\bf z}\,=\,{\bf x}\circ\left({\bf y}\circ{\bf z}\right)$;
\item $\left({\bf y}\circ{\bf z}\right)\widehat{\tensor}[\id_\mathcal{A}] \ =\ \left({\bf y}\widehat{\tensor}[\id_\mathcal{A}]\right)\circ\left({\bf z}\widehat{\tensor}[\id_\mathcal{A}]\right)$.
\end{enumerate}
Here,	the element $[\id_\mathcal{A}]\in KK(\mathcal{A},\mathcal{A})$ is given by the triple $(\mathcal{A},1,0)$.
\end{proposition}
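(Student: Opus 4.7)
The plan is to invoke both statements as standard structural facts from Kasparov's $KK$-theory rather than to reprove them. The paper has already cited Blackadar's monograph~\cite{Bla98} for all basic notions, and both claims are recorded there precisely under the separability and $\sigma$-unitality hypotheses that are in force in the statement. So the strategy reduces to pointing to the correct references and checking that our size conditions on the algebras match Kasparov's axiomatic framework.

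For part (a), associativity of the composition product is the deepest technical result in the theory, and I would simply cite it from~\cite[Theorem~18.7.1]{Bla98} (following Kasparov's original approach). The underlying idea, which I would not reproduce here, is to characterize ${\bf x}\circ{\bf y}$ as the unique $KK$-class represented by cycles whose Fredholm operator is a \emph{connection} for the operator of ${\bf y}$ and satisfies a positivity condition coming from the operators of ${\bf x}$ and ${\bf y}$; one then constructs a single operator on the triple tensor product module that realizes both $({\bf x}\circ{\bf y})\circ{\bf z}$ and ${\bf x}\circ({\bf y}\circ{\bf z})$, the key technical input being Kasparov's technical theorem. This is the main obstacle in the theory, and precisely the reason one always cites rather than reproves it.

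For part (b), the identity is elementary once the general compatibility between composition and exterior products is in hand. The class $[\id_\mathcal{A}]$ is represented by the trivial cycle $(\mathcal{A},1,0)$, so the operation $-\,\widehat{\tensor}\,[\id_\mathcal{A}]$ is just the functor of tensoring a cycle with $\mathcal{A}$ on the right. The desired identity then follows from the distribution formula
\[
({\bf x}_1\widehat{\tensor}{\bf y}_1)\circ({\bf x}_2\widehat{\tensor}{\bf y}_2)\ =\ ({\bf x}_1\circ{\bf x}_2)\widehat{\tensor}({\bf y}_1\circ{\bf y}_2)\,,
\]
applied with ${\bf x}_1={\bf y}$, ${\bf x}_2={\bf z}$, ${\bf y}_1={\bf y}_2=[\id_\mathcal{A}]$, together with the trivial observation that $[\id_\mathcal{A}]\circ[\id_\mathcal{A}]=[\id_\mathcal{A}]$. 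The distribution formula is recorded as~\cite[Theorem~18.9.1]{Bla98} and is itself a formal consequence of part (a). Thus, as soon as (a) is available, (b) follows with no further work; I do not expect any real obstacle here, beyond bookkeeping of the various separability conditions so that all the products involved are actually defined.
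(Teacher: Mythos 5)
Your proposal is correct and takes essentially the same route as the paper: the paper states this proposition without proof, explicitly flagging it as a recall of standard facts about the Kasparov products with the understanding that all size restrictions (separable source, $\sigma$-unital target) are in force, and points the reader to Blackadar's monograph. Your references and the reduction of part (b) to the interchange law $({\bf x}_1\widehat{\tensor}{\bf y}_1)\circ({\bf x}_2\widehat{\tensor}{\bf y}_2)=({\bf x}_1\circ{\bf x}_2)\widehat{\tensor}({\bf y}_1\circ{\bf y}_2)$ together with $[\id_\mathcal{A}]\circ[\id_\mathcal{A}]=[\id_\mathcal{A}]$ are exactly the expected justification.

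One small imprecision worth noting: you assert that the interchange law is ``itself a formal consequence of part (a).'' Strictly speaking this is not so. Part (a) is associativity of the \emph{composition} product $KK(\mathcal{A},\mathcal{B})\times KK(\mathcal{B},\mathcal{C})\to KK(\mathcal{A},\mathcal{C})$, whereas the interchange law needs the associativity of Kasparov's \emph{generalized} intersection product $\tensor_D$ in its full bivariant form (Blackadar, Section~18.9 / Kasparov's Theorem~4.7), from which both the composition-product associativity and the interchange law are specializations. This does not affect the validity of your argument, since you cite the interchange law as a standard result anyway; it only corrects the claimed logical dependence between the two lemmas.
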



\subsection{Connections for unbounded modules}\label{SS:unbounded connections}
Connes and Skandalis~\cite{CS84} defined the notion of connection to give a criterion under which an element ${\bf z}\in KK(\mathcal{A},\mathcal{C})$ is expressable as composition product ${\bf x}\circ{\bf y}={\bf z}$, for some ${\bf x}\in KK(\mathcal{A},\mathcal{B})$ and ${\bf y}\in KK(\mathcal{B},\mathcal{C})$.
Their construction is based on the expression of the $KK$-elements as \emph{bounded} Kasparov modules.
Kucerovsky~\cite{Kuc97} extended the notion of connection to the case when the elements are expressed as \emph{unbounded} Kasparov modules.
We quickly review this approach.


\begin{definition}
Let $S$, $T$ be unbounded operators on a Hilbert $\mathcal{A}$-module $E$.
We say that the resolvent of $T$ is \emph{compatible} with $S$ if there is a dense submodule $\mathcal{W}$ of $E$ such that the operator $S(i\mu+T)^{-1}(i\mu_1+S)^{-1}$ is defined on $\mathcal{W}$,  for all $\mu$, $\mu_1\in\RR\setminus\{0\}$.
\end{definition}


The next proposition is the main technical tool that we use to do computations with unbounded Kasparov modules.


\begin{proposition}[Kucerovsky,~\cite{Kuc97}]\label{P:unbounded product}
Suppose that the classes ${\bf x}\in KK(\mathcal{A},\mathcal{B})$, ${\bf y}\in KK(\mathcal{B},\mathcal{C})$ and ${\bf z}\in KK(\mathcal{A},\mathcal{C})$ are represented respectively by the unbounded Kasparov modules $(E_1,\phi_1,D_1)$, $(E_2,\phi_2,D_2)$ and $(E,\phi_1\widehat{\tensor} 1,D)$, where $E\cong E_1\widehat{\tensor}_{\phi_1} E_2$. 
For $x\in E_1$, let $T_x\in\mathcal{L}_\mathcal{C}(E_2,E)$ be the operator defined by setting $T_x(y):=x \widehat{\tensor} y$.
Moreover, suppose that the following conditions are satisfied:
\begin{enumerate}[label=(\roman*)]
	\item (Connection condition) the operator $DT_x-(-1)^{\partial x}T_x D_2$  is bounded on $\Dom(D_2)$
		for all homogeneous $x$ in some dense subset of $\phi_1(A) E_1$;
	\item (Compatibility condition) either the resolvent of $(D_1\widehat{\tensor}1)$ is compatible with $D$  or 
		the resolvent of $D$ is compatible with $(D_1\widehat{\tensor}1)$;
	\item (Positivity condition) the graded commutator 
	$[D,D_1\widehat{\tensor}1]=D\,(D_1\widehat{\tensor}1)+(D_1\widehat{\tensor}1)\,D$ 
	is bounded below on a dense submodule of $E$.
\end{enumerate}
Then ${\bf x}\circ{\bf y}={\bf z}$.
\end{proposition}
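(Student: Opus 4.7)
My plan follows the standard strategy from Kucerovsky's paper, which reduces the unbounded product criterion to the earlier bounded product criterion of Connes--Skandalis. The first step is to pass from the unbounded pictures to the bounded ones via the Baaj--Julg transform: set $F := D(1+D^{2})^{-1/2}$, $F_{1} := D_{1}(1+D_{1}^{2})^{-1/2}$, and $F_{2} := D_{2}(1+D_{2}^{2})^{-1/2}$. By Proposition~\ref{P:unbounded to bounded}, the triples $(E,\phi_{1}\widehat{\tensor}1, F)$, $(E_{1},\phi_{1},F_{1})$ and $(E_{2},\phi_{2},F_{2})$ are bounded Kasparov modules representing ${\bf z}$, ${\bf x}$ and ${\bf y}$ respectively. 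By the Connes--Skandalis theorem, to conclude ${\bf x}\circ{\bf y}={\bf z}$ it suffices to verify (a) that $F$ is an $F_{1}\widehat{\tensor}1$-connection for $F_{2}$ in the Connes--Skandalis sense, i.e.\ that for a dense set of homogeneous $x\in \phi_{1}(\mathcal{A})E_{1}$ the operator $F T_{x} - (-1)^{\partial x} T_{x} F_{2}$ lies in $\mathcal{K}_{\mathcal{C}}(E_{2},E)$, and (b) that the graded commutator $[F_{1}\widehat{\tensor}1,\,F]$ is positive modulo $\mathcal{K}_{\mathcal{C}}(E)$ on $(\phi_{1}(\mathcal{A})\widehat{\tensor}1)E$.

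The main technical step is to deduce the bounded connection condition (a) from the unbounded connection condition (i). The plan is to use the integral representation
\[
F_{2} - F_{2}' = \frac{2}{\pi}\int_{0}^{\infty}\bigl[D_{2}(1+D_{2}^{2}+\lambda^{2})^{-1} - \cdots\bigr]\,d\lambda,
\]
and more directly the identity $F = \frac{2}{\pi}\int_{0}^{\infty} D(1+D^{2}+\lambda^{2})^{-1}d\lambda$, to rewrite
$F T_{x} - (-1)^{\partial x} T_{x} F_{2}$ as a norm-convergent integral whose integrand involves the unbounded graded commutator $DT_{x} - (-1)^{\partial x} T_{x}D_{2}$, sandwiched between resolvents $(1+D^{2}+\lambda^{2})^{-1}$ and $(1+D_{2}^{2}+\lambda^{2})^{-1}$. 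The unbounded connection condition (i) controls this commutator, and compactness then comes from the fact that $(1+D_{1}^{2})^{-1}\phi_{1}(a)\in\mathcal{K}_{\mathcal{B}}(E_{1})$ (from the Kasparov module axioms for ${\bf x}$), together with the observation that $T_{x}(1+D_{2}^{2})^{-1/2}$ involves the compact $(1+D_{1}^{2})^{-1/2}$ acting on the first tensor factor after an algebraic manipulation. This is precisely where the compatibility condition (ii) enters: it guarantees that the algebraic rewriting $D(\cdot)^{-1}(\cdot)^{-1}$ is justified on a dense submodule, so the manipulations producing the integrand are legitimate.

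For (b), I would show that the unbounded positivity condition (iii) implies its bounded counterpart. Write
\[
[F,\,F_{1}\widehat{\tensor}1] \;=\; (1+D^{2})^{-1/2}\,[D,\,D_{1}\widehat{\tensor}1]\,(1+(D_{1}\widehat{\tensor}1)^{2})^{-1/2} \;+\; R,
\]
where $R$ collects cross terms involving commutators of $D$ and $D_{1}\widehat{\tensor}1$ with the respective resolvents, and argue, using compatibility (ii) and the $\mathcal{K}_{\mathcal{B}}(E_{1})$-compactness of $(1+D_{1}^{2})^{-1}\phi_{1}(a)$, that $R$ is compact after multiplication by an element of $\phi_{1}(\mathcal{A})\widehat{\tensor}1$. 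The main term is a positive operator sandwiched between positive resolvents, hence positive, by (iii).

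The principal obstacle is the connection step: one has to be genuinely careful in handling the unbounded operators $D$, $D_{1}\widehat{\tensor}1$ simultaneously on $E$, since their domains need not match and $T_{x}$ need not preserve either of them. The compatibility condition (ii) is designed exactly to make these manipulations legitimate on a sufficiently large dense submodule. Once (i)--(iii) have been transferred into the bounded picture in this way, the proposition follows immediately from the Connes--Skandalis sufficiency theorem.
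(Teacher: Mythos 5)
The paper itself gives no proof of this proposition: it is quoted verbatim from Kucerovsky's K-Theory paper \cite{Kuc97}, so there is no ``paper's own proof'' to compare against. Your outline correctly reconstructs the strategy of Kucerovsky's original argument --- passing to the bounded picture via the Baaj--Julg transform, then invoking the Connes--Skandalis sufficiency criterion, with the unbounded connection and semiboundedness hypotheses transported through the functional calculus using the integral representation $D(1+D^2)^{-1/2} = \frac{2}{\pi}\int_0^\infty D(1+D^2+\lambda^2)^{-1}\,d\lambda$ and the compatibility hypothesis to justify the domain manipulations.

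One point worth flagging: in your step (b) you conclude ``hence positive, by (iii),'' but hypothesis (iii) only asserts that $[D,D_1\widehat{\tensor}1]$ is \emph{bounded below}, not positive. The Connes--Skandalis criterion requires $a[F_1\widehat{\tensor}1,F]a^*\geq 0$ modulo $\mathcal{K}_\mathcal{C}(E)$, and going from ``$\geq -c$'' on the unbounded commutator to ``$\geq 0$ mod compacts'' on the bounded one is not merely sandwiching by resolvents --- the resolvents do not commute with the commutator, and the compact correction has to be manufactured from the compactness of $(1+D^2)^{-1}\phi_1(a)$. Kucerovsky's actual argument here is a bit more delicate than your sketch suggests, though the overall mechanism is as you describe. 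This is an imprecision rather than a gap, and it does not affect the fact that your outline identifies all the right ingredients and the correct overall route.
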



\subsection{Twisted Dirac operators on compact manifolds}
The Hilbert $A$-bundle $V_N\rightarrow N$ defines an element $[V_N]\in KK(\CC,C(N;A))$ represented by the triple $\left(C(N;V_N),1,0\right)$, where $1$ denotes scalar multiplication.
The Dirac operator $D_{N+}$ defines an element $[D_{N+}]\in KK(C(N),\CC)$ through the unbounded Kasparov module
\begin{equation}
	\left(H^0\left(N;\Sigma_{N+}^+\right)\oplus H^0\left(N;\Sigma_{N+}^-\right),1,
	\left(\begin{array}{cc}
	0&D_{N+}^-\\D_{N+}^+&0
	\end{array}\right)
	\right)\,,
\end{equation}
where $1$ denotes pointwise multiplication.
The relationship between the $A$-index of $D_{N+,V_N}$ and the elements $[V_N]$ and $[D_{N+}]$ is given in the next proposition (for the proof we refer to~\cite[Section~5.3]{Sch05}).


\begin{proposition}\label{P:twisted Dirac}
	$\ind_A D_{N+,V_N}\ =\ [V_N]\circ\left([D_{N+}]\widehat{\tensor}[\id_A]\right)$.
\end{proposition}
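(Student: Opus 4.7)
The plan is to apply Kucerovsky's criterion (Proposition~\ref{P:unbounded product}) directly, representing all three classes by unbounded Kasparov modules. I would represent $[V_N]$ by $\big(C(N;V_N),1,0\big)$ as a module for $(\CC, C(N;A))$, with $1$ denoting scalar multiplication; $[D_{N+}]\widehat{\tensor}[\id_A]$ by $\big(H^0(N;\Sigma_{N+})\widehat{\tensor} A,\phi_2,D_{N+}\widehat{\tensor}1\big)$ for $(C(N;A),A)$, where $\phi_2$ is pointwise multiplication by continuous $A$-valued functions; and $\ind_A D_{N+,V_N}$ by $\big(H^0(N;V_N\tensor\Sigma_{N+}),1,D_{N+,V_N}\big)$, which indeed represents the Mi\v{s}\v{c}enko--Fomenko index class since $N$ is closed and therefore the bounded transform agrees with the classical Fredholm picture up to $A$-compacts.

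The first technical step is to identify the internal tensor product $C(N;V_N)\widehat{\tensor}_{C(N;A)}\big(H^0(N;\Sigma_{N+})\widehat{\tensor} A\big)$ with $H^0(N;V_N\tensor\Sigma_{N+})$ as a Hilbert $A$-module, intertwining the left $\CC$-action. Because $V_N$ is of finite type, $C(N;V_N)$ is finitely generated projective over $C(N;A)$, and a Serre--Swan-type calculation produces the desired unitary isomorphism on simple tensors $x\widehat{\tensor}(s\widehat{\tensor} a)\mapsto x\tensor(s\cdot a)$, extended by continuity.

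Next I would verify the three hypotheses of Proposition~\ref{P:unbounded product}. Since $D_1=0$, both the compatibility and the positivity conditions hold trivially: the resolvents of $0$ are scalars commuting with everything in sight, and the graded commutator $[D_{N+,V_N},D_1\widehat{\tensor}1]$ vanishes identically, so it is bounded below by $0$. The substantive step is the connection condition. For a smooth section $x\in C^\infty(N;V_N)$, the creation operator $T_x(s):=x\tensor s$ satisfies, on the dense domain of smooth compactly supported elements, the Leibniz identity
\[
	D_{N+,V_N}\,T_x\,-\,T_x\,\big(D_{N+}\widehat{\tensor}1\big)\ =\ c\big(\nabla^{V_N}x\big)\,,
\]
which is a direct consequence of the product formula $\nabla^{V_N\tensor\Sigma_{N+}}=\nabla^{V_N}\tensor 1+1\tensor\nabla^{\Sigma_{N+}}$ and the definition~\eqref{eq:twisted Dirac} of the twisted Dirac operator. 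Since $N$ is compact, $c(\nabla^{V_N}x)$ is a bounded adjointable bundle endomorphism on $H^0(N;V_N\tensor\Sigma_{N+})$, and smooth sections are dense in $C(N;V_N)$, so the connection condition holds.

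The main obstacle is the bookkeeping underlying the Hilbert module identification and the Leibniz identity: one must carefully check that the identification intertwines the left $C(N;A)$-action on $C(N;V_N)$ used in the internal product with the pointwise action defining $\phi_2$, and that the graded conventions enter the commutator formula correctly (the degree-$0$ character of $V_N$ makes $(-1)^{\partial x}=1$ here). Once these are in place, Proposition~\ref{P:unbounded product} yields the equality $[V_N]\circ\big([D_{N+}]\widehat{\tensor}[\id_A]\big)=\ind_A D_{N+,V_N}$, which is the claim.
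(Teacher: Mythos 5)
Your argument is correct and is essentially the approach the paper relies on: the paper itself defers this proposition to \cite[Section~5.3]{Sch05}, where the Kasparov-product factorization of the Mi\v{s}\v{c}enko--Fomenko index is proved by exactly this kind of connection computation, and the paper carries out the closely parallel calculation for the cylinder in Lemma~\ref{L:structure of model operator} using the same tool (Kucerovsky's criterion, Proposition~\ref{P:unbounded product}). Your observation that $D_1=0$ trivializes the compatibility and positivity conditions is correct and in fact makes this case simpler than the cylinder computation in Lemma~\ref{L:structure of model operator}, where $D_1=\chi$ is nonzero; the remaining substantive step, the Leibniz identity $D_{N+,V_N}\,T_x-T_x(D_{N+}\widehat{\tensor}1)=c(\nabla^{V_N}x)$ for smooth $x$ and the boundedness of the right-hand side on a compact base, is exactly what one needs to verify.
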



\subsection{KK-theoretical version of Anghel's theorem}
The operator $\widehat{D}_{N+}$ defines an element $\big[\widehat{D}_{N+}\big]$ in $KK^1(C_0(N\times\RR),\CC)$ through the unbounded Kasparov module
\begin{equation}
	\left(H^0\big(\widehat{\Sigma}_{N+}\big)\oplus H^0\big(\widehat{\Sigma}_{N+}\big),\phi,
	\left(\begin{array}{cc}
	0&\widehat{D}_{N+}\\\widehat{D}_{N+}&0
	\end{array}\right)
	\right)
\end{equation}
for $\big(C_0(N\times\RR)\widehat{\tensor} C_1,\CC\big)$.
Here, $\phi:C_0(N\times\RR)\widehat{\tensor} C_1\rightarrow \mathcal{L}\big(H^0\big(\widehat{\Sigma}_{N+}\big)\oplus H^0\big(\widehat{\Sigma}_{N+}\big)\big)$ is the graded homomorphism defined as follows.
By~\cite[Corollary~14.5.3]{Bla98}, even (respectively odd) elements of $C_0(N\times\RR)\widehat{\tensor} C_1$ are of the form $b\oplus b$ (resp. $b\oplus-b$) for some $b\in C_0(N\times\RR)$.
Then the homomorphism $\phi$ is given by
\[
	\phi:b\oplus b\mapsto\left(\begin{array}{cc}b&0\\0&b\end{array}\right)
	\qquad\qquad
	\phi:b\oplus -b\mapsto\left(\begin{array}{cc}0&-ib\\ib&0\end{array}\right)\,,
\]
for $b\in C_0(N\times\RR)$.

The function $\chi$ defined in Subsection~\ref{SS:model operator} gives an element $[\chi]\in KK^1(C(N),C_0(N\times \RR))$ through the Kasparov module
\begin{equation}
	\left(C_0(N\times\RR)\widehat{\tensor} C_1,\psi,
	\left(\begin{array}{cc}\chi&0\\0&-\chi
	\end{array}\right)
	\right)
\end{equation}
for $\big(C(N),C_0(N\times\RR)\widehat{\tensor} C_1\big)$.
Here, the homomorphism $\psi:C(N)\rightarrow \mathcal{L}_{C_0(N\times\RR)\widehat{\tensor} C_1}(C_0(N\times\RR)\widehat{\tensor} C_1)$ is defined as follows.
Given a function $u\in C(N)$, let $\widehat{u}\in C(N\times \RR)$ be the lift of $u$ to $N\times \RR$, i.e. $\widehat{u}(x,r)=u(x)$ for $(x,r)\in N\times\RR$.
Then $\psi(u)$ is pointwise multiplication by $\widehat{u}$.
The next proposition is the $KK$-theoretical version the Callias-type theorem due to Anghel~\cite{Ang93}. 


\begin{proposition}[Kucerovsky,~\cite{Kuc01}]\label{P:Anghel theorem}
	$[D_{N+}]\ =\ [\chi]\circ\big[\widehat{D}_{N+}\big]$.	
\end{proposition}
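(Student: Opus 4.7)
I would invoke Kucerovsky's unbounded product theorem, Proposition~\ref{P:unbounded product}, with $(\mathcal{A},\mathcal{B},\mathcal{C})=(C(N),C_0(N\times\RR)\widehat\tensor C_1,\CC)$ and $\mathbf{x}=[\chi]$, $\mathbf{y}=[\widehat{D}_{N+}]$. The first step is to identify the internal tensor product of the underlying Hilbert modules. Using the explicit description of $\phi$ on even/odd elements $b\oplus\pm b$ of $C_0(N\times\RR)\widehat\tensor C_1$, the module $E:=(C_0(N\times\RR)\widehat\tensor C_1)\,\widehat\tensor_\phi\,\bigl(H^0(\widehat\Sigma_{N+})\oplus H^0(\widehat\Sigma_{N+})\bigr)$ is naturally isomorphic, as a $\ZZ_2$-graded Hilbert $\CC$-module, to $H^0(\widehat\Sigma_{N+})\oplus H^0(\widehat\Sigma_{N+})$, and the left action of $C(N)$ via $\psi\widehat\tensor 1$ becomes pullback followed by pointwise multiplication.

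The second step is to write down a candidate representative of $\mathbf{x}\circ\mathbf{y}$ on $E$. Motivated by the form of the model operator from Subsection~\ref{SS:model operator}, I would set
\[
D\ :=\ \begin{pmatrix} 0 & \widehat{D}_{N+}-i\chi\\ \widehat{D}_{N+}+i\chi & 0\end{pmatrix},
\]
which is precisely the untwisted model operator $\mathbf{M}$ on the cylinder. By Theorem~\ref{T:unbounded kasparov module} applied with $A=\CC$ and $V_N$ trivial, $(E,\psi\widehat\tensor 1,D)$ is an unbounded Kasparov module for $(C(N),\CC)$. I would then verify the three hypotheses of Proposition~\ref{P:unbounded product}. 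For the connection condition, with $D_2$ the operator representing $[\widehat{D}_{N+}]$, and for $x=b\in C_0(N\times\RR)$ homogeneous, the operator $DT_x-(-1)^{\partial x}T_xD_2$ reduces to Clifford multiplication by $db$ plus a bounded $\chi$-independent term, both of which extend boundedly to $\Dom(D_2)$ because $b$ has compact support. For compatibility, the operator $D_1\widehat\tensor 1$ acts by the bounded-below multiplication $\mathrm{diag}(\chi,-\chi)$ on $E$, whose resolvent leaves $\Dom(D)$ invariant by the standard Sobolev estimates on the cylinder. For positivity, a direct computation gives $\{D,\,\mathrm{diag}(\chi,-\chi)\}=2\chi^2\cdot \mathrm{Id}+R$, where $R$ is a bounded bundle map coming from $\{\widehat{D}_{N+},\chi\}=\gamma$, and hence the graded commutator is bounded below. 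Granting these verifications, Kucerovsky's theorem yields $[\chi]\circ[\widehat{D}_{N+}]=[(E,\psi\widehat\tensor 1,D)]$ in $KK(C(N),\CC)$.

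The final step is to identify the class of $(E,\psi\widehat\tensor 1,D)$ with $[D_{N+}]$. This is the classical Callias-type identity in the $A=\CC$ case and I would prove it by an operator homotopy: rescaling to $D_t=\bigl(\begin{smallmatrix}0&\widehat{D}_{N+}-it\chi\\\widehat{D}_{N+}+it\chi&0\end{smallmatrix}\bigr)$ for $t\in[1,\infty)$ produces a norm-continuous family of unbounded Kasparov modules representing a constant class in $KK(C(N),\CC)$, and separation of variables shows that as $t\to\infty$ the resolvent concentrates on the zero-section $N\times\{0\}$ via the ground state $e^{-t\chi^2/2}\otimes s$ for $s\in\ker D_{N+}^\pm$, whose integration over $\RR$ recovers the Kasparov module of $[D_{N+}]$.

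The main obstacle will be the positivity and connection conditions: although each reduces to a concrete computation, the interaction between the unbounded multiplier $\chi$ and the first-order operator $\widehat{D}_{N+}$ requires careful bookkeeping of domains of essential self-adjointness. In addition, the operator-homotopy step in the last paragraph must be done in a way that keeps the Kasparov module hypotheses (in particular compact resolvent after composing with $\phi$) continuous in $t$, which is guaranteed by the estimates in~\cite[Theorem~3.40]{Ebe16} but still needs to be spelled out.
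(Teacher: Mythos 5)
The paper cites this proposition from Kucerovsky~\cite{Kuc01} without reproducing a proof, so there is no in-paper argument to compare against; your two-stage strategy is the natural one. Stage one — applying the unbounded connection criterion (Proposition~\ref{P:unbounded product}) to show $[\chi]\circ[\widehat{D}_{N+}]$ equals the class of the untwisted model cycle $(E,\psi\widehat\tensor 1,\mathbf{M})$ — parallels the computation the paper itself carries out in Lemma~\ref{L:structure of model operator} for the twisted case, and goes through. Two small slips there: what you need and use is the \emph{commutator} $[\widehat{D}_{N+},\chi]=\gamma$, not the anticommutator $\{\widehat{D}_{N+},\chi\}$; and the second contribution to the connection-condition difference is multiplication by $\chi b$, which is emphatically $\chi$-\emph{dependent} and is bounded only because $b$ is compactly supported (cf.\ the term $\chi\,u\tensor ga$ in the proof of Lemma~\ref{L:structure of model operator}). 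Also, Theorem~\ref{T:unbounded kasparov module} gives a $(\CC,A)$-cycle, not a $(C(N),\CC)$-cycle; you need the additional (easy) observation that $[\mathbf{M},\psi(u)]=c(d\widehat{u})$ is bounded for $u\in C^\infty(N)$ before you can assert that $(E,\psi\widehat\tensor 1,\mathbf{M})$ is a Kasparov module over $C(N)$.

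The genuine gap is in stage two, the identification $[(E,\psi\widehat\tensor 1,\mathbf{M})]=[D_{N+}]$ in $KK(C(N),\CC)$. You cannot argue by sending $t\to\infty$ in the family $D_t$: the bounded transforms $D_t(1+D_t^2)^{-1/2}$ do not converge in norm, so there is no limiting Kasparov cycle to name, and "integration over $\RR$ recovers the Kasparov module of $[D_{N+}]$" has no precise content. The spectral analysis you invoke (Gaussian kernel plus a gap of order $\sqrt t$) establishes equality of \emph{indices}; but these classes live in the K-homology group $KK(C(N),\CC)$, where the index is a very lossy invariant, so this is not enough. To close the gap you must produce an honest operator homotopy of $(C(N),\CC)$-cycles: for example, use that the projection onto the $r$-ground state commutes with the $C(N)$-action, exhibit a finite Clifford rotation carrying $\mathbf{M}$ to a block sum of $D_{N+}$ (on the Gaussian line) with an invertible operator on the complement, and then homotope the latter block to a degenerate cycle; or simply invoke the $A=\CC$ Callias/Anghel theorem (\cite{Ang93}, \cite{Bun95}) as a known black box. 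As written, the concluding paragraph does not constitute a proof of that identification, which is in fact the entire analytic content of the proposition.
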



\subsection{$A$-index of Twisted Callias-type operators}
In order to prove Theorem~\ref{T:analysis on the cylinder}, we need to connect the $A$-index of the model operator ${\bf M}_{V_N}$ with the elements $[V_N]$, $\big[\widehat{D}_{N+}\big]$ and $[\chi]$.

The product $\big[\widehat{D}_{N+}\big]\widehat{\tensor}[\id_A]\in KK^1(C_0(N\times\RR)\widehat{\tensor} A,A)$ is given by the triple
\begin{equation}	
	\left(\left(H^0\left(\widehat{\Sigma}_{N+}\right)\oplus H^0\left(\widehat{\Sigma}_{N+}\right)\right)\widehat{\tensor} A,
	\phi\widehat{\tensor}\id_A,
	\left(\begin{array}{cc}
	0&\widehat{D}_{N+}\widehat{\tensor} \id_A\\\widehat{D}_{N+}\widehat{\tensor}\id_A&0
	\end{array}\right)
	\right)\,.
\end{equation}
Moreover, the product $[\chi]\widehat{\tensor} [\id_A]\in KK^1(C(N;A),C_0(N\times \RR; A))$ is represented by the unbounded Kasparov module
\begin{equation}
	\left(C_0(N\times\RR;A)\widehat{\tensor} C_1,\psi\widehat{\tensor} \id_A,
	\left(\begin{array}{cc}\chi&0\\0&-\chi
	\end{array}\right)
	\right)
\end{equation}
for the pair of algebras $\big(C(N;A),C_0(N\times\RR;A)\widehat{\tensor} C_1\big)$.
Here, we used the isomorphisms 
\[
	C(N)\,\widehat{\tensor}\, A\cong C(N;A)\qquad\text{and}
	\qquad C_0(N\times\RR)\,\widehat{\tensor}\, A\cong C_0(N\times\RR;A)\,.
\]


\begin{lemma}\label{L:structure of the model operator1}
The element $[V_N]\circ\left([\chi]\widehat{\tensor} [\id_A]\right)\in KK^1(\CC,C_0(N\times \RR; A))$ is represented by the unbounded Kasparov module
\begin{equation}
	\left(C_0(N\times\RR;\widehat{V}_N)\widehat{\tensor} C_1,1,
	\left(\begin{array}{cc}\chi&0\\0&-\chi
	\end{array}\right)
	\right)
\end{equation}
for the pair of algebras $\big(\CC,C_0(N\times\RR;A)\widehat{\tensor} C_1\big)$, where $1$ denotes complex scalar multiplication.
\end{lemma}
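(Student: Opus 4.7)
The plan is to compute $[V_N] \circ \bigl([\chi] \widehat{\tensor} [\id_A]\bigr)$ by directly identifying the internal tensor product underlying the Kasparov product and then invoking Kucerovsky's criterion (Proposition~\ref{P:unbounded product}). Since the first Kasparov module $(C(N;V_N), 1, 0)$ has zero operator, the three hypotheses of that proposition become essentially vacuous, and the real content of the lemma lies in the module-theoretic identification together with the check that the proposed operator yields a well-defined unbounded Kasparov module.

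First I would produce a unitary isomorphism of right $C_0(N\times\RR;A)\widehat{\tensor} C_1$-modules
\[
	C(N;V_N) \,\widehat{\tensor}_{\psi\,\widehat{\tensor}\,\id_A}\, \bigl(C_0(N\times\RR;A)\widehat{\tensor} C_1\bigr) \ \iso \ C_0(N\times\RR;\widehat{V}_N)\widehat{\tensor} C_1\,,
\]
sending an elementary tensor $v \,\widehat{\tensor}\, (f\widehat{\tensor} e)$ to $(p^\ast v \cdot f)\,\widehat{\tensor}\, e$, where $p\colon N\times\RR\to N$ is the projection and $\widehat{V}_N = p^\ast V_N$. The balancing relation is respected because $p^\ast(v\cdot u) = p^\ast v \cdot \widehat{u} = p^\ast v \cdot (\psi\widehat{\tensor}\id_A)(u)|_{C_0\text{-part}}$, and a direct computation of the $C_0(N\times\RR;A)\widehat{\tensor} C_1$-valued inner products on both sides, using the fiberwise inner products of $\widehat{V}_N$, shows that the map is isometric and surjective. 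Under this identification, the left action of $\CC$ inherited from scalar multiplication on $C(N;V_N)$ becomes scalar multiplication on the target, which matches the ``$1$'' appearing in the claimed representative.

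Next I would verify that the proposed triple is an unbounded Kasparov module. The operator $\diag(\chi,-\chi)$ is self-adjoint and regular as diagonal multiplication by the proper real function $\chi(x,r)=r$ on a $C_0$-section space, and it is odd with respect to the grading induced by $C_1$. Moreover $(1+\chi^2)^{-1}$ is multiplication by a continuous function vanishing at infinity on $N\times\RR$; since $N$ is compact, this defines a compact endomorphism of the Hilbert $C_0(N\times\RR;A)\widehat{\tensor} C_1$-module $C_0(N\times\RR;\widehat{V}_N)\widehat{\tensor} C_1$. Hence the compactness condition of Definition~\ref{D:unbounded modules} holds, and the commutator condition is trivial for the scalar action.

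Finally I would apply Proposition~\ref{P:unbounded product} with $D_1=0$, $D_2=\diag(\chi,-\chi)$ on $E_2$, and $D=\diag(\chi,-\chi)$ on the identified module. The connection condition reduces to the identity $DT_v = (-1)^{\partial v} T_v D_2$, which holds because $p^\ast v$ commutes with multiplication by $\chi$; the compatibility condition is automatic since $D_1\widehat{\tensor} 1 = 0$; and the positivity condition $[D, D_1\widehat{\tensor} 1] = 0$ is trivially bounded below. This establishes that the triple represents the Kasparov product, as claimed. The main obstacle is really just careful bookkeeping around the tensor product identification and the balancing relations, together with the verification that the unbounded diagonal multiplication operator is regular on the composed module; once these are in place, Kucerovsky's criterion closes the argument with no further analytic input.
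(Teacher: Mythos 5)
Your proposal is correct and takes essentially the same approach as the paper: both apply Kucerovsky's criterion (Proposition~\ref{P:unbounded product}) with $D_1 = 0$, verify the connection condition by observing that $\diag(\chi,-\chi)$ commutes with $T_u$, and note that the compatibility and positivity conditions are trivially satisfied. The paper's proof is terser, leaving implicit the identification $E_1 \widehat{\tensor}_{\psi\widehat{\tensor}\id_A} E_2 \cong C_0(N\times\RR;\widehat{V}_N)\widehat{\tensor} C_1$ and the verification that the claimed triple is itself a valid unbounded Kasparov module, both of which you spell out; your added care with the module isomorphism and compact-resolvent check is sound but does not change the route of argument.
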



\begin{proof}
By~\cite[Corollary~14.5.3]{Bla98}, even (resp. odd) elements of $C_0(N\times\RR;A)\,\widehat{\tensor}\, C_1$ are of the form $g\oplus g$ (resp. $g\oplus -g$), for some $g\in C_0(N\times\RR;A)$.
Fix $g\in C_0(N\times\RR;\widehat{V}_N)$ and $u\in C(N;V_N)$.
We have
\[
\begin{aligned}
	\left(\begin{array}{cc}\chi &0\\0&-\chi\end{array}\right)\,T_u(g\oplus \pm g)\ -\ & 
	T_u \left(\begin{array}{cc}\chi&0\\0&-\chi\end{array}\right)(g\oplus \pm g)\ =\vspace{0.2cm}\\ 
	=\ &
	\left(\begin{array}{cc}\chi &0\\0&-\chi\end{array}\right)\left(\begin{array}{c}u\tensor g\\u\tensor \pm g\end{array}\right)-
	T_u \left(\begin{array}{c}\chi g\\\mp \chi g\end{array}\right)\ =\ 0\,,
\end{aligned}
\]
from which Condition (i) of Proposition~\ref{P:unbounded product} follows.
Conditions~(ii) and (iii) are trivially verified.
\end{proof}


The next lemma provides the wanted connection between the $A$-index of ${\bf M}_{V_N}$ and the cycles defined in the previous two subsections.
When $A=\CC$, this lemma follows from~\cite[Lemma~3.1]{Kuc01}.
 
\begin{lemma}\label{L:structure of model operator}
$\ind_A{\bf M}_{V_N}\ =\ \left\{[V_N]\circ\left([\chi] \,\widehat{\tensor}\, [\id_A]\right)\right\}\circ\left(\big[\widehat{D}_{N+}\big] \,\widehat{\tensor}\, [\id_A]\right)$.
\end{lemma}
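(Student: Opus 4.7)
The plan is to apply Kucerovsky's unbounded product criterion (Proposition~\ref{P:unbounded product}) to recognize the unbounded Kasparov module $\left(E,1,{\bf M}_{V_N}\right)$ from Theorem~\ref{T:unbounded kasparov module} as a representative of the composition product $\{[V_N]\circ([\chi]\,\widehat{\tensor}\,[\id_A])\}\circ\left(\big[\widehat{D}_{N+}\big]\,\widehat{\tensor}\,[\id_A]\right)$. This is the $C^\ast$-algebraic coefficient extension of~\cite[Lemma~3.1]{Kuc01}, the only novelty being that the coefficient algebra is $A$ rather than $\CC$.

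First I would identify the internal tensor product of the cycle $(E_1,1,D_1)$ of Lemma~\ref{L:structure of the model operator1}, where $D_1=\chi\oplus(-\chi)$, with the cycle $(E_2,\phi_2\widehat{\tensor}\id_A,D_2)$ representing $\big[\widehat{D}_{N+}\big]\,\widehat{\tensor}\,[\id_A]$. The $C_1$-factor in $E_1$ is absorbed by the $C_1$-action on $E_2$ carried by $\phi_2\,\widehat{\tensor}\,\id_A$; after completion, the resulting Hilbert $A$-module is naturally isomorphic to $E=H^0\big(\widehat{V}_N\tensor\widehat{\Sigma}_{N+}\big)^{\oplus 2}$, and a short calculation shows that the tensor-product operator $D_1\widehat{\tensor}1$ acquires the off-diagonal form $\left(\begin{smallmatrix}0 & -i\chi\\ i\chi & 0\end{smallmatrix}\right)$ on $E$.

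Next I would verify the three conditions of Proposition~\ref{P:unbounded product} with $D={\bf M}_{V_N}$. For the connection condition, for $x=f\,\widehat{\tensor}\,1$ with $f\in C_c^\infty(N\times\RR;\widehat{V}_N)$ (whose span is dense in $E_1$), the Leibniz rule for the twisted Dirac operator $\widehat{D}_{N+,\widehat{V}_N}$ shows that $DT_x\mp T_x D_2$ consists of a Clifford multiplication by $\nabla^{\widehat{V}_N} f$ plus a term $\mp i\chi f$; both are bounded, the first because $f$ is smooth, the second because $\chi f$ has compact support. For the compatibility condition, the submodule $\mathcal{W}=C_c^\infty(N\times\RR;\widehat{V}_N\tensor\widehat{\Sigma}_{N+})^{\oplus 2}$ suffices, as both resolvents preserve Schwartz-type regularity of smooth compactly supported sections. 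For the positivity condition, a direct matrix computation yields
\[
 \big\{{\bf M}_{V_N},\ D_1\widehat{\tensor}1\big\}\ =\ 2\chi^2\,\id_E\ +\ \begin{pmatrix} i\,[\widehat{D}_{N+,\widehat{V}_N},\chi] & 0\\ 0 & -i\,[\widehat{D}_{N+,\widehat{V}_N},\chi]\end{pmatrix},
\]
and by~\eqref{eq:[D_V,phi_V]=[D,phi]} together with the proof of Lemma~\ref{L:h goes to infinity}, $[\widehat{D}_{N+,\widehat{V}_N},\chi]=\gamma\,\widehat{\tensor}\,\id_{\widehat{V}_N}$ is a bounded self-adjoint endomorphism of norm $1$. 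Hence the anticommutator is bounded below by $-1$ on all of $E$.

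Proposition~\ref{P:unbounded product} then identifies the composition product with the class of $(E,1,{\bf M}_{V_N})$, which by the corollary following Theorem~\ref{T:equivalence of definitions} equals $\ind_A{\bf M}_{V_N}$. The principal difficulty will lie in the bookkeeping of Step~1: unwinding the graded tensor product so that $D_1\widehat{\tensor}1$ is correctly translated to the off-diagonal matrix on $E$ and ${\bf M}_{V_N}$ is thereby recognized as the natural product operator. The presence of the Hilbert $A$-bundle $\widehat{V}_N$ enters only through the connection condition, and only by contributing the bounded term $c(\nabla^{\widehat{V}_N} f)$, so the proof adapts cleanly from the case $A=\CC$ treated in~\cite{Kuc01}.
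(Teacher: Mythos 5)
Your proposal is correct and follows the same route as the paper: identify $E_1\widehat{\tensor}_\phi E_2$ with $E=H^0(\widehat{V}_N\tensor\widehat{\Sigma}_{N+})^{\oplus 2}$, then verify Kucerovsky's three conditions for $D={\bf M}_{V_N}$; your explicit off-diagonal form for $D_1\widehat{\tensor}1$ and the resulting anticommutator match the paper's positivity computation, and your connection-condition reasoning (boundedness of $c(\nabla^{\widehat{V}_N}f)$ and of $\chi f$ thanks to compact support) is the same as the paper's. One small inaccuracy: your justification of the compatibility condition, that "both resolvents preserve Schwartz-type regularity of smooth compactly supported sections," is not correct as stated, since $(i\mu+{\bf M}_{V_N})^{-1}$ does not preserve compact support; the cleaner argument, used in the paper, is that $\Dom({\bf M}_{V_N})\subseteq\Dom(D_1\widehat{\tensor}1)$, so compatibility follows from Kucerovsky's Lemma~10.
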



\begin{proof}
Set
\[\begin{aligned}
	& E_1\ :=\ C_0(N\times\RR;\widehat{V}_N) \,\widehat{\tensor}\, C_1\,,\qquad
	E_2\ :=\ \left(H^0\left(\widehat{\Sigma}_{N+}\right)\oplus H^0\left(\widehat{\Sigma}_{N+}\right)\right)\widehat{\tensor} A\,,
	\vspace{0.2cm}\\
	& \qquad\qquad
	E\ :=\ H^0\big(\widehat{V}_N\tensor \widehat{\Sigma}_{N+}\big)\oplus 
	H^0\big(\widehat{V}_N\tensor \widehat{\Sigma}_{N+}\big)\,.
\end{aligned}\]
Let us first prove that $E_1\widehat{\tensor}_\phi E_2\cong E$.
We need to show that
\begin{equation}\label{eq:E_1tensorE_2}
	\left< \alpha\tensor \beta,\gamma\tensor \delta\right>_E\ =\ 
	\left< \alpha\tensor \beta,\gamma\tensor \delta\right>_{E_1\widehat{\tensor}_\phi E_2},
\end{equation}
for every homogeneous $\alpha$, $\gamma\in E_1$ and every $\beta$, $\delta\in E_2$.
Suppose first that $\partial \alpha=\partial\gamma=0$, i.e. $\alpha=b\oplus b$ and $\gamma=c\oplus c$, for some $b,\,c\in C_0(N\times\RR;\widehat{V}_N)$.
Suppose also that 
\[
	\beta\ =\ \left(\begin{array}{c}p\\q\end{array}\right)\qquad\qquad\delta\ =\ \left(\begin{array}{c}r\\s\end{array}\right),
\]
for some $p,q,r,s\in H^0(\widehat{\Sigma}_{N+})\,\widehat{\tensor}\, A$.
Then
\[
\begin{aligned}
	&\left< \alpha\tensor \beta,\gamma\tensor \delta\right>_E\ =\ 
		\left< (b\oplus b)\tensor \left(\begin{array}{c}p \\q\end{array}\right), 
		(c\oplus c)\tensor\left(\begin{array}{c}r \\s\end{array}\right)\right>_E
		\vspace{0.2cm}\\
	&=\,  \int_{N\times\RR}\left(\left<b(x),c(x)\right>_{\widehat{V}_{N,x}}\left<p(x),r(x)\right>_{\widehat{\Sigma}_{N+,x}}
		+\left<b(x),c(x)\right>_{\widehat{V}_{N,x}}\left<q(x),s(x)\right>_{\widehat{\Sigma}_{N+,x}}\right)d\text{vol}(x)	
		\vspace{0.2cm}\\
	&=\,  \int_{N\times\RR}\left(\left<p(x),r(x)
		\left<b(x),c(x)\right>_{\widehat{V}_{N,x}}\right>_{\widehat{\Sigma}_{N+,x}\tensor A}
		+\left<q(x),s(x)\left<b(x),c(x)\right>_{\widehat{V}_{N,x}}\right>_{\widehat{\Sigma}_{N+,x}\tensor A}\right)
		d\text{vol}(x)
		\vspace{0.2cm}\\
	&=\, \left< \left(\begin{array}{c}p \\q\end{array}\right), 
		\phi\Big(\left< b\oplus b,c\oplus c\right>_{E_1}\Big)
		\left(\begin{array}{c}r \\s\end{array}\right)\right>_{E_2}
		\ =\ \left< (b\oplus b)\tensor \left(\begin{array}{c}p \\q\end{array}\right),
		(c\oplus c)\tensor\left(\begin{array}{c}r \\s\end{array}\right)\right>_{E_1\widehat{\tensor}_\phi E_2}\,.
\end{aligned}
\]
Hence, Equation~\eqref{eq:E_1tensorE_2} holds when $\partial \alpha=\partial\gamma=0$. 
The cases when either $\partial \alpha=1$ or $\partial\gamma=1$ are obtained with similar computations.


To prove the thesis, it remains to verify Conditions (i), (ii) and (iii) of Proposition~\ref{P:unbounded product}.


(\emph{Connection condition}).
Set
\[
\begin{aligned}
	&D_1\ :=\ \left(\begin{array}{cc}\chi&0 \\0&-\chi\end{array}\right)\qquad\qquad\qquad
	D_2\ :=\ \left(\begin{array}{cc}
	0&\widehat{D}_{N+}\widehat{\tensor} \id_A\\\widehat{D}_{N+}\widehat{\tensor}\id_A&0
	\end{array}\right)\vspace{0.2cm}\\	
	&\qquad\qquad D\ :=\ \left(\begin{array}{cc}0&\widehat{D}_{N+,V_N}-i\chi \\\widehat{D}_{N+,V_N}+i\chi&0\end{array}\right)\,.
\end{aligned}
\]
Let $w$ be a homogeneous element in $C^\infty_c(N\times\RR;\widehat{V}_N) \,\widehat{\tensor}\, C_1$.
We need to show that the operator $T_w D_2 -(-1)^{\partial w}D T_w$ is bounded on $\Dom(D_2)$.
Let us consider first the case when $\partial w=1$, i.e. $w=g\oplus -g$, for some $g\in C^\infty_c(N\times\RR;\widehat{V}_N)$.
For $u\tensor a,\,v\tensor b\in H^1\big(\widehat{\Sigma}_{N+}\big)\widehat{\tensor} A$, we have
\[
	T_{g\oplus -g}\, D_2\left(\begin{array}{c}u\tensor a \vspace{0.2cm}\\v\tensor b\end{array}\right)\ =\ 
	T_{g\oplus -g}\left(\begin{array}{c}\widehat{D}_{N+}v\tensor b \vspace{0.2cm}
	\\\widehat{D}_{N+}u\tensor a\end{array}\right)\ =\ 
	i\,\left(\begin{array}{c}-\widehat{D}_{N+}u \tensor ga \vspace{0.2cm}\\ \widehat{D}_{N+}v\tensor gb\end{array}\right)
\]
and
\[
	D\,T_{g\oplus -g} \left(\begin{array}{c}u\tensor a \vspace{0.2cm}\\v\tensor b\end{array}\right)\ =\ 
	\left(\begin{array}{c}i\widehat{D}_{N+,V_N}(u\tensor ga) \vspace{0.2cm}\\
	-i\widehat{D}_{N+,V_N}(v\tensor gb)\end{array}\right)\,+\,
	\left(\begin{array}{c} \chi\, u\tensor ga\vspace{0.2cm}\\\chi\, v\tensor gb\end{array}\right)
\]
Moreover, 
\[
	\widehat{D}_{N+,V_N}(s\tensor g)\ =\ \big(\widehat{D}_{N+}s\big)\tensor g+(D_g s)\,,
	\qquad\qquad s\in H^1\big(\widehat{\Sigma}_{N+}\big)\,.
\]
Here, $D_g$ is the operator expressed in local coordinates as
\[
	D_g(s)
	\ =\ \sum_i\left(c(X^i) s\right)\tensor \left(\nabla_{X_i}^{V_N} g\right)\,,
\]
where $\{X_i\}$ is a local orthonormal frame of $T(N\times\RR)$ and $X^i$ is the dual frame of $T^*(N\times\RR)$. 
Therefore,
\begin{equation}\label{eq:connection condition}
	(T_{g\oplus -g}\, D_2\,+\,D\,T_{g\oplus -g}) \left(\begin{array}{c}u\tensor a \vspace{0.2cm}\\v\tensor b\end{array}\right)\ =\ 
	\left(\begin{array}{c}iD_g(u)a\vspace{0.2cm}\\-iD_g(v)b\end{array}\right)\,+\,
	\left(\begin{array}{c} \chi\, u\tensor ga\vspace{0.2cm}\\\chi\, v\tensor gb\end{array}\right)\,.
\end{equation}
Since $g$ is compactly supported, the operators $s\mapsto \chi\,s\tensor g$ and $s\mapsto D_g(s)$ are bounded.
By Equation~\ref{eq:connection condition}, the operator $T_{g\oplus -g}\, D_2\,+\,D\,T_{g\oplus -g}$ is bounded as well.
With a similar reasoning, it is proved that the operator $T_{g\oplus g}\, D_2\,-\,D\,T_{g\oplus g}$ is bounded, from which Condition~(i) of  Proposition~\ref{P:unbounded product} follows.


(\emph{Compatibility condition}).
Under the isomorphism $E_1\widehat{\tensor}_\phi E_2\cong E$, the domain of $D$ is contained in the domain of $D_1\widehat{\tensor}1$.
Hence, by~\cite[Lemma~10]{Kuc97} the resolvent of $D$ is compatible with $D_1\widehat{\tensor}1$ and  Condition~(ii) of  Proposition~\ref{P:unbounded product} holds.


(\emph{Positivity condition}).
We have
\[
	\left[D,D_1\widehat{\tensor} 1\right]\ =\ 
	\left(\begin{array}{cc}2\chi^2+i[\widehat{D}_{N+,V_N},\chi]&0\vspace{0.2cm}\\0&2\chi^2-i[\widehat{D}_{N+,V_N},\chi]\end{array}\right)\,.
\]
By Inequality~\eqref{eq:estimate of chi}, the operator on the right-hand side of the last equality is bounded below on $C^\infty_c\big(\widehat{V}_N\tensor\big(\widehat{\Sigma}_{N+}\oplus \widehat{\Sigma}_{N+}\big)\big)$, from which Condition~(iii) of  Proposition~\ref{P:unbounded product} follows.
\end{proof}


\subsection{Proof of Theorem~\ref{T:analysis on the cylinder}}
We have:
\[
\begin{aligned}
	\ind_A{\bf M}_{V_N}\
	=\ & \left\{[V_N]\circ\left([\chi] \,\widehat{\tensor}\,  [\id_A]\right)\right\}
	\circ\left(\big[\widehat{D}_{N+}\big] \,\widehat{\tensor}\,  [\id_A]\right)&
	\quad\qquad\text{by Lemma~\ref{L:structure of model operator}}\vspace{0.2cm}\\
	=\ & [V_N]\circ\left\{\left([\chi] \,\widehat{\tensor}\,  [\id_A]\right)
	\circ\left(\big[\widehat{D}_{N+}\big] \,\widehat{\tensor}\,  [\id_A]\right)\right\}&
	\qquad\text{by Proposition~\ref{P:intersection product}.(a)}\vspace{0.2cm}\\
	=\ & [V_N]\circ\left\{\left([\chi]\circ\big[\widehat{D}_{N+}\big] \right)
	 \,\widehat{\tensor}\,  [\id_A]\right\}&
	\text{by Proposition~\ref{P:intersection product}.(b)}\vspace{0.2cm}\\
	=\ & [V_N]\circ\left([D_{N+}] 
	 \,\widehat{\tensor}\,  [\id_A]\right)&\text{by Lemma 1}\vspace{0.2cm}\\
	=\ & \ind_AD_{N+,V_N}&\text{by Proposition~\ref{P:twisted Dirac}}\,.
\end{aligned}
\]
\hfill$\square$

\begin{remark}\label{R:algebra is separable}
The assumption that $A$ is separable in part (b) of Theorem~\ref{T:Theorem B} is due to the previous computation and the size restriction on $C^\ast$-algebras we made in Subsection~\ref{SS:intersection product}.
\end{remark}


\section{The vanishing theorem}\label{S:vanishing}
This last section is devoted to the proof of Theorem~\ref{T:codimension one obstructions} and Theorem~\ref{T:Theorem A}.


\subsection{Bochner-Lichnerowitz Formula}
Let $(M,g)$, $\kappa$, $V$ and $\nabla^V$ be as in Subsection~\ref{SS:codimension 1 obstructions}.
Let $\mathbb{S}_M$ be the spinor bundle over $M$.
Since the dimension of $M$ is odd, $\mathbb{S}_M$ is an \emph{ungraded} Dirac bundle.
Denote by $\slashed{D}_M$ the associated Dirac operator.
On the bundle $\mathbb{S}_M\tensor V$ consider the connection $\nabla^{\mathbb{S}_M\tensor V}:=\nabla^{\mathbb{S}_M}\tensor 1+1\tensor \nabla^{V}$ and the associated Laplace operator
\[
	\Delta_{\mathbb{S}_M\tensor V}\ :=\ \big(\nabla^{\mathbb{S}_M\tensor V}\big)^\ast \circ\nabla^{\mathbb{S}_M\tensor V}\,,
\]
where $\big(\nabla^{\mathbb{S}_M\tensor V}\big)^\ast$ is the formal adjoint of $\nabla^{\mathbb{S}_M\tensor V}$.


\begin{proposition}[Bochner-Lichnerowitz]\label{P:B-L formula} 
$\slashed{D}_{M,V}^2\ =\ \Delta_{\mathbb{S}_M\tensor V}+\frac{1}{4}\kappa$.
\end{proposition}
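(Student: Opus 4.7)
The plan is to adapt the classical Bochner--Lichnerowicz argument to the Hilbert $C^*$-module setting, exploiting the hypothesis from Subsection~\ref{SS:codimension 1 obstructions} that $V$ is \emph{flat}. The overall strategy is to write $\slashed{D}_{M,V}^2$ as a sum of a ``Laplacian part'' and a ``curvature part'', identify the Laplacian part as $\Delta_{\mathbb{S}_M\tensor V}$ using the Clifford anticommutation relations, and then evaluate the curvature part using the standard identity which extracts the scalar curvature from the spin curvature, noting that the $V$-twisting contribution vanishes because $\nabla^V$ is flat.

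More concretely, I would fix $x \in M$ and work in a local orthonormal frame $\{e_i\}$ of $TM$ with dual coframe $\{e^i\}$, synchronous at $x$ in the sense that $\nabla e_i$ vanishes at $x$. In this frame the twisted Dirac operator~\eqref{eq:twisted Dirac} takes the form $\slashed{D}_{M,V} = \sum_i c(e^i)\,\nabla^{\mathbb{S}_M\tensor V}_{e_i}$. Squaring and splitting the resulting double sum into symmetric and antisymmetric parts in the index pair $(i,j)$ yields, at $x$,
$$\slashed{D}_{M,V}^2 \;=\; -\sum_i \bigl(\nabla^{\mathbb{S}_M\tensor V}_{e_i}\bigr)^2 \;+\; \sum_{i<j} c(e^i)c(e^j)\,R^{\mathbb{S}_M\tensor V}(e_i,e_j),$$
where I have used $c(e^i)c(e^j)+c(e^j)c(e^i) = -2\delta_{ij}$ for the symmetric part. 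The first sum equals $\Delta_{\mathbb{S}_M\tensor V}$ at $x$ (the cross terms with $\nabla_{e_i} e^j$ vanishing in the synchronous frame).

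For the curvature part, the product-rule decomposition of the tensor connection gives
$$R^{\mathbb{S}_M\tensor V} \;=\; R^{\mathbb{S}_M}\tensor \id_V \;+\; \id_{\mathbb{S}_M}\tensor R^V.$$
Since $V$ is flat, $R^V = 0$ and the second summand contributes nothing. The first summand is handled by the standard Lichnerowicz computation: using $R^{\mathbb{S}_M}(e_i,e_j) = \tfrac{1}{4}\sum_{k,l} R_{ijkl}\,c(e^k)c(e^l)$ together with the first Bianchi identity, one obtains $\sum_{i<j} c(e^i)c(e^j)\,R^{\mathbb{S}_M}(e_i,e_j) = \tfrac{1}{4}\kappa\cdot\id$.

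The main point requiring attention is verifying that these classical pointwise calculations pass through to the $C^*$-module setting, but this is routine rather than a genuine obstacle: the Clifford action acts only on the $\mathbb{S}_M$ factor and is $A$-linear, the connection $\nabla^V$ preserves the $A$-valued inner product of the fibers, and all identities used are purely algebraic at each point. Consequently the formula holds as an identity of $A$-linear differential operators on $C^\infty_c(M;\mathbb{S}_M\tensor V)$, completing the proof.
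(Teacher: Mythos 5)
Your proof is correct. The paper itself gives no argument for this proposition; in the remark immediately following it, the author defers to \cite[Theorem~II.8.17]{LM89} (the twisted Lichnerowicz formula) and notes that the extra curvature-of-$V$ term there vanishes because $\nabla^V$ is flat. Your proposal spells out exactly that computation — the synchronous-frame square of $\slashed{D}_{M,V}$, the Clifford anticommutation to isolate the rough Laplacian, the splitting $R^{\mathbb{S}_M\tensor V}=R^{\mathbb{S}_M}\tensor\id_V + \id_{\mathbb{S}_M}\tensor R^V$ with $R^V=0$, and the Bianchi-identity contraction giving $\tfrac14\kappa$ — and correctly observes that everything is a pointwise $A$-linear identity that survives the passage to Hilbert $A$-module bundles, so it is the same approach the paper implicitly relies on, just written out.
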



\begin{remark}
In Proposition~\ref{P:B-L formula}, the hypothesis that the connection $\nabla^V$ is flat is crucial.
In fact, if $\nabla^V$ is not flat, on the right-hand side of this formula it appears a remainder term depending on the curvature of $\nabla^V$ (see~\cite[Theorem~II.8.17]{LM89} and the following remark).
\end{remark}


\subsection{Proof of Theorem~\ref{T:codimension one obstructions}}
Let $h:M\rightarrow [0,\infty)$ be a smooth function satisfying $h=1$ on $M_+$ and $h=-1$ on $M_-\setminus L$, where $L\subset M_-$ is a compact neighborhood of $\partial M_-$ such that $k$ is strictly positive on $L$.
Notice that for every $\lambda>0$ the function $\lambda h$ is an admissible endomorphism for the pair $\left(\mathbb{S}_M,\slashed{D}_M\right)$.
Denote by $P_\lambda$ the twisted Callias-type operator associated to the admissible quadruple $(\mathbb{S}_M,\slashed{D}_M,\lambda h,V)$.
Notice that the operator $D_{N+,V_N}$ induced by these data on $N$ (see Subsection~\ref{SS:Callias-type theorem}) coincides with the opeartor $\slashed{D}_{N,V_N}$.
Hence, by Theorem~\ref{T:Callias-type theorem} we deduce
\begin{equation}\label{eq:P_lambda=D_N}
	\ind_A P_\lambda \ =\  \ind_A \slashed{D}_{N,V_N}\,.
\end{equation}
From Proposition~\ref{P:B-L formula}, we obtain
\[
	P_\lambda^2\ =\ \left(\begin{array}{cc}
	\Delta_{\mathbb{S}_M\tensor V}+\frac{1}{4}\kappa\,+\,\lambda^2 h^2\,+\,i\big[\slashed{D}_{M,V},h\big]&\vspace{0.2cm}\\	
	0& \Delta_{\mathbb{S}_M\tensor V}+\frac{1}{4}\kappa\,+\,\lambda^2 h^2\,-\,i\big[\slashed{D}_{M,V},h\big]
	\end{array}\right)\,.
\]
For $\lambda$ small enough, $\frac{\kappa}{4}+\lambda^2-\lambda |h^\prime|\geq c$, for some positive constant $c$.
Hence, by Corollary~\ref{C:condition for ind_AB_V=0}, the class $\ind_A P_\lambda$ vanishes.
Now the thesis follows from Equation~\eqref{eq:P_lambda=D_N}
\hfill$\square$


\subsection{Higher Dirac obstructions on closed spin manifolds}\label{SS:higher obstructions}
Let $X$ be a spin manifold with associated spin-Dirac operator $\slashed{D}_X$.
Let $\pi$ be the fundamental group of $X$ and let $\widetilde{X}$ be its universal cover.
Denote by $C^\ast_\CC\pi$ the complex (reduced or maximal) group $C^\ast$-algebra of $\pi$.
The Hilbert $C^\ast_\CC\pi$-bundle $\mathcal{V}(X)\ :=\ \widetilde{X}\times_\pi C^\ast_\CC\pi$ is called the \emph{Mi\v{s}\v{c}enko-Fomenko line bundle}.
It is endowed with a canonical flat connection.
Denote by $\slashed{D}_{X,\mathcal{V}(X)}$ the Dirac operator $\slashed{D}_X$ twisted with the bundle $\mathcal{V}(X)$.
The \emph{Rosenberg index} of $X$ is the class
\begin{equation}\label{eq:A-obstructions}
	\alpha_\CC(X)\ :=\ \ind_A  \slashed{D}_{X,\mathcal{V}(X)}\in K_\ast(C^\ast_\CC\pi)\,,
\end{equation}
where $\ind_A  \slashed{D}_{X,\mathcal{V}(X)}$ is the Mi\v{s}\v{c}enko and Fomenko index of the operator $\slashed{D}_{X,\mathcal{V}(X)}$.


\subsection{Proof of Theorem~\ref{T:Theorem A}}
Suppose the pair $(M,N)$ satisfies the hypotheses of Theorem~\ref{T:Theorem A} and let $g_M$ be a complete Riemannian manifolds on $M$ with $\scal(g_M)>0$.
We want to show that $\alpha_\CC(N)=0$.

We first consider the case when $M$ is odd-dimensional.
In this case, $N$ is even-dimensional and $\alpha_\CC(N)\in K_0(C^\ast_\CC\pi)$.
Let $\overline{M}\rightarrow M$ be the Galois cover such that $\pi_1(\overline{M})=\pi_1(N)$.
There exists a lift of $i:N\hookrightarrow M$ to an inclusion $j:N\hookrightarrow \overline{M}$ such that 
$j_\ast:\pi_1(N)\rightarrow\pi_1(\overline{M})$ is an isomorphism and there is a partition $\overline{M}=\overline{M}_-\cup_{j(N)}\overline{M}_+$ where $j(N)=\overline{M}_-\cap \overline{M}_+$ has codimension one (for more details on this construction, see~\cite[Proof of Theorem~1.7]{Zei16}). 
From Theorem~\ref{T:codimension one obstructions} with the choice $V=\mathcal{V}(M)$, we deduce that $\alpha_\CC(N)=0$.

Let us now consider the case when $M$ is even-dimensional.
In this case, $N$ is odd-dimensional and $\alpha_\CC(N)\in K_1(C^\ast_\CC\pi)$.
Replace the pair $(M,N)$ with the pair $(M\times S^1,N\times S^1)$.
Let $g_{S^1}$ be the canonical flat metric on $S^1$.
Then the product metric $g_M\times g_{S^1}$ on $M\times S^1$ is complete and has positive scalar curvature.
Since $N$ has trivial normal bundle in $M$, then $N\times S^1$ has trivial normal bundle in $M\times S^1$.
Moreover, since $\pi_1(N)$ injects into $\pi_1(M)$, then also $\pi_1(N\times S^1)$ injects into $\pi_1(M\times S^1)$.
Hence, the pair $(M\times S^1,N\times S^1)$ satisfies the hypotheses of Theorem~\ref{T:Theorem A}.
Since $M\times S^1$ is even-dimensional, from the first part of the proof we deduce that $\alpha_\CC(N\times S^1)=0$ in $K_0(C^\ast_\CC\pi)$.
By~\cite[Proposition~4.2]{HPS15}, we finally obtain $\alpha_\CC(N)=0$ in $K_1(C^\ast_\CC\pi)$.
\hfill$\square$

\appendix
\section{Self-adjointness and regularity of $A$-linear differential operators of Schr\"{o}dinger-type.}\label{A:shroedinger-type}
Let  $W$ be a Hilbert $A$-bundle of finite type over a complete Riemannian manifold $(M,g)$.
Let $Q\colon C^\infty_c(M;W)\to C^\infty_c(M;W)$ be a formally self-adjoint, first order, differential operator.
Denote by $\sigma(Q)$ the principal symbol of $Q$.
For $x\in M$ and $\xi\in T^\ast_xM$, the map $\sigma(Q)(x,\xi)$ is a bounded adjointable operator on the Hilbert $A$-module $W_x$.
Let $R:W\rightarrow W$ be a smooth self-adjoint $A$-linear bundle map.
Consider the Schr\"odinger-type operator 
\begin{equation}\label{W:IHR}
       G \ := \ Q^2 \ + \ R\,.
\end{equation}
We view $G$ as an $A$-linear, unbounded operator on $H^0(M;W)$ with initial domain $C^\infty_c(M;W)$.


\begin{theorem}\label{T:G essentially self-adjoint}
Suppose that
\begin{enumerate}[label=(\alph*)]
	\item the operator $Q$ is elliptic;
	\item the principal symbol $\sigma(Q)$ is uniformly bounded from above, i.e. there exists a constant $c>0$ such that
		\begin{equation}\label{eq:Istronglyelliptic}
			\|\sigma(Q)(x,\xi)\|_{\mathcal{L}_A(W_x)} \ \le \ c\,|\xi|_g, \qquad \qquad  
			x\in M, \ \xi\in T_x^*M\backslash\{0\}\,,
		\end{equation}
		where $\|\sigma(Q)(x,\xi)\|_{\mathcal{L}_A(W_x)}$ is the norm of $\sigma(Q)(x,\xi)$ as a bounded operator on $W_x$ 
		and where $|\xi|_g$ denotes the length of $\xi$ defined by the Riemannian metric $g$ on $M$;
	\item the potential $R(x)$ is uniformly bounded from below, i.e. there exists a constant $b>0$ such that
		\begin{equation}\label{eq:R>-b}
			R(x)\ \geq\ -b\,,\qquad\qquad x\in M\,.
		\end{equation}
\end{enumerate}
Then the minimal closure $\overline{G}$ of $G$ is a regular, self-adjoint operator on the Hilbert $A$-module $H^0(M;W)$.
It is the only self-adjoint extension of $G$. 
\end{theorem}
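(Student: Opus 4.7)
The plan is to prove $\overline{G}$ is regular self-adjoint and the unique self-adjoint extension by showing that $G\bigl(C^\infty_c(M;W)\bigr)$ is dense in $H^0(M;W)$, combining interior elliptic regularity with a cutoff/energy estimate on the complete manifold $(M,g)$.

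\textbf{Step 1 (Reduction and criterion).} After replacing $G$ with $G + b + 1$, I may assume $R \geq 1$, so that $\langle Gu, u\rangle_0 \geq \langle u,u\rangle_0$ for all $u \in C^\infty_c(M;W)$. This shift affects neither regular self-adjointness nor uniqueness of extensions. By the standard criterion for regular self-adjointness of positive symmetric operators on Hilbert $C^\ast$-modules (cf.~\cite[Chapter~9]{Lan95}), it suffices to prove that the range of $G$ restricted to $C^\infty_c(M;W)$ is dense in $H^0(M;W)$. Uniqueness of the self-adjoint extension follows at once: any other self-adjoint extension of $G$ would be a proper symmetric extension of $\overline{G}$, which a self-adjoint operator does not admit.

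\textbf{Step 2 (Elliptic regularity).} Suppose $v \in H^0(M;W)$ annihilates the range, i.e.\ $\langle v, Gu\rangle_0 = 0$ for all $u \in C^\infty_c(M;W)$; I must show $v = 0$. This gives $Gv = 0$ in the distributional sense. Ellipticity of $Q$ makes $G = Q^2 + R$ an elliptic $A$-linear operator of order two, and the interior elliptic regularity theory for such operators (see~\cite{FM80}) upgrades $v$ to a smooth section.

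\textbf{Step 3 (Cutoff/energy estimate).} Since $(M,g)$ is complete, there exist cutoffs $\chi_n \in C^\infty_c(M)$ with $0 \leq \chi_n \leq 1$, equal to $1$ on an exhausting family of compact sets $K_n \uparrow M$, and with $\|d\chi_n\|_\infty \leq C$ uniformly. Set $X_n := [Q,\chi_n]$; by assumption (b), $X_n$ is an order-zero bundle endomorphism with $\|X_n\|_\infty \leq c\,\|d\chi_n\|_\infty$. Using $Gv = 0$, a direct calculation gives $G(\chi_n v) = 2X_n(Qv) + [Q,X_n]v$, and pairing with $\chi_n v$, together with the formal self-adjointness of $Q$ on compactly supported smooth sections, yields the energy identity
\[
\langle Q(\chi_n v), Q(\chi_n v)\rangle_0 \,+\, \langle \chi_n v, R\chi_n v\rangle_0 \,=\, 2\,\langle \chi_n v, X_n Qv\rangle_0 \,+\, \langle \chi_n v, [Q,X_n]v\rangle_0.
\]
Since $X_n$ is supported in $M \setminus K_n$, Cauchy-Schwarz for Hilbert $C^\ast$-modules together with a local elliptic estimate $\|Qv\|_{L^2(\supp X_n)} \leq C\,\|v\|_{L^2(U_n)}$ on a slightly thickened neighborhood $U_n$ controls the right-hand side by a constant multiple of $\|v\|_{L^2(M\setminus K_n)}\,\|v\|_0$, which tends to $0$ as $n\to\infty$ because $v\in H^0(M;W)$. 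The left-hand side, after taking the $A$-norm and using $R \geq 1$, is bounded below by $\|\chi_n v\|_0^2$. Consequently $\|\chi_n v\|_0 \to 0$, and since $\chi_n v \to v$ in $H^0$, this forces $v = 0$.

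\textbf{Main obstacle.} The crux is making the cutoff/energy estimate work in the Hilbert $C^\ast$-module setting. On a general complete manifold, $\|d\chi_n\|_\infty$ is only uniformly bounded and does not decay; the needed decay must instead come from the fact that $v \in H^0$ has vanishing mass on the shells $\supp d\chi_n$ that escape to infinity. Moreover, because inner products are $A$-valued, scalar estimates must be replaced by the Cauchy-Schwarz inequality for Hilbert $C^\ast$-modules and the fact that $X \geq Y \geq 0$ in $A$ implies $\|X\|_A \geq \|Y\|_A$. With these tools, together with the interior elliptic estimate used to pass from $Qv$ back to $v$ on $\supp X_n$, the argument closes and completes the proof.
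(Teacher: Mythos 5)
Your plan runs into a structural obstruction in Step~2--3 that is exactly what the Kaad--Lesch local--global principle (used in the paper's proof) is designed to circumvent. You attempt to establish density of the range by showing that any $v\in H^0(M;W)$ orthogonal to the range must vanish. In a Hilbert space this is equivalent to density, but in a Hilbert $C^\ast$-module it is not: a closed submodule $F\subsetneq E$ can have $F^\perp=\{0\}$. (Take $A=C[0,1]$, $E=A$, and $F=\{f: f(1/2)=0\}$; then $F^\perp=\{0\}$ although $\overline{F}=F\neq E$.) Hence ``the orthogonal complement of $\mathrm{Ran}(G)$ is zero'' does not imply that $\mathrm{Ran}(G)$ is dense, and Steps~2--3 do not establish the hypothesis you rely on in Step~1. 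This is the decisive gap: the Hilbert-space deficiency argument simply does not transfer to the module setting. The paper avoids this by first localizing at each cyclic representation $\rho$ via the Kaad--Lesch principle (Theorem~\ref{T:KL}, Corollary~\ref{C:geometric local global principle}), thereby reducing to genuine Hilbert-space statements about $G_\rho$, where the orthogonality test and classical elliptic regularity are both available.

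Two secondary issues are worth flagging. First, your claim that on a general complete manifold one only has $\|d\chi_n\|_\infty\leq C$ is incorrect: Shubin's construction (cited in the paper as~\cite[Prop.~4.1]{Shu01}, used in conditions \textbf{(C.1)--(C.3)}) furnishes cutoffs on \emph{any} complete manifold with $\|d\phi_k\|_\infty\to 0$, and this decay is what the paper's Lemma~\ref{L:Ds is square-integrable2} exploits. Second, your Step~3 uses a local elliptic estimate $\|Qv\|_{L^2(\supp X_n)}\le C\|v\|_{L^2(U_n)}$ with a constant uniform in $n$; this requires uniformity of the local geometry and, more seriously, an \emph{upper} bound on $R$ (since $Q^2v=-Rv$), whereas hypothesis (c) only assumes $R$ bounded below. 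The paper's Lemma~\ref{L:Ds is square-integrable2} replaces this by an integration-by-parts identity $\|\phi_k Q_\rho s\|_0^2 = \langle\phi_k^2 G_\rho s, s\rangle_0 - \langle\phi_k^2 R_\rho s,s\rangle_0 + 2\langle\widehat Q_\rho(d\phi_k)\phi_k Q_\rho s, s\rangle_0$, which together with $R\geq -b$ and the decay of $\|d\phi_k\|_\infty$ yields a uniform bound on $\|\phi_k Q_\rho s\|_0$ without any elliptic estimate on shells or upper bound on $R$.
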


\subsection{The local global principle of Kaad and Lesch}\label{SS:KL}
Let $E$ be a Hilbert $A$-module and let $D:\dom(D)\rightarrow E$ be an $A$-linear, closed, densely defined and symmetric operator.
Let $H_\rho$ be a (not necessarily separable) Hilbert space and let $\rho:A\rightarrow\mathcal{L}(H_\rho)$ be a $\ast$-representation of the $C^\ast$-algebra $A$.
Denote by $E^\rho$ the Hilbert space $E\tensor_\rho H_\rho$, obtained by completing the algebraic tensor product $E\odot_A H_\rho$ with respect to the $\CC$-valued inner product
\[
	\left<e_1\tensor h_1,e_2\tensor h_2\right>_{E^\rho}\ :=\ \left<h_1,\rho\left(\left<e_1,e_2\right>_E\right) h_2\right>_{H_\rho}
\]
(see ~\cite[Section~13.5]{Bla98}).
Let $D^\rho_0:\dom(D)\odot_A H_\rho\rightarrow E^\rho$ be the operator defined by setting $D^\rho_0(e\tensor h)=(De)\tensor h$.
By~\cite[Lemma~2.5]{KL12}, $D^\rho_0$ is densely defined and symmetric.
The closure $D^\rho$ of $D^\rho_0$ is called the \emph{localization} of $D$ at the representation $\rho$.
Notice that the operator $D^\rho$ is closed, densely defined and symmetric.
Finally recall that a representation $\rho:A\rightarrow\mathcal{L}(H^\rho)$ is \emph{cyclic} if there exists $h_0\in H_\rho$ such that the set $\{\rho(a)h_0|a\in A\}$ is dense in $H_\rho$.


\begin{theorem}[Kaad-Lesch,~\cite{KL12}]\label{T:KL}
The following are equivalent:
\begin{enumerate}
\item $D$ is a regular, self-adjoint operator on the Hilbert $A$-module $E$; 
\item for every cyclic representation $\rho$ of $A$, the localization $D^\rho$ is a self-adjoint operator on the Hilbert space $E^\rho$.
\end{enumerate}
\end{theorem}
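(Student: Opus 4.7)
The plan is to prove the two implications separately, relying throughout on the Lance characterization of regular self-adjoint operators on a Hilbert $A$-module $E$: a closed symmetric $A$-linear operator $D$ is regular self-adjoint if and only if $D\pm i\colon\dom D\to E$ are surjective, equivalently have dense range (the range is automatically closed, since symmetry gives $\|(D\pm i)f\|\geq\|f\|$, so $(D\pm i)^{-1}$ extends by closedness of $D$ to an $A$-linear contraction $E\to\dom D$). For (1)$\Rightarrow$(2), I would argue that localization preserves density of the range. If $D\pm i$ are surjective on $E$, then
\[
(D^\rho_0\pm i)\bigl(\dom D\odot_{A} H_\rho\bigr)\;=\;(D\pm i)(\dom D)\odot_{A} H_\rho\;=\;E\odot_{A} H_\rho,
\]
which is dense in $E^\rho$. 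Hence $D^\rho\pm i$ have dense range, and the classical von Neumann criterion applied to the closed symmetric Hilbert-space operator $D^\rho$ gives self-adjointness.

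For (2)$\Rightarrow$(1), the content is to upgrade self-adjointness of all localizations to density of $(D\pm i)(\dom D)$ in $E$. A preliminary reduction is the following observation: for any closed $A$-submodule $F\subseteq E$ and any cyclic representation $(\rho,H_\rho,\xi)$, the interior-tensor identity $f\otimes\rho(a)\xi=(fa)\otimes\xi$ combined with cyclicity of $\xi$ shows that the closure of $F\odot H_\rho$ in $E^\rho$ coincides with the closure of $\{g\otimes\xi:g\in F\}$. When $\rho=\rho_\phi$ is the GNS representation of a state $\phi$ with cyclic vector $\xi_\phi$, the identity $\|(e-g)\otimes\xi_\phi\|_{E^{\rho_\phi}}^{2}=\phi(\langle e-g,e-g\rangle)$ then shows that $e_0\otimes\xi_\phi\in F^{\rho_\phi}$ is equivalent to $\inf_{g\in F}\phi(\langle e_0-g,e_0-g\rangle)=0$.

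The main step, and the principal obstacle, is a minimax interchange. Set $F:=\overline{(D+i)(\dom D)}$, and assume for contradiction that $F\subsetneq E$ and choose $e_0\in E$ with $d:=\dist(e_0,F)>0$. Define
\[
f(g,\phi)\;:=\;\phi\bigl(\langle e_0-g,\,e_0-g\rangle\bigr)^{1/2},\qquad g\in F,\ \phi\in\mathcal{S}(A),
\]
where $\mathcal{S}(A)$ is the state space of $A$, convex and weak-$\ast$ compact. Using $\|a\|_{A}=\sup_{\phi\in\mathcal{S}(A)}\phi(a)$ for positive $a\in A$, one gets $\|e_0-g\|=\sup_{\phi}f(g,\phi)$, so $d=\inf_{g}\sup_{\phi}f(g,\phi)$. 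Since $f$ is convex in $g$ (a norm on $E^{\rho_\phi}$ viewed as a function of $g$), and concave and weak-$\ast$ continuous in $\phi$ (the square root of an affine weak-$\ast$ continuous functional), Sion's minimax theorem applies and yields
\[
d\;=\;\inf_{g\in F}\sup_{\phi}f(g,\phi)\;=\;\sup_{\phi}\inf_{g\in F}f(g,\phi).
\]
Positivity of the right-hand side produces $\phi^{\ast}\in\mathcal{S}(A)$ with $\inf_{g\in F}\phi^{\ast}(\langle e_0-g,e_0-g\rangle)>0$, which by the reduction above means $e_0\otimes\xi_{\phi^{\ast}}\notin F^{\rho_{\phi^{\ast}}}$. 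But $(D^{\rho_{\phi^{\ast}}}+i)(\dom D^{\rho_{\phi^{\ast}}})\subseteq F^{\rho_{\phi^{\ast}}}$, so $D^{\rho_{\phi^{\ast}}}+i$ does not have dense range, contradicting the assumed self-adjointness of $D^{\rho_{\phi^{\ast}}}$ on the Hilbert space $E^{\rho_{\phi^{\ast}}}$. The same argument handles $D-i$.

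Once density of $(D\pm i)(\dom D)$ is established, the Lance characterization recalled at the outset yields regular self-adjointness of $D$, proving (2)$\Rightarrow$(1). The uniqueness of the self-adjoint extension is automatic from the fact that a closed symmetric $D$ whose $D\pm i$ have dense range has no proper symmetric extensions. The delicate points in the argument are twofold: first, justifying the cyclicity-based identification of the closure of $F\odot H_\rho$ with the closure of the simple tensors $\{g\otimes\xi:g\in F\}$; and second, verifying the convex--concave structure and weak-$\ast$ continuity hypotheses that make Sion's minimax theorem applicable in this operator-module setting. Both should be routine once set up correctly, but the minimax interchange is the conceptual heart of the result and must carry the entire weight of passing from "self-adjointness fiberwise" to "regular self-adjointness globally".
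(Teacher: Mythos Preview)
The paper does not supply a proof of this theorem: it is quoted from Kaad--Lesch~\cite{KL12} and used as a black box (via Corollary~\ref{C:geometric local global principle}) in the proof of Theorem~\ref{T:G essentially self-adjoint}. There is therefore no proof in the paper to compare your attempt against.

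That said, your argument is a valid and rather elegant route to the result. The implication (1)$\Rightarrow$(2) is routine. For (2)$\Rightarrow$(1) the real issue is exactly the one you identify: in a Hilbert $C^\ast$-module a proper closed submodule $F\subsetneq E$ can have $F^\perp=\{0\}$, so one cannot simply pick an orthogonal vector and evaluate a state on it; something must manufacture a single state at which the range deficiency of $D+i$ survives. Your use of Sion's minimax theorem does precisely this. The hypotheses check: $F$ is convex, $\mathcal{S}(A)$ is weak-$\ast$ compact convex (here $A$ is unital, as assumed throughout the paper), $g\mapsto f(g,\phi)$ is a seminorm hence convex and norm-continuous, and $\phi\mapsto f(g,\phi)=\phi(\langle e_0-g,e_0-g\rangle)^{1/2}$ is concave (square root of an affine function) and weak-$\ast$ continuous. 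The cyclicity reduction identifying the closure of $F\odot_A H_\rho$ with that of $\{g\otimes\xi:g\in F\}$ also uses unitality (so that $F\cdot A=F$), which is fine in this paper's setting though the original Kaad--Lesch statement is more general. The containment $\operatorname{ran}(D^\rho+i)\subseteq F^\rho$ follows because $D^\rho$ is the closure of $D^\rho_0$ and $F^\rho$ is closed.

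The proof in~\cite{KL12} is organized differently: rather than a minimax over the state space, Kaad and Lesch develop a localization calculus for closed submodules and semiregular operators and show directly that a closed submodule $F\subsetneq E$ must fail to be dense in some $E^\rho$. Your approach is more self-contained and avoids that machinery, at the cost of importing Sion's theorem; their approach sits inside a broader framework that also handles the non-unital case and related regularity questions.
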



\subsection{Localization of differential operators: Ebert approach.}\label{SS:Ebert approach}
Let $W$ be a Hilbert $A$-bundle of finite type over a Riemannian manifold $M$.
Let $P: C^\infty_c(M;W)\rightarrow C^\infty_c(M;W)$ be an $A$-linear, formally self-adjoint differential operator of order $p$.
We regard $P$ as an $A$-linear, densely defined, symmetric operator on $H^0(M;W)$ with initial domain $ C^\infty_c(M;W)$.

Let $\rho:A\rightarrow \mathcal{L}(H_\rho)$ be a $\ast$-representation of $A$ and let $E$ be a Hilbert $A$-module.
Then $\rho$ induces a $\ast$-representation $\widehat{\rho}:\mathcal{L}_A(E)\rightarrow \mathcal{L}(E^\rho)$ defined by setting $\widehat{\rho}(P)=P\tensor 1$ (see~\cite[Section~1.5]{Ebe16} and~\cite[Chapter~4]{Lan95}).
Notice that $\widehat{\rho}$ sends unitary operators into unitary operators and induces a continuous group homomorphism $\widehat{\rho}:U(E)\rightarrow U(E^\rho)$.

Let $F$ be the typical fiber of $W$.
Let $\mathcal{P}(W)\rightarrow M$ be a principal $U(F)$-bundle such that $W$ is isometric to $\mathcal{P}(W)\times_{U(F)}F$.
Use the group homomorphism $\widehat{\rho}:U(F)\rightarrow U(F^\rho)$ to construct the bundle $W^\rho :=\mathcal{P}(W)\times_{U(F)}F^\rho$ over $M$ with typical fiber the Hilbert space $F^\rho$.
The completion of $ C^\infty_c(M;W^\rho)$ with respect to the natural $\CC$-valued inner product is a Hilbert space, that we denote by $H^0(M;W^\rho)$.

The operator $P$ extends to a differential operator $P_\rho: C^\infty_c(M;W^\rho)\rightarrow  C^\infty_c(M;W^\rho)$ defined as follows.
Suppose the operator $P$ is given, in local coordinates, by the formula $P=\sum_{|\alpha|\leq q}a_\alpha(x)\partial_x^\alpha$.
Here, for a multiindex $\alpha=(\alpha_1,\ldots, \alpha_d)$ (with $d=\dim M$), we set $|\alpha|=\alpha_1+\cdots +\alpha_d$ and $\partial_x^\alpha=\big(\frac{\partial}{\partial x_1}\big)^{\alpha_1}\cdots\big(\frac{\partial}{\partial x_d}\big)^{\alpha_d}$.
Define $P_\rho$ as the operator defined, in local coordinates, by the formula
\begin{equation}\label{eq:local expression of P_pi}
	P_\rho\ :=\ \sum_{|\alpha|\leq q}\widehat{\rho}(a_\alpha)\,\partial_x^\alpha\,.
\end{equation}
Since $\widehat{\rho}$ is a $\ast$-homomorphism, Formula~\eqref{eq:local expression of P_pi} implies that $P_\rho$ is formally self-adjoint.
Hence, we regard $P_\rho$ as a densely defined, symmetric operator on the Hilbert space $H^0(M;W^\rho)$ with domain $C^\infty_c(M;W^\rho)$.

Let $\overline{P}$ denote the closure of $P$.
Let $\left(H^0(M;W)\right)^\rho$ and $\overline{P}^{\,\rho}$ be resepectively the Hilbert space and the unbounded operator given by the construction of Subsection~\ref{SS:KL}.
We want to relate the Hilbert space $H^0(M;W^\rho)$ with $\left(H^0(M;W)\right)^\rho$ and the operator $P_\rho$ with $\overline{P}^{\,\rho}$.
Define the map
\[
	\Phi: C^\infty_c(M;W)\odot_A H_\rho\longrightarrow  C^\infty_c(M;W^\rho)
\]
by setting $\Phi(s\tensor_A h)(x)=s(x)\tensor_A h\in W_x^\rho$.
It is clear from~\eqref{eq:local expression of P_pi} that $\Phi$ interwines $P_\rho$ and $P\tensor 1$.
We have the following lemma.


\begin{lemma}\label{L:Ebe}
Suppose the representation $\rho$ is cyclic.
Then $\Phi$ extends to an isometry $\Phi:\left(H^0(M;W)\right)^\rho\rightarrow H^0(M;W^\rho)$.
Moreover, $\Phi$ takes $ C^\infty_c(M;W)\odot_A H_\rho$ onto a core of $P_\rho$.
\end{lemma}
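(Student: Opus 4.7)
The plan is to prove the statement in three steps: (i) verify that $\Phi$ preserves inner products on the algebraic tensor product, so that it extends to a well-defined isometry between the Hilbert space completions; (ii) show this isometric embedding is surjective; (iii) verify that $\Phi(C^\infty_c(M;W)\odot_A H_\rho)$ is a core for $P_\rho$.

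For (i), I unwind the definitions on simple tensors: for $s_1,s_2\in C^\infty_c(M;W)$ and $h_1,h_2\in H_\rho$, the inner product on $(H^0(M;W))^\rho$ of $s_1\tensor h_1$ and $s_2\tensor h_2$ equals $\langle h_1,\rho(\langle s_1,s_2\rangle_{H^0})h_2\rangle_{H_\rho}$, where $\langle s_1,s_2\rangle_{H^0}=\int_M \langle s_1(x),s_2(x)\rangle_{W_x}\,d\mu(x)$ is an $A$-valued Bochner integral, converging in the norm of $A$ because the integrand is continuous and compactly supported. Since $\rho$ is bounded linear it commutes with this Bochner integral, and after pairing with $h_1$ and $h_2$ I obtain
\[
	\int_M \langle h_1,\rho(\langle s_1(x),s_2(x)\rangle_{W_x})h_2\rangle_{H_\rho}\,d\mu(x),
\]
which is precisely $\langle \Phi(s_1\tensor h_1),\Phi(s_2\tensor h_2)\rangle_{H^0(M;W^\rho)}$. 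Extension by continuity then yields an isometric embedding of $(H^0(M;W))^\rho$ into $H^0(M;W^\rho)$.

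For (ii) and (iii), I reduce to a trivializing chart. Cover $M$ by open sets $U_\alpha$ over which $W$ trivializes as $U_\alpha\times F$, so $W^\rho$ trivializes as $U_\alpha\times F^\rho$, and pick a subordinate partition of unity. On each chart, the image of $\Phi$ already contains every section of the form $x\mapsto \psi(x)(f\tensor h)$ with $\psi\in C^\infty_c(U_\alpha)$, $f\in F$, $h\in H_\rho$. Combining density of $F\odot H_\rho$ in $F^\rho$ (by construction of the representation tensor product) with the standard density of $C^\infty_c(U_\alpha)\tensor F^\rho$ in $C^\infty_c(U_\alpha;F^\rho)$ in the $L^2$-topology yields $L^2$-density of the image of $\Phi$ in $H^0(M;W^\rho)$, which settles (ii). Running the same argument in the $C^q$-topology, and using that $P_\rho$ is a differential operator of order $q$ with smooth coefficients (so $C^q$-convergence on a fixed compact set dominates the graph norm of $P_\rho$), then yields the core property (iii).

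The hardest part will be upgrading the approximation of elementary tensor sections from $L^2$-density to $C^q$-density, which is what the core statement really demands. Exploiting that $F$ is finitely generated projective, I would write $F=pA^n$ for some projection $p\in M_n(A)$; this identifies $F^\rho$ with $p\cdot H_\rho^n$ and reduces the problem to approximating smooth compactly supported $H_\rho^n$-valued functions in $C^q$-topology by finite sums $\sum_j \psi_j(x)\tensor v_j$. This is a standard mollification and partition-of-unity argument once one uses that the image of a smooth compactly supported section lies in a separable subspace of $H_\rho^n$, so an orthonormal basis expansion converges uniformly with all derivatives.
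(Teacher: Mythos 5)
Your argument is correct and aligns with the route the paper indicates: the paper defers this lemma to Ebert's Lemma 2.18 and describes exactly the adaptation you employ, namely working with the $C^p$-norm on a chart and taking the compact set $K=\bigcup_{|\alpha|\le p}(\partial_x^\alpha u)(\RR^d)$ of fiber values, whose closed span is precisely the separable subspace on which your orthonormal-basis truncation (together with Dini's theorem) gives $C^p$-uniform, hence graph-norm, convergence. Two minor remarks: ``mollification'' is not actually needed, since the scalar coefficients $\langle u(x),e_j\rangle$ are already smooth with compact support; and your step (ii) is redundant once (iii) is in place, because $\Phi\left(C^\infty_c(M;W)\odot_A H_\rho\right)$ being a core for $P_\rho$ implies it is $L^2$-dense, and the range of a Hilbert-space isometry is closed, so $\Phi$ is automatically onto.
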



\begin{remark}
For the proof of this lemma, we refer the reader to~\cite[Lemma~2.18]{Ebe16}, where the case when $P$ is of first order is proved.
The same argument works in the case when $P$ has arbitrary order $p$ with the following adjustments. 
The $C^1$-norm used in~\cite{Ebe16} must be replaced with the $C^p$-norm and the set $K$ must be defined as $K:=\bigcup_{|\alpha|\leq p}\left(\partial_x^\alpha u\right)(\RR^d)$, where $d=\dim M$.
\end{remark}


\begin{corollary}\label{C:geometric local global principle}
The following are equivalent:
\begin{enumerate}
\item the closure $\overline{P}$ of $P$ is a regular, self-adjoint operator on the Hilbert $A$-module $H^0(M;W)$;
\item for all cyclic representations $\rho$ of $A$, the closure $\overline{P}_\rho$ of $P_\rho$ is a self-adjoint operator on the Hilbert space $H^0(M;W^\rho)$.
\end{enumerate}
\end{corollary}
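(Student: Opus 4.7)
The plan is to derive this corollary by directly combining the abstract local-global principle (Theorem~\ref{T:KL}) with the geometric identification provided by Lemma~\ref{L:Ebe}. The key observation is that, for each cyclic representation $\rho$, Lemma~\ref{L:Ebe} realizes the abstract localization of $\overline{P}$ at $\rho$ as precisely the geometric operator $\overline{P}_\rho$, modulo a canonical Hilbert-space isometry.

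Concretely, I would first invoke Theorem~\ref{T:KL} applied to $\overline{P}$: it says statement~(1) is equivalent to the self-adjointness of every localization $\overline{P}^{\,\rho}$ on the Hilbert space $(H^0(M;W))^\rho$ as $\rho$ ranges over cyclic $\ast$-representations of $A$. For each such $\rho$, I would then use the isometry $\Phi$ of Lemma~\ref{L:Ebe} to transport $\overline{P}^{\,\rho}$ to the geometric side: $\Phi$ intertwines $P\otimes 1$ acting on $C^\infty_c(M;W)\odot_A H_\rho$ with $P_\rho$ acting on $\Phi\bigl(C^\infty_c(M;W)\odot_A H_\rho\bigr)$, and by the last sentence of Lemma~\ref{L:Ebe} the latter is a core for $\overline{P}_\rho$. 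Assuming the former is a core for $\overline{P}^{\,\rho}$, passing to closures then gives $\Phi\,\overline{P}^{\,\rho}\,\Phi^{-1}=\overline{P}_\rho$, and the equivalence of~(1) and~(2) follows from unitary invariance of self-adjointness.

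The only step requiring verification is that $C^\infty_c(M;W)\odot_A H_\rho$ is a core for the abstract localization $\overline{P}^{\,\rho}$ in the sense of Subsection~\ref{SS:KL}. By definition, $\overline{P}^{\,\rho}$ is the closure of $\overline{P}^{\,\rho}_0$ on $\dom(\overline{P})\odot_A H_\rho$, so it suffices to show that any element $e\otimes h$ with $e\in\dom(\overline{P})$ and $h\in H_\rho$ can be approximated in the graph norm of $\overline{P}^{\,\rho}$ by tensors $e_n\otimes h$ with $e_n\in C^\infty_c(M;W)$. This reduces, via the explicit formula for the inner product on $(H^0(M;W))^\rho$, to an elementary graph-norm approximation in $H^0(M;W)$, which holds because $C^\infty_c(M;W)$ is a core for $\overline{P}$ by construction. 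Once this core-comparison observation is in place, the remainder of the proof is a purely formal chaining of the two cited results, and no further analysis is needed.
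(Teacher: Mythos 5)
Your proposal is correct and follows the same route as the paper: invoke Theorem~\ref{T:KL} for $\overline{P}$ and use the isometry $\Phi$ of Lemma~\ref{L:Ebe} to identify each abstract localization $\overline{P}^{\,\rho}$ with the geometric operator $\overline{P}_\rho$. The paper compresses the core-matching step into the single sentence ``under the isometry $\Phi$ the operator $\overline{P}^{\,\rho}$ corresponds to $\overline{P}_\rho$,'' whereas you spell out the (correct) verification that $C^\infty_c(M;W)\odot_A H_\rho$ is a core for $\overline{P}^{\,\rho}$, so there is no substantive difference.
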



\begin{proof}
By Lemma~\ref{L:Ebe}, under the isometry $\Phi:\left(H^0(M;W)\right)^\rho\rightarrow H^0(M;W^\rho)$ the operator $\overline{P}^{\,\rho}$ corresponds to $\overline{P}_\rho$.
Now the thesis follows from~Theorem~\ref{T:KL}
\end{proof}


\subsection{The case of closed manifolds}
Let $M$, $W$ and $P$ be as in Subsection~\ref{SS:Ebert approach}.
In this subsection we consider the case when the manifold $M$ is closed and study the unbounded $A$-linear operator $P: C^\infty(M;W)\rightarrow H^0(M;W)$.


\begin{proposition}\label{P:Q self adjoint}
Suppose the manifold $M$ is closed and the operator $P$ is elliptic.
Then the closure $\overline{P}$ of $P$ is a regular, self-adjoint operator on the Hilbert $A$-module $H^0(M;V)$.
It is the unique self-adjoint extension of $P$.
\end{proposition}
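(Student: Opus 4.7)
The plan is to reduce the statement to classical elliptic theory via the local-global principle of Corollary~\ref{C:geometric local global principle}. It suffices to prove that, for every cyclic $*$-representation $\rho:A\to\mathcal{L}(H_\rho)$, the closure $\overline{P}_\rho$ of $P_\rho$ is a self-adjoint operator on the Hilbert space $H^0(M;W^\rho)$.

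To this end, I would first verify that $P_\rho$ is an elliptic formally self-adjoint differential operator of order $p$ on the Hilbert-space bundle $W^\rho\to M$. From the local expression~\eqref{eq:local expression of P_pi} and the fact that $\widehat{\rho}$ is a unital $*$-homomorphism of $C^*$-algebras, one sees that $P_\rho$ is formally self-adjoint and that its principal symbol equals $\widehat{\rho}(\sigma(P)(x,\xi))$. Since $\widehat{\rho}$ preserves invertibility (indeed $\widehat{\rho}(a^{-1})$ is an inverse of $\widehat{\rho}(a)$ whenever $a$ is invertible), the ellipticity of $P$ transfers to ellipticity of $P_\rho$; compactness of $M$ gives a uniform bound on $\sigma(P_\rho)(x,\xi)^{-1}$ for $|\xi|=1$, so the construction of a properly supported parametrix from the standard pseudodifferential calculus goes through verbatim in the Hilbert-space-valued setting, since every step is fiberwise.

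Having the parametrix in hand, I would invoke Gårding's inequality, which on a closed manifold gives the elliptic estimate $\|u\|_{H^p(M;W^\rho)}\leq C(\|P_\rho u\|_{H^0}+\|u\|_{H^0})$ for $u\in C^\infty(M;W^\rho)$, together with elliptic regularity: any $u\in H^0(M;W^\rho)$ with $P_\rho u\in H^0$ (in the distributional sense) lies in $H^p(M;W^\rho)$. This forces the minimal and maximal domains of $P_\rho$ to coincide, both equal to $H^p(M;W^\rho)$. Since $P_\rho$ is formally self-adjoint, the coincidence of these domains is precisely the statement that $\overline{P}_\rho$ is self-adjoint. Applying Corollary~\ref{C:geometric local global principle} then yields that $\overline{P}$ is a regular, self-adjoint operator on $H^0(M;W)$. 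Uniqueness is automatic: any symmetric extension $T$ of $P$ satisfies $\overline{P}\subseteq T\subseteq T^*\subseteq (\overline{P})^*=\overline{P}$, so $T=\overline{P}$.

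The main obstacle I anticipate is a careful verification that the classical pseudodifferential machinery — symbol calculus, parametrix construction, Sobolev scale, and Gårding's inequality — really does transfer to sections of a bundle $W^\rho$ whose typical fiber $F^\rho$ may be an infinite-dimensional Hilbert space. All operator-theoretic estimates one needs are of the form \enquote{norm of a bounded operator on $F^\rho$}, and the compactness of $M$ produces the uniform symbol bounds that drive the parametrix construction, so this is essentially routine but must be written out with some care about how Fourier-multiplier estimates behave in the Hilbert-valued setting.
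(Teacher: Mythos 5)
Your proof is correct, but it inverts the logical flow of the paper's own argument. The paper proves this proposition directly at the Hilbert $A$-module level by invoking the Mi\v{s}\v{c}enko--Fomenko pseudodifferential calculus for Hilbert $A$-bundles: on a closed manifold one has a parametrix $Q$ of order $-p$ with $PQ=1-R$, $QP=1-S$, $R,S$ of order $-\infty$, and the elliptic regularity this yields forces $\Dom(P^\ast)=H^p(M;W)=\Dom(\overline{P})$, exactly the adaptation to order $p$ of the argument in \cite{HPS15} for the Dirac case. It then \emph{deduces} the localized statement (Corollary~\ref{C:Q_rho self adjoint}) from this via the local-global principle. You go the other way: you prove essential self-adjointness of each localization $P_\rho$ by classical (Hilbert-space-valued) elliptic theory on the closed manifold, and then invoke Corollary~\ref{C:geometric local global principle} to climb back up to the $A$-module statement. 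Both routes are legitimate, and neither is circular within the paper's ordering, since the local-global machinery precedes this proposition. The trade-off is which non-finite-rank pseudodifferential calculus you are willing to take as given: the paper leans on Mi\v{s}\v{c}enko--Fomenko over Hilbert $A$-modules, whereas you lean on an operator-valued (Hilbert-space fiber) version of the classical calculus. Your observation that the relevant symbol bound passes through $\widehat{\rho}$ because $*$-homomorphisms are norm-decreasing and preserve inverses, combined with compactness of the cosphere bundle, is precisely what makes $P_\rho$ uniformly elliptic, and the G{\aa}rding/regularity steps then follow from the parametrix just as you say. Your uniqueness argument via $\overline{P}\subseteq T\subseteq T^\ast\subseteq(\overline{P})^\ast=\overline{P}$ is the standard one and matches what the paper implicitly relies on.
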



\begin{remark}
For the proof of this proposition, we refer the reader to~\cite[pages~6,7]{HPS15}, where the case when $P$ is a twisted Dirac operator is proved.
The argument used in~\cite{HPS15} is based on the pseudodifferential calculus developed by Mi\v{s}\v{c}enko and Fomenko~\cite{FM80} for operators acting on smooth sections of $W$.
The same argument works in the case when $P$ has arbitrary order $p$.
The only two differences are that, in the general case, $P$ has order $p$ in the pseudodifferential calculus and the parametrix of $P$ is an operator $Q$ of order $-p$ such that there are $R$, $S$ of order $-\infty$ satisfying the identities $PQ=1-R$ and $QP=1-S$.
\end{remark}


\begin{corollary}\label{C:Q_rho self adjoint}
Suppose the same hypotheses of Proposition~\ref{P:Q self adjoint} are satisfied and let $\rho$ be a cyclic representation of the $C^\ast$-algebra $A$.
Then the closure of $P_\rho$ is a self-adjoint operator on the Hilbert space $H^0(M;V^\rho)$.
\end{corollary}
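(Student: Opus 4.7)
The proof is essentially a one-step deduction combining the two main tools already established in the appendix. The plan is to invoke Proposition~\ref{P:Q self adjoint} to verify hypothesis~(1) of Corollary~\ref{C:geometric local global principle}, and then read off hypothesis~(2) of that same corollary, which is exactly the conclusion we want.

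In detail, under the standing hypotheses (namely $M$ closed and $P$ elliptic, first-order formally self-adjoint differential operator of order $p$ acting on a Hilbert $A$-bundle of finite type $W$), Proposition~\ref{P:Q self adjoint} asserts that the minimal closure $\overline{P}$ of $P$ is a regular self-adjoint operator on the Hilbert $A$-module $H^0(M;W)$. This is precisely condition~(1) in Corollary~\ref{C:geometric local global principle}. Therefore, by that corollary, condition~(2) also holds: for \emph{every} cyclic representation $\rho$ of the $C^\ast$-algebra $A$, the closure $\overline{P}_\rho$ of the operator $P_\rho$ on $C^\infty(M;W^\rho)$ is a self-adjoint operator on the Hilbert space $H^0(M;W^\rho)$.

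Since this is purely a matter of chaining two previously proved statements, no step should present a real obstacle. The only mildly delicate point is recognizing that the identification used to apply Corollary~\ref{C:geometric local global principle} is legitimate in this setting, i.e.~that the isometry $\Phi \colon (H^0(M;W))^\rho \to H^0(M;W^\rho)$ of Lemma~\ref{L:Ebe} intertwines $\overline{P}^{\,\rho}$ with $\overline{P}_\rho$. This has already been folded into the statement of Corollary~\ref{C:geometric local global principle}, so nothing further needs to be checked here. The argument thus reduces to a clean two-sentence proof.
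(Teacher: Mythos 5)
Your proof is correct and matches the paper's argument exactly: the paper's proof of this corollary is the one-liner ``It follows from Corollary~\ref{C:geometric local global principle} and Proposition~\ref{P:Q self adjoint},'' and you have simply unpacked that chain. (Minor slip: you write ``first-order formally self-adjoint differential operator of order $p$''; Proposition~\ref{P:Q self adjoint} applies to elliptic operators of arbitrary order, not just first order, but this does not affect the argument.)
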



\begin{proof}
It follows from Corollary~\ref{C:geometric local global principle} and Proposition~\ref{P:Q self adjoint} 
\end{proof}


\subsection{Essential self-adjointness of $G_\rho$}
In this subsection we present the proof of Theorem~\ref{T:G essentially self-adjoint}.
Let $M$, $W$, $Q$, $R$ and $G=Q^2+R$ be as in Theorem~\ref{T:G essentially self-adjoint}.
Fix a cyclic representation $\rho$ of the $C^\ast$-algebra $A$ and consider the symmetric densely defined operator
\[
	G_\rho=Q_\rho^2+R_\rho: C^\infty_c(M;W^\rho)\longrightarrow H^0(M;W^\rho)
\]
given by the constructions of Subsection~\ref{SS:Ebert approach}.
By Corollary~\ref{C:geometric local global principle}, to prove Theorem~\ref{T:G essentially self-adjoint} we need to show that the operator $G_\rho^\ast$ is self-adjoint.
To this end, we need some information on the asymptotic behavior of sections in $\Dom(G_\rho^\ast)$.

For $j=0,1,2$, let $H^j(M;W^\rho)$ be the $j$-th Sobolev space constructed by using the operator $Q_\rho$.
We denote by $H^j_\text{loc}(M;W^\rho)$ the space of all sections $s\in H^0(M;W^\rho)$ such that $\psi s$ is in $H^j(M;W^\rho)$ for all $\psi\in C^\infty_c(M)$.
From Corollary~\ref{C:Q_rho self adjoint}, it follows that 
\begin{equation}\label{W:domain of H_V^ast is locally square-integrable}
	\Dom\left(G_\rho^\ast\right)\subseteq H^2_\textrm{loc}(M;W^\rho)\,.
\end{equation}
In particular, $Q_\rho s\in H^0_\textrm{loc}(M;W^\rho)$ for $s\in \Dom(H_\mu^\ast)$.

For $\psi\in C^\infty_c(M)$, consider the bundle map $\widehat{Q}_\rho(d\psi):W\rightarrow W$ defined by setting
\begin{equation}\label{W:Q_rho1}
	\widehat{Q}_\rho(d\psi)_x\ :=\ -i\,\widehat{\rho}_x\left(\sigma(Q)(x,d\psi_x)\right)\,,\qquad\qquad x\in M\,,
\end{equation}
where $\sigma(Q)$ is the symbol of $Q$ and where $\widehat{\rho}_x:\mathcal{L}_A(W_x)\rightarrow \mathcal{L}(W_x^\rho)$ is the representation induced by $\rho$ (see Subsection~\ref{SS:Ebert approach}).
Since $\widehat{\rho}_x$ is norm decreasing (see~\cite[page~42]{Lan95}), by~\eqref{eq:Istronglyelliptic} we deduce
\begin{equation}\label{W:Q_rho2}
	\|\widehat{Q}_\rho(d\psi)_x\|\ \leq\ c\,|d\psi_x|_g\,,\qquad\qquad x\in M\,,
\end{equation}
where $c$ is the same constant of Inequality~\eqref{eq:Istronglyelliptic}.
Moreover, by the local expression for $Q$, it follows that $[Q_\rho,\psi]_x=\widehat{\rho}_x([Q,\rho]_x)$.
Hence, 
\begin{equation}\label{W:Q_rho3}
	Q_\rho(\psi s)\ =\ \psi\, Q_\rho s\,+\, \widehat{Q}_\rho(d\psi) s\,.
\end{equation}


Since the Riemannian metric on $M$ is complete, by \cite[Proposition~4.1]{Shu01} there exists a sequence $\{\phi_k\}_{k=0}^\infty$ of compactly supported real-valued smooth functions on $M$ such that 
\begin{itemize}
	\item[\textbf{(C.1)}] $0\leq\phi_k(x)\leq1$, for all $k\in\ZZ_+$ and all $x\in M$; 
	\item[\textbf{(C.2)}] there exists a sequence $\{L_k\}_{k=0}^\infty$ of compact sets exhausting $M$ such that 
		$\phi_k=1$ on $L_K$ and $\supp (\phi_{k})\subseteq L_{k+1}$;
	\item[\textbf{(C.3)}] the sequence $\{d\phi_k\}_{k=0}^\infty$ converges to $0$ in the $\|\cdot\|_\infty$-norm.
\end{itemize}
Notice that, for $s\in\Dom\left(G_\rho^\ast\right)$, $\phi_k\,Q_\rho s\in H^0(M;W^\rho)$ by~\eqref{W:domain of H_V^ast is locally square-integrable}.


\begin{lemma}\label{L:Ds is square-integrable2}
Let $\rho$ be a cyclic representation of the $C^\ast$-algebra $A$.
Suppose that $s\in \Dom\left(G_\rho^\ast\right)$ and that $\{\phi_k\}_{k=0}^\infty$ is a sequence of compactly supported smooth functions satisfying \emph{\textbf{(C.1)}} and \emph{\textbf{(C.3)}}.
Then there is a constant $c_1>0$ such that $\|\phi_k Q_\rho s\|_0\leq c_1$, for all $k\in\ZZ_+$.
\end{lemma}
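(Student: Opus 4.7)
The plan is to derive a uniform $L^2$-bound on $\phi_k Q_\rho s$ by running an integration-by-parts argument against $\phi_k^2$ and then absorbing the commutator terms. Concretely, the starting identity will be
\begin{equation*}
\|\phi_k Q_\rho s\|_0^2 \;=\; \langle \phi_k^2 Q_\rho s, Q_\rho s\rangle_0,
\end{equation*}
and I will transfer one copy of $Q_\rho$ to the first factor, using formal self-adjointness of $Q_\rho$ and the commutator identity~\eqref{W:Q_rho3}:
\begin{equation*}
Q_\rho(\phi_k^2\, Q_\rho s) \;=\; \phi_k^2\, Q_\rho^2 s \;+\; 2\phi_k\,\widehat{Q}_\rho(d\phi_k)\, Q_\rho s.
\end{equation*}
Combined with the defining relation $Q_\rho^2 s = G_\rho^\ast s - R_\rho s$, this yields
\begin{equation*}
\|\phi_k Q_\rho s\|_0^2 \;=\; \langle \phi_k^2\, G_\rho^\ast s, s\rangle_0 \;-\; \langle \phi_k^2\, R_\rho s, s\rangle_0 \;+\; 2\,\langle \phi_k\,\widehat{Q}_\rho(d\phi_k)\, Q_\rho s,\, s\rangle_0.
\end{equation*}

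Next I will bound each term on the right. The first term is controlled by $\|G_\rho^\ast s\|_0\,\|s\|_0$ via Cauchy--Schwarz and $0\le\phi_k\le 1$. For the second, since $R\ge -b$ pointwise and $\widehat\rho$ is a positivity-preserving representation, the fiberwise bundle map $R_\rho$ also satisfies $R_\rho\ge -b$, so $-\langle \phi_k^2 R_\rho s,s\rangle_0 \le b\|s\|_0^2$. For the cross term, estimate~\eqref{W:Q_rho2} gives $\|\widehat{Q}_\rho(d\phi_k)\|_\infty\le c\,\|d\phi_k\|_\infty$, so Cauchy--Schwarz followed by AM--GM produces
\begin{equation*}
2\,|\langle \phi_k\,\widehat{Q}_\rho(d\phi_k)\, Q_\rho s,\, s\rangle_0| \;\le\; \tfrac{1}{2}\|\phi_k Q_\rho s\|_0^2 \;+\; 2c^2\|d\phi_k\|_\infty^2\,\|s\|_0^2.
\end{equation*}
Absorbing $\tfrac12\|\phi_k Q_\rho s\|_0^2$ into the left side and using $\|d\phi_k\|_\infty\to 0$ from \textbf{(C.3)} (so $\|d\phi_k\|_\infty$ is uniformly bounded in $k$) gives the desired uniform bound
\begin{equation*}
\|\phi_k Q_\rho s\|_0^2 \;\le\; 2\|G_\rho^\ast s\|_0\|s\|_0 \;+\; 2b\|s\|_0^2 \;+\; 4c^2\,\sup_k\|d\phi_k\|_\infty^2\,\|s\|_0^2,
\end{equation*}
from which $c_1$ is read off.

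The delicate point, and the main obstacle, is justifying the integration by parts, because $s$ is only known to lie in $\mathrm{Dom}(G_\rho^\ast)\subseteq H^2_{\mathrm{loc}}(M;W^\rho)$, not in $C^\infty_c$. The way I would handle this is to fix $k$ and work over a relatively compact open set $U$ containing $\supp\phi_{k+1}$; there $s\in H^2(U;W^\rho)$, so $\phi_k^2 Q_\rho s$ is a compactly supported $H^1$-section. Approximating it in $H^1$ by elements of $C^\infty_c(U;W^\rho)$ and pairing against $s\in L^2_{\mathrm{loc}}$, the formal self-adjointness of $Q_\rho$ on $C^\infty_c$ passes to the limit because $Q_\rho$ is continuous from $H^1$ to $L^2$ on $U$; this legitimizes the transfer of $Q_\rho$ performed above. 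With this justification in place, the estimates do not use anything beyond the hypotheses (a)--(c) and properties \textbf{(C.1)}, \textbf{(C.3)} of the cut-off sequence, completing the argument.
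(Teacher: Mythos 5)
Your proof is correct and follows essentially the same argument as the paper: the identity $\|\phi_k Q_\rho s\|_0^2=\langle Q_\rho(\phi_k^2 Q_\rho s),s\rangle_0$, expansion via the commutator formula~\eqref{W:Q_rho3}, substitution $Q_\rho^2 s = G_\rho^\ast s - R_\rho s$, the lower bound $R_\rho\ge -b$ from positivity of $\widehat\rho$, and absorption of the cross term by AM--GM, closing with~\textbf{(C.3)}. The final paragraph justifying the transfer of $Q_\rho$ onto the compactly supported $H^1$-section is a legitimate refinement (the paper leaves it implicit, relying on~\eqref{W:domain of H_V^ast is locally square-integrable}), and you correctly retain the symbol constant $c$ from~\eqref{W:Q_rho2} that the paper's display silently drops.
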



\begin{proof}
Using~\eqref{W:Q_rho3}, we have 
\begin{equation}\label{W:Ds is square-integrable:2}
\begin{array}{rcl}
	\| \phi_k  Q_\rho s\|_0^2 &=&\left< Q_\rho(\phi_k ^2 Q_\rho s),s\right>_0\ =\ 
	\left<\phi_k ^2Q_\rho^2s,s\right>_0 \,+\,2\left< \widehat{Q}_\rho(d\phi_k )\phi_k  Q_\rho s,s\right>_0\vspace{0.2cm}\\
	&=& 
	\left<\phi_k ^2G_\rho^2s,s\right>_0 \,-\,
	\left<\phi_k^2 R_\rho s,s\right>_0\,+\,2\left< \widehat{Q}_\rho(d\phi_k )\phi_k  Q_\rho s,s\right>_0
\end{array}
\end{equation}
where $\widehat{Q}_\rho(d\phi_k)$ is the bundle map defined by~\eqref{W:Q_rho1}.
Notice that, since $\widehat{\rho}$ is a morphism of $C^\ast$-algebras, it sends positive operators into positive operators.
Since $R_\rho(x)=\widehat{\rho}_x(R(x))$, by~\eqref{eq:R>-b} it follows that $R_\rho(x)\geq -b$ for $x\in M$.
Hence,
\begin{equation}\label{W:Ds is square-integrable:5}
	-\,\left<\phi_k^2 R_\rho s, s\right>_0\ \leq\ b\,\left<\phi_k^2 s, s\right>_0\,.
\end{equation}
From~\eqref{W:Ds is square-integrable:2},~\eqref{W:Ds is square-integrable:5} and~\textbf{(C.1)}, we obtain
\begin{equation}\label{W:Ds is square-integrable:1}
	\| \phi_k  Q_\rho s\|_0^2 \ \leq\  \| G_\rho^\ast  s\|_0 \, \| s\|_0 \,+\,b\, \|s\|_0^2
	\,+\,2\,\|d\phi_k\|_\infty\,\|\phi_k  Q_\rho s\|_0\|s\|_0\,.
\end{equation}
Using the inequality $ab\le \frac12 a^2+\frac12{b^2}$, we get
\begin{equation}\label{W:Ds is square-integrable:3}
	2\,\|d\phi_k\|_\infty\,\|\phi_k  Q_\rho s\|_0\, \|s\|_0 \ \leq\ \frac12\,\|\phi_k  Q_\rho s\|_0^2\ +\  
	2\,\|d\phi_k \|_\infty^2\,\|s\|_0^2\,.
\end{equation}
From~\eqref{W:Ds is square-integrable:1} and~\eqref{W:Ds is square-integrable:3}, we deduce
\[
	\|\phi_k  Q_\rho s\|_0^2\ \leq\ 2\,\| G_\rho^\ast  s\|_0 \, \| s\|_0 \,+\,2 b\, \|s\|_0^2
	\ +\  
	4\,\|d\phi_k \|_\infty^2\,\|s\|_0^2\,.
\]
The thesis follows from this last inequality and~\textbf{(C.3)}.
\end{proof}


\subsection*{Proof of Theorem~\ref{T:G essentially self-adjoint}}
Let $\rho$ be a cyclic representation of the $C^\ast$-algebra $A$ and let $s_1$, $s_2\in \dom(G_\rho^\ast)$.
By Corollary~\ref{C:geometric local global principle}, to prove the thesis we need to show that
\begin{equation}\label{W:P:H_F^ast is regular:6}
	\left< G_\rho^\ast\, s_1, s_2\right>_0\ =\ \left< s_1,G_\rho^\ast\, s_2\right>_0\,.
\end{equation}
Let $\{\phi_k\}_{k=0}^\infty$ be a sequence of compactly supported functions satisfying \textbf{(C.1)--(C.3)}.
We have
\begin{equation}\label{W:P:H_F^ast is regular:1}
\begin{aligned}
	\left<\phi_k   s_1,G_\rho^\ast s_2\right>_0
	=\ &\left< Q_\rho^2(\phi_k   s_1),s_2\right>_0 \,+\,\left< \,R_\rho(\phi_k   s_1),s_2\right>_0\vspace{0.2cm}\\
	=\ &\left< Q_\rho(\phi_k   s_1),Q_\rho s_2\right>_0 \,+\,\left<\phi_k   s_1,R_\rho s_2\right>_0\vspace{0.2cm}\\
	=\ & \left<\phi_k   Q_\rho s_1,Q_\rho s_2\right>_0\,+\,
		\left<\widehat{Q}_\rho(d\phi_k  )s_1,Q_\rho s_2\right>_0+\left<\phi_k   s_1,R_\rho s_2\right>_0\,,
\end{aligned}
\end{equation}
where $\widehat{Q}_\rho(d\phi_k)$ is the bundle map defined by~\eqref{W:Q_rho1}.
Similarly,
\begin{equation}\label{W:P:H_F^ast is regular:2}
	\left< G_\rho^\ast s_1,\phi_k   s_2\right>_0
	\ =\ \left< Q_\rho s_1,\phi_k   Q_\rho s_2\right>_0\,+\,\left< Q_\rho s_1,\widehat{Q}_\rho(d\phi_k)s_2\right>_0\,+\,
		\left< R_\rho s_1,\phi_k   s_2\right>_0\,.
\end{equation}
From (\ref{W:P:H_F^ast is regular:1}) and (\ref{W:P:H_F^ast is regular:2}), we deduce
\begin{equation*}
\begin{aligned}
	\left< G_\rho^\ast s_1,\phi_k   s_2\right>_0\,-\,\left<\phi_k   s_1,G_\rho^\ast s_2\right>_0\ =\ 
	\left< \phi_{k+1}Q_\rho s_1,\widehat{Q}_\rho(d\phi_k)s_2\right>_0\,-\,
	\left<\widehat{Q}_\rho(d\phi_k  )s_1,\phi_{k+1}Q_\rho s_2\right>_0\vspace{0.2cm}\\
\end{aligned}
\end{equation*}
where we used the fact that, by~\textbf{(C.2)}, $\phi_{k+1}=1$ on the support of $\phi_k$.
Therefore, using Lemma~\ref{L:Ds is square-integrable2} we obtain
\begin{equation*}\label{W:P:H_F^ast is regular:11}
	\left|\left< G_\rho^\ast s_1,\phi_k   s_2\right>_0\,-\,\left<\phi_k   s_1,G_\rho^\ast s_2\right>_0\right|\ \leq\
	c_2\,\|d\phi_k\|_\infty\,,
\end{equation*}
for a suitable constant $c_2$.
Hence, 
\begin{equation}\label{W:P:H_F^ast is regular:4}
	\left< G_\rho^\ast s_1,\phi_k   s_2\right>_0\,-\,\left<\phi_k   s_1,G_\rho^\ast s_2\right>_0\,\xrightarrow{\quad\quad} 
	0\,,\ \ \ \ \textrm{as}\ k\rightarrow\infty\,.
\end{equation}
Moreover, from the dominated convergence theorem, we deduce
\begin{equation}\label{W:P:H_F^ast is regular:5}
	\left< G_\rho^\ast s_1,\phi_k   s_2\right>_0\,-\,\left<\phi_k   s_1,G_\rho^\ast s_2\right>_0\,\xrightarrow{\quad\quad} 
	\left< G_\rho^\ast\, s_1, s_2\right>_0\, -\, \left< s_1,G_\rho^\ast\, s_2\right>_0\,,\ \ \ \ \textrm{as}\ k\rightarrow\infty\,.
\end{equation}
Finally, (\ref{W:P:H_F^ast is regular:4}) and (\ref{W:P:H_F^ast is regular:5}) imply (\ref{W:P:H_F^ast is regular:6}).
\hfill$\square$

\end{document}